\newtheorem{theorem}{Theorem}
\numberwithin{theorem}{section}
\newtheorem{assumption}[theorem]{Assumption}
\newtheorem{condition}[theorem]{Condition}
\newtheorem{corollary}[theorem]{Corollary}
\newtheorem{definition}[theorem]{Definition}
\newtheorem{lemma}[theorem]{Lemma}
\newtheorem{proposition}[theorem]{Proposition}
\theoremstyle{definition}
\newtheorem{remark}[theorem]{Remark}
\newcommand{\abGal}[1] {\operatorname{Gal}\big(\overline{#1}/#1\big)}
\begin{document}

\title{Pink-type results for general subgroups of $\operatorname{SL}_2(\mathbb{Z}_\ell)^n$}
\author{Davide Lombardo}
\date{}
\maketitle

\begin{abstract}
We study open subgroups $G$ of $\operatorname{SL}_2(\mathbb{Z}_\ell)^n$ in terms of some associated Lie algebras without assuming that $G$ is a pro-$\ell$ group, thereby extending a theorem of Pink. The result has applications to the study of families of Galois representations.
\end{abstract}
\section{Motivation and statement of the result}
The ultimate goal of this work is the study of the images of certain Galois representations with values in $\operatorname{GL}_2(\mathbb{Z}_\ell)^n$, such as those afforded by the Tate modules of elliptic curves, or those arising from modular forms. In view of this aim, one would wish to provide a manageable description of these images; however, it turns out that it is beneficial, and in a sense simpler, to consider arbitrary subgroups $G$ of $\operatorname{GL}_2(\mathbb{Z}_\ell)^n$ without making any reference to their origin, and in the present work Galois representations will play virtually no role. In most applications to the study of Galois representations, the main object of interest is actually the intersection $G \cap \operatorname{SL}_2(\mathbb{Z}_\ell)^n$, and furthermore it is an easy matter to pass from results on subgroups of $\operatorname{SL}_2(\mathbb{Z}_\ell)^n$ to results on subgroups of $\operatorname{GL}_2(\mathbb{Z}_\ell)^n$, so we shall actually mostly work with subgroups $G$ of $\operatorname{SL}_2(\mathbb{Z}_\ell)^n$. Any such $G$ is the extension of a `finite' part, the image $G(\ell)$ of the reduction $G \to \operatorname{SL}_2(\mathbb{F}_\ell)^n$, by a `Lie' part, the kernel of this reduction.

When $G$ is closed and $G(\ell)$ is trivial (or more generally when $G$ is pro-$\ell$), and $\ell$ is odd, a construction due to Pink \cite{MR1241950} gives a very concrete and handy description of $G$ in terms of a certain $\mathbb{Z}_\ell$-Lie algebra $\mathcal{L}(G)$ (together with some additional data which is not very important to our present discussion). Furthermore, if $G$ is the image of a representation of $\abGal{K}$ ($K$ a number field), the condition that $G(\ell)$ be trivial can always be met by replacing $K$ by a finite extension, so that Pink's theorem applies. Note however that the degree of this extension depends on $\ell$: while this is often perfectly fine when considering a single Galois representation, it may become a major drawback when dealing with infinite families $G_\ell$ indexed by the rational primes (as is the case, for example, with the action of $\abGal{K}$ on the various Tate modules of an abelian variety).
Furthermore, Pink's theorem does not apply to $\ell=2$, which might again be quite a hindrance when trying to study the whole system $G_\ell$ at once.

While we cannot hope to give a complete description of $G$ in terms of Pink's Lie algebras when $G$ is not pro-$\ell$, we could try and settle for less, namely a result of the form `when $\mathcal{L}(G)$ contains a large neighbourhood of the identity (given explicitly), we can explicitly find a neighbourhood of the identity of $\operatorname{SL}_2(\mathbb{Z}_\ell)^n$ that is included in $G$'. Note that when dealing with Galois representations we are often interested in `large image' results, for which this weaker form of Pink's theorem would still be adequate. Unfortunately, even this is not possible (cf. for example \cite[§ 4.5]{AdelicEC}), and the best we can hope for is for such a statement to hold not quite for $G$, but for a subgroup $H$ of $G$ such that the index $[G:H]$ is bounded \textit{by a function of $n$ alone}. 
In order to give a concrete statement we shall need some preliminary definitions:

\begin{definition}
For a prime $\ell$ and a positive integer $s$ we let $\mathcal{B}_\ell(s)$ be the open subgroup of $\operatorname{SL}_2(\mathbb{Z}_\ell)$ given by
\[
\left\{ x \in \operatorname{SL}_2(\mathbb{Z}_\ell) \bigm\vert x \equiv \operatorname{Id} \pmod{\ell^s} \right\}.
\]

We also set $\mathcal{B}_\ell(0)=\operatorname{SL}_2(\mathbb{Z}_\ell)$, and for non-negative integers $k_1, \ldots, k_n$ we denote by $\mathcal{B}_\ell(k_1,\ldots, k_n)$ the open subgroup $\prod_{j=1}^n \mathcal{B}_\ell(k_j)$ of $\operatorname{SL}_2(\mathbb{Z}_\ell)^n$.
\end{definition}

\begin{definition}{(cf. \cite{MR1241950})}
Let $\ell$ be a prime, $n$ a positive integer and $G$ be an open subgroup of $\operatorname{GL}_2(\mathbb{Z}_\ell)^n$. If $\ell=2$, assume further that the reduction modulo 4 of $G$ is trivial. Writing elements of $G$ as $n$-tuples $(g_1,\ldots,g_n)$ of elements of $\operatorname{GL}_2(\mathbb{Z}_\ell)$, we define a map $\Theta_n$ by the formula
\[
\begin{array}{cccc}
\Theta_n : & G  & \to & \bigoplus_{i=1}^n \mathfrak{sl}_2(\mathbb{Z}_\ell)\\
& (g_1,\ldots,g_n) & \mapsto & \left( g_1 -\frac{1}{2} \operatorname{tr}(g_1), \ldots, g_n -\frac{1}{2} \operatorname{tr}(g_n)  \right),
\end{array}
\]
and we let $\mathcal{L}(G) \subseteq \mathfrak{sl}_2(\mathbb{Z}_\ell)^n$ be the $\mathbb{Z}_\ell$-span of $\Theta_n(G)$. We call $\mathcal{L}(G)$ the Lie algebra of $G$.
\end{definition}

\begin{theorem}\label{thm_GeneralPink}
Let $\ell$ be an odd prime, $n$ be an integer, and $G$ be an open subgroup of $\operatorname{SL}_2(\mathbb{Z}_\ell)^n$. There exists an open subgroup $H$ of $G$, of index at most $24^n 48^{n(n-1)}$, with the following property: if $\mathcal{L}(H)$ contains $\bigoplus_{i=1}^n \ell^k \mathfrak{sl}_2(\mathbb{Z}_\ell)$ for a certain integer $k>0$, then $H$ contains $B_\ell(p, \ldots, p)$ for $p=80(\max\{n,2\}-1)k$.

Similarly, let $n$ be a positive integer and $G$ be an open subgroup of $\operatorname{SL}_2(\mathbb{Z}_2)^n$. There exists an open subgroup $H$ of $G$ that satisfies $[G:H] \bigm\vert 96^{n}$, is trivial modulo 4 (so that $\mathcal{L}(H)$ is defined), and has the following property: if $\mathcal{L}(H)$ contains $\bigoplus_{i=1}^n 2^k \mathfrak{sl}_2(\mathbb{Z}_2)$ for a certain integer $k>0$, then $H$ contains $B_2(p, \ldots, p)$ for $p=607(\max\{n,2\}-1)k$.

\end{theorem}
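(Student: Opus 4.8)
The plan is to reduce everything to the pro‑$\ell$ (and, for $\ell=2$, uniform pro‑$2$) version of Pink's theorem, which I treat as a black box: if $P$ is a closed pro‑$\ell$ subgroup of $\operatorname{SL}_2(\mathbb{Z}_\ell)^n$ with $\ell$ odd — or $\ell=2$ and $P$ trivial modulo $4$ — then $P$ is squeezed between the exponential of a bounded dilation of $\mathcal{L}(P)$ and the exponential of $\mathcal{L}(P)$ itself, with explicit constants; in particular $\mathcal{L}(P)\supseteq\bigoplus_i\ell^{m}\mathfrak{sl}_2(\mathbb{Z}_\ell)$ forces $\mathcal{B}_\ell(m+c,\dots,m+c)\subseteq P$ for an explicit $c$. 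The whole game is then to pass from $G$ to a subgroup $H$ of index bounded in terms of $n$ alone such that, writing $H^+:=H\cap\mathcal{B}_\ell(1,\dots,1)$ for $\ell$ odd (resp. $H^+:=H\cap\mathcal{B}_2(2,\dots,2)$ for $\ell=2$), which is always pro‑$\ell$: (a) the finite quotient $H/H^+$ is structurally tame, and (b) the hypothesis $\mathcal{L}(H)\supseteq\bigoplus_i\ell^k\mathfrak{sl}_2(\mathbb{Z}_\ell)$ forces $\mathcal{L}(H^+)\supseteq\bigoplus_i\ell^{k'}\mathfrak{sl}_2(\mathbb{Z}_\ell)$ with $k'$ bounded by a linear function of $k$ whose slope depends only on $n$. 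Applying Pink to $H^+$ and bookkeeping the constants then yields the stated $p$, the term $\max\{n,2\}-1$ reflecting that the reduction already costs something at $n=1$.

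For $\ell$ odd I would build $H$ in two stages mirroring the two factors $24^{n}$ and $48^{n(n-1)}$ of the index bound. \emph{Diagonal stage.} For each $i$, look at the image $\overline{G}_i$ of the $i$-th projection of $G$ in $\operatorname{SL}_2(\mathbb{F}_\ell)$ and invoke Dickson's classification: either $\overline{G}_i$ contains $\operatorname{SL}_2(\mathbb{F}_\ell)$ — in which case the congruence subgroup property of $\operatorname{SL}_2$ over $\mathbb{Z}_\ell$ (a Frattini argument, with a small ad hoc check at $\ell=3$) shows that the $i$-th projection of $G$ is all of $\operatorname{SL}_2(\mathbb{Z}_\ell)$ and nothing needs to be done — or $\overline{G}_i$ is contained in a Borel, in a Cartan normalizer, or is one of $\widetilde{A}_4,\widetilde{S}_4,\widetilde{A}_5$, and in all of these cases $\overline{G}_i$ has a subgroup of index at most $24$ contained in a Borel or a Cartan normalizer with a small (metacyclic, hence controlled) reflection part. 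Intersecting the corresponding preimages costs at most $24^{n}$ in index, and for the resulting group each non‑$\operatorname{SL}_2$ factor is solvable with the torus acting on the three weight lines of its Lie algebra by the characters $\chi^{2},1,\chi^{-2}$; a direct computation then shows that the non‑pro‑$\ell$ elements can enlarge $\mathcal{L}$ only in the "Cartan direction", so $\mathcal{L}(G_i^+)$ differs from $\mathcal{L}(G_i)$ by at most a bounded power of $\ell$.

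\emph{Off-diagonal stage.} For each ordered pair $(i,j)$ of indices whose factors are both of the "$\operatorname{SL}_2(\mathbb{F}_\ell)$" kind, analyse the image of $G$ in $\operatorname{SL}_2(\mathbb{F}_\ell)^2$ (more precisely in $\operatorname{SL}_2(\mathbb{Z}/\ell^m)^2$ for $m$ bounded) by Goursat's lemma: the only configurations tying the two factors together are graphs of automorphisms of $\operatorname{SL}_2$ and fibre products over $\operatorname{PSL}_2(\mathbb{F}_\ell)$, i.e. ties governed by $\operatorname{Aut}(\operatorname{SL}_2(\mathbb{F}_\ell))=\operatorname{PGL}_2(\mathbb{F}_\ell)$, and after passing to a subgroup of index dividing $48$ one can ensure the only surviving ties are visible modulo $\ell$ alone. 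Since the hypothesis and the conclusion both concern the levels $\ell^{k}$ with $k\geq1$, such "shallow" ties are irrelevant for the passage to $H^+$; accumulating over the $n(n-1)$ ordered pairs gives the factor $48^{n(n-1)}$, and an induction on $n$ peeling off one factor at a time turns the per‑factor/per‑pair losses into the overall factor $80(\max\{n,2\}-1)$. The case $\ell=2$ follows the same scheme, except that imposing triviality modulo $4$ (which costs a divisor of $96^{n}$ in the index) already makes $H$ itself pro‑$2$ and uniform, so the off-diagonal stage is unnecessary; one then applies the uniform‑pro‑$2$ analogue of Pink's theorem, whose considerably weaker explicit bounds are responsible for the constants $96$ and $607$ in place of $24$ and $80$.

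I expect the crux to be step (b): controlling how much of $\mathcal{L}(H)$ survives the passage to the pro‑$\ell$ subgroup $H^+$ when $H/H^+$ is non‑abelian (the $\operatorname{SL}_2(\mathbb{F}_\ell)$ factors) or entangles several factors. The content is that every generator $\Theta_n(h)$ of $\mathcal{L}(H)$ coming from a non‑pro‑$\ell$ element $h$ lies, up to a bounded power of $\ell$, in the $\operatorname{Ad}(H)$-stable submodule generated by $\mathcal{L}(H^+)$; this is exactly what the structural cleanup of the finite part was engineered to guarantee, and it is also where the genuinely explicit — and, at $\ell=3$ and $\ell=2$, somewhat delicate — constants enter. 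A secondary difficulty is purely quantitative: threading Pink's own non‑trivial bounds through the induction on $n$ without the constants growing faster than linearly in $n$.
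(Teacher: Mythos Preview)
Your proposal and the paper share the coarse skeleton (Dickson on each factor for the $24^n$, Goursat-type analysis on pairs for the $48^{n(n-1)}$, Pink as the endpoint), but the mechanism you propose for passing from this skeleton to the conclusion is different from the paper's and, as written, has a real gap.

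The paper does \emph{not} try to compare $\mathcal{L}(H)$ with $\mathcal{L}(H^+)$ for general $n$. Instead it proves the theorem for $n=1$ and $n=2$ first (this is the hard work, occupying Sections~3 and~4), and then reduces the general case to these via a purely group-theoretic lemma (Lemma~\ref{lemma_IntegralGoursat}): if every pairwise projection $(\pi_i\times\pi_j)(H)$ contains $\mathcal{B}_\ell(s_{ij},s_{ij})$, then $H$ itself contains a product of balls whose radii are sums of the $s_{ij}$. So the flow is: use the $n=1$ result to bound the $n_t$ in terms of $k$, plug these into the $n=2$ result to get balls in each pairwise projection, then glue with the integral Goursat lemma. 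Lie algebras for $n>2$ never appear except through the $n=1,2$ projections.

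Your step~(b), by contrast, asserts that after the structural cleanup one can show directly that $\mathcal{L}(H)\supseteq\bigoplus_i\ell^k\mathfrak{sl}_2$ forces $\mathcal{L}(H^+)\supseteq\bigoplus_i\ell^{k'}\mathfrak{sl}_2$ with $k'-k$ bounded. This is exactly the content you have not supplied: even for $n=2$ the paper spends all of Section~3 on a case-by-case analysis (split/nonsplit Cartan, Borel, full $\operatorname{SL}_2$, and their mixtures) to show that the span of $\Theta_2(h)$ for $h\in N(T)$ is not much smaller than $\mathcal{L}(T)$. Your sentence ``this is exactly what the structural cleanup of the finite part was engineered to guarantee'' is where the real argument should be; as it stands it is an assertion of what you want, not a proof. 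Moreover, the off-diagonal stage cannot be restricted to pairs of ``$\operatorname{SL}_2(\mathbb{F}_\ell)$ kind'': the delicate cases in Section~3 are precisely the Cartan/Borel pairings, where the finite quotient is abelian yet the comparison of $\mathcal{L}(T)$ with $\mathcal{L}(N(T))$ still requires work.

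Your treatment of $\ell=2$ is also incorrect. You write that once $H$ is trivial modulo $4$ ``the off-diagonal stage is unnecessary'' and one applies a uniform pro-$2$ Pink theorem directly. But Pink's theorem does not cover $\ell=2$, and no such black box is available; the paper's Section~4 proves the $n=2$, $\ell=2$ case by an entirely different argument (showing that if $G$ fails to contain a large ball then it is $2$-adically close to a graph, contradicting the Lie-algebra hypothesis), and the general $n$ again goes through the integral Goursat lemma applied to all pairs. So for $\ell=2$ the pairwise analysis is not merely convenient but essential, and is where the constant $607$ actually comes from.
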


While it is certainly true that both this theorem and its proof are quite technical, it should be remarked that this statement does enable us to show exactly the kind of `large image' results we alluded to: the case $n=1$ has been used in \cite{AdelicEC} to show an explicit open image theorem for elliptic curves (without complex multiplication), and in \cite{ProductsEC} we apply the case $n=2$ to extend this result to arbitrary products of non-CM elliptic curves.

A few more words on the proof of theorem \ref{thm_GeneralPink}: as it will be clear from section \ref{sect_RedTo2}, the crucial cases are $n=1$ and $n=2$. While the former has essentially been proven in \cite{AdelicEC}, the latter forms the core of the present paper, and we shall actually prove it in a slightly more precise form than strictly necessary to establish theorem \ref{thm_GeneralPink}. This will be done in sections \ref{sect_Pinkl} and \ref{sect_Pink2} below, where we also give analogous statements for $\operatorname{GL}_2(\mathbb{Z}_\ell)^2$.

\medskip

\noindent\textbf{Notation.} Throughout the paper $\ell$ is a fixed prime number. If $G$ is a closed subgroup of $\operatorname{GL}_2(\mathbb{Z}_\ell)^n$ and $k$ is a positive integer, we denote by $G(\ell^k)$ the image of the projection $G \to \operatorname{GL}_2(\mathbb{Z}/\ell^k\mathbb{Z})^n$. If $H$ is a subgroup of $\operatorname{SL}_2(\mathbb{Z}_\ell)^n$  (resp.~of $\operatorname{SL}_2(\mathbb{F}_\ell)^n$), we denote by $N(H)$ the largest normal subgroup of $H$ which is pro-$\ell$ (resp.~an $\ell$-group); this object is well-defined by lemma \ref{lemma_NIsWellDefined} below. If $x$ is an element of $\operatorname{GL}_2(\mathbb{Z}_\ell)$ (resp.~of $\mathbb{Z}_\ell$), we write $[x]$ for its image in $\operatorname{GL}_2(\mathbb{F}_\ell)$ (resp.~in $\mathbb{F}_\ell$). Since special care is needed to treat the case $\ell = 2$, to write uniform statements we set $v$ either $0$ or $1$ according to whether $\ell$ is odd or $\ell = 2$.
Finally, it will also be useful to introduce some standard elements of $\operatorname{SL}_2(\mathbb{Z}_\ell)$:
for every $a \in \mathbb{Z}_\ell$ we set
\[
L(a)=\left(\begin{matrix} 1 & 0 \\ a & 1 \end{matrix} \right), \;  D(a)=\left(\begin{matrix} 1+a & 0 \\ 0 & (1+a)^{-1} \end{matrix} \right), \; R(a)=\left(\begin{matrix} 1 & a \\ 0 & 1 \end{matrix} \right).
\]
Notice that $L(a), R(a)$ belong to $\operatorname{SL}_2(\mathbb{Z}_\ell)$ for any $a \in \mathbb{Z}_\ell$, while $D(a)$ is in $\operatorname{SL}_2(\mathbb{Z}_\ell)$ if and only if $a \not \equiv -1 \pmod \ell$.

\section{Preliminary lemmas}
In this section we prove some general lemmas which will be used repeatedly throughout the paper.

\subsection{Logarithms and exponentials}
We recall the following fundamental properties of logarithms and exponentials, both for $\ell$-adic integers and for elements of $M_2(\mathbb{Z}_\ell)$, the set of $2 \times 2$ matrices with coefficients in $\mathbb{Z}_\ell$. The proofs of these statements are immediate upon direct inspection of the involved power series, and will not be included.

\begin{lemma}\label{lemma_log}
The power series
$
\displaystyle \log(1+t) = \sum_{n \geq 1} (-1)^{n-1} \frac{t^n}{n}
$
converges for $t \in \ell \mathbb{Z}_\ell$ (respectively for $t \in \ell M_2(\mathbb{Z}_\ell)$), and establishes a bijection between $1+\ell^{1+v}\mathbb{Z}_\ell$ and $\ell^{1+v}\mathbb{Z}_\ell$ (resp.~between $\mathcal{B}_\ell(1+v)$ and $\ell^{1+v}M_2(\mathbb{Z}_\ell)$). The inverse function is given by the power series
$
\displaystyle \exp t = \sum_{n \geq 0} \frac{t^n}{n!},
$
which converges for $t \in \ell^{1+v}\mathbb{Z}_\ell$ (resp.~$t \in \ell^{1+v}M_2(\mathbb{Z}_\ell)$).
Furthermore, for every $t \in \ell^{1+v}\mathbb{Z}_\ell$ we have $v_\ell \log(1+t)=v_\ell (t)$, and for every $t \in \ell^{1+v} \mathbb{Z}_\ell$ we have $v_\ell(\exp(t)-1)=v_\ell(t)$.
\end{lemma}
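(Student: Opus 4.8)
The plan is to reduce the entire statement to two elementary estimates on the $\ell$-adic valuations of the coefficients $1/n$ and $1/n!$, and then to transport the formal power-series identities $\exp(\log(1+T))=1+T$ and $\log(\exp T)=T$ to the $\ell$-adic (and matrix) settings at hand. The two estimates I would record first are $v_\ell(n)\le \log_\ell n$ and $v_\ell(n!)=\frac{n-s_\ell(n)}{\ell-1}\le \frac{n-1}{\ell-1}$, where $s_\ell(n)$ denotes the sum of the base-$\ell$ digits of $n$; note also that $v_\ell(n)\le v_\ell(n!)$ because $n\mid n!$.

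Given these bounds, convergence is immediate. For $v_\ell(t)\ge 1$ the general term $t^n/n$ of $\log(1+t)$ has valuation $\ge n\,v_\ell(t)-\log_\ell n$, which tends to $\infty$, so the series converges on $\ell\mathbb{Z}_\ell$; the term $t^n/n!$ of $\exp t$ has valuation $\ge n\,v_\ell(t)-\frac{n-1}{\ell-1}=v_\ell(t)+(n-1)\!\left(v_\ell(t)-\frac{1}{\ell-1}\right)$, which tends to $\infty$ once $v_\ell(t)>\frac{1}{\ell-1}$, i.e. once $t\in\ell^{1+v}\mathbb{Z}_\ell$ — and this is exactly where $\ell$ odd and $\ell=2$ must be separated. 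The matrix case is handled verbatim by the same computation, defining $v_\ell(A)$ as the least valuation of an entry of $A$, using $A^n\in\ell^{n\,v_\ell(A)}M_2(\mathbb{Z}_\ell)$ and the $\ell$-adic completeness of $M_2(\mathbb{Z}_\ell)$. The two valuation identities then drop out of the same estimate: for $t\in\ell^{1+v}\mathbb{Z}_\ell$, so that $v_\ell(t)\ge 1+v>\frac{1}{\ell-1}$, every term of $\log(1+t)-t$ and of $\exp(t)-1-t$ has valuation strictly larger than $v_\ell(t)$, so the linear term dominates and $v_\ell(\log(1+t))=v_\ell(\exp(t)-1)=v_\ell(t)$; in particular $\log$ carries $1+\ell^{1+v}\mathbb{Z}_\ell$ into $\ell^{1+v}\mathbb{Z}_\ell$ and $\exp$ carries $\ell^{1+v}\mathbb{Z}_\ell$ into $1+\ell^{1+v}\mathbb{Z}_\ell$, and the same holds for matrices. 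The refinement reflecting the $\operatorname{SL}_2$-condition — that $\exp$ of a trace-zero matrix has determinant $1$ and $\log$ of an element of $\mathcal{B}_\ell(1+v)$ is trace-zero — follows from $\det(\exp A)=\exp(\operatorname{tr} A)$, an identity one can check over $\overline{\mathbb{Q}}_\ell$ by triangularising.

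Finally, to see that $\exp$ and $\log$ are mutually inverse on these sets, I would specialise the formal identities $\exp(\log(1+T))=1+T$ and $\log(\exp T)=T$ of $\mathbb{Q}[[T]]$ at $T=t$: this is legitimate because all the partial sums in sight converge, so one may pass to the limit. The sole subtlety is that these are one-variable identities, so in the matrix case one must note that, for a fixed $A$, every matrix arising in the manipulation lies in the commutative subalgebra $\mathbb{Z}_\ell[A]\subseteq M_2(\mathbb{Z}_\ell)$, which is what allows the scalar identities to be transferred. I expect this commutativity bookkeeping to be the only point requiring genuine care; everything else is the valuation arithmetic above, which is presumably why the author records the statement without proof.
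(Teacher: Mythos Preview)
Your proposal is correct and complete. The paper itself omits the proof entirely, stating only that ``the proofs of these statements are immediate upon direct inspection of the involved power series, and will not be included''; your writeup is precisely the standard verification the author had in mind, and you have even been more careful than the paper's statement by noting the $\det(\exp A)=\exp(\operatorname{tr} A)$ point needed to match $\mathcal{B}_\ell(1+v)$ with the trace-zero part of $\ell^{1+v}M_2(\mathbb{Z}_\ell)$.
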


\begin{lemma}\label{lemma_CongruenceExponentials}
Let $A,B \in M_2(\mathbb{Z}_2)$ and $n \geq 2$ be an integer. Suppose that $A \equiv 0 \pmod 4$ and $B \equiv 0 \pmod {2^n}$: then $\exp(A+B) \equiv \exp(A) \pmod {2^n}$ and $\log(A+B) \equiv \log(A) \pmod {2^n}$.
\end{lemma}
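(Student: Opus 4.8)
The plan is to prove both congruences by comparing, term by term, the power series defining the two sides. Since $A \equiv 0 \pmod 4$ and $B \equiv 0 \pmod{2^n}$ with $n \ge 2$, both $A$ and $A+B$ lie in $4\,M_2(\mathbb{Z}_2) \subseteq 2\,M_2(\mathbb{Z}_2)$, so by Lemma~\ref{lemma_log} (with $\ell=2$) the series $\exp(X) = \sum_{m\ge0}\frac{X^m}{m!}$ and $\log(X) = \sum_{m\ge1}\frac{(-1)^{m-1}}{m}X^m$ converge in $M_2(\mathbb{Z}_2)$ at $X=A$ and at $X=A+B$. It therefore suffices to bound the $2$-adic valuation of every term of
\[
\exp(A+B)-\exp(A) = \sum_{m\ge1}\frac{1}{m!}\bigl((A+B)^m-A^m\bigr)
\]
and of
\[
\log(A+B)-\log(A) = \sum_{m\ge1}\frac{(-1)^{m-1}}{m}\bigl((A+B)^m-A^m\bigr).
\]

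The combinatorial input is the non-commutative binomial expansion: writing $(A+B)^m$ as the sum, over all $2^m$ words $w\in\{A,B\}^m$, of the ordered product of the letters of $w$, and cancelling the single term $A^m$, one gets $(A+B)^m-A^m=\sum_w w(A,B)$, the sum ranging over the words containing at least one letter $B$. If such a word has exactly $j\ge1$ letters equal to $B$ and $m-j$ equal to $A$, then, since $B\in 2^n M_2(\mathbb{Z}_2)$, $A\in 4\,M_2(\mathbb{Z}_2)$ and $M_2(\mathbb{Z}_2)$ is closed under multiplication, the matrix $w(A,B)$ lies in $2^{\,jn+2(m-j)}M_2(\mathbb{Z}_2)$.

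Hence the corresponding term of $\exp(A+B)-\exp(A)$ lies in $2^{\,e}M_2(\mathbb{Z}_2)$ with $e=jn+2(m-j)-v_2(m!)$. By Legendre's formula $v_2(m!)=m-s_2(m)$, where $s_2(m)\ge1$ is the sum of the binary digits of $m$, so, using $m\ge j\ge1$ and $n\ge2$,
\[
e=j(n-2)+m+s_2(m)\;\ge\;j(n-2)+j+1\;=\;j(n-1)+1\;\ge\;n .
\]
Thus every term of $\exp(A+B)-\exp(A)$ is divisible by $2^n$, which is the first congruence. For the logarithm the same word contributes a matrix in $2^{\,e'}M_2(\mathbb{Z}_2)$ with $e'=jn+2(m-j)-v_2(m)\ge e\ge n$ (because $v_2(m)\le v_2(m!)$), giving $\log(A+B)\equiv\log(A)\pmod{2^n}$. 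There is essentially no obstacle in this argument: the only point that needs (minimal) care is that dividing the $2^n$-divisible matrices $(A+B)^m-A^m$ by $m!$ (resp.\ by $m$) does not destroy the congruence, and this is precisely what the hypotheses $A\equiv0\pmod4$ and $n\ge2$ secure via the displayed inequality.
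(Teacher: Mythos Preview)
Your proof is correct and follows exactly the line the paper indicates: the paper omits the argument, remarking only that it is ``immediate upon direct inspection of the involved power series'', and your term-by-term expansion via the non-commutative binomial, together with Legendre's formula $v_2(m!)=m-s_2(m)$, is precisely that inspection carried out in full. The only cosmetic point is that what you write as $\log(X)=\sum_{m\ge1}\frac{(-1)^{m-1}}{m}X^m$ is of course the series the paper calls $\log(1+X)$; this is just the paper's shorthand in the lemma statement, and you have interpreted it consistently.
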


\subsection{Subgroups of $\operatorname{SL}_2(\mathbb{Z}_\ell)^n$ and $\mathbb{Z}_\ell$-Lie algebras}
In this section we consider various properties of closed subgroups of $\operatorname{SL}_2(\mathbb{Z}_\ell)^n$, including their Lie algebras, generating sets, and derived subgroups.




\begin{lemma}\label{lemma_NIsWellDefined}
Let $n$ be a positive integer and $G$ be a closed subgroup of $\operatorname{SL}_2(\mathbb{Z}_\ell)^n$. The collection of all normal pro-$\ell$ subgroups of $G$ has a unique maximal element, which we denote by $N(G)$. Likewise, if $G$ is a subgroup of $\operatorname{SL}_2(\mathbb{F}_\ell)^n$, the collection of all normal subgroups of $G$ whose order is a power of $\ell$ admits a unique maximal element, which we denote again by $N(G)$.
\end{lemma}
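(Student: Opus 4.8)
The plan is to deduce both assertions from one elementary fact: \emph{the product of two normal pro-$\ell$ (resp.\ normal $\ell$-) subgroups is again one of the same kind}. I would first establish this in the profinite setting. Let $N_1,N_2$ be closed normal pro-$\ell$ subgroups of $G$ (note $G$ is profinite, being a closed subgroup of the compact group $\operatorname{SL}_2(\mathbb{Z}_\ell)^n$). Then $N_1N_2$ is the image of the compact set $N_1\times N_2$ under multiplication, hence compact and closed; it is a subgroup because $N_2$ normalises $N_1$, and it is normal in $G$ because both factors are. It is pro-$\ell$ because $N_1N_2/N_1\cong N_2/(N_1\cap N_2)$ is a continuous quotient of the pro-$\ell$ group $N_2$, hence pro-$\ell$, while $N_1$ is pro-$\ell$, so $N_1N_2$ is an extension of a pro-$\ell$ group by a pro-$\ell$ group and is therefore pro-$\ell$ (using that a profinite group is pro-$\ell$ iff each of its finite continuous quotients is an $\ell$-group).

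Granting this, I would consider the family $\mathcal{F}$ of all closed normal pro-$\ell$ subgroups of $G$. It is nonempty (it contains $\{\operatorname{Id}\}$) and directed by inclusion, since $N_1N_2$ dominates both $N_1$ and $N_2$. Hence $U:=\bigcup_{N\in\mathcal{F}}N$ is a normal subgroup of $G$ (a directed union of normal subgroups), and its closure $\overline{U}$ is a closed normal subgroup; the claim is that $\overline{U}\in\mathcal{F}$, which finishes the proof, as $\overline{U}$ then contains every member of $\mathcal{F}$ and is the desired unique maximal element. Only pro-$\ell$-ness of $\overline{U}$ requires an argument: given an open normal subgroup $V\trianglelefteq\overline{U}$, the quotient $Q:=\overline{U}/V$ is finite and discrete, and since $U$ is dense its image in $Q$ is all of $Q$; that image equals $\bigcup_{N\in\mathcal{F}} NV/V$, a directed family (with $N_1N_2V/V$ dominating $N_1V/V$ and $N_2V/V$) of $\ell$-subgroups of $Q$, as $NV/V\cong N/(N\cap V)$ is a finite quotient of the pro-$\ell$ group $N$. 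A directed union of subgroups of a finite group is its largest member, so $Q=N_0V/V$ for some $N_0$ and is an $\ell$-group. Thus every finite continuous quotient of $\overline{U}$ is an $\ell$-group, i.e.\ $\overline{U}$ is pro-$\ell$.

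For a subgroup $G$ of $\operatorname{SL}_2(\mathbb{F}_\ell)^n$ the same scheme works with the topology removed and is easier: $G$ is finite, the product of two normal $\ell$-subgroups is a normal $\ell$-subgroup by the extension argument above, and one may pick a normal $\ell$-subgroup $N_0$ of maximal order among the finitely many available; the product-closure then forces $N_0$ to contain every normal $\ell$-subgroup, so it is the unique maximal one.

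I do not expect a serious obstacle. The only delicate point is the verification that $\overline{U}$ is pro-$\ell$: one must descend to finite continuous quotients and exploit the directedness of $\mathcal{F}$ to collapse the resulting union — precisely the step that would break down for an arbitrary, non-directed union of normal pro-$\ell$ subgroups.
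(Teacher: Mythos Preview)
Your argument is correct and is in fact the standard ``pro-$\ell$ radical'' construction, valid in any profinite group: closure under products of normal pro-$\ell$ subgroups gives directedness, and the closure of the directed union is again pro-$\ell$ by the finite-quotient test you describe. The finite case is the usual $\ell$-core argument.

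The paper takes a different, more structural route. It exploits the specific feature of $\operatorname{SL}_2(\mathbb{Z}_\ell)^n$ that the kernel of the reduction $\pi:G\to\operatorname{SL}_2(\mathbb{F}_\ell)^n$ is already pro-$\ell$. Hence any normal pro-$\ell$ subgroup $N$ satisfies $N\subseteq\pi^{-1}(N(G(\ell)))$, and conversely $\pi^{-1}(N(G(\ell)))$ is an extension of an $\ell$-group by a pro-$\ell$ group, so is itself normal and pro-$\ell$. This reduces everything to the finite case, which the paper then settles via the Dickson classification of subgroups of $\operatorname{SL}_2(\mathbb{F}_\ell)$ (the maximal normal $\ell$-subgroup is the unipotent radical in the Borel case and trivial otherwise), together with the observation that for $n>1$ one has $N(G)=\{(g_1,\ldots,g_n)\in G:g_i\in N(G_i)\}$.

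Your approach is cleaner and more general; the paper's approach buys an explicit description of $N(G)$ as $\pi^{-1}(N(G(\ell)))$ and, in the finite case, a concrete componentwise formula, both of which are used later in the paper.
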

\begin{proof}
Denote by $\pi:G \to \operatorname{SL}_2(\mathbb{F}_\ell)^n$ the canonical projection and let $N$ be a normal, pro-$\ell$ subgroup of $G$. Clearly $\pi(N)$ is an $\ell$-group that is normal in $G(\ell)$, so it suffices to show that $N(G(\ell))$ is well-defined, for then $N(G)=\pi^{-1} (N(G(\ell)))$ is the maximal normal pro-$\ell$ subgroup of $G$. To treat the finite case consider first $n=1$. Then $G$ is a subgroup of $\operatorname{SL}_2(\mathbb{F}_\ell)$, and it easy to see that the collection of its maximal normal subgroups of order a power of $\ell$ has a maximal element: it follows from the Dickson classification that this is given by the unique $\ell$-Sylow if $G$ is of Borel type, and by the trivial group otherwise. Finally, if $G$ is a subgroup of $\operatorname{SL}_2(\mathbb{F}_\ell)^n$ with $n>1$, we denote by $G_i$ the projection of $G$ on the $i$-th factor $\operatorname{SL}_2(\mathbb{F}_\ell)$; it is then immediate to check that
$
N(G) = \left\{ (g_1,\ldots,g_n) \in G \bigm\vert g_i \in N(G_i) \right\}.
$
\end{proof}

\begin{lemma}\label{lemma_ConjStableSubspaces}
Let $t$ be a non-negative integer. Let $W \subseteq \mathfrak{sl}_2(\mathbb{Z}_\ell)$ be a Lie subalgebra that does not reduce to zero modulo $\ell^{t+1}$. Suppose that $W$ is stable under conjugation by $\mathcal{B}_\ell(s)$ for some non-negative integer $s$, where $s \geq 2$ if $\ell=2$ and $s \geq 1$ if $\ell=3$ or $5$. Then $W$ contains the open set $\ell^{t+4s+4v} \mathfrak{sl}_2(\mathbb{Z}_\ell)$.
\end{lemma}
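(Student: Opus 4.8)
The plan is to extract from $W$ one element that reduces modulo $\ell$ to a non‑nilpotent matrix, and then to fill out a full neighbourhood of $0$ by combining its conjugates under $\mathcal{B}_\ell(s)$. Using the basis $e=\left(\begin{smallmatrix}0&1\\0&0\end{smallmatrix}\right)$, $h=\left(\begin{smallmatrix}1&0\\0&-1\end{smallmatrix}\right)$, $f=\left(\begin{smallmatrix}0&0\\1&0\end{smallmatrix}\right)$ of $\mathfrak{sl}_2(\mathbb{Z}_\ell)$, write a general element as $\alpha e+\beta h+\gamma f$. Since $W$ does not reduce to $0$ modulo $\ell^{t+1}$, I would fix $X=\alpha e+\beta h+\gamma f\in W$ whose coordinates have minimal valuation $t_0:=\min\{v_\ell(\alpha),v_\ell(\beta),v_\ell(\gamma)\}\le t$, and write $X=\ell^{t_0}X_0$ with $X_0=\alpha_0 e+\beta_0 h+\gamma_0 f$ primitive; the argument then splits according to whether $[X_0]$ is a non‑zero nilpotent matrix, i.e.\ whether $v_\ell(\beta^2+\alpha\gamma)$ exceeds $2t_0$ or equals it. The only inputs are that $W$ is a $\mathbb{Z}_\ell$‑submodule stable under $\operatorname{Ad}(g)$ for $g\in\mathcal{B}_\ell(s)$, that $R(a\ell^s),L(a\ell^s)$ ($a\in\mathbb{Z}_\ell$) and $D(\ell^s)$ lie in $\mathcal{B}_\ell(s)\cap\operatorname{SL}_2(\mathbb{Z}_\ell)$ (where the hypotheses on $s$ for $\ell\in\{2,3,5\}$ ensure $D(\ell^s)\in\operatorname{SL}_2$ and $v_\ell((1+\ell^s)^{\pm2}-1)=s+v$), and the explicit identities, valid for all $c$,
\[
\operatorname{Ad}(R(c))X-X=(-2c\beta-c^2\gamma)e+c\gamma\,h,\qquad \operatorname{Ad}(L(c))X-X=-c\alpha\,h+(2c\beta-c^2\alpha)f,
\]
\[
\operatorname{Ad}(D(c))X-X=\big((1+c)^2-1\big)\alpha\,e+\big((1+c)^{-2}-1\big)\gamma\,f,
\]
all of whose right‑hand sides again lie in $W$.

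Non‑nilpotent case ($v_\ell(\beta^2+\alpha\gamma)=2t_0$). Put $Y_R=\operatorname{Ad}(R(\ell^s))X-X$, $Y_L=\operatorname{Ad}(L(\ell^s))X-X$, $Y_D=\operatorname{Ad}(D(\ell^s))X-X$, together with the ``second order'' elements $Z_R=(\operatorname{Ad}(R(2\ell^s))X-X)-2Y_R=-2\ell^{2s}\gamma\,e\in W$ and $Z_L=(\operatorname{Ad}(L(2\ell^s))X-X)-2Y_L=-2\ell^{2s}\alpha\,f\in W$. I would then go through the finitely many shapes of $[X_0]$ — according to which of $[\alpha_0],[\beta_0],[\gamma_0]$ vanish — and in each shape select three of these elements, of the form $\ell^{t_0+a_i}w_i\in W$ with $w_i\in\mathfrak{sl}_2(\mathbb{Z}_\ell)$ primitive and $a_i\le 2s+2v$, such that $[w_1],[w_2],[w_3]$ is an $\mathbb{F}_\ell$‑basis of $\mathfrak{sl}_2(\mathbb{F}_\ell)$; for example $(X,Y_R,Y_L)$ with $(a_i)=(0,s,s)$ works when $[\alpha_0],[\beta_0],[\gamma_0]$ are all non‑zero, one uses $Y_D$ in place of $Y_L$ when $[\beta_0]=0$, and so on (with a few extra factors of $2$ and occasional use of $Z_R,Z_L$ when $\ell=2$). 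The key point is that in each shape the relevant $3\times3$ determinant is a non‑zero scalar multiple of $\overline{\beta_0^2+\alpha_0\gamma_0}$, hence non‑zero exactly because $[X_0]$ is non‑nilpotent; then $w_1,w_2,w_3$ is a $\mathbb{Z}_\ell$‑basis of $\mathfrak{sl}_2(\mathbb{Z}_\ell)$ and
\[
W\supseteq\sum_{i=1}^3\ell^{t_0+a_i}\mathbb{Z}_\ell\,w_i\supseteq\ell^{t_0+2s+2v}\,\mathfrak{sl}_2(\mathbb{Z}_\ell).
\]

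Nilpotent case ($[X_0]\neq0$ nilpotent). Then at least one of $[\alpha_0],[\gamma_0]$ is non‑zero, and after possibly conjugating everything by $\left(\begin{smallmatrix}0&1\\-1&0\end{smallmatrix}\right)$ — which normalises $\mathcal{B}_\ell(s)$ and preserves all hypotheses, exchanging $e\leftrightarrow f$ and $R\leftrightarrow L$ — I may assume $v_\ell(\alpha)=t_0$. Now $Y:=\operatorname{Ad}(L(\ell^s))X-X=-\ell^s\alpha\,h+(2\ell^s\beta-\ell^{2s}\alpha)f$ lies in $W$; its $h$‑coordinate has valuation exactly $s+t_0$ while its $f$‑coordinate has valuation at least $s+t_0$ (using $s\ge1$, resp.\ $s\ge2$ for $\ell=2$), so $\ell^{-(s+t_0)}Y\in\mathfrak{sl}_2(\mathbb{Z}_\ell)$ is primitive and reduces modulo $\ell$ to $-[\alpha\ell^{-t_0}]h$ plus a multiple of $f$ — a non‑nilpotent matrix, since $[\alpha\ell^{-t_0}]\neq0$. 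Applying the non‑nilpotent case to $Y$ (with $s+t_0$ in the role of $t_0$) gives $W\supseteq\ell^{(s+t_0)+2s+2v}\,\mathfrak{sl}_2(\mathbb{Z}_\ell)=\ell^{t_0+3s+2v}\,\mathfrak{sl}_2(\mathbb{Z}_\ell)$. In either case $W\supseteq\ell^{t_0+3s+2v}\,\mathfrak{sl}_2(\mathbb{Z}_\ell)$, which — as $t_0\le t$ and $3s+2v\le4s+4v$ — contains $\ell^{t+4s+4v}\,\mathfrak{sl}_2(\mathbb{Z}_\ell)$.

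The hard part is the ``in each shape'' step of the non‑nilpotent case: it requires, for each of the handful of possible shapes of $[X_0]$, carrying out the valuation bookkeeping that picks out three good elements among the $Y_\bullet,Z_\bullet$ and confirms that the associated $3\times3$ determinant is a unit times $\overline{\beta_0^2+\alpha_0\gamma_0}$. For $\ell\ge7$ this is clean and in fact yields the sharper inclusion $W\supseteq\ell^{t_0+s}\,\mathfrak{sl}_2(\mathbb{Z}_\ell)$ in the non‑nilpotent case; the weaker constants $4s$ and $4v$ are there precisely to absorb the small primes $\ell\in\{2,3,5\}$, where $2$ and $4$ need not be invertible and $(1+\ell^s)^{\pm2}-1$ carries an extra factor of $\ell$. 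Lemma~\ref{lemma_CongruenceExponentials} is the handy tool for these estimates when $\ell=2$.
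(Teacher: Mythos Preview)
Your plan is sound and would succeed, but it takes a genuinely different route from the paper. The paper never splits into nilpotent versus non-nilpotent $[X_0]$, nor does it try to extract a $\mathbb{Z}_\ell$-basis by a case analysis on the shape of $[X_0]$. Instead it exploits the fact that $e,h,f$ are eigenvectors for conjugation by the diagonal element $D(\ell^s)$: writing $\mathcal{C}_d$ for $\operatorname{Ad}(D(\ell^s))$ and $\alpha=1+\ell^s$, the paper observes that the operators $\alpha^4(\mathcal{C}_d-\alpha^2\operatorname{Id})\circ(\mathcal{C}_d-\operatorname{Id})$ and $(\alpha^2\mathcal{C}_d-\operatorname{Id})\circ(\mathcal{C}_d-\operatorname{Id})$, applied to the fixed element $w$, pick out $(\alpha^4-1)(\alpha^2-1)\mu_x x$ and $(\alpha^4-1)(\alpha^2-1)\mu_y y$ respectively, hence also $(\alpha^4-1)(\alpha^2-1)\mu_h h$ by subtraction. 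Since $v_\ell((\alpha^4-1)(\alpha^2-1))=2s+3v$, this produces one of $\ell^{t+2s+3v}h$, $\ell^{t+2s+3v}x$, $\ell^{t+2s+3v}y$ in $W$ with no case analysis on $w$ at all; a short and uniform application of $\operatorname{Ad}(R(\ell^s))-\operatorname{Id}$ and $\operatorname{Ad}(L(\ell^s))-\operatorname{Id}$ then fills out the remaining two directions. Your approach buys a slightly sharper exponent ($t_0+3s+2v$ rather than $t+4s+4v$) at the cost of the ``in each shape'' bookkeeping; the paper's approach buys uniformity and brevity by paying a little in the constant.

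One small correction: your closing remark that Lemma~\ref{lemma_CongruenceExponentials} is the handy tool for the $\ell=2$ estimates is off --- that lemma concerns congruences for $\exp$ and $\log$ and plays no role here. The only inputs you actually use for small $\ell$ are the valuations $v_\ell((1+\ell^s)^{\pm 2}-1)=s+v$ and the fact that $2$ (resp.\ $4$) divides the coefficients in your $Y_\bullet$, $Z_\bullet$.
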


\begin{proof}
Fix an element $w$ of $W$ that does not vanish modulo $\ell^{t+1}$ and write $w=\mu_x x + \mu_y y + \mu_h h$ for some $\mu_x, \mu_y, \mu_h \in \mathbb{Z}_\ell$, where
\[
h=\left( \begin{matrix} 1 & 0 \\ 0 & -1 \end{matrix} \right), \; x=\left( \begin{matrix} 0 & 1 \\ 0 & 0 \end{matrix} \right), \; y=\left( \begin{matrix} 0 & 0 \\ 1 & 0 \end{matrix} \right)
\]
and $\min \left\{v_\ell(\mu_x),v_\ell(\mu_y), v_\ell(\mu_h) \right\} \leq t$. Set $\alpha:=1+\ell^s$ and consider the matrices
$C_\ell=L(\ell^s)$, $C_r=R(\ell^s),$ and $C_d=D(\ell^s)$.
For $\bullet \in \{l,d,r \}$ define $\mathcal{C}_\bullet$ as the linear operator of $W$ into itself given by
\[
\mathcal{C}_\bullet(x) = {C_\bullet}^{-1} x \, C_\bullet
\]
and set $\mathcal{D}_\bullet:=\mathcal{C}_\bullet - \operatorname{Id}$. As $W$ is stable under conjugation by $\mathcal{B}_\ell(s)$, it is in particular also stable under $\mathcal{C}_\bullet$ and $\mathcal{D}_\bullet$. One can easily check the following identities:
\[
\alpha^4 (\mathcal{C}_d- \alpha^2 \operatorname{Id}) \circ \mathcal{D}_d (w) = (\alpha^4-1)(\alpha^2-1) \mu_x x \in W\]
\[
(\alpha^2 \mathcal{C}_d- \operatorname{Id}) \circ \mathcal{D}_d (w) = (\alpha^4-1)(\alpha^2-1) \mu_y y \in W,
\]
and by considering the decomposition of $w$ we deduce that also $(\alpha^4-1)(\alpha^2-1) \mu_h h$ belongs to $W$. By the assumptions on $s$ we have $v_\ell((\alpha^4-1)(\alpha^2-1))=2s + 3v$, and therefore $W$ contains at least one of
\[
M_1=\ell^{t+2s+3v} h, \quad M_2=\ell^{t+2s+3v}x, \quad  M_3=\ell^{t+2s+3v} y.
\]
In the first case $W$ contains $\ell^{t+3s+4v}\mathfrak{sl}_2\left(\mathbb{Z}_\ell\right)$ because of the following identities:
\[
\mathcal{D}_r(M_1)=2 \ell^{t+3s+3v}x,  \quad \mathcal{D}_l(M_1)=-2 \ell^{t+3s+3v} y.
\]
In the second and third case $W$ contains $\ell^{t+4s+4v} \mathfrak{sl}_2\left(\mathbb{Z}_\ell\right)$ because of the following identities:
\[
\mathcal{D}_l \circ \mathcal{D}_l(M_2)=-2 \ell^{t+4s+3v} y, \quad \mathcal{D}_l \circ \mathcal{D}_l(M_2)-2\mathcal{D}_l(M_2)=-2 \ell^{t+3s+3v}h\]
\[
\mathcal{D}_r \circ \mathcal{D}_r(M_3)=-2 \ell^{t+4s+3v} x, \quad \mathcal{D}_r \circ \mathcal{D}_r(M_3)-2\mathcal{D}_r(M_3)=2 \ell^{t+3s+3v}h.\]
\end{proof}

\begin{lemma}\label{lemma_ApproximateEigenvalue1}
Let $n$ and $m$ be positive integers. Let $g \in \operatorname{End} \left(\mathbb{Z}_\ell^m\right)$ and write $p_g$ for the characteristic polynomial of $g$. Let furthermore $\lambda \in \mathbb{Z}_\ell, w \in \mathbb{Z}_\ell^m$ be such that $gw \equiv \lambda w \pmod{\ell^n}$. Suppose that $w \not\equiv 0 \pmod{\ell^{\alpha+1}}$ holds for some $0 \leq \alpha < n$. Then we have $p_g(\lambda) \equiv 0 \pmod{\ell^{n-\alpha}}$.
\end{lemma}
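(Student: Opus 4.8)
The idea is to feed the approximate eigenvector relation into the Cayley--Hamilton identity $p_g(g)=0$. First I would establish, by a one-line induction on $k$, that the congruence $gw\equiv\lambda w\pmod{\ell^n}$ propagates to $g^k w\equiv\lambda^k w\pmod{\ell^n}$ for every $k\geq 0$: writing $g^k w=\lambda^k w+\ell^n v_k$ and applying $g$ gives $g^{k+1}w=\lambda^k(gw)+\ell^n g v_k\equiv\lambda^{k+1}w\pmod{\ell^n}$. By $\mathbb{Z}_\ell$-linearity this immediately yields $r(g)w\equiv r(\lambda)w\pmod{\ell^n}$ for \emph{every} polynomial $r\in\mathbb{Z}_\ell[X]$.

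Next I would apply this to the specific polynomial $r(X)=p_g(X)-p_g(\lambda)$. Since $g\in\operatorname{End}(\mathbb{Z}_\ell^m)$, its characteristic polynomial $p_g$ is monic with coefficients in $\mathbb{Z}_\ell$, so $r$ lies in $\mathbb{Z}_\ell[X]$ and moreover $r(\lambda)=0$. On the other hand, Cayley--Hamilton gives $p_g(g)=0$, hence $r(g)=-p_g(\lambda)\operatorname{Id}$. Combining this with the congruence from the previous paragraph, we obtain
\[
-p_g(\lambda)\,w = r(g)\,w \equiv r(\lambda)\,w = 0 \pmod{\ell^n},
\]
that is, $p_g(\lambda)\,w\equiv 0\pmod{\ell^n}$.

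Finally, I would read off the valuation bound coordinatewise. The hypothesis $w\not\equiv 0\pmod{\ell^{\alpha+1}}$ means that some coordinate $w_i$ of $w$ satisfies $v_\ell(w_i)\leq\alpha$. From $p_g(\lambda)\,w_i\equiv 0\pmod{\ell^n}$ we get $v_\ell\big(p_g(\lambda)\big)+v_\ell(w_i)\geq n$, whence $v_\ell\big(p_g(\lambda)\big)\geq n-\alpha$, i.e.\ $p_g(\lambda)\equiv 0\pmod{\ell^{n-\alpha}}$, as claimed. There is no real obstacle here: the only point that deserves a word is that $p_g-p_g(\lambda)$ has $\mathbb{Z}_\ell$-coefficients (so that the propagated congruence applies to it), which holds because $p_g$ is monic over $\mathbb{Z}_\ell$ and $\lambda\in\mathbb{Z}_\ell$; everything else is a routine induction plus Cayley--Hamilton.
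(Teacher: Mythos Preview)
Your argument is correct, but the route differs from the paper's. You propagate the congruence $gw\equiv\lambda w\pmod{\ell^n}$ to all powers $g^k$ by induction, then invoke Cayley--Hamilton to conclude $p_g(\lambda)\,w\equiv 0\pmod{\ell^n}$; the final valuation step via a coordinate of minimal valuation is the same as in the paper. The paper instead multiplies the single relation $(g-\lambda\operatorname{Id})w\equiv 0\pmod{\ell^n}$ on the left by the adjugate $(g-\lambda\operatorname{Id})^*$, using $(g-\lambda\operatorname{Id})^*(g-\lambda\operatorname{Id})=\det(g-\lambda\operatorname{Id})\cdot\operatorname{Id}=p_g(\lambda)\cdot\operatorname{Id}$, which lands on the same identity in one stroke without any induction. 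Both approaches are elementary; the adjugate trick is slightly more economical, while your version makes the role of Cayley--Hamilton explicit. One minor simplification: you need not pass through $r(X)=p_g(X)-p_g(\lambda)$ at all---applying your propagated congruence directly to $p_g$ already gives $0=p_g(g)w\equiv p_g(\lambda)w\pmod{\ell^n}$.
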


\begin{proof}
Denote by $(g-\lambda \operatorname{Id})^*$ the adjugate matrix of $(g-\lambda \operatorname{Id})$, that is the unique operator such that $(g-\lambda \operatorname{Id})^*(g-\lambda \operatorname{Id})=\det (g-\lambda \operatorname{Id}) \cdot \operatorname{Id}$ holds. Multiplying $(g-\lambda \operatorname{Id})w \equiv 0 \pmod{\ell^n}$ on the left by $(g-\lambda \operatorname{Id})^*$ we obtain $\det(g-\lambda \operatorname{Id}) \cdot  w \equiv 0 \pmod{\ell^n}$, and by considering the coordinate of $w$ of smallest valuation we obtain $p_g(\lambda)=\det(g-\lambda \operatorname{Id}) \equiv 0 \pmod{\ell^{n-\alpha}}$ as claimed.
\end{proof}

\begin{definition}
If $n \geq 2$ and if $g_1 , \ldots, g_n$ are elements of a group $G$, we write $\operatorname{Comm}_{n-1}(g_1, \ldots, g_n)$ for the 
$(n - 1)$ times iterated commutator, which is an element of $G$ that can be
defined via the following recursion:
\[
\operatorname{Comm}_{1}(g_1,g_2)=[g_1,g_2]=g_1g_2g_1^{-1}g_2^{-1},
\]
\[
\operatorname{Comm}_{n-1}(g_1 , \ldots, g_n ) = [\operatorname{Comm}_{n-2} (g_1, \ldots, g_{n-1}), g_n ].
\]
Furthermore, if $G_1,\ldots,G_n$ are subgroups of a topological group $G$, we write $\operatorname{Comm}_{n-1}\left(G_1,\ldots,G_n \right)$ for the subgroup of $G$ topologically generated by
\[
\left\{\operatorname{Comm}_{n-1}(g_1,\ldots,g_n) \bigm\vert g_i \in G_i \text{ for }i=1,\ldots,n\right\}.
\]
We also write $[G_1,G_2]$ for $\operatorname{Comm}_1(G_1,G_2)$.
\end{definition}

\begin{lemma}\label{lemma_Generators}Let $s$ be a non-negative integer (with $s \geq 2$ if $\ell=2$ and $s \geq 1$ if $\ell=3$). Let $a,b,c \in \mathbb{Z}_\ell$ all have exact valuation $s$. Suppose that $c \not \equiv -1 \pmod \ell$, so that $D(c)$ belongs to $\operatorname{SL}_2(\mathbb{Z}_\ell)$. Let $G$ be a closed subgroup of $\operatorname{SL}_2(\mathbb{Z}_\ell)$.
\begin{enumerate}
\item Suppose $G$ contains $L(a)$ (resp. $R(b)$): then for all $d \in \mathbb{Z}_\ell$ such that $v_\ell(d) \geq s$ the group $G$ contains $L(d)$ (resp. $R(d)$).
\item Suppose $s \geq 1$ and $G$ contains $D(c)$: then for all $d \in \mathbb{Z}_\ell$ such that $v_\ell(d) \geq s$ the group $G$ contains $D(d)$.
\item Suppose $\ell \geq 5$ or $s \geq 1$. If $G$ contains $L(a), R(b), D(c)$, then it contains all of $\mathcal{B}_\ell(s)$.
\item Suppose $\ell \geq 5$ and $G$ contains $L(a), R(b)$: then $G$ contains all of $\mathcal{B}_\ell(2s)$.


\end{enumerate}
\end{lemma}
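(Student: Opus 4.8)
The plan is to prove the four items in order, reducing each to elementary facts about $\mathbb{Z}_\ell$ or to the preceding items. For (1) and (2) I would work inside the relevant one-parameter subgroups. Since $L(d)L(d')=L(d+d')$ and $G$ is closed, the set $\{d\in\mathbb{Z}_\ell : L(d)\in G\}$ is a closed subgroup of $(\mathbb{Z}_\ell,+)$, hence of the form $\ell^m\mathbb{Z}_\ell$; as it contains $a$ with $v_\ell(a)=s$ it equals $\ell^s\mathbb{Z}_\ell$, which gives (1) for $L$, and the case of $R$ is identical. For (2), note $D(c)^n=D\big((1+c)^n-1\big)$, so $\{d:D(d)\in G\}$ contains $\{u-1 : u\in\overline{\langle 1+c\rangle}\}$, where $\overline{\langle 1+c\rangle}$ is the closed subgroup of $\mathbb{Z}_\ell^\times$ topologically generated by $1+c$; by Lemma \ref{lemma_log} one has $v_\ell(\log(1+c))=v_\ell(c)=s$, and since under our hypotheses ($\ell$ odd with $s\geq1$, or $\ell=2$ with $s\geq2$) the group $1+\ell^s\mathbb{Z}_\ell$ is a free pro-$\ell$ group of rank one with $\log(1+\ell^s\mathbb{Z}_\ell)=\ell^s\mathbb{Z}_\ell$, the element $1+c$ topologically generates $1+\ell^s\mathbb{Z}_\ell$, whence (2).

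For (3) I would distinguish two cases. If $s\geq1$, every $g\in\mathcal{B}_\ell(s)$ has its $(1,1)$-entry $\alpha$ in $1+\ell^s\mathbb{Z}_\ell$, hence a unit, and a direct computation using $\det g=1$ gives the Bruhat-type factorization $g=L(\gamma\alpha^{-1})\,D(\alpha-1)\,R(\alpha^{-1}\beta)$, where $\beta,\gamma$ are the $(1,2)$- and $(2,1)$-entries of $g$; all three arguments have valuation $\geq s$, so by (1) and (2) each factor lies in $G$ and therefore $\mathcal{B}_\ell(s)\subseteq G$. If instead $s=0$ — which, given the standing hypotheses, forces $\ell\geq5$ — then (1) already puts $L(1)$ and $R(1)$ in $G$; these generate $\operatorname{SL}_2(\mathbb{Z})$, which is dense in $\operatorname{SL}_2(\mathbb{Z}_\ell)$ because $\operatorname{SL}_2(\mathbb{Z})$ surjects onto each $\operatorname{SL}_2(\mathbb{Z}/\ell^k\mathbb{Z})$, so the closed subgroup $G$ equals $\operatorname{SL}_2(\mathbb{Z}_\ell)=\mathcal{B}_\ell(0)$.

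For (4) the case $s=0$ is the density argument just given. For $s\geq1$ the idea is to manufacture a diagonal element of valuation $2s$ and then invoke (3) with parameter $2s$. By (1), $G$ contains $L(d)$ and $R(d)$ for all $d$ with $v_\ell(d)\geq s$. A direct computation shows that $[L(a),R(b)]$ has $(1,1)$-entry $1-ab$, which is a unit because $v_\ell(ab)=2s\geq2$, and off-diagonal entries of valuation $3s$; applying the Bruhat factorization $g=L(\gamma\alpha^{-1})D(\alpha-1)R(\alpha^{-1}\beta)$ to $g=[L(a),R(b)]\in G$ then writes $D(-ab)$ as the product of $[L(a),R(b)]$ with two elements $L(\cdot),R(\cdot)$ of valuation $3s$, all lying in $G$; hence $D(-ab)\in G$, with $v_\ell(-ab)=2s$ and $-ab\not\equiv-1\pmod\ell$. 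Since $G$ now contains $L(\ell^{2s})$, $R(\ell^{2s})$ and $D(-ab)$, all of exact valuation $2s$, part (3) applied with $s$ replaced by $2s$ yields $\mathcal{B}_\ell(2s)\subseteq G$. No step here is conceptually hard: the only genuine computations are the two matrix identities (the Bruhat factorization and the explicit value of $[L(a),R(b)]$), and the only points that require care are keeping track of valuations so that the hypotheses of (1)--(3) are met at each invocation — exact valuations, the condition $c\not\equiv-1$, and the parity constraints on the parameter when $\ell\in\{2,3\}$ — together with the separate (easy) treatment of $s=0$, where the Bruhat factorization is unavailable.
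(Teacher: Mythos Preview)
Your proof is correct and follows the same overall architecture as the paper's: parts (1) and (2) are essentially identical (you phrase the argument via the classification of closed subgroups of $\mathbb{Z}_\ell$ and $\mathbb{Z}_\ell^\times$, the paper writes out the limiting sequences explicitly, but these are the same proof).

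The differences are in (3) and (4), and they make your version more self-contained. For (3) with $s\geq 1$ the paper simply cites an external lemma (\cite[Lemma 3.1]{AdelicEC}) asserting that $L(d),R(d),D(d)$ generate $\mathcal{B}_\ell(s)$, whereas you supply the Bruhat-type factorization $g=L(\gamma\alpha^{-1})D(\alpha-1)R(\alpha^{-1}\beta)$ directly; for $s=0$ the paper invokes Serre's lifting lemma ($G(\ell)=\operatorname{SL}_2(\mathbb{F}_\ell)\Rightarrow G=\operatorname{SL}_2(\mathbb{Z}_\ell)$), while you use the more elementary density of $\operatorname{SL}_2(\mathbb{Z})$ in $\operatorname{SL}_2(\mathbb{Z}_\ell)$. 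For (4) the paper writes down the explicit identity
\[
D(ab)=L\!\left(\tfrac{a}{1+ab}\right)R(-b)\,L(-a)\,R\!\left(\tfrac{b}{1+ab}\right)
\]
to produce the diagonal element, whereas you obtain $D(-ab)$ by applying your Bruhat factorization to the commutator $[L(a),R(b)]$. Both routes are short; yours has the advantage of reusing the same factorization from (3) rather than introducing a new identity, while the paper's has the advantage of being a single verifiable formula.
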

\begin{proof}
\begin{enumerate}
\item Let $u=\frac{d}{a} \in \mathbb{Z}_\ell$. It is easy to check that for every integer $n$ we have $L(a)^n=L(an)$. Fix a sequence $n_k$ of rational integers that converge to $u$ in the $\ell$-adic topology: then we have
$
\lim_{k \to \infty} L(a)^{n_k} = \lim_{k \to \infty} L(an_k)=  L\left( \lim_{k \to \infty}  an_k\right) = L(au) = L(d).
$
Now observe that $G$ contains every term of the sequence $L(a)^{n_k}$, so -- since it is closed in $\operatorname{SL}_2(\mathbb{Z}_\ell)$ by assumption -- it also contains their limit $L(d)$. The same proof also applies to the case of $R(b)$.
\item The assumptions on $s$ imply that $\log(1+c)$ is defined, and that $v_\ell \log(1+c) = s$ (cf. lemma \ref{lemma_log}). Let $\displaystyle u=\frac{\log(1+d)}{\log(1+c)}$, which is a well-defined element of $\mathbb{Z}_\ell$ since $v_\ell \log(1+d) \geq s$. Choose as above a sequence $n_k$ that converges to $u$ in the $\ell$-adic topology: then $G$ contains the limit
\[
\begin{aligned}
\lim_{k \to \infty} D(c)^{n_k} & =  \lim_{k \to \infty} \left( \begin{matrix} (1+c)^{n_k} & 0 \\ 0 & (1+c)^{-n_k} \end{matrix} \right) \\
& = \lim_{k \to \infty} \left( \begin{matrix} \exp(n_k \log(1+c)) & 0 \\ 0 & \exp(-n_k \log(1+c)) \end{matrix} \right) \\
& = \left( \begin{matrix} \exp(u \log(1+c)) & 0 \\ 0 & \exp(-u \log(1+c)) \end{matrix} \right) = D(d).
\end{aligned}
\]

\item Suppose first $s=0$: then it is easy to see that $G(\ell)=\operatorname{SL}_2(\mathbb{F}_\ell)$. Since $\ell \geq 5$, by \cite[IV-23, Lemma 3]{SerreAbelianRepr} this implies $G=\operatorname{SL}_2(\mathbb{Z}_\ell)$. Suppose on the other hand that $s \geq 1$: then parts (1) and (2) imply that $G$ contains $L(d)$, $R(d)$, $D(d)$ for all $d \in \mathbb{Z}_\ell$ of valuation at least $s$, and by \cite[Lemma 3.1]{AdelicEC} these elements generate $\mathcal{B}_\ell(s)$.

\item If $s=0$ we notice -- as in part (3) -- that $G(\ell)=\operatorname{SL}_2(\mathbb{F}_\ell)$, hence $G=\operatorname{SL}_2(\mathbb{Z}_\ell)$. Otherwise, it suffices to apply part (3) to $(a^2,b^2,ab)$: indeed part (1) implies that $G$ contains $L(a^2), R(b^2)$ and
$
\displaystyle D(ab) = L\left( \frac{a}{1+ab} \right) R(-b) L(-a) R\left( \frac{b}{1+ab} \right).
$
\end{enumerate}
\end{proof}

\begin{lemma}\label{lemma_Commutator}
Let $s_1, \ldots, s_n$ be non-negative integers (where for every $j=1,\ldots,n$ we require that $s_j \geq 2$ if $\ell=2$ and $s_j \geq 1$ if $\ell=3$). The iterated commutator $\operatorname{Comm}_{n-1}(\mathcal{B}_\ell(s_1),\mathcal{B}_\ell(s_2), \ldots, \mathcal{B}_\ell(s_n))$ contains $B_\ell(s_1+\cdots+s_n+(n-1)v)$.
\end{lemma}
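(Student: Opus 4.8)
The plan is to prove the case $n=2$ directly — this is the core of the statement — and then obtain the case $n\ge 3$ by a one‑line induction (the case $n=1$ being trivial). The guiding idea is that for elements $g,h$ close to the identity one has $\log[g,h]\equiv[\log g,\log h]$ modulo terms of higher valuation, and that in $\mathfrak{sl}_2$ the brackets $[h,x]=2x$, $[h,y]=-2y$ carry a factor of $2$; this is exactly what produces the correction $(n-1)v$ when $\ell=2$. Rather than argue with logarithms (which is delicate at $\ell=2$), I would realise everything through the explicit matrix identities for $L(\cdot),R(\cdot),D(\cdot)$ already used in Lemma~\ref{lemma_Generators}.

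For $n=2$, put $s=s_1+s_2+v$ and $\Gamma=[\mathcal{B}_\ell(s_1),\mathcal{B}_\ell(s_2)]$; by Lemma~\ref{lemma_Generators}(3) it is enough to exhibit inside $\Gamma$ elements $L(a)$, $R(b)$, $D(c)$ with $a,b,c$ of exact valuation $s$ (the hypotheses on $s_1,s_2$ guarantee both that this lemma applies and that $D(\ell^{s_i})\in\mathcal{B}_\ell(s_i)$). The unipotent generators arise from commuting with a diagonal element: one checks the identities
\[
[D(\ell^{s_1}),L(\ell^{s_2})]=L\!\bigl(\bigl((1+\ell^{s_1})^{-2}-1\bigr)\ell^{s_2}\bigr),\qquad [D(\ell^{s_1}),R(\ell^{s_2})]=R\!\bigl(\bigl((1+\ell^{s_1})^{2}-1\bigr)\ell^{s_2}\bigr),
\]
and since $(1+\ell^{s_1})^{\pm2}-1$ has valuation exactly $s_1+v$ — this is where the $+v$ enters, the factor $2$ being responsible when $\ell=2$ — these are $L$- and $R$-elements of exact valuation $s$; by Lemma~\ref{lemma_Generators}(1), $\Gamma$ then contains $L(d)$ and $R(d)$ for every $d$ with $v_\ell(d)\ge s$. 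For the diagonal generator I would compute $M:=[L(\ell^{s_1}),R(\ell^{s_2})]$, whose $(1,1)$-entry is $1-\ell^{s_1+s_2}$ and whose off-diagonal entries are $\ell^{s_1+2s_2}$ and $-\ell^{2s_1+s_2}$, both of valuation $\ge s$ (using $s_1,s_2\ge v$); multiplying $M$ on the left and right by suitable $L(u),R(w)\in\Gamma$ with $v_\ell(u)=2s_1+s_2$, $v_\ell(w)=s_1+2s_2$ clears the off-diagonal entries and leaves $D(-\ell^{s_1+s_2})\in\Gamma$. This element has parameter of valuation $s_1+s_2=s-v\le s$, so Lemma~\ref{lemma_Generators}(2) yields $D(d)\in\Gamma$ for all $v_\ell(d)\ge s_1+s_2$, in particular for $v_\ell(d)=s$; Lemma~\ref{lemma_Generators}(3) now gives $\Gamma\supseteq\mathcal{B}_\ell(s)$. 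The degenerate configurations in which some $s_i=0$ (which forces $\ell\ge 5$) are handled the same way after replacing the parameters $\ell^{s_i}$ by suitable units — available since $\mathcal{B}_\ell(0)=\operatorname{SL}_2(\mathbb{Z}_\ell)$ — the totally degenerate case $s_1=s_2=0$ coming down to the fact that $\operatorname{SL}_2$ over a local ring is generated by elementary matrices.

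For $n\ge 3$, set $S=s_1+\cdots+s_{n-1}+(n-2)v$. By the definition of iterated commutators, the monotonicity $[A,B]\supseteq[A',B]$ for $A\supseteq A'$, and the inductive hypothesis applied to $s_1,\dots,s_{n-1}$,
\[
\operatorname{Comm}_{n-1}\bigl(\mathcal{B}_\ell(s_1),\ldots,\mathcal{B}_\ell(s_n)\bigr)=\bigl[\operatorname{Comm}_{n-2}(\mathcal{B}_\ell(s_1),\ldots,\mathcal{B}_\ell(s_{n-1})),\,\mathcal{B}_\ell(s_n)\bigr]\supseteq\bigl[\mathcal{B}_\ell(S),\,\mathcal{B}_\ell(s_n)\bigr].
\]
Since $S\ge s_1$ satisfies the hypotheses of the lemma whenever $s_1$ does, the already-proven case $n=2$ applies and gives $[\mathcal{B}_\ell(S),\mathcal{B}_\ell(s_n)]\supseteq\mathcal{B}_\ell(S+s_n+v)=\mathcal{B}_\ell(s_1+\cdots+s_n+(n-1)v)$, which is the claim.

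The main obstacle is the base case $n=2$, and inside it the diagonal generator: the naive commutator $[L,R]$ only reaches depth $s_1+s_2$ rather than $s_1+s_2+v$, so it is essential that Lemma~\ref{lemma_Generators}(2) lets one pass to greater valuation. For $\ell=2$ there is the added burden of tracking the powers of $2$ carefully throughout — in the identities above, in the valuation inequalities $2s_1+s_2\ge s$ and $s_1+2s_2\ge s$, and in the degenerate sub-cases.
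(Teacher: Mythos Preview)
Your proposal is correct and follows essentially the same approach as the paper: establish $n=2$ by producing $L$-, $R$-, and $D$-type generators of $\mathcal{B}_\ell(s_1+s_2+v)$ inside the commutator via explicit identities (the $L$ and $R$ from commuting a unipotent with a diagonal element, the $D$ from $[L,R]$ after stripping the off-diagonal entries with already-available $L,R$ factors), then induct. The paper uses $[L(\ell^{s_1}),D(\ell^{s_2})]$ and $[R(\ell^{s_1}),D(\ell^{s_2})]$ where you use $[D(\ell^{s_1}),L(\ell^{s_2})]$ and $[D(\ell^{s_1}),R(\ell^{s_2})]$, and it obtains $D(\ell^{s_1+s_2}+\ell^{2(s_1+s_2)})$ where you obtain $D(-\ell^{s_1+s_2})$, but these are cosmetic differences; the mechanism and the valuation bookkeeping (including the $s_i\ge v$ checks ensuring the auxiliary $L(u),R(w)$ lie in $\Gamma$) are the same. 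The only place to tighten is the totally degenerate case $s_1=s_2=0$: your $D(-\ell^{s_1+s_2})=D(-1)$ is undefined there, and ``$\operatorname{SL}_2$ is generated by elementary matrices'' is indeed what you need (you already have all $L(d),R(d)\in\Gamma$), but the paper's identity yields $D(2)$, which survives uniformly and feeds directly into Lemma~\ref{lemma_Generators}(3) without a separate argument.
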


\begin{proof}
Consider first the case $n=2$.
Let $s$ be a non-negative integer (with $s \geq 2$ if $\ell=2$ and $s \geq 1$ if $\ell=3$). By lemma \ref{lemma_Generators} (3), if $a,b,c$ are any three elements of $\mathbb{Z}_\ell$ of valuation $s$ and such that $c \not \equiv -1 \pmod \ell$, the group $\mathcal{B}_\ell(s)$ is topologically generated by $L(a), R(b), D(c)$. It is easy to check the following identities:
\[
\left[ L(\ell^{s_1}), D(\ell^{s_2}) \right] = L\left( \frac{\ell^{s_2}+2}{\left(\ell^{s_2}+1\right)^2} \ell^{s_1+s_2} \right)
\]
\[
\left[ R(\ell^{s_1}), D(\ell^{s_2}) \right] = R\left( -(2+\ell^{s_2}) \ell^{s_1+s_2} \right),
\]
where (by the assumptions on $s_1,s_2$) we have
$\displaystyle
v_\ell\left(\frac{\ell^{s_2}+2}{\left(\ell^{s_2}+1\right)^2} \right) = v_\ell(2+\ell^{s_2})=v$. To conclude the proof for $n=2$ it thus suffices to show that $\left[\mathcal{B}_\ell(s_1), \mathcal{B}_\ell(s_2)  \right]$ contains an element of the form $D(c)$ with $v_\ell(c) \leq s_1+s_2+v$. This is easily achieved thanks to the following identity:
\[
L\left( - \frac{\ell^{s_1 + 2 s_2}}{1 + \ell^{s_1 + s_2} + \ell^{2 s_1 + 2 s_2}} \right) \left[ R(\ell^{s_1}), L(\ell^{s_2}) \right] R\left( \frac{\ell^{2 s_1+s_2}}{1+\ell^{s_1+s_2}+\ell^{2 s_1+2 s_2}} \right) = D\left(\ell^{s_1+s_2}+\ell^{2(s_1+s_2)}\right),
\]
where we know that $\displaystyle L\left( - \frac{\ell^{s_1 + 2 s_2}}{1 + \ell^{s_1 + s_2} + \ell^{2 s_1 + 2 s_2}} \right)$ and $\displaystyle  R\left( \frac{\ell^{2 s_1+s_2}}{1+\ell^{s_1+s_2}+\ell^{2 s_1+2 s_2}} \right)$ both belong to $\left[\mathcal{B}_\ell(s_1),\mathcal{B}_\ell(s_2) \right]$ by what we already proved and by lemma \ref{lemma_Generators} (1).
The case of an arbitrary $n$ follows by induction from the case $n=2$.
\end{proof}


The following lemma can be considered as an integral analogue of \cite[Lemma on p. 790]{MR0457455}.

\begin{lemma}\label{lemma_IntegralGoursat}
Let $n \geq 2$ be an integer and $G$ be a closed subgroup of $\prod_{i=1}^n \operatorname{SL}_2(\mathbb{Z}_\ell)$. Write $\pi_i$ for the projection on the $i$-th factor, and suppose that for every $i\neq j$ there is some non-negative integer $s_{ij}$ (with $s_{ij} \geq 2$ if $\ell=2$ and $s_{ij} \geq 1$ if $\ell=3$) such that the group $\left(\pi_i \times \pi_j\right)(G)$ contains $\mathcal{B}_\ell(s_{ij}, s_{ij})$. Then $G$ contains $\prod_{i=1}^n \mathcal{B}_\ell\left(\sum_{j \neq i} s_{ij}+(n-2)v\right)$.
\end{lemma}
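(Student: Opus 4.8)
The plan is to follow the Goursat-type strategy of the cited lemma, adapted to the present integral and topological setting. For each index $i$ I aim to show that $G$ contains the full subgroup $S_i := 1\times\cdots\times\mathcal{B}_\ell(e_i)\times\cdots\times 1$ (with $\mathcal{B}_\ell(e_i)$ in the $i$-th factor), where $e_i=\sum_{j\neq i}s_{ij}+(n-2)v$; once this is done, the conclusion follows because the $S_i$ lie in pairwise-commuting subgroups of $G$, so that the subgroup they generate, namely $\prod_{i=1}^n\mathcal{B}_\ell(e_i)$, is contained in $G$. The case $n=2$ is immediate, since there $\pi_1\times\pi_2$ is the identity map and $(n-2)v=0$, so from now on I assume $n\geq 3$.

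Fix $i$. For each $j\neq i$, the hypothesis $(\pi_i\times\pi_j)(G)\supseteq\mathcal{B}_\ell(s_{ij},s_{ij})$ provides, for every $a\in\mathcal{B}_\ell(s_{ij})$, an element $g_j\in G$ with $\pi_i(g_j)=a$ and $\pi_j(g_j)=1$ (the remaining coordinates being arbitrary). Enumerating the indices different from $i$ as $j_1,\dots,j_{n-1}$, I then form the iterated commutator $\operatorname{Comm}_{n-2}(g_{j_1},\dots,g_{j_{n-1}})\in G$. Applying the homomorphism $\pi_i$ yields $\operatorname{Comm}_{n-2}$ of a tuple that can be prescribed arbitrarily in $\mathcal{B}_\ell(s_{ij_1}),\dots,\mathcal{B}_\ell(s_{ij_{n-1}})$, whereas applying $\pi_m$ for $m=j_r\neq i$ yields an iterated commutator whose $r$-th entry is $\pi_{j_r}(g_{j_r})=1$; since it is clear from the recursion defining $\operatorname{Comm}$ that an iterated commutator with any trivial entry is itself trivial, the element $\operatorname{Comm}_{n-2}(g_{j_1},\dots,g_{j_{n-1}})$ has trivial $m$-th coordinate for every $m\neq i$. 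Hence it lies in the closed subgroup $G_i:=G\cap\big(1\times\cdots\times\operatorname{SL}_2(\mathbb{Z}_\ell)\times\cdots\times 1\big)$, which I identify with a closed subgroup of $\operatorname{SL}_2(\mathbb{Z}_\ell)$ via $\pi_i$.

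As $G_i$ is closed and contains $\operatorname{Comm}_{n-2}(a_{j_1},\dots,a_{j_{n-1}})$ for every admissible choice of the $a_{j_r}$, it contains the topological subgroup these elements generate, which by definition is $\operatorname{Comm}_{n-2}\big(\mathcal{B}_\ell(s_{ij_1}),\dots,\mathcal{B}_\ell(s_{ij_{n-1}})\big)$. By Lemma \ref{lemma_Commutator}, applied to the $n-1$ subgroups $\mathcal{B}_\ell(s_{ij_r})$ (whose defining constraints for $\ell=2,3$ are precisely those assumed on the $s_{ij}$), this contains $\mathcal{B}_\ell(e_i)$, and therefore $G$ contains $S_i$. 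Letting $i$ vary and multiplying the $S_i$ (which commute, being supported on disjoint sets of coordinates) then gives $\prod_{i=1}^n\mathcal{B}_\ell(e_i)\subseteq G$, as claimed.

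Essentially everything here is bookkeeping; the one genuinely load-bearing point is that a single iterated commutator $\operatorname{Comm}_{n-2}(g_{j_1},\dots,g_{j_{n-1}})$ kills all $n-1$ coordinates different from $i$ simultaneously — which is exactly why each $g_j$ is chosen with $\pi_j(g_j)=1$ — together with the routine check that the $\ell=2,3$ hypotheses transfer correctly to the application of Lemma \ref{lemma_Commutator}.
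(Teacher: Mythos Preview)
Your proof is correct and follows essentially the same approach as the paper: fix an index $i$, lift elements from each pairwise projection $(\pi_i\times\pi_j)(G)$ so that the $j$-th coordinate is trivial and the $i$-th coordinate is an arbitrary element of $\mathcal{B}_\ell(s_{ij})$, take the $(n-2)$-fold iterated commutator to kill all coordinates $\neq i$, and invoke Lemma~\ref{lemma_Commutator}. The only cosmetic differences are that the paper fixes $i=n$ by symmetry and phrases the use of Lemma~\ref{lemma_Commutator} as ``$\mathcal{B}_\ell(e_i)$ is generated by these commutators'' rather than your closure argument; the content is identical.
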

\begin{proof}
Without loss of generality, by the symmetry of the problem it is enough to prove that $G$ contains
$
\left\{\operatorname{Id}\right\} \times \cdots \times \left\{\operatorname{Id}\right\} \times \mathcal{B}_\ell\left( \sum_{j \neq n} s_{nj} +(n-2)v \right).
$
By lemma \ref{lemma_Commutator} we know that the group $\mathcal{B}_\ell\left( \sum_{j \neq n} s_{nj} +(n-2)v \right)$ is generated by
$
\left\{ \operatorname{Comm}_{n-2}(g_1,\ldots,g_{n-1}) \bigm\vert g_j \in \mathcal{B}_\ell(s_{nj}) \right\},
$
so it suffices to show that for every choice of $(g_1,\ldots,g_{n-1}) \in \prod_{j \neq n} \mathcal{B}_\ell(s_{nj})$ the group $G$ contains the element $\left(\operatorname{Id}, \ldots,\operatorname{Id}, \operatorname{Comm}_{n-2}(g_1,\ldots,g_{n-1}) \right)$.
By hypothesis we can find $x_1, \ldots, x_{n-1} \in G$ such that $\pi_i(x_i)=\operatorname{Id}$ and $\pi_n(x_i)=g_i$ for all $i$ between $1$ and $n-1$. Consider now the iterated commutator
$
\tilde{g}=\operatorname{Comm}_{n-2}(x_1,\ldots, x_{n-1})\in G
$
. For $i \leq n-1$, the $i$-th component of $\tilde{g}$ is trivial because we have
$
\displaystyle \pi_i(\tilde{g})  = \operatorname{Comm}_{n-2}\left(\pi_i(x_1),\ldots,\underbrace{\pi_i(x_i)}_{\operatorname{Id}}, \ldots, \pi_i(x_{n-1}) \right)=\operatorname{Id};
$
moreover, our choice of $x_1, \ldots, x_n$ ensures that
$
\pi_n(\tilde{g})=\operatorname{Comm}_{n-2}\left( g_1,\ldots, g_{n-1}\right)
$ holds. We have thus shown that $(\operatorname{Id}, \ldots, \operatorname{Id}, \operatorname{Comm}_{n-2}\left( g_1,\ldots, g_{n-1}\right)
)$ is an element of $G$. 
\end{proof}

\begin{lemma}\label{lemma_TrivialModEllImpliesTrivial}
Let $\ell>2$ and $G$ be a closed subgroup of $\operatorname{SL}_2(\mathbb{Z}_\ell)^2$. Suppose $G$ contains an element $g=(x,y)$ with $[x]=[\pm \operatorname{Id}]$ and $[y]$  nontrivial and of order prime to $\ell$. Then $G$ contains an element of the form $(\pm \operatorname{Id},z)$, where the order of $[z]$ is the same as the order of $[y]$.
\end{lemma}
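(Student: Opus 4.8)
The plan is to realise the desired element as a ``profinite power'' $g^\alpha$ of the given element $g=(x,y)$, where $\alpha$ is allowed to range over $\widehat{\mathbb{Z}}=\prod_p\mathbb{Z}_p$ rather than over $\mathbb{Z}$. Since $G$ is closed in the profinite group $\operatorname{SL}_2(\mathbb{Z}_\ell)^2$, the closure $\overline{\langle g\rangle}$ of the cyclic group generated by $g$ is a procyclic profinite subgroup of $G$, and it is the image of $\widehat{\mathbb{Z}}$ under the unique continuous homomorphism sending $1\mapsto g$. Writing $g^\alpha$ for the image of $\alpha\in\widehat{\mathbb{Z}}$, every $g^\alpha$ therefore lies in $G$, and $(\,\cdot\,)^\alpha$ is compatible with reduction modulo $\ell$ and with projection to either factor. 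The idea is to choose $\alpha$ so that $x^\alpha$ collapses to $\pm\operatorname{Id}$ while $y^\alpha$ keeps the same reduction modulo $\ell$ as $y$.

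Concretely, I would take $\alpha$ to be the element of $\widehat{\mathbb{Z}}=\mathbb{Z}_\ell\times\widehat{\mathbb{Z}}^{(\ell)}$ whose $\ell$-adic component is $0$ and all of whose prime-to-$\ell$ components equal $1$; equivalently, $\alpha$ is a limit of integers $a_j$ with $v_\ell(a_j)\to\infty$ and $a_j\equiv1$ modulo any fixed integer coprime to $\ell$. Two observations then finish the argument. First, $[x]=[\pm\operatorname{Id}]$ allows us to pick $\epsilon\in\{\operatorname{Id},-\operatorname{Id}\}$ with $[\epsilon]=[x]$ and to set $u:=\epsilon^{-1}x$, which reduces to $\operatorname{Id}$ modulo $\ell$, that is $u\in\mathcal{B}_\ell(1)$; since $\ell$ is odd, $\mathcal{B}_\ell(1)$ is a pro-$\ell$ group, so the continuous homomorphism $\widehat{\mathbb{Z}}\to\overline{\langle u\rangle}$ factors through the maximal pro-$\ell$ quotient $\mathbb{Z}_\ell$ of $\widehat{\mathbb{Z}}$, on which $\alpha$ vanishes; hence $u^\alpha=\operatorname{Id}$, and as $\epsilon$ has order dividing $2$ (hence prime to $\ell$) we get $\epsilon^\alpha=\epsilon$, so $x^\alpha=\epsilon\in\{\pm\operatorname{Id}\}$. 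Second, $[y]$ has order $m$ prime to $\ell$ by hypothesis, and $\alpha\equiv1$ modulo $m$, so $[y^\alpha]=[y]^\alpha=[y]$; thus $z:=y^\alpha$ satisfies $[z]=[y]$, of order $m$. Putting the two coordinates together, $g^\alpha=(x^\alpha,y^\alpha)=(\epsilon,z)\in G$ is exactly an element of the form $(\pm\operatorname{Id},z)$ with $[z]$ of the same order as $[y]$.

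The only genuinely delicate point is that a single exponent must simultaneously annihilate the pro-$\ell$ part of the first coordinate and leave the prime-to-$\ell$ behaviour (here: the reduction modulo $\ell$) of the second coordinate untouched; these requirements pull in opposite directions, and it is precisely the freedom to let $\alpha$ live in $\widehat{\mathbb{Z}}$, together with the splitting $\widehat{\mathbb{Z}}=\mathbb{Z}_\ell\times\widehat{\mathbb{Z}}^{(\ell)}$ and the fact that reduction modulo $\ell$ produces a procyclic image of order prime to $\ell$, that makes both achievable at once. Beyond this, the proof only invokes standard facts: that $\mathcal{B}_\ell(1)$ is pro-$\ell$ for odd $\ell$ (which is where the hypothesis $\ell>2$ enters), that $-\operatorname{Id}$ is central of order $2$, and the functoriality of $\alpha\mapsto g^\alpha$ under continuous homomorphisms. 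I do not expect any further obstacle; alternatively one may phrase the whole argument with an explicit convergent sequence of integer exponents $a_j$ and conclude by closedness of $G$, whichever form fits the rest of the paper better.
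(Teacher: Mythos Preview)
Your proof is correct and is essentially the same as the paper's, which consists of the single line ``such an element is given by any limit point of the sequence $g^{\ell^n}$''; your $\widehat{\mathbb{Z}}$-exponent $\alpha$ is just a more structured packaging of that limit. The only cosmetic difference is that your choice of $\alpha$ with prime-to-$\ell$ component equal to $1$ yields the slightly stronger conclusion $[z]=[y]$, whereas the bare sequence $\ell^n$ only gives $[z]$ of the same order as $[y]$, which is all the lemma claims.
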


\begin{proof}
Such an element is given by any limit point of the sequence $g^{\ell^n}$.
\end{proof}

\begin{lemma}\label{lemma_Saturation}
Let $n \geq 1$ and $G_1, G_2$ be two closed subgroups of $\operatorname{GL}_2(\mathbb{Z}_\ell)^n$ (with $G_1(4), G_2(4)$ trivial if $\ell=2$). Suppose that the two groups $\left\{ \lambda g_1 \bigm\vert \lambda \in \mathbb{Z}_\ell^\times, g_1 \in G_1 \right\}$ and $\left\{ \mu g_2 \bigm\vert \mu \in \mathbb{Z}_\ell^\times, g_2 \in G_2 \right\}$ coincide: then $G_1,G_2$ have the same derived subgroup and the same Lie algebra.
 \end{lemma}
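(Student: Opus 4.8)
The plan is to exploit the fact that scalar matrices are central in $\operatorname{GL}_2(\mathbb{Z}_\ell)^n$, so that neither the formation of commutators nor the map $\Theta_n$ "sees" them. Write $\widetilde{G}_i = \{\lambda g \mid \lambda \in \mathbb{Z}_\ell^\times,\ g \in G_i\}$ for $i=1,2$; the hypothesis is exactly $\widetilde{G}_1 = \widetilde{G}_2$. In particular (taking $\lambda = 1$) every $g_1 \in G_1$ can be written as $\mu g_2$ for some $\mu \in \mathbb{Z}_\ell^\times$ and $g_2 \in G_2$, and symmetrically every element of $G_2$ is a unit scalar multiple of an element of $G_1$. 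These are the only consequences of the hypothesis I will use.

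For the derived subgroups: given $g_1, h_1 \in G_1$, write $g_1 = \lambda g_2$ and $h_1 = \mu h_2$ with $\lambda, \mu \in \mathbb{Z}_\ell^\times$ and $g_2, h_2 \in G_2$. Since $\lambda \operatorname{Id}$ and $\mu \operatorname{Id}$ are central in each factor $\operatorname{GL}_2(\mathbb{Z}_\ell)$, a one-line computation gives $[g_1, h_1] = [\lambda g_2, \mu h_2] = [g_2, h_2]$. Hence the set of commutators of pairs of elements of $G_1$ is contained in the corresponding set for $G_2$, and by the symmetric argument the two sets coincide. Since $[G_1,G_1]$ (resp. $[G_2,G_2]$) is by definition the closed subgroup topologically generated by that set, we conclude $[G_1, G_1] = [G_2, G_2]$.

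For the Lie algebras: first record the homogeneity identity $\Theta_n(\lambda g) = \lambda\, \Theta_n(g)$ for any scalar $\lambda \in \mathbb{Z}_\ell^\times$, which is immediate from the defining formula, because $\operatorname{tr}(\lambda g^{(i)}) = \lambda\operatorname{tr}(g^{(i)})$ and "subtract half the trace times the identity" is $\mathbb{Z}_\ell$-linear in the matrix. (When $\ell = 2$ one checks this stays inside $\mathfrak{sl}_2(\mathbb{Z}_2)^n$: $\Theta_n(g_2)$ lies there because $G_2(4)$ is trivial, and multiplying by $\lambda \in \mathbb{Z}_2^\times \subset \mathbb{Z}_2$ preserves the lattice; no need to evaluate $\Theta_n$ at $\lambda g_2$ itself.) Now for $g_1 \in G_1$ write $g_1 = \mu g_2$ as above; then $\Theta_n(g_1) = \mu\,\Theta_n(g_2)$ lies in the $\mathbb{Z}_\ell$-span of $\Theta_n(G_2)$, i.e. in $\mathcal{L}(G_2)$. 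Therefore $\Theta_n(G_1) \subseteq \mathcal{L}(G_2)$, hence $\mathcal{L}(G_1) \subseteq \mathcal{L}(G_2)$, and symmetry gives the reverse inclusion.

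The argument is essentially formal, so there is no genuinely hard step; the only points deserving a moment's care are the bookkeeping in the case $\ell = 2$ — ensuring $\Theta_n$ stays defined and $\mathbb{Z}_2$-integral on the elements in play, which works because scalars in $\mathbb{Z}_2^\times$ are $2$-adic units and the trace of a matrix $\equiv \operatorname{Id} \pmod 4$ is even — and the observation that the closed subgroup topologically generated by a set of commutators depends only on that set, so that equality of the commutator sets immediately upgrades to equality of the derived subgroups.
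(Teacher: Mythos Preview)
Your proof is correct and follows essentially the same approach as the paper: both arguments use that scalars cancel in commutators to get equality of derived subgroups, and the homogeneity $\Theta_n(\lambda g)=\lambda\,\Theta_n(g)$ to get equality of Lie algebras, then conclude by symmetry. Your extra care with the $\ell=2$ integrality is a nice addition but does not change the structure of the argument.
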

\begin{proof}
The hypothesis is symmetric in $G_1,G_2$, so it suffices to show the inclusions $G_1' \subseteq G_2'$ and $\mathcal{L}(G_1) \subseteq \mathcal{L}(G_2)$. The hypothesis implies in particular that for all $g_1 \in G_1$ there exists $\nu(g_1) \in \mathbb{Z}_\ell^\times$ such that $\nu(g_1) g_1 \in G_2$.
Now let $x,y$ be elements of $G_1$: then $[x,y]=[\nu(x)x,\nu(y)y] \in G_2'$, so -- since $G_1'$ is generated by elements of the form $[x,y]$ for $x,y$ varying in $G_1$ -- we have $G_1' \subseteq G_2'$. As for the Lie algebra, simply notice that for all $g_1 \in G_1$ we have $\Theta_n(g_1)=\frac{1}{\nu(g_1)}\Theta_n(\nu(g_1)g_1) \in \mathcal{L}(G_2)$.
\end{proof}

\begin{lemma}\label{lemma_NontrivialImpliesNontrivialAlgebra}
Let $n$ be a positive integer and $g \in \operatorname{SL}_2(\mathbb{Z}_\ell)$ a matrix satisfying $g \equiv \operatorname{Id} \pmod {\ell^{1+v}}$ and $g \not \equiv \operatorname{Id} \pmod{\ell^n}$. Then $\Theta_1(g)\not \equiv 0 \pmod{\ell^n}$.
\end{lemma}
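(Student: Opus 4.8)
The plan is to argue by contradiction. Suppose $\Theta_1(g) \equiv 0 \pmod{\ell^n}$. By the very definition of $\Theta_1$ we have $g = \tau\cdot\operatorname{Id} + \Theta_1(g)$ with $\tau := \tfrac12\operatorname{tr}(g)$, and $\tau$ genuinely lies in $\mathbb{Z}_\ell$: this is clear for $\ell$ odd, and for $\ell = 2$ the hypothesis $g \equiv \operatorname{Id} \pmod 4$ forces $\operatorname{tr}(g) \equiv 2 \pmod 4$. Thus our assumption says exactly that $g$ is congruent to the scalar matrix $\tau\cdot\operatorname{Id}$ modulo $\ell^n$. I will then squeeze an arithmetic constraint on $\tau$ out of the relation $\det g = 1$ and show it is incompatible with $g \equiv \operatorname{Id} \pmod{\ell^{1+v}}$ and $g \not\equiv \operatorname{Id}\pmod{\ell^n}$.

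The first step uses $\det g = 1$. Since $\Theta_1(g)$ is traceless, the $2\times 2$ identity $\det(\tau\operatorname{Id}+\theta) = \tau^2 + \tau\operatorname{tr}(\theta)+\det(\theta)$ gives $1 = \det g = \tau^2 + \det\Theta_1(g)$. As $\Theta_1(g) \equiv 0 \pmod{\ell^n}$, its determinant is divisible by $\ell^{2n}$, so $\tau^2 \equiv 1 \pmod{\ell^{2n}}$. The second step locates $\tau$: from $g \equiv \operatorname{Id}\pmod{\ell^{1+v}}$ we read off $\operatorname{tr}(g) \equiv 2 \pmod{\ell^{1+v}}$, hence $\tau \equiv 1 \pmod\ell$ if $\ell$ is odd, and $\tau$ is odd if $\ell = 2$.

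For $\ell$ odd this already finishes the proof: $\tau+1$ is then a unit, so from $(\tau-1)(\tau+1)\equiv 0 \pmod{\ell^{2n}}$ we get $\tau \equiv 1 \pmod{\ell^{2n}}$, whence $g \equiv \tau\operatorname{Id} \equiv \operatorname{Id}\pmod{\ell^n}$, contradicting the hypothesis. For $\ell = 2$, note first that the hypotheses are vacuous unless $n \geq 3$ (since $g \equiv \operatorname{Id}\pmod 4$ forces $g\equiv\operatorname{Id}\pmod{2^n}$ when $n\leq 2$). Since $\tau$ is odd, exactly one of $\tau-1$, $\tau+1$ has $2$-adic valuation $1$, so $\tau^2 \equiv 1\pmod{2^{2n}}$ forces the other factor to be divisible by $2^{2n-1}$, hence by $2^n$. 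Thus either $\tau \equiv 1\pmod{2^n}$, giving $g \equiv \operatorname{Id}\pmod{2^n}$ and a contradiction, or $\tau \equiv -1\pmod{2^n}$, giving $g \equiv -\operatorname{Id}\pmod{2^n}$; combined with $g\equiv\operatorname{Id}\pmod 4$ and $n\geq 3$ this yields $-\operatorname{Id}\equiv\operatorname{Id}\pmod 4$, absurd.

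There is no real obstacle here; the only delicate points are both in the case $\ell = 2$: one must check that $\tau$ is actually integral, and — crucially — one must reduce $\det g = 1$ modulo $\ell^{2n}$ rather than merely $\ell^n$, since the weaker congruence only yields $\tau \equiv \pm 1 \pmod{\ell^{n-1}}$, which is too weak to force $g\equiv\operatorname{Id}\pmod{\ell^n}$ and does not eliminate the $\tau\equiv -1$ branch. With the modulus $\ell^{2n}$ in hand, that branch is killed by invoking $g \equiv \operatorname{Id} \pmod 4$.
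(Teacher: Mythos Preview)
Your proof is correct and follows essentially the same route as the paper: argue by contradiction, use $\det g=1$ to force the half-trace to square to $1$ modulo a high power of $\ell$, and then use the hypothesis $g\equiv\operatorname{Id}\pmod{\ell^{1+v}}$ to pin down the correct square root. The paper works entry by entry (with the parameter $v$ absorbing the $\ell=2$ case uniformly) and only extracts $a^2\equiv 1\pmod{\ell^{n+v}}$, whereas your coordinate-free decomposition $g=\tau\operatorname{Id}+\Theta_1(g)$ gives the slightly stronger $\tau^2\equiv 1\pmod{\ell^{2n}}$; either suffices, and the arguments are otherwise identical.
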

\begin{proof}
Write $g=\left( \begin{matrix}a & b \\ c & d\end{matrix} \right)$, so that $\Theta_1(g)=\left(\begin{matrix} \frac{a-d}{2} & b \\ c & \frac{d-a}{2} \end{matrix} \right)$. Suppose by contradiction that we had $\Theta_1(g) \equiv 0 \pmod {\ell^n}$, that is $c \equiv d \equiv \frac{a-d}{2} \equiv 0 \pmod{\ell^n}$. It then follows $a \equiv d \pmod {\ell^{n+v}}$, and since $1=\det(g)=ad-bc$ we find $a^2 \equiv d^2 \equiv 1 \pmod{\ell^{n+v}}$. Since $a \equiv 1 \pmod {\ell^{1+v}}$ by hypothesis, this implies $a \equiv d \equiv 1 \pmod{\ell^n}$, that is, $g \equiv \operatorname{Id} \pmod{\ell^n}$, contradiction.
\end{proof}

\subsection{Teichm\"uller lifts}
We now recall the definition of Teichm\"uller lifts and some of their basic properties.

\begin{definition}
Let $F$ be a finite extension of $\mathbb{Q}_\ell$ with residue field $\mathbb{F}=\mathbb{F}_{\ell^k}$. For an element $[f] \in \mathbb{F}$ we denote by $\omega([f])$ the Teichm\"uller lift of $[f]$, that is to say the only element $g \in \mathcal{O}_F$ that reduces to $[f]$ in $\mathbb{F}$ and satisfies $g^{\ell^k}=g$.
\end{definition}

\begin{lemma}
With the notation of the previous definition, the sequence $f^{\ell^{kn}}$ converges to $\omega([f])$ when $n$ tends to infinity.
\end{lemma}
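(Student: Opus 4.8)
The plan is to show directly that the sequence $f^{q^n}$, where $q := \ell^k$ and $f \in \mathcal{O}_F$ is any lift of $[f]$, becomes $\mathfrak{m}$-adically close to $g := \omega([f])$, with $\mathfrak{m}$ the maximal ideal of $\mathcal{O}_F$. By the definition of the Teichm\"uller lift we have $g^q = g$ and $g$ reduces to $[f]$, hence $f \equiv g \pmod{\mathfrak{m}}$; so it is enough to prove the congruence $f^{q^n} \equiv g \pmod{\mathfrak{m}^{n+1}}$ for all $n \geq 0$, which immediately gives $f^{q^n} \to g = \omega([f])$ (and incidentally shows the limit does not depend on the chosen lift $f$).

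First I would record the elementary fact that raising to the $\ell$-th power increases $\mathfrak{m}$-adic precision: if $x, y \in \mathcal{O}_F$ satisfy $x \equiv y \pmod{\mathfrak{m}^j}$ for some $j \geq 1$, then $x^\ell \equiv y^\ell \pmod{\mathfrak{m}^{j+1}}$. Writing $x = y + t$ with $t \in \mathfrak{m}^j$ and expanding $(y+t)^\ell$ by the binomial theorem, every term other than $y^\ell$ is divisible by $\ell t$ (because $\ell \mid \binom{\ell}{i}$ for $1 \leq i \leq \ell - 1$) or equals $t^\ell$; since $\ell \in \mathfrak{m}$ and $t \in \mathfrak{m}^j$, these terms lie in $\mathfrak{m}^{j+1}$ or in $\mathfrak{m}^{\ell j} \subseteq \mathfrak{m}^{j+1}$ (using $\ell j \geq 2j \geq j+1$). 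Applying this $k$ times in succession yields the implication $x \equiv y \pmod{\mathfrak{m}^j} \Rightarrow x^q \equiv y^q \pmod{\mathfrak{m}^{j+1}}$.

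The desired congruence then follows by induction on $n$: the case $n = 0$ is $f \equiv g \pmod{\mathfrak{m}}$, and if $f^{q^n} \equiv g \pmod{\mathfrak{m}^{n+1}}$, then raising to the $q$-th power and using $g^q = g$ gives $f^{q^{n+1}} = \left(f^{q^n}\right)^q \equiv g^q = g \pmod{\mathfrak{m}^{n+2}}$. Since $\mathcal{O}_F$ is complete for the $\mathfrak{m}$-adic topology, we conclude that $f^{q^n}$ converges, and its limit is $g = \omega([f])$.

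I do not anticipate a genuine obstacle: this is the standard description of Teichm\"uller representatives as limits of $q$-power iterates, and the only subtlety is that $F/\mathbb{Q}_\ell$ may be ramified, so one must argue with the maximal ideal $\mathfrak{m}$ (hence gaining only one unit of precision per $\ell$-th power, which is still far more than needed) rather than pretending $\ell$ is a uniformizer. It is also worth noting at the outset that the existence and uniqueness of $\omega([f])$ are already part of the preceding definition, so nothing needs to be reproved there.
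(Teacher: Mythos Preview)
Your argument is correct and self-contained. The paper proceeds slightly differently: it writes $f = \omega([f]) \cdot u$ with $u \equiv 1$ in the residue field, notes that $\omega([f])^{\ell^{kn}} = \omega([f])$, and then cites as ``clear'' that $u^{\ell^{kn}} \to 1$. That multiplicative split is shorter but tacitly assumes $[f] \neq 0$ (else one cannot divide by $\omega([f])$), and the fact it calls clear---that a principal unit raised to the $q^n$-th power tends to $1$---is precisely the special case $g=1$ of the congruence $x \equiv y \pmod{\mathfrak m^j} \Rightarrow x^q \equiv y^q \pmod{\mathfrak m^{j+1}}$ that you prove explicitly. So your additive approach is a touch more elementary, handles the degenerate case $[f]=0$ uniformly, and in effect supplies the detail the paper suppresses; conversely, the paper's version makes the structural reason (decomposition into Teichm\"uller part times a $1$-unit) more visible. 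A small remark: applying your ``raising to the $\ell$-th power'' step $k$ times actually gives the stronger conclusion $x^q \equiv y^q \pmod{\mathfrak m^{j+k}}$, so your stated bound $\mathfrak m^{j+1}$ is an understatement, though of course it suffices for the induction.
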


\begin{proof}
Write $f=\omega([f]) \cdot u$, where $u$ reduces to 1 in the residue field $\mathbb{F}$. It is clear that the sequence $u^{\ell^{kn}}$ converges to 1 in $F$, so $f^{\ell^{kn}} = \omega([f])^{\ell^{kn}} \cdot u^{\ell^{kn}} = \omega([f]) \cdot u^{\ell^{kn}}$ converges to $\omega([f])$.
\end{proof}

\begin{lemma}\label{lemma_Teichmuller1}
Let $g$ be an element of $\operatorname{SL}_2(\mathbb{Z}_\ell)$ such that $[g]$ has order prime to $\ell$ and strictly greater than 2: then the sequence $g^{\ell^{2n}}$ for $n \in \mathbb{N}$ converges to a certain $g_\infty$ that satisfies $g_\infty^{\ell^2}=g_\infty$.
Moreover, if $[g]$ is diagonalizable over $\mathbb{F}_\ell$, the limit $g_\infty$ even satisfies $g_\infty^\ell=g_\infty$.
\end{lemma}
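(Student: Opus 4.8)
The plan is to analyze the commutative $\mathbb{Z}_\ell$-algebra $R:=\mathbb{Z}_\ell[g]\subseteq M_2(\mathbb{Z}_\ell)$ and then invoke the preceding lemma on Teichm\"uller lifts. First I would note that $g$ is not a scalar matrix: otherwise $\det g=1$ would force $g=\pm\operatorname{Id}$, and $[g]$ would have order at most $2$, contrary to hypothesis. Since $g$ satisfies its characteristic polynomial $p_g(t)=t^2-\operatorname{tr}(g)t+1$ and is not scalar, the surjection $\mathbb{Z}_\ell[t]/(p_g(t))\to R$, $t\mapsto g$, is an isomorphism; in particular $R$ is free of rank $2$ over $\mathbb{Z}_\ell$, and $g$ is a unit of $R$ (with $g^{-1}=\operatorname{tr}(g)\operatorname{Id}-g$).

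Next I would show that the reduction $\bar p_g\in\mathbb{F}_\ell[t]$ is separable. If it had a double root $\lambda\in\overline{\mathbb{F}_\ell}$, then $\lambda^2=\det[g]=1$ would give $\lambda\in\{\pm1\}\subseteq\mathbb{F}_\ell$, so $[g]-\lambda\operatorname{Id}$ would be nilpotent; but $[g]$ has order prime to $\ell$, hence is semisimple, so $[g]-\lambda\operatorname{Id}$ would be simultaneously semisimple and nilpotent, hence zero, forcing $[g]=\pm\operatorname{Id}$ of order $\le 2$ --- a contradiction. Thus $\bar p_g$ is separable (here both hypotheses, order prime to $\ell$ and order $>2$, are genuinely used). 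By Hensel's lemma the factorization of $\bar p_g$ into distinct irreducible factors lifts to $\mathbb{Z}_\ell$, and since $\deg p_g=2$ the algebra $R$ is isomorphic, as a $\mathbb{Z}_\ell$-algebra, either to $\mathcal{O}_F$ with $F/\mathbb{Q}_\ell$ the unramified quadratic extension (exactly when $\bar p_g$ is irreducible over $\mathbb{F}_\ell$), or to $\mathbb{Z}_\ell\times\mathbb{Z}_\ell$ (exactly when $\bar p_g$ splits over $\mathbb{F}_\ell$, i.e.\ exactly when $[g]$ is diagonalizable over $\mathbb{F}_\ell$). Under this identification $g$ corresponds to a unit, so $[g]$ maps to a nonzero element of each residue field.

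It then remains to apply the Teichm\"uller-lift lemma proved above. In the first case the residue field is $\mathbb{F}_{\ell^2}$, so taking $k=2$ gives that $g^{\ell^{2n}}$ converges to some $g_\infty$ with $g_\infty^{\ell^2}=g_\infty$. In the second case, writing $g=(g_1,g_2)$ with $g_i\in\mathbb{Z}_\ell^\times$ and applying the lemma with $k=1$ to each factor, already the full sequence $g^{\ell^n}$ converges, to a limit $g_\infty$ satisfying $g_\infty^\ell=g_\infty$; then the subsequence $g^{\ell^{2n}}$ converges to the same $g_\infty$, which a fortiori satisfies $g_\infty^{\ell^2}=g_\infty$. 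In both cases $g_\infty\in\operatorname{SL}_2(\mathbb{Z}_\ell)$ because it is a limit of elements of the closed subset $\operatorname{SL}_2(\mathbb{Z}_\ell)$ of $M_2(\mathbb{Z}_\ell)$. The "moreover" clause is then immediate: if $[g]$ is diagonalizable over $\mathbb{F}_\ell$ we are in the second case, where we have just seen $g_\infty^\ell=g_\infty$. I do not anticipate any real obstacle; the only genuinely delicate point is the separability of $\bar p_g$ (equivalently, that $\mathbb{Z}_\ell[g]$ is \'etale over $\mathbb{Z}_\ell$), which is exactly where the two hypotheses on the order of $[g]$ enter, together with the minor bookkeeping of passing to the subsequence $g^{\ell^{2n}}$ in the split case.
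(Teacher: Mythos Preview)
Your proof is correct and follows essentially the same strategy as the paper: establish that $[g]$ has distinct eigenvalues (equivalently, that $\bar p_g$ is separable), then apply the Teichm\"uller-lift lemma to the eigenvalues. The paper carries this out by explicitly diagonalizing $g=P^{-1}DP$ over a degree-$\le 2$ extension $F/\mathbb{Q}_\ell$ and applying the lemma to each diagonal entry, whereas you work intrinsically in the commutative \'etale $\mathbb{Z}_\ell$-algebra $R=\mathbb{Z}_\ell[g]\cong\mathcal{O}_F$ or $\mathbb{Z}_\ell\times\mathbb{Z}_\ell$ without choosing a basis. Your route is slightly more canonical and makes the separability check (where the hypotheses on the order of $[g]$ enter) fully explicit; the paper's diagonalization is more concrete and immediately exhibits the eigenvalues of $g_\infty$ as $\omega([\lambda^{\pm 1}])$, which is convenient for the applications that follow. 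Substantively the two arguments coincide.
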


\begin{proof}%
The assumption implies that $[g]$ has distinct eigenvalues, hence there exists an extension $F$ of $\mathbb{Q}_\ell$, of degree at most 2, over which $g$ can be written as $g=P^{-1} D P$, where $D=\operatorname{diag}(\lambda,\lambda^{-1})$ is diagonal and $P$ is a base-change matrix. By the previous lemma, the sequence $D^{\ell^{2n}}$ converges to $\operatorname{diag}(\omega([\lambda]),\omega([\lambda^{-1}]))$, so the sequence $g^{\ell^{2n}} = P D^{\ell^{2n}} P^{-1}$ converges to $P^{-1} \operatorname{diag}(\omega([\lambda]),\omega([\lambda^{-1}])) P$, which satisfies the conclusion since $\omega([\lambda])^{\ell^2}=\omega([\lambda])$. For the second statement we can take $F=\mathbb{Q}_\ell$ and use the fact that the Teichm\"uller lifts satisfy $\omega([\lambda])^\ell=\omega([\lambda])$.
\end{proof}


\begin{proof}
Let $[\lambda_1^{\pm 1}]=[\lambda_2^{\pm 1}]$ be the eigenvalues of $[g_1]=[g_2]$. As in the proof of the previous lemma we see that the sequence $(g_1,g_2)^{\ell^{2n}}$ converges to a limit $(h_1,h_2)$ such that the matrices $h_1,h_2$ have the same eigenvalues $\omega([\lambda_i]),\omega([\lambda_i^{-1}])$. The existence of $Q$ is then clear if $[h_1]=[h_2]$ is diagonalizable over $\mathbb{F}_\ell$, and follows from \cite[Lemma 4.7]{AdelicEC} otherwise. The claim on the orders of $[h_1], [h_2]$ is clear.
\end{proof}

\section{Odd $\ell$, $n=2$}\label{sect_Pinkl}
In this section we establish the case $n=2$ of the main theorem when $\ell$ is an odd prime, together with a refined version of it that applies to subgroups of $\operatorname{GL}_2(\mathbb{Z}_\ell)^2$:






\begin{theorem}\label{thm_PinkGL22}
Let $\ell>2$ be a prime number and $G$ an open subgroup of $\operatorname{GL}_2(\mathbb{Z}_\ell)^2$. Let $G_1, G_2$ be the projections of $G$ on the two factors $\operatorname{GL}_2(\mathbb{Z}_\ell)$, and for $i=1,2$ let $n_i$ be a positive integer such that $G_i$ contains $\mathcal{B}_\ell(n_i)$. Suppose furthermore that for every $(g_1,g_2) \in G$ we have $\det(g_1)=\det(g_2)$. At least one of the following holds:
\begin{enumerate}
\item $G$ contains $\mathcal{B}_\ell(20 \max\{n_1,n_2\},20 \max\{n_1,n_2\})$
\item there exists a subgroup $T$ of $G$, of index dividing $2 \cdot 48^2$, with the following properties:
\begin{enumerate}
\item if $\mathcal{L}(T)$ contains $\ell^k \mathfrak{sl}_2(\mathbb{Z}_\ell) \oplus \ell^k \mathfrak{sl}_2(\mathbb{Z}_\ell)$ for a certain integer $k \geq 0$, then $T$ contains $\mathcal{B}_\ell(p,p)$, where
$
p=2k+\max\left\{2k,8n_1,8n_2 \right\}.
$
\item for any $(t_1,t_2)$ in $T$, if both $[t_1]$ and $[t_2]$ are multiples of the identity, then they are equal.
\end{enumerate}

\end{enumerate}
\end{theorem}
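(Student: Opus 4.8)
The plan is to transfer everything to $H:=G\cap\operatorname{SL}_2(\mathbb{Z}_\ell)^2$, whose structure is rigidly controlled by simplicity of $\mathfrak{sl}_2$. First I record two consequences of the determinant hypothesis. For any $g_1\in\mathcal{B}_\ell(n_1)\subseteq G_1$ there is $(g_1,g_2)\in G$; then $\det g_2=\det g_1=1$, so $(g_1,g_2)\in H$ and $\pi_1(H)\supseteq\mathcal{B}_\ell(n_1)$, and symmetrically $\pi_2(H)\supseteq\mathcal{B}_\ell(n_2)$. Next, because $\Theta_2$ kills traces while the relation $\det g_1=\det g_2$ forces the two components of every element of $\operatorname{Lie}(G)$ to have equal trace, the image of $\operatorname{Lie}(G)$ in $\mathfrak{sl}_2(\mathbb{Q}_\ell)^2$, which is $\mathcal{L}(G)\otimes\mathbb{Q}_\ell$, coincides with $\operatorname{Lie}_{\mathbb{Q}_\ell}(H)$ up to the single scalar direction; in particular $\operatorname{rank}\mathcal{L}(G)\le\dim_{\mathbb{Q}_\ell}H+1$. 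A subalgebra of $\mathfrak{sl}_2(\mathbb{Q}_\ell)^2$ surjecting onto both factors is, by Goursat's lemma and simplicity, either everything or the graph of an automorphism, so $\dim_{\mathbb{Q}_\ell}H\in\{6,3\}$; I treat the two cases separately.

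\textbf{The diagonal case} $\dim_{\mathbb{Q}_\ell}H=3$. Here $\dim G\le 4$, so $\operatorname{rank}\mathcal{L}(G)\le 4<6$ and $\mathcal{L}(T)\subseteq\mathcal{L}(G)$ can never contain $\bigoplus_{i=1}^2\ell^k\mathfrak{sl}_2(\mathbb{Z}_\ell)$: property (a) of case (2) holds vacuously for every subgroup $T$, and (1) is impossible; so I only need $T\le G$ of index dividing $2\cdot 48^2$ satisfying (b). Since $\dim_{\mathbb{Q}_\ell}\pi_i(H)=3$, each $\pi_i(H)$ is open in $\operatorname{SL}_2(\mathbb{Z}_\ell)$, hence its reduction lies in no Borel and in no Cartan normalizer (their preimages having dimension $\le 2$); by Dickson's classification, and using that $\mathbb{F}_\ell$ is a prime field (so no subfield subgroups occur), $\pi_i(H)(\ell)$ is either $\operatorname{SL}_2(\mathbb{F}_\ell)$ or the preimage in $\operatorname{SL}_2$ of $A_4,S_4,A_5$, and $G_i(\ell)$ is that group times a group of scalars. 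Comparing determinants, whenever $(t_1,t_2)\in G$ has $[t_1]$ and $[t_2]$ both scalar one gets $[t_1]=\pm[t_2]$, so (b) can fail only through the sign: the set $S$ of such pairs is normal in $G$ and $(t_1,t_2)\mapsto[t_1][t_2]^{-1}$ is a $G$-invariant character $S\to\{\pm1\}$, and from the cyclic structure of the scalar part of $G(\ell)$ one exhibits $T$ of index dividing $2$ (and a fortiori dividing $2\cdot48^2$) with $T\cap S$ inside its kernel.

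\textbf{The open case} $\dim_{\mathbb{Q}_\ell}H=6$, i.e. $H$ open in $\operatorname{SL}_2(\mathbb{Z}_\ell)^2$. Let $K_1,K_2$ be the Goursat kernels $\{g:(g,\operatorname{Id})\in H\}$ and $\{g:(\operatorname{Id},g)\in H\}$; being nonzero ideals in the respective $\mathfrak{sl}_2$'s they are open, and normal in $\pi_1(H),\pi_2(H)$. If both $K_i\supseteq\mathcal{B}_\ell(20\max\{n_1,n_2\})$ then $G\supseteq H\supseteq K_1\times K_2\supseteq\mathcal{B}_\ell(20\max\{n_1,n_2\},20\max\{n_1,n_2\})$ and we are in case (1). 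Otherwise I would take $T\le G$ of index dividing $2\cdot48^2$ whose reduction $T(\ell)$ has trivial semisimple part (bounded index by Dickson, again also arranged so that the sign of (b) is fixed), so that $N(T)\supseteq T\cap\mathcal{B}_\ell(1)^2$ is a pro-$\ell$ open subgroup to which Pink's structure theorem applies. Assuming $\mathcal{L}(T)\supseteq\bigoplus_{i=1}^2\ell^k\mathfrak{sl}_2(\mathbb{Z}_\ell)$, I would deduce a lower bound of the same shape for $\mathcal{L}(N(T))$ — using that the off-diagonal submodule $\{w:(0,w)\in\mathcal{L}(T)\}$ is stable under $\operatorname{ad}$ of a large neighbourhood, a Lie-algebra analogue of Lemma~\ref{lemma_ConjStableSubspaces}, together with the case $n=1$ of the theorem applied to each projection — and then run Pink and Lemmas~\ref{lemma_Generators} and \ref{lemma_Commutator} to produce $L(a),R(b),D(c)$ in each factor and the commutators realising $\mathcal{B}_\ell(p,p)\subseteq T$ with $p=2k+\max\{2k,8n_1,8n_2\}$.

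\textbf{Main obstacle.} The delicate point is precisely this quantitative descent in the open case, from ``$\mathcal{L}(T)$ contains $\bigoplus\ell^k\mathfrak{sl}_2(\mathbb{Z}_\ell)$'' to ``$T$ contains $\mathcal{B}_\ell(p,p)$'' with $p$ linear in $k$ and the $n_i$: $T$ is not pro-$\ell$, and $[T:N(T)]=|T(\ell)|$ is not bounded independently of $\ell$, so Pink's dictionary cannot simply be transported between $T$ and $\mathcal{L}(T)$; the resolution is the careful choice of $T$ together with the ideal-stability argument, which pins down the required one-parameter subgroups already inside the pro-$\ell$ core $N(T)$, where Pink's description is exact. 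A second, more clerical obstacle is checking that the index bound $2\cdot 48^2$ survives in every sub-case — accounting separately for the determinant sign, the exceptional-subgroup contributions of the two factors, and the small anomalies at $\ell=3$ — and that the numerical constants $20$ and $2k+\max\{2k,8n_1,8n_2\}$ come out with room to spare.
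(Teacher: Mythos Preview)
Your overall plan—pass to $H=G\cap\operatorname{SL}_2(\mathbb{Z}_\ell)^2$, then study the descent from $\mathcal{L}(T)$ to the pro-$\ell$ core $N(T)$ and invoke Pink there—is the right shape, and matches the paper's strategy. But what you have written is a sketch that leaves the essential step unproved, and it contains one concrete error of principle.

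\medskip

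\textbf{The actual content of this theorem.} In the paper, the proof of Theorem~\ref{thm_PinkGL22} itself is a short reduction to the $\operatorname{SL}_2$ statement (Theorem~\ref{thm_PinkSL22}): one passes to the index-$2$ kernel $U_1$ of $\det^* \bmod{\mathbb{Z}_\ell^{\times 2}}$, saturates by scalars to obtain $U_2\subseteq\operatorname{SL}_2(\mathbb{Z}_\ell)^2$ with the \emph{same} Lie algebra and derived subgroup (Lemma~\ref{lemma_Saturation}), applies Theorem~\ref{thm_PinkSL22} to $U_2$, and pulls the resulting $T_2$ back. The substantial work is entirely inside Theorem~\ref{thm_PinkSL22}. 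Your proposal collapses this two-stage structure and tries to do both at once; that is fine, but it means the burden on you is the proof of the $\operatorname{SL}_2$ statement, and that is precisely what you have not supplied.

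\medskip

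\textbf{The genuine gap.} In your ``open case'' you write that you would take $T\leq G$ of index dividing $2\cdot 48^2$ ``whose reduction $T(\ell)$ has trivial semisimple part''. This is impossible in general: if $G_1(\ell)=G_2(\ell)=\operatorname{SL}_2(\mathbb{F}_\ell)$ (which is the generic situation for large $\ell$), any subgroup of $G$ whose reduction is solvable has index divisible by $|\operatorname{PSL}_2(\mathbb{F}_\ell)|=\ell(\ell^2-1)/2$, which is unbounded in $\ell$. The paper does \emph{not} avoid this case; it treats $(\operatorname{SL}_2(\mathbb{F}_\ell),\operatorname{SL}_2(\mathbb{F}_\ell))$ separately with a completely different argument: if $G(\ell^{k+1})$ is the graph of an isomorphism it must be inner (automorphisms of $\operatorname{SL}_2(\mathbb{Z}/\ell^{k+1}\mathbb{Z})$ are inner), which contradicts $\mathcal{L}(G)\supseteq\ell^k\mathfrak{sl}_2^{\,2}$; otherwise one extracts an element $(\operatorname{Id},g_2)$ nontrivial modulo $\ell^{k+1}$ and concludes via Lemma~\ref{lemma_NontrivialLieAlgebra}. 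For the remaining Dickson types (Borel, split/nonsplit Cartan, and their mixtures) the paper carries out, case by case, the passage from $\mathcal{L}(T)$ to $\mathcal{L}(N(T))$ that you allude to, by exhibiting explicit projection operators built as polynomials in conjugation by a well-chosen Teichm\"uller element $(g_1,g_2)$. None of this case analysis appears in your outline, and the single sentence ``ideal-stability argument \dots pins down the required one-parameter subgroups'' does not substitute for it.

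\medskip

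\textbf{A smaller error.} In your diagonal case you assert that $\pi_i(H)$ open in $\operatorname{SL}_2(\mathbb{Z}_\ell)$ implies $\pi_i(H)(\ell)$ is not contained in a Borel or Cartan normalizer. That inference is false: $\mathcal{B}_\ell(1)$ is open and has trivial reduction. Openness constrains the Lie algebra, not the residual image. (Under the hypotheses of the theorem, $H$ is in any case open in $\operatorname{SL}_2(\mathbb{Z}_\ell)^2$, so this ``diagonal'' branch never occurs and the discussion is moot—but the argument you gave for it would not have been valid.)
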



We will derive this theorem from the corresponding statement for $\operatorname{SL}_2(\mathbb{Z}_2)^2$:

\begin{theorem}\label{thm_PinkSL22}
Let $\ell > 2$ be a prime number and $G$ an open subgroup of $\operatorname{SL}_2(\mathbb{Z}_\ell)^2$. For $i=1,2$ let $n_i$ be a positive integer such that $G_i$ contains $\mathcal{B}_\ell(n_i)$. At least one of the following holds:
\begin{enumerate}
\item $G'$ contains $\mathcal{B}_\ell(20\max\{n_1,n_2\},20\max\{n_1,n_2\})$
\item there exists a subgroup $T$ of $G$, of index dividing $48^2$, with the following properties:
\begin{enumerate} \item if $\mathcal{L}(T)$ contains $\ell^k \mathfrak{sl}_2(\mathbb{Z}_\ell) \oplus \ell^k \mathfrak{sl}_2(\mathbb{Z}_\ell)$ for a certain integer $k \geq 0$, then $T'$ contains $\mathcal{B}_\ell(p,p)$, where
$
p=2k+\max\left\{2k,8n_1,8n_2 \right\}.
$
\item for any $(t_1,t_2)$ in $T$, if both $[t_1]$ and $[t_2]$ are multiples of the identity, then they are equal.
\end{enumerate}
\end{enumerate}

\end{theorem}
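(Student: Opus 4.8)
I would analyse $G$ through its two projections $G_1,G_2$ and the way they are glued together, i.e. via Goursat's lemma, both at the level of groups and at the level of the $\mathbb{Z}_\ell$-module $\mathcal{L}(G)\subseteq\mathfrak{sl}_2(\mathbb{Z}_\ell)^2$. Set $H_1=G\cap(\operatorname{SL}_2(\mathbb{Z}_\ell)\times\{\operatorname{Id}\})$ and $H_2=G\cap(\{\operatorname{Id}\}\times\operatorname{SL}_2(\mathbb{Z}_\ell))$, viewed inside $\operatorname{SL}_2(\mathbb{Z}_\ell)$: each $H_i$ is normal in $G_i$, hence normalised by $\mathcal{B}_\ell(n_i)$, and $H_1\times H_2\subseteq G$, so that $G'\supseteq H_1'\times H_2'$ and $\mathcal{L}(G)\supseteq\mathcal{L}(H_1)\oplus\mathcal{L}(H_2)$. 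Dually, $\mathfrak a:=\mathcal{L}(G)\cap(\mathfrak{sl}_2(\mathbb{Z}_\ell)\times 0)$ and $\mathfrak b:=\mathcal{L}(G)\cap(0\times\mathfrak{sl}_2(\mathbb{Z}_\ell))$ are Lie ideals of $\mathcal{L}(G_1)\supseteq\ell^{n_1}\mathfrak{sl}_2(\mathbb{Z}_\ell)$ and $\mathcal{L}(G_2)\supseteq\ell^{n_2}\mathfrak{sl}_2(\mathbb{Z}_\ell)$ respectively, and since $\mathfrak{sl}_2$ is simple a non-zero such ideal necessarily contains $\ell^m\mathfrak{sl}_2(\mathbb{Z}_\ell)$ for some $m$. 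The dichotomy of the theorem is then: either $\mathfrak a$ and $\mathfrak b$ are ``large in bounded depth'', and case (1) holds; or the gluing is ``rigid'', $G$ is essentially a twisted diagonal, and case (2) holds with (2a) vacuously true.

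\textbf{Step 1 (the large case, and the conversion $\mathcal{L}$-big $\Rightarrow$ $G$-big).} The main technical input, and really the core of the argument, is to upgrade information on $\mathcal{L}(G)$ to information on $G$ itself without assuming $G$ pro-$\ell$. One replaces $G$ by $N(G)$ — whose index in $G$ is bounded by a Dickson-type count (this is where factors like $24$ and $48$ originate) and whose Lie algebra agrees with $\mathcal{L}(G)$ up to a harmless ``scalar'' discrepancy handled as in Lemma \ref{lemma_Saturation} — and argues on the pro-$\ell$ group $N(G)$. If $\mathfrak a$ (say) is non-zero modulo $\ell^{t+1}$ then, being stable under $\operatorname{Ad}(\mathcal{B}_\ell(n_1))$, it already contains $\ell^{t+4n_1+4v}\mathfrak{sl}_2(\mathbb{Z}_\ell)$ by Lemma \ref{lemma_ConjStableSubspaces}; a Pink-style bootstrap — conjugating a well-chosen element of $G$ of low depth in the relevant factor by elements of $\mathcal{B}_\ell(n_1)$, taking iterated commutators and exponentiating, all while tracking valuations with Lemmas \ref{lemma_log}, \ref{lemma_Generators}, \ref{lemma_Commutator} and \ref{lemma_NontrivialImpliesNontrivialAlgebra} and building on the case $n=1$ of the theorem — turns this into the assertion that $H_1$ contains $\mathcal{B}_\ell$ of a depth linear in $t$ and $n_1$, and similarly for $\mathfrak b,H_2$. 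If both $\mathfrak a,\mathfrak b$ are non-zero already in depth $O(\max\{n_1,n_2\})$, this forces $H_i'\supseteq\mathcal{B}_\ell(20\max\{n_1,n_2\})$ (using $\mathcal{B}_\ell(s)'\supseteq\mathcal{B}_\ell(2s)$ from Lemma \ref{lemma_Commutator}), hence $G'\supseteq\mathcal{B}_\ell(20\max\{n_1,n_2\},20\max\{n_1,n_2\})$: case (1). If one of them survives only in higher depth $t$, the hypothesis of (2a) can only hold for $k$ comparable to $t$, and one checks that the depth $p=2k+\max\{2k,8n_1,8n_2\}$ produced by the same estimates is then attained by $G'$, so case (2) holds with $T=G$.

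\textbf{Step 2 (the rigid case).} Suppose instead $\mathfrak a=\mathfrak b=0$ (borderline $H_i\subseteq\{\pm\operatorname{Id}\}$), or more generally that they survive only in arbitrarily high depth. Using Lemmas \ref{lemma_TrivialModEllImpliesTrivial} and \ref{lemma_Teichmuller1} one checks that $G$ contains no element that is scalar in one factor while having large image in the other, so the Goursat isomorphism $G_1/H_1\xrightarrow{\ \sim\ }G_2/H_2$ identifies two essentially faithful open subgroups of $\operatorname{SL}_2(\mathbb{Z}_\ell)$ modulo centre. By rigidity of open subgroups of $\operatorname{SL}_2(\mathbb{Z}_\ell)$ — together with Dickson's classification to absorb the finite part $G_i/N(G_i)$, of exceptional type $\widetilde{A_4},\widetilde{S_4},\widetilde{A_5}$ in the worst case, whence the constant $48$ — this isomorphism agrees, on a subgroup $T\le G$ with $[G:T]\mid 48^2$, with conjugation by some $P\in\operatorname{GL}_2(\mathbb{Z}_\ell)$. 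Conjugating the second factor by $P$ (which preserves $G'$, $\mathcal{L}(\cdot)$, every $\mathcal{B}_\ell(s)$ and all conclusions of the theorem) we may assume $T\subseteq\{(g,\pm g)\}$; then $\mathcal{L}(T)$ lies in the rank-$3$ lattice $\{(a,\pm a)\}$, cannot contain $\ell^k\mathfrak{sl}_2(\mathbb{Z}_\ell)^{\oplus 2}$, so (2a) holds vacuously, and (2b) is obtained, if necessary, by a further index-$\le 2$ refinement fixing the sign to $+$, a factor of $2$ absorbed into $48^2$.

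\textbf{Main obstacle.} The hard part is Step 1: proving, with explicit constants, that fullness of $\mathcal{L}(G)$ in bounded depth forces $G$ itself (not merely a bounded-index subgroup) to contain a deep congruence subgroup, even though $G$ is not pro-$\ell$ and $\mathcal{L}(G)$ need not be literally closed under the Lie bracket. This requires the delicate logarithm/exponential and iterated-commutator bookkeeping sketched above, fed by Lemma \ref{lemma_ConjStableSubspaces} and the case $n=1$, carried out precisely enough to reach the stated depths $20\max\{n_1,n_2\}$ and $2k+\max\{2k,8n_1,8n_2\}$. The rigidity-and-Dickson input of Step 2 is comparatively standard, but the uniform tracking of the $48^2$ index across the two factors, and its interaction with the $\pm\operatorname{Id}$/Weyl subtleties needed for property (2b), also demand some care.
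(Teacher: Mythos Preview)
Your outline has a genuine gap at its most important step. In Step~1 you write that ``one replaces $G$ by $N(G)$ --- whose index in $G$ is bounded by a Dickson-type count \ldots\ and whose Lie algebra agrees with $\mathcal{L}(G)$ up to a harmless `scalar' discrepancy handled as in Lemma~\ref{lemma_Saturation}''. Both claims are false. First, $[G:N(G)]$ can be as large as $\left(|\operatorname{SL}_2(\mathbb{F}_\ell)|/\ell\right)^2\asymp\ell^4$, which is not bounded independently of $\ell$; the whole point of passing to the subgroup $T$ of index dividing $48^2$ is to avoid this. Second, and more seriously, Lemma~\ref{lemma_Saturation} compares $\mathcal{L}$ for two groups that differ by \emph{scalar multiplication}; it says nothing about the passage from $G$ to its maximal normal pro-$\ell$ subgroup. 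In general $\mathcal{L}(N(G))$ can be much smaller than $\mathcal{L}(G)$, and bridging that gap is precisely the substance of the proof. The paper does this by a case analysis on the Dickson type of $T_1(\ell),T_2(\ell)$: one produces an element $(g_1,g_2)\in T$ with $g_i^{\ell^2}=g_i$ (a Teichm\"uller lift), studies the conjugation operator $C:(x,y)\mapsto(g_1xg_1^{-1},g_2yg_2^{-1})$ on $\mathfrak{sl}_2(\mathbb{Z}_\ell)^2$, and uses the eigenspace decomposition of $C$ to show that certain projections of $\mathcal{L}(T)$ already lie in $\mathcal{L}(N(T))$. Only then can Pink's theorem (Theorem~\ref{thm_Pink_GP}, via Lemma~\ref{lemma_NontrivialLieAlgebra}) be applied to $N(T)$. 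Your ``Pink-style bootstrap'' does not address this mechanism.

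Your Step~2 also does not function as written. Since $G$ is open in $\operatorname{SL}_2(\mathbb{Z}_\ell)^2$, it contains some $\mathcal{B}_\ell(r,r)$, so $H_1,H_2$ are open and $\mathfrak a,\mathfrak b$ are never zero; thus the ``$\mathfrak a=\mathfrak b=0$'' case is empty, and the plan to make (2a) vacuous cannot succeed. In the paper, the ``rigid'' situation (type $(\operatorname{SL}_2,\operatorname{SL}_2)$) is handled not by declaring (2a) vacuous but by showing that if $G(\ell^{k+1})$ were the graph of an automorphism of $\operatorname{SL}_2(\mathbb{Z}/\ell^{k+1}\mathbb{Z})$ (necessarily inner), then $\mathcal{L}(G)$ could not contain $\ell^k\mathfrak{sl}_2^{\oplus2}$; the resulting non-graph element then feeds into Lemma~\ref{lemma_NontrivialLieAlgebra}. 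In the Cartan/Borel cases there is no such global rigidity, and the argument genuinely uses the eigenvalue computation with $C$ sketched above. So the dichotomy you propose --- ``large $\mathfrak a,\mathfrak b$'' versus ``vacuous (2a)'' --- does not carve the problem at the right joint; the real work is the comparison $\mathcal{L}(T)\rightsquigarrow\mathcal{L}(N(T))$, done type by type.
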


\subsection{Strategy of proof}
Given the amount of technical details required to establish theorem \ref{thm_PinkSL22}, before plunging into the actual proof we try to give a general overview of the argument. Several ingredients are needed. To begin with, when an element of $G$ exists whose two projections $g_1,g_2$ on $G_1, G_2$ are sensibly different (for example, $g_1$ has trivial reduction modulo $\ell$ but $g_2$ does not), then this element can be used to show that $G$ is stable under certain `projections' from $G$ to $G_1 \times \{\operatorname{Id}\}$ and $\{\operatorname{Id}\} \times G_2$: in this case, the assumption that $G_1, G_2$ contain open topological balls is enough to conclude that the same is true for $G$ (in a quantitative manner). This line of argument is what leads to lemma \ref{lemma_NontrivialElementIsEasy}. Thus, in proving theorem \ref{thm_PinkSL22} one can assume -- among other things -- that $G(\ell)$ looks like the graph of an isomorphism: the reduction to this case is carried out in proposition \ref{prop_FirstReduction}.
Moreover, theorem \ref{thm_PinkSL22} is essentially known (and due to Pink) when $G$ is pro-$\ell$, cf.~theorem \ref{thm_Pink_GP} below. We would therefore like to reduce to the case of pro-$\ell$ groups, which we do by considering the maximal normal pro-$\ell$ subgroup of $G$ (denoted by $N(G)$ in all that follows). Most of the remaining part of the proof (sections \ref{sect_FirstCase} to \ref{sect_LastCase}) is concerned with showing that when the Lie algebra of $T$ (a certain subgroup of $G$ of bounded index) is large, the same is true for the Lie algebra of $N(T)$, to which we can subsequently apply Pink's theorem. The key observation in carrying out this last step is the following: since $N(T)$ is (by definition) normal in $T$, its Lie algebra is acted upon by all of $T$, including those elements that do not belong to $N(T)$. A careful study of the action of these elements, whose details depend on the specific shape of $T(\ell)$, shows that the stability of $\mathcal{L}(N(T))$ by the conjugation action of $T$ forces it to be not very different from $\mathcal{L}(T)$, and an application of Pink's theorem (in the form of lemma \ref{lemma_NontrivialLieAlgebra}) then concludes the argument.

One final technical ingredient is our ability to change bases to simplify the computations. More precisely, let $g$ be any element of $ \operatorname{GL}_2(\mathbb{Z}_\ell)^2$: then the group $G$ satisfies the assumptions of theorem \ref{thm_PinkSL22} (for certain values of $n_1,n_2$) if and only if $gGg^{-1}$ does, because the topological balls $\mathcal{B}_\ell(s)$ are invariant under any integral change of basis. Likewise, the conclusion of the theorem only involves topological balls, multiples of the identity, and open sets of the form $\ell^k \mathfrak{sl}_2(\mathbb{Z}_\ell) \oplus \ell^k \mathfrak{sl}_2(\mathbb{Z}_\ell)$: all of these objects are invariant under change of basis -- notice that $\Theta(g t g^{-1})=g \Theta(t) g^{-1}$ for any $t \in \operatorname{GL}_2(\mathbb{Z}_\ell)^2$ -- and therefore the claim of theorem \ref{thm_PinkSL22} is true for $G$ if and only if it is true for $gGg^{-1}$. In other words, as it should be expected, theorem \ref{thm_PinkSL22} depends on $G$ only through its conjugacy class, and therefore we are free to change basis every time doing so simplifies some of our calculations.

\begin{remark}
The proof of theorem \ref{thm_PinkSL22} is constructive, in the sense that the group $T$ -- when we are in case (2) -- is described explicitly. Property (2b) will be clear from the construction, and we will not comment further on it.
\end{remark}

\subsection{Preliminary reductions}
One of the key ingredients of the proof of theorem \ref{thm_PinkSL22} is the following result:


\begin{theorem}\label{thm_Pink_GP}
Let $\ell > 2$ be a prime number and $k$ be a non-negative integer.
Suppose $G$ is a closed pro-$\ell$ subgroup of $\operatorname{SL}_2(\mathbb{Z}_\ell)$ such that $\mathcal{L}(G)$ contains $\ell^k \mathfrak{sl}_2(\mathbb{Z}_\ell)$: then $G'$ contains $\mathcal{B}_\ell(2k)$. 
Similarly, if $G$ is a closed pro-$\ell$ subgroup of $\operatorname{SL}_2(\mathbb{Z}_\ell)^2$ and $\mathcal{L}(G)$ contains $\ell^k \mathfrak{sl}_2(\mathbb{Z}_\ell) \oplus \ell^k \mathfrak{sl}_2(\mathbb{Z}_\ell)$, then $G'$ contains $\mathcal{B}_\ell(2k,2k)$. 
\end{theorem}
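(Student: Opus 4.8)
The plan is to deduce this from the structure theorem of Pink \cite{MR1241950}, after reducing it to a purely algebraic statement about $\mathfrak{sl}_2$. First note that the case $k=0$ is vacuous: if $G$ is pro-$\ell$ then $G(\ell)$ is an $\ell$-subgroup of $\operatorname{SL}_2(\mathbb{F}_\ell)$, hence (after a change of basis, which is harmless since everything in sight is conjugation-invariant) contained in the unipotent upper-triangular subgroup, so $\Theta_1(g)$ is strictly upper triangular modulo $\ell$ for every $g \in G$ and $\mathcal{L}(G)$ reduces onto a subspace of dimension at most $1$; in particular $\mathcal{L}(G)$ cannot contain $\mathfrak{sl}_2(\mathbb{Z}_\ell)$, and we may assume $k \geq 1$. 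Two elementary remarks will then be used. (i) Since $\ell$ is odd, $\mathfrak{sl}_2(\mathbb{Z}_\ell)$ is a perfect Lie algebra: from $[h,x]=2x$, $[h,y]=-2y$, $[x,y]=h$ and $2 \in \mathbb{Z}_\ell^\times$ we get $[\mathfrak{sl}_2(\mathbb{Z}_\ell),\mathfrak{sl}_2(\mathbb{Z}_\ell)] = \mathfrak{sl}_2(\mathbb{Z}_\ell)$, and more generally $[\ell^k\mathfrak{sl}_2(\mathbb{Z}_\ell), \ell^k\mathfrak{sl}_2(\mathbb{Z}_\ell)] = \ell^{2k}\mathfrak{sl}_2(\mathbb{Z}_\ell)$. (ii) For $k \geq 1$, the computation of Lemma \ref{lemma_log} shows that $\exp$ and $\log$ are mutually inverse bijections between $\mathcal{B}_\ell(2k)$ and $\ell^{2k}M_2(\mathbb{Z}_\ell)$; since an element of $\mathcal{B}_\ell(2k)$ has determinant $1$, its logarithm is traceless, so $\log$ restricts to a bijection $\mathcal{B}_\ell(2k) \to \ell^{2k}\mathfrak{sl}_2(\mathbb{Z}_\ell)$ and $\exp(\ell^{2k}\mathfrak{sl}_2(\mathbb{Z}_\ell)) = \mathcal{B}_\ell(2k)$. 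In view of these remarks it suffices to prove that $G'$ contains $\exp\bigl([\mathcal{L}(G),\mathcal{L}(G)]\bigr)$; the right-hand side makes sense because $[\mathcal{L}(G),\mathcal{L}(G)]$ is contained in $\ell\,\mathfrak{sl}_2(\mathbb{Z}_\ell)$ (as $\mathcal{L}(G)$ reduces to an abelian, in fact $\leq 1$-dimensional, subspace modulo $\ell$) so that $\exp$ converges on it, while it contains $\ell^{2k}\mathfrak{sl}_2(\mathbb{Z}_\ell)$ by remark (i).

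For the containment $G' \supseteq \exp([\mathcal{L}(G),\mathcal{L}(G)])$ I would argue as follows, which is the content of Pink's theorem. Every $g \in G$ with $g \equiv \operatorname{Id} \pmod\ell$ has $\log(g) \in \mathfrak{sl}_2(\mathbb{Z}_\ell)$, and writing $A = g-\operatorname{Id}$ and using $\operatorname{tr}(A) = -\det(A)$ (because $\det g = 1$) one checks directly that $\log(g) \equiv \Theta_1(g) \pmod{\ell^{2 v_\ell(A)}}$; by successive approximation in the complete lattice $\mathfrak{sl}_2(\mathbb{Z}_\ell)$ it follows that the closed $\mathbb{Z}_\ell$-Lie subalgebra $\Lambda$ generated by $\{\log g : g \in G,\ g \equiv \operatorname{Id} \bmod \ell\}$ contains $\mathcal{L}(G)$, hence contains $\ell^k\mathfrak{sl}_2(\mathbb{Z}_\ell)$, so $[\Lambda,\Lambda] \supseteq \ell^{2k}\mathfrak{sl}_2(\mathbb{Z}_\ell)$ by remark (i). Moreover, $G$ being pro-$\ell$ and closed, for such a $g$ the procyclic group $\overline{\langle g\rangle} \cong \mathbb{Z}_\ell$ is contained in $G$, so $\exp(\lambda \log g) \in G$ for every $\lambda \in \mathbb{Z}_\ell$. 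Using the Baker--Campbell--Hausdorff formula one then shows that for $u,w \in \Lambda \cap \ell\,\mathfrak{sl}_2(\mathbb{Z}_\ell)$ the group commutator $[\exp u, \exp w] \in G'$ is $\exp$ of an element congruent to $[u,w]$ modulo a higher power of $\ell$; iterating such commutators, taking $\mathbb{Z}_\ell$-th powers, and passing to limits inside the closed group $G'$, one produces $\exp(m) \in G'$ for every $m \in [\Lambda,\Lambda]\cap \ell\,\mathfrak{sl}_2(\mathbb{Z}_\ell)$, in particular for every $m \in \ell^{2k}\mathfrak{sl}_2(\mathbb{Z}_\ell)$; by remark (ii) this is exactly the assertion $G' \supseteq \mathcal{B}_\ell(2k)$. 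Alternatively, one may simply invoke Pink's classification \cite{MR1241950}, which identifies $G'$ with $\exp([\mathcal{L}(G),\mathcal{L}(G)])$ whenever the latter is defined.

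The step I expect to be the main obstacle is precisely the quantitative Baker--Campbell--Hausdorff bookkeeping of the previous paragraph: one must track, for each iterated commutator, how much is lost in the $\ell$-adic valuation of the error term, and verify that the limiting process recovers $\exp$ of the \emph{full} derived lattice $\ell^{2k}\mathfrak{sl}_2(\mathbb{Z}_\ell)$ with no loss in the exponent. This is where the pro-$\ell$ hypothesis is essential and where the estimates of Lemma \ref{lemma_log}, together with the generation statement of Lemma \ref{lemma_Generators}, do the work; it is the part genuinely due to Pink, and in the final write-up I would quote \cite{MR1241950} for it rather than reproduce it. Finally, the statement for closed pro-$\ell$ subgroups $G$ of $\operatorname{SL}_2(\mathbb{Z}_\ell)^2$ follows by running the identical argument with $\mathfrak{sl}_2(\mathbb{Z}_\ell)$ replaced throughout by $\mathfrak{sl}_2(\mathbb{Z}_\ell)\oplus\mathfrak{sl}_2(\mathbb{Z}_\ell)$: the Lie bracket is taken componentwise, so $[\ell^k\mathfrak{sl}_2(\mathbb{Z}_\ell)\oplus\ell^k\mathfrak{sl}_2(\mathbb{Z}_\ell),\ \ell^k\mathfrak{sl}_2(\mathbb{Z}_\ell)\oplus\ell^k\mathfrak{sl}_2(\mathbb{Z}_\ell)] = \ell^{2k}\mathfrak{sl}_2(\mathbb{Z}_\ell)\oplus\ell^{2k}\mathfrak{sl}_2(\mathbb{Z}_\ell)$, and $\exp$ of this lattice is $\mathcal{B}_\ell(2k,2k)$; this is the case $n=2$ of Pink's classification.
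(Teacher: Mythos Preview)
Your approach is essentially the paper's: compute $[\ell^k\mathfrak{sl}_2,\ell^k\mathfrak{sl}_2]=\ell^{2k}\mathfrak{sl}_2$ and then invoke Pink \cite{MR1241950}. The paper, however, skips the entire BCH discussion and applies Pink's Theorem~2.7 directly, which describes $G'$ not as $\exp([\mathcal{L}(G),\mathcal{L}(G)])$ but as
\[
G'=\bigl\{g\in\operatorname{SL}_2(\mathbb{Z}_\ell)^n \;\bigm|\; \Theta_n(g)\in[\mathcal{L}(G),\mathcal{L}(G)],\ \operatorname{tr}(g)-2\in C\bigr\},
\]
where $C=\operatorname{tr}(\mathcal{L}(G)\cdot\mathcal{L}(G))$. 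Your formulation ``$G'=\exp([\mathcal{L}(G),\mathcal{L}(G)])$'' is not the precise statement Pink proves, and to make the citation rigorous you must also verify the trace condition: the paper checks that $C$ contains $\operatorname{tr}\bigl((\ell^k h,0)\cdot(\ell^k h,0)\bigr)=(2\ell^{2k},0)$ and $(0,2\ell^{2k})$, hence $C\supseteq \ell^{2k}\mathbb{Z}_\ell\oplus\ell^{2k}\mathbb{Z}_\ell$, after which the inclusion $\mathcal{B}_\ell(2k,2k)\subseteq G'$ follows immediately. This is the one concrete step your write-up is missing; add it and drop the BCH sketch, and your proof coincides with the paper's.
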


\begin{proof} The proof is identical in the two cases, so let us only consider subgroups of $\operatorname{SL}_2(\mathbb{Z}_\ell)^2$ (the case of $\operatorname{SL}_2(\mathbb{Z}_\ell)$ is also treated in \cite[proof of theorem 4.2]{AdelicEC}). Consider the basis
\[
x=\left(\begin{matrix} 0 & 1 \\ 0 & 0 \end{matrix} \right), \, y=\left(\begin{matrix} 0 & 0 \\ 1 & 0 \end{matrix} \right), \, h=\left(\begin{matrix} 1 & 0 \\ 0 & -1 \end{matrix} \right)
\]
of $\mathfrak{sl}_2(\mathbb{Z}_\ell)$. Since $[h,x]=2x$, $[h,y]=-2y$ and $[x,y]=h$, the derived subalgebra of $\ell^k\mathfrak{sl}_2(\mathbb{Z}_\ell)$ is $\ell^{2k}\mathfrak{sl}_2(\mathbb{Z}_\ell)$.
By assumption $\mathcal{L}(G)$ contains $\ell^k \mathfrak{sl}_2(\mathbb{Z}_\ell) \oplus \ell^k \mathfrak{sl}_2(\mathbb{Z}_\ell)$, so the derived subalgebra $[\mathcal{L}(G),\mathcal{L}(G)]$ contains $\ell^{2k}\mathfrak{sl}_2(\mathbb{Z}_\ell) \oplus \ell^{2k}\mathfrak{sl}_2(\mathbb{Z}_\ell)$. Furthermore, in the notation of \cite{MR1241950} we have that
$
C=\operatorname{tr}(\mathcal{L}(G) \cdot \mathcal{L}(G))$ contains both 
$\operatorname{tr} \left((\ell^k h,0) \cdot (\ell^kh,0) \right) = (2\ell^{2k}, 0)$ and $(0,2\ell^{2k})$. Since $C$ is by definition a topologically closed additive subgroup of $\mathbb{Z}_\ell^2$, this shows in particular that $C$ contains $\ell^{2k}\mathbb{Z}_\ell \oplus \ell^{2k}\mathbb{Z}_\ell$.
It then follows from \cite[Theorem 2.7]{MR1241950} that
\[
G'  = \left\{ g \in \operatorname{SL}_2(\mathbb{Z}_\ell)^2 \bigm\vert \Theta_2(g) \in [\mathcal{L}(G),\mathcal{L}(G)], \; \operatorname{tr}(g) - 2 \in C \right\} \supseteq  \mathcal{B}_\ell(2k,2k).
\]
\end{proof}

\begin{proof}{(of theorem \ref{thm_PinkGL22} assuming theorem \ref{thm_PinkSL22})}
Write $\det^*$ for the map
$
G \xrightarrow{\det} \mathbb{Z}_\ell^2 \xrightarrow{\pi_1} \mathbb{Z}_\ell$
given by the composition of the usual determinant with the projection on the first coordinate of $\mathbb{Z}_\ell^2$. By assumption, an element $(g_1,g_2)$ of $G$ satisfies $\det(g_1,g_2)=(1,1)$ if and only if it satisfies $\det^*(g_1,g_2)=1$.

Assume first that $\ell \leq 5$. Denote by $\tilde{T}$ the inverse image in $G$ of an $\ell$-Sylow of $G(\ell)$, and set
\[
T:=\ker\left( \tilde{T} \xrightarrow{\det^*} \mathbb{Z}_\ell^\times \to \frac{\mathbb{Z}_\ell^\times}{\mathbb{Z}_\ell^{\times 2}} \right).
\]
Using the fact $\det g_1=\det g_2$ for every $(g_1,g_2)\in G$, and that the index of an $\ell$-Sylow of $G(\ell)$ is prime to $\ell$, it is easily checked that the index of $T$ in $G$ divides $2 \cdot \frac{1}{\ell-1} \left( \frac{|\operatorname{GL}_2(\mathbb{F}_\ell)|}{\ell} \right)^2 \bigm\vert 2 \cdot 48^2$. Now set
$
T^{1} = \left\{ \lambda g \bigm\vert \lambda \in \mathbb{Z}_\ell^\times, g \in T \right\} \cap \operatorname{SL}_2(\mathbb{Z}_\ell)^2.
$ 
Using lemma \ref{lemma_Saturation} and the argument of \cite[Lemma 3.16 (2)]{AdelicEC} one sees easily that $T$ and $T^1$ have the same derived subgroup and the same Lie algebra, and moreover it is clear by construction that $T^1$ is a pro-$\ell$ subgroup of $\operatorname{SL}_2(\mathbb{Z}_\ell)^2$. Furthermore, every element $(t_1,t_2)$ of $T$ reduces to $([\operatorname{Id}],[\operatorname{Id}])$ modulo $\ell$, so $T$ satisfies (2b). Now if $\mathcal{L}(T)$ contains $\ell^k \mathfrak{sl}_2(\mathbb{Z}_\ell) \oplus \ell^k \mathfrak{sl}_2(\mathbb{Z}_\ell)$, then the same is true for $\mathcal{L}\left(T^1\right)$, hence $(T^1)'=T'$ contains $\mathcal{B}_\ell(2k,2k)$ by theorem \ref{thm_Pink_GP}, and $T$ has properties (2a) and (2b) as required.


Next consider the case $\ell>5$.
Let $U_1$ be the subgroup of $G$, of index at most 2, given by $\operatorname{ker}\left(G \xrightarrow{\det^*} \mathbb{Z}_\ell^\times \to \mathbb{Z}_\ell^\times/\mathbb{Z}_\ell^{\times 2} \right)$. 
Let $U_2 = \left\{ \lambda g \bigm\vert \lambda \in \mathbb{Z}_\ell^\times, g \in U_1 \right\} \cap \operatorname{SL}_2(\mathbb{Z}_\ell)^2$, and notice as above that $U_1$ and $U_2$ have the same derived subgroup. Also notice that $U_2$ is open in $\operatorname{SL}_2(\mathbb{Z}_\ell)^2$: indeed, as $G$ is open in $\operatorname{GL}_2(\mathbb{Z}_\ell)^2$ there is an $r>0$ such that $\mathcal{B}_\ell(r,r) \subseteq G$, and the definition of $U_2$ shows that $\mathcal{B}_\ell(r,r)$ is also contained in $U_2$. Moreover, since elements in $\mathcal{B}_\ell(n_1), \mathcal{B}_\ell(n_2)$ have unit determinant, the two projections of $U_2$ on the factors $\operatorname{GL}_2(\mathbb{Z}_\ell)$ contain $\mathcal{B}_\ell(n_1), \mathcal{B}_\ell(n_2)$ respectively.
We can assume that $U_2'=U_1'$ does not contain $\mathcal{B}_\ell(20\max\{n_1,n_2\},20\max\{n_1,n_2\})$, for otherwise the same holds for $G$ and we are done. Apply then theorem \ref{thm_PinkSL22} to $(U_2,n_1,n_2)$ to find a subgroup $T_2$ of $U_2$ (of index dividing $48^2$) that has properties (2a) and (2b) of that statement.
Notice that we have a well-defined morphism
$
U_1 \stackrel{\psi}{\rightarrow} U_2/(\pm (\operatorname{Id},\operatorname{Id}))
$
sending $g$ to the class of $g/\sqrt{\det^*(g)}$, where $\sqrt{\det^*{g}}$ exists in $\mathbb{Z}_\ell^\times$ by construction of $U_1$. Let $\overline{T_2}$ be the image of $T_2$ in the quotient $U_2/(\pm (\operatorname{Id},\operatorname{Id}))$. Now $\psi$ is surjective by definition of $U_2$, so if we define $T$ to be the inverse image of $\overline{T_2}$ through $\psi$, then the index $[U_1:T]$ divides $48^2$ and $[G:T]$ divides $2 \cdot 48^2$. 
Furthermore, the Lie algebra of $T$ and that of $T_2$ agree, as do their derived subgroups (lemma \ref{lemma_Saturation}).
Suppose now that $\mathcal{L}(T)$ contains $\ell^k \mathfrak{sl}_2(\mathbb{Z}_\ell) \oplus \ell^k \mathfrak{sl}_2(\mathbb{Z}_\ell)$: then the same is true for $\mathcal{L}(T_2)$, and therefore by property (2a) of theorem \ref{thm_PinkSL22} we see that $T_2'=T'$ contains $\mathcal{B}_\ell(p,p)$. 
Finally, let $(t_1,t_2)$ be in $T$ and suppose that $[t_1],[t_2]$ are multiples of the identity. By construction, there exists a scalar $\lambda \in \mathbb{Z}_\ell^\times$ and an element $(w_1,w_2) \in T_2$ such that $(t_1,t_2)=\lambda(w_1,w_2)$; in particular, $[w_1], [w_2]$ are multiples of the identity, so the properties of $T_2$ force $[t_1]=[\lambda][w_2]=[\lambda][w_1]=[t_2]$.
\end{proof}

The following proposition allows us to assume that $\ell > 5$:

\begin{proposition}\label{prop_PinkTrueSmallEll}
Theorem \ref{thm_PinkSL22} is true for $\ell \leq 5$.
\end{proposition}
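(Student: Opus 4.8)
The plan is to prove Theorem \ref{thm_PinkSL22} directly for the finitely many small primes $\ell \in \{3,5\}$ by always landing in alternative (2), constructing $T$ explicitly as an inverse image of an $\ell$-Sylow subgroup and invoking Pink's theorem (Theorem \ref{thm_Pink_GP}) after a saturation step. More precisely, given an open $G \le \operatorname{SL}_2(\mathbb{Z}_\ell)^2$, let $\tilde T$ be the inverse image in $G$ of an $\ell$-Sylow subgroup $S$ of the finite group $G(\ell) \le \operatorname{SL}_2(\mathbb{F}_\ell)^2$. Since $G(\ell)$ embeds into $\operatorname{SL}_2(\mathbb{F}_\ell)^2$ and $|\operatorname{SL}_2(\mathbb{F}_\ell)| = \ell(\ell-1)(\ell+1)$, the index $[G(\ell):S]$ divides $\left((\ell-1)(\ell+1)\right)^2 = \big(|\operatorname{SL}_2(\mathbb{F}_\ell)|/\ell\big)^2$, which for $\ell = 3$ is $8^2 = 64$ and for $\ell = 5$ is $24^2 = 576$; in both cases this divides $48^2$. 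Thus $T := \tilde T$ has index dividing $48^2$ in $G$, and since $\tilde T$ is the full preimage of a pro-$\ell$ (here: $\ell$-) group under the reduction map, whose kernel $G \cap \mathcal{B}_\ell(1,1)$ is pro-$\ell$, the group $T$ is itself pro-$\ell$.

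The point of choosing $T$ pro-$\ell$ is that Theorem \ref{thm_Pink_GP} applies to it verbatim. Concretely, suppose $\mathcal{L}(T)$ contains $\ell^k \mathfrak{sl}_2(\mathbb{Z}_\ell) \oplus \ell^k \mathfrak{sl}_2(\mathbb{Z}_\ell)$ for some $k \ge 0$. Then by Theorem \ref{thm_Pink_GP}, $T'$ contains $\mathcal{B}_\ell(2k,2k)$; and since $p = 2k + \max\{2k, 8n_1, 8n_2\} \ge 2k$ we have $\mathcal{B}_\ell(p,p) \subseteq \mathcal{B}_\ell(2k,2k) \subseteq T'$, so property (2a) holds. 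For property (2b): every element $(t_1,t_2)$ of $T$ reduces modulo $\ell$ to an element of the $\ell$-group $S$; since an element of $\operatorname{SL}_2(\mathbb{F}_\ell)$ which is a scalar multiple of the identity has order dividing $2$ (its entries being $\pm 1$), and $\ell$ is odd, the only scalar in an $\ell$-group is the identity. Hence if $[t_1]$ and $[t_2]$ are both scalar, they both equal $[\operatorname{Id}]$, so in particular $[t_1] = [t_2]$ and (2b) is satisfied. Therefore $G$ is in case (2), and the proposition follows.

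One cosmetic wrinkle is that the exponent bookkeeping should be stated so as to match the claim $[G:T] \mid 48^2$ exactly rather than merely $\le 48^2$; this is handled above since each of $64$ and $576$ divides $48^2 = 2304$. The main obstacle — really the only nontrivial input — is Theorem \ref{thm_Pink_GP}, i.e. Pink's description of derived subgroups of pro-$\ell$ subgroups of $\operatorname{SL}_2(\mathbb{Z}_\ell)^2$; but this is already available to us (it holds for all odd $\ell$, small primes included, as its proof via \cite[Theorem 2.7]{MR1241950} makes no exception), so for $\ell \le 5$ the entire argument collapses to the observation that after passing to a bounded-index pro-$\ell$ subgroup we are exactly in the hypotheses of that theorem. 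Note that alternative (1) of Theorem \ref{thm_PinkSL22} is never needed for these primes; we always produce $T$ and land in (2).
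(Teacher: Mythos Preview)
Your proof is correct and follows exactly the same approach as the paper: take $T$ to be the preimage in $G$ of an $\ell$-Sylow of $G(\ell)$, check that $[G:T]$ divides $(|\operatorname{SL}_2(\mathbb{F}_\ell)|/\ell)^2 \mid 48^2$, note that $T$ is pro-$\ell$ so Theorem~\ref{thm_Pink_GP} gives (2a), and verify (2b) since the only scalar in an $\ell$-group is the identity. The mention of a ``saturation step'' in your plan is unnecessary here (and indeed you never use one), since $G$ already lies in $\operatorname{SL}_2(\mathbb{Z}_\ell)^2$.
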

\begin{proof} Take $T$ to be the inverse image in $G$ of a $\ell$-Sylow of $G(\ell)$. It is clear that $T$ satisfies (2b). Furthermore, $T$ is pro-$\ell$, so an application of theorem \ref{thm_Pink_GP} shows that $T$ satisfies (2a); finally, it is immediate to check that $[G:T]$ divides $\left(\frac{|\operatorname{SL}_2(\mathbb{F}_\ell)|}{\ell}\right)^2 \bigm\vert 48^2$.
\end{proof}

\begin{assumption}\label{ass_EllGreater5}
From now on, we work under the additional assumption that $\ell > 5$.
\end{assumption}
Our final objective is to compute, in terms of $k, n_1$ and $n_2$, an integer $p$ such that $T$ contains $\mathcal{B}_\ell(p,p)$. This would be immediate if $T$ were a pro-$\ell$ group, for then we would simply apply theorem \ref{thm_Pink_GP} as it is. In general, however, one needs to take into account the structure of $T(\ell)$, and many different possibilities arise, according to the type of $T_1(\ell),T_2(\ell)$ in the Dickson classification.
In some situations which we now discuss, the two projections $G_1$ and $G_2$ behave essentially independently of one another; in this case the problem is greatly simplified, and it is possible to exhibit an integer $p$ as above without examining too closely the structure of $G(\ell)$:
\begin{lemma}\label{lemma_NontrivialElementIsEasy}
Let $\ell>5$ and $G$, $n_1, n_2$ be as in theorem \ref{thm_PinkSL22}.
Suppose that $G$ contains an element $(a,b)$ such that $[a]=[\pm \operatorname{Id}]$ and the prime-to-$\ell$ part of the order of $[b]$ is at least 3. Then $G'$ contains $\mathcal{B}_\ell(4n_1+16n_2,8n_2) \supset \mathcal{B}_\ell(20\max\{n_1,n_2\},20\max\{n_1,n_2\})$. The same conclusion is true if $[b]=[\pm \operatorname{Id}]$ and the prime-to-$\ell$ part of the order of $[a]$ is at least 3.
\end{lemma}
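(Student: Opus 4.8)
The plan is to exploit the element $(a,b)$ to show that $G$ essentially decouples into its two projections, at least after passing to a suitable power. First I would replace $(a,b)$ by a limit point of the sequence $(a,b)^{\ell^m}$: since $[a] = [\pm\operatorname{Id}]$, the element $a^{\ell^m}$ converges $\ell$-adically to $\pm\operatorname{Id}$ (the reduction is $\pm\operatorname{Id}$, hence $a^{\pm 1} \in \mathcal B_\ell(1)$ is a pro-$\ell$ element, and its $\ell^m$-th powers tend to $\operatorname{Id}$), whereas for the $b$-component one uses Lemma~\ref{lemma_Teichmuller1} applied to $b$ (the prime-to-$\ell$ part of the order of $[b]$ is $\geq 3$) to see that $(a,b)^{\ell^{2m}}$ converges to some $(\pm\operatorname{Id}, z)$ with $[z]$ of order equal to the prime-to-$\ell$ part of the order of $[b]$, in particular $\geq 3$; this is precisely the mechanism of Lemma~\ref{lemma_TrivialModEllImpliesTrivial}. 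So $G$ contains an honest element of the form $(\operatorname{Id}, z)$ or $(-\operatorname{Id},z)$ with $[z]$ semisimple of order $\geq 3$; squaring if necessary I may assume it is $(\operatorname{Id}, z)$ with $[z]$ still of order $\geq 3$ (the order only drops by a factor dividing $2$, and $3 \nmid 2$, so oddness is irrelevant — more carefully, if the order is $\geq 3$ and prime to $\ell$, squaring keeps it $\geq 2$; I should instead directly arrange order $\geq 3$ by noting that an element of order $\leq 2$ that is a square of something prime-to-$\ell$ forces that something to have order dividing $4$, contradicting order prime to $\ell>2$ being $\geq 3$). Conjugating $z$ into diagonal form over an at most quadratic unramified extension and using that $z$ normalizes things, the upshot is that $(\operatorname{Id}, z) \in G$ with $z$ a nontrivial torus element.

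Next I would use this $(\operatorname{Id}, z)$ together with the hypothesis $\mathcal B_\ell(n_1) \subseteq G_1$ to pump up the second coordinate. Concretely, for any $x \in \mathcal B_\ell(n_1)$ pick a lift $(x, y) \in G$; then the commutator $[(x,y),(\operatorname{Id},z)] = (\operatorname{Id}, [y,z])$ lies in $G'$, but this only controls $[y,z]$ and $y$ is not under our control. The cleaner route: since $(\operatorname{Id},z)\in G$, conjugation by it is an automorphism of $G$ preserving $\mathcal B_\ell(n_1) \times \{\operatorname{Id}\}$-related structure; I want to produce $\mathcal B_\ell(\ast) \times \{\operatorname{Id}\}$ inside $G'$ and $\{\operatorname{Id}\}\times \mathcal B_\ell(\ast)$ inside $G'$ separately, then take the product. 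For the first coordinate, because $(\operatorname{Id},z) \in G$ centralizes the first factor, $G \cap (\operatorname{SL}_2(\mathbb Z_\ell) \times \langle z\rangle\text{-centralizer})$ surjects onto $G_1 \supseteq \mathcal B_\ell(n_1)$ with the property that we can correct the second coordinate; running the argument of Lemma~\ref{lemma_Commutator} and Lemma~\ref{lemma_Generators} inside $G$ using that $z$ lets us kill second components, one gets $\mathcal B_\ell(c_1 n_1) \times \{\operatorname{Id}\} \subseteq G'$ for an explicit small constant $c_1$ — the factor $4$ in the target $\mathcal B_\ell(4n_1 + 16n_2, 8n_2)$ and the extra $16n_2$ suggest two applications of Lemma~\ref{lemma_ConjStableSubspaces}-type eigenvalue arguments with the torus $z$ (whose relevant eigenvalue-difference has valuation governed by $n_2$) plus a Goursat step (Lemma~\ref{lemma_IntegralGoursat}) with $s_{12}$ of size $8n_2$. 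For the second coordinate, $G_2 \supseteq \mathcal B_\ell(n_2)$ gives $\mathcal B_\ell(2n_2,?)\subseteq$ projection of $G'$, and combined with the first-coordinate control via Lemma~\ref{lemma_IntegralGoursat} one assembles $\mathcal B_\ell(4n_1+16n_2, 8n_2)$.

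I would organize the final computation as: (i) produce $(\operatorname{Id},z)\in G$ with $z$ a nontrivial semisimple torus element whose nontrivial eigenvalue-gap has $\ell$-valuation $0$ after base change (automatic since $[z]\neq[\pm\operatorname{Id}]$); (ii) show the centralizer-type subgroup of $G$ surjecting onto $G_1$ lets us realize $L(a),R(b),D(c)$ with first coordinate arbitrary in $\mathcal B_\ell(n_1)$ and second coordinate adjustable, so that after commutators with $(\operatorname{Id},z)$ and an application of Lemma~\ref{lemma_ConjStableSubspaces} (with $t$ governed by $n_2$ and $s$ bounded) we get $\mathcal B_\ell(4n_1+16n_2)\times\{\operatorname{Id}\}\subseteq G'$; (iii) symmetrically but more cheaply get $\{\operatorname{Id}\}\times\mathcal B_\ell(8n_2)\subseteq G'$ from $\mathcal B_\ell(n_2)\subseteq G_2$ and a commutator with the first-coordinate ball just produced, via Lemma~\ref{lemma_Commutator}; (iv) multiply. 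The final containment $\mathcal B_\ell(4n_1+16n_2,8n_2)\supseteq \mathcal B_\ell(20\max\{n_1,n_2\},20\max\{n_1,n_2\})$ is the trivial estimate $4n_1+16n_2 \leq 20\max\{n_1,n_2\}$ and $8n_2 \leq 20\max\{n_1,n_2\}$, noting $\mathcal B_\ell$ is decreasing in its arguments. The symmetric statement (roles of $a,b$ swapped) follows by the same argument after permuting the two factors.

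The main obstacle I expect is step (ii): controlling the first coordinate of $G'$ without any control over which second coordinates appear in lifts from $G_1$. The torus element $(\operatorname{Id},z)$ is the tool that fixes this — conjugating a lift $(x,y)$ by $(\operatorname{Id},z)$ and forming commutators/products allows one to replace $y$ by $[y,z^{\pm 1}]$ and iterate, and since $z$ is semisimple nontrivial the map $y \mapsto [y,z]$ on $\mathcal B_\ell(n_2)$-type subgroups contracts the "diagonal part" while the off-diagonal eigenspaces get multiplied by $(\lambda^{\pm 2}-1)$ — but there is no a priori reason the second coordinate shrinks to $\operatorname{Id}$ rather than to some fixed nontrivial element. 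The honest fix is to not try to make the second coordinate trivial but to first establish $\{\operatorname{Id}\}\times\mathcal B_\ell(8n_2)\subseteq G'$ (which comes from $\mathcal B_\ell(n_2)\subseteq G_2$ and commutators with $(\operatorname{Id},z)$, using Lemma~\ref{lemma_ConjStableSubspaces} on the $z$-stable Lie algebra, where the jump from $n_2$ to $8n_2$ is the "$4s+4v$" type loss with $s$ absorbing the torus conductor), and then use this normal subgroup to clean up second coordinates of first-coordinate lifts for free. So the logical order is really (i), then (iii), then (ii), then (iv); the numerology $4n_1+16n_2$ vs $8n_2$ reflects that the first coordinate pays both its own cost ($4n_1$, a Lemma~\ref{lemma_Generators}(4)-style doubling) and the cost of cleaning ($16n_2$, two further losses), matching the asymmetry in the stated bound.
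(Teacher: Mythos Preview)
Your overall strategy matches the paper's: produce an element $(\pm\operatorname{Id},z)$ with $[z]$ of prime-to-$\ell$ order $\geq 3$, use commutators with it to get $\{\operatorname{Id}\}\times\mathcal{B}_\ell(\ast)\subseteq G$, then use that subgroup to clean second coordinates of lifts from $\mathcal{B}_\ell(n_1)\subseteq G_1$ and obtain $\mathcal{B}_\ell(\ast)\times\{\operatorname{Id}\}\subseteq G$, and finally pass to $G'$ via the commutator lemma. Two execution points deserve correction.

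First, your attempt to square $(\pm\operatorname{Id},z)$ down to $(\operatorname{Id},z^2)$ fails when $[z]$ has order exactly $4$: then $[z^2]$ has order $2$ and you lose the hypothesis. Your parenthetical justification (``order $\leq 2$ that is a square $\Rightarrow$ order divides $4$, contradicting $\geq 3$'') is circular, since order $4$ is $\geq 3$. The paper simply never squares: it keeps $(\pm\operatorname{Id},b')$ and observes that for any $(g_1,g_2)\in G$ the commutator $[(\pm\operatorname{Id},b')^{-1},(g_1,g_2)]=(\operatorname{Id},[(b')^{-1},g_2])$ already has trivial first coordinate.

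Second, your mechanism for step~(iii) invokes Lemma~\ref{lemma_ConjStableSubspaces}, but that lemma concerns stability under the full ball $\mathcal{B}_\ell(s)$, not under a single torus element, and the ``$4s+4v$'' loss you cite has nothing to do with $n_2$ here. The paper instead does a short case analysis on the shape of $b'$ (split diagonal, nonsplit Cartan form $\begin{smallmatrix} c & d\varepsilon\\ d & c\end{smallmatrix}$, or the special element $\begin{smallmatrix}0&-1\\1&0\end{smallmatrix}$) and writes down explicit commutator identities with $g_2=R(\ell^{n_2}),L(\ell^{n_2}),D(\ell^{n_2})$; for instance in the split case $[(b')^{-1},R(\ell^{n_2})]=R((d^{-2}-1)\ell^{n_2})$ with $v_\ell(d^{-2}-1)=0$, and Lemma~\ref{lemma_Generators}(4) then gives $\{\operatorname{Id}\}\times\mathcal{B}_\ell(2n_2)\subseteq G$. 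The worst case (nonsplit) yields $4n_2$. The paper establishes $G\supseteq\mathcal{B}_\ell(2n_1+8n_2,4n_2)$ first and only then passes to $G'$ via Lemma~\ref{lemma_Commutator}, which doubles each entry and accounts precisely for the $4n_1+16n_2$ and $8n_2$ in the statement; your attempt to work directly in $G'$ is what blurs the numerology.
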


\begin{proof}
The statement is clearly symmetric in $a$ and $b$, so let us assume $[a]=[\pm \operatorname{Id}]$.
Since $\ell^2$ does not divide the order of $\operatorname{SL}_2(\mathbb{F}_\ell)$, the element $[b]^{\ell}$ has order prime to $\ell$; replacing $(a,b)$ with $(a,b)^{\ell}$ allows us to assume that the order of $[b]$ is prime to $\ell$ and not less than 3. By lemma \ref{lemma_TrivialModEllImpliesTrivial}, $G$ contains an element of the form $(\pm \operatorname{Id},b')$, where the order of $[b']$ is the same as the order of $[b]$. We can therefore assume $a=\pm \operatorname{Id}$.
By hypothesis, for any $g_2$ in $\mathcal{B}_\ell(n_2)$ there exists a $g_1 \in G_1$ such that $(g_1,g_2)$ belongs to $G$: it follows that for any $g_2 \in \mathcal{B}_\ell(n_2)$ the element \begin{equation}\label{eq_CommutatorbPrime}
(\pm \operatorname{Id},b')^{-1}(g_1,g_2)(\pm \operatorname{Id},b')(g_1,g_2)^{-1}=(\operatorname{Id},(b')^{-1}g_2b'g_2^{-1})=(\operatorname{Id},[(b')^{-1},g_2])
\end{equation}
belongs to $G$. Up to a choice of basis, we can assume that either $b' = \left( \begin{matrix} d & 0 \\ 0 & 1/d \end{matrix} \right)$ for a certain unit $d$, or $b' = \left( \begin{matrix} c & d \varepsilon \\ d & c \end{matrix} \right)$ for certain units $c,d$ and a certain $\varepsilon$ such that $[\varepsilon]$ is not a square, or $b' = \left( \begin{matrix} 0 & -1 \\ 1 & 0 \end{matrix} \right)$ (for the last two cases cf.~\cite[Lemma 4.7]{AdelicEC}). In the first case, setting $g_2=R(\ell^{n_2})$ in \eqref{eq_CommutatorbPrime} shows that
$
\left( \operatorname{Id}, R\left(\left(d^{-2}-1\right) \ell^{n_2}\right) \right)
$
belongs to $G$. Given that $d$ is not congruent to $\pm 1$ modulo $\ell$, for otherwise the order of $[b]$ would be 1 or 2, we see that the $\ell$-adic valuation of $(d^{-2}-1) \ell^{n_2}$ is exactly $n_2$. Similarly, $G$ contains $\left( \operatorname{Id}, L\left( (d^2-1) \ell^{n_2} \right) \right)$, and by lemma \ref{lemma_Generators} (4) this implies that $G$ contains $\left\{1\right\} \times \mathcal{B}_\ell(2n_2)$. In the second case we notice that $R\left( -\frac{2 c \ell^{2 n_2} \left(2+\ell^{n_2}\right)^2}{d \left(1+\ell^{n_2}\right)^4}\right)$ can be written as
\[
\left[ (b')^{-1}, \left(
\begin{array}{cc}
 1+\ell^{n_2} & -\frac{c \ell^{n_2} (2+\ell^{n_2})}{d (1+\ell^{n_2})} \\
 0 & \frac{1}{1+\ell^{n_2}} \\
\end{array}
\right) \right] \cdot \left[ (b')^{-1}, \left(
\begin{array}{cc}
 \frac{1}{1+\ell^{n_2}} & \frac{c \ell^{n_2} \left(2+\ell^{n_2}\right)}{d \left(1+\ell^{n_2}\right)} \\
 0 & 1+\ell^{n_2} \\
\end{array}
\right) \right],
\]
hence $\left(\operatorname{Id},R\left(-\frac{2 c \ell^{2 n_2} \left(2+\ell^{n_2}\right)^2}{d \left(1+\ell^{n_2}\right)^4}\right)\right)$ belongs to $G$; by lemma \ref{lemma_Generators} (1) we then see that $G$ contains $\left(\operatorname{Id}, R\left(\ell^{2n_2}\right)\right)$, and analogous identities prove that $G$ also contains $\left(\operatorname{Id}, L\left(\ell^{2n_2}\right)\right)$. Lemma \ref{lemma_Generators} (4) then implies that $G$ contains $\{\operatorname{Id}\} \times \mathcal{B}_\ell(4n_2)$.
Finally, in the last case we use the identities
\[
\left[(b')^{-1}, D(\ell^{n_2}) \right]^{-1}=D(\ell^{n_2}(2+\ell^{n_2}))
\]
\[
\left[ D(\ell^{n_2}),R(\ell^{n_2}) \right]=R\left(\ell^{2n_2} (2+\ell^{n_2})  \right), \quad \left[ D(\ell^{n_2}),L(\ell^{n_2}) \right]=L\left(-\ell^{2n_2} \frac{2+\ell^{n_2}}{(1+\ell^{n_2})^2}  \right)
\]
which prove that $G$ contains $\{\operatorname{Id}\} \times \mathcal{B}_\ell(2n_2)$.

Consider now an element $h=(h_1,h_2)$ of $G$ whose first coordinate is $h_1=R\left( \frac{1}{\ell^2-1} \ell^{n_1}\right)$; such an element exists by hypothesis. The $\ell(\ell^2-1)$-th power of $h$ is of the form $h'=\left( R \left( \ell^{n_1+1} \right), h_2' \right)$, where $[h_2']=[h_2]^{\ell(\ell^2-1)}=[h_2]^{\left|\operatorname{SL}_2(\mathbb{F}_\ell)\right|}=[\operatorname{Id}]$. The $\ell^{4n_2-1}$-th power of $h'$ (recall that $n_2>0$), therefore, is a certain
$
h'' = \left( R\left( \ell^{n_1+4n_2} \right), h_2'' \right),
$
where $h_2'' \in \mathcal{B}_\ell(4n_2)$. By what we already proved, $G$ contains $(\operatorname{Id},h_2'')$, so it also contains
$
h'' \cdot (\operatorname{Id},h_2'')^{-1}=\left( R\left( \ell^{n_1+4n_2}\right), \operatorname{Id} \right).
$
The same argument shows that $\left( L\left( \ell^{n_1+4n_2}\right), \operatorname{Id} \right)$ belongs to $G$, and we finally deduce that $G$ contains $\mathcal{B}_\ell(2n_1+8n_2) \times \left\{\operatorname{Id}\right\}$, hence -- since we also have $G \supseteq \left\{\operatorname{Id}\right\} \times \mathcal{B}_\ell(4n_2)$ -- that $G$ contains $\mathcal{B}_\ell(2n_1+8n_2,4n_2)$. Taking derived subgroups and using lemma \ref{lemma_Commutator} then shows that
 $G'$ contains $\mathcal{B}_\ell(4n_1+16n_2,8n_2)$.
\end{proof}

\begin{lemma}\label{lemma_NontrivialLieAlgebra}
Let $\ell > 5$ be a prime, $G$ a closed subgroup of $\operatorname{SL}_2(\mathbb{Z}_\ell)^2$ and for $i=1,2$ let $n_i$ be a \textit{non-negative} integer such that $G_i$ contains $\mathcal{B}_\ell(n_i)$. Let $t$ be a non-negative integer and $u$ be an element of $\mathfrak{sl}_2(\mathbb{Z}_\ell)$ such that $u \not \equiv 0 \pmod{\ell^{t+1}}$.
Suppose that $\mathcal{L}(N(G)) \subseteq \mathfrak{sl}_2(\mathbb{Z}_\ell)\oplus \mathfrak{sl}_2(\mathbb{Z}_\ell)$ contains $\left( 0, u \right)$: then $G'$ contains $\left\{\operatorname{Id}\right\} \times \mathcal{B}_\ell(2t+8n_2)$.
Likewise, if $u_1, u_2 \in \mathfrak{sl}_2(\mathbb{Z}_\ell)$ are such that $u_1, u_2 \not \equiv 0 \pmod{\ell^{t+1}}$ and $\mathcal{L}(N(G))$ contains $(u_1,0)$ and $(0,u_2)$, then $G'$ contains $\mathcal{B}_\ell(2t+8n_1, 2t+8n_2)$.
\end{lemma}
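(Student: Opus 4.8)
The plan is to deduce the statement from Pink's theorem applied to $N := N(G)$, the maximal normal pro-$\ell$ subgroup of $G$ (cf. lemma~\ref{lemma_NIsWellDefined}). Since $\ell$ is odd, $\mathcal{L}(N)$ is a $\mathbb{Z}_\ell$-Lie subalgebra of $\mathfrak{sl}_2(\mathbb{Z}_\ell)\oplus\mathfrak{sl}_2(\mathbb{Z}_\ell)$, and because $N$ is normal in $G$ and $\Theta_2(gng^{-1})=g\,\Theta_2(n)\,g^{-1}$, it is stable under conjugation by every element of $G$. I would then introduce the ``slice''
\[
W:=\bigl\{\xi\in\mathfrak{sl}_2(\mathbb{Z}_\ell)\bigm\vert (0,\xi)\in\mathcal{L}(N)\bigr\},
\]
which is a Lie subalgebra of $\mathfrak{sl}_2(\mathbb{Z}_\ell)$, being the intersection of $\mathcal{L}(N)$ with the subalgebra $\{0\}\oplus\mathfrak{sl}_2(\mathbb{Z}_\ell)$. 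By hypothesis $u\in W$ with $u\not\equiv 0\pmod{\ell^{t+1}}$, so $W$ does not reduce to zero modulo $\ell^{t+1}$; and $W$ is stable under conjugation by $\mathcal{B}_\ell(n_2)$, since for $g_2\in\mathcal{B}_\ell(n_2)\subseteq G_2$ one can pick $g_1\in G_1$ with $(g_1,g_2)\in G$, and then conjugation by $(g_1,g_2)$ sends $(0,\xi)\in\mathcal{L}(N)$ to $(0,g_2\xi g_2^{-1})\in\mathcal{L}(N)$, i.e. $g_2\xi g_2^{-1}\in W$.

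Next I would apply lemma~\ref{lemma_ConjStableSubspaces} with $s=n_2$ (legitimate since $\ell>5$, so the constraints on $s$ are vacuous and $v=0$): this gives $W\supseteq\ell^{t+4n_2}\mathfrak{sl}_2(\mathbb{Z}_\ell)$, hence $\mathcal{L}(N)\supseteq\{0\}\oplus\ell^{m}\mathfrak{sl}_2(\mathbb{Z}_\ell)$ with $m:=t+4n_2$. Running the computation from the proof of theorem~\ref{thm_Pink_GP} on the second factor, one gets $[\mathcal{L}(N),\mathcal{L}(N)]\supseteq\{0\}\oplus\ell^{2m}\mathfrak{sl}_2(\mathbb{Z}_\ell)$ and $C:=\operatorname{tr}(\mathcal{L}(N)\cdot\mathcal{L}(N))\supseteq\{0\}\oplus\ell^{2m}\mathbb{Z}_\ell$. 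By \cite[Theorem~2.7]{MR1241950} one then has $N'=\{g\in\operatorname{SL}_2(\mathbb{Z}_\ell)^2\mid \Theta_2(g)\in[\mathcal{L}(N),\mathcal{L}(N)],\ \operatorname{tr}(g)-(2,2)\in C\}$, and every $(\operatorname{Id},g_2)$ with $g_2\in\mathcal{B}_\ell(2m)$ satisfies these two conditions (this is the routine verification already carried out in the proof of theorem~\ref{thm_Pink_GP}). Hence $N'\supseteq\{\operatorname{Id}\}\times\mathcal{B}_\ell(2m)$, and since $N'\subseteq G'$ we conclude $G'\supseteq\{\operatorname{Id}\}\times\mathcal{B}_\ell(2t+8n_2)$.

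For the last assertion I would carry out the slice construction on both factors: from $(u_1,0)$ and $(0,u_2)$, together with lemma~\ref{lemma_ConjStableSubspaces} used with $s=n_1$ and with $s=n_2$, one obtains $\mathcal{L}(N)\supseteq\ell^{t+4n_1}\mathfrak{sl}_2(\mathbb{Z}_\ell)\oplus\ell^{t+4n_2}\mathfrak{sl}_2(\mathbb{Z}_\ell)$. It is important here not to symmetrize (replacing both exponents by their maximum before invoking theorem~\ref{thm_Pink_GP} would give too small a ball); instead I would feed this possibly asymmetric submodule directly into \cite[Theorem~2.7]{MR1241950} as above, getting $[\mathcal{L}(N),\mathcal{L}(N)]\supseteq\ell^{2t+8n_1}\mathfrak{sl}_2(\mathbb{Z}_\ell)\oplus\ell^{2t+8n_2}\mathfrak{sl}_2(\mathbb{Z}_\ell)$ and $C\supseteq\ell^{2t+8n_1}\mathbb{Z}_\ell\oplus\ell^{2t+8n_2}\mathbb{Z}_\ell$, whence $G'\supseteq N'\supseteq\mathcal{B}_\ell(2t+8n_1,2t+8n_2)$. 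The only genuinely delicate point is the conjugation-stability of $W$, which is precisely where the normality of $N(G)$ and the hypotheses $G_i\supseteq\mathcal{B}_\ell(n_i)$ come into play; everything else reduces to lemma~\ref{lemma_ConjStableSubspaces} and a direct application of Pink's theorem.
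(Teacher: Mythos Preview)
Your proof is correct and follows essentially the same approach as the paper: both use normality of $N(G)$ to get conjugation-stability, then invoke lemma~\ref{lemma_ConjStableSubspaces} to enlarge the slice to $\ell^{t+4n_i}\mathfrak{sl}_2(\mathbb{Z}_\ell)$, and finally apply Pink's theorem. Your slice $W$ plays the same role as the paper's $S(u)$, and you are in fact a bit more careful than the paper in spelling out the final step (feeding the asymmetric submodule directly into \cite[Theorem~2.7]{MR1241950} rather than citing theorem~\ref{thm_Pink_GP} as stated, which is formulated only for the symmetric case).
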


\begin{proof}
Note first that -- by the same argument as \cite[Lemma 4.5]{AdelicEC} -- the Lie algebra $\mathcal{L}(N(G))$ is stable under conjugation by $G$. The smallest Lie subalgebra of $\mathcal{L}(N(G))$ that contains $(0,u)$ and is stable under conjugation by $G$ is $0 \oplus S(u)$, where $S(u)$ is the smallest Lie subalgebra of $\mathfrak{sl}_2(\mathbb{Z}_\ell)$ that contains $u$ and is stable under conjugation by $G_2$. By virtue of lemma \ref{lemma_ConjStableSubspaces}, and given that $G_2$ contains $\mathcal{B}_\ell(n_2)$, the algebra $S(u)$ contains $\ell^{t+4n_2} \mathfrak{sl}_2(\mathbb{Z}_\ell)$. It follows that $\mathcal{L}(N(G))$ contains $0 \oplus \ell^{t+4n_2} \mathfrak{sl}_2(\mathbb{Z}_\ell)$, and applying theorem \ref{thm_Pink_GP} we deduce that $N(G)'$ (hence also $G'$) contains $\left\{\operatorname{Id}\right\} \times \mathcal{B}_\ell(2t+8n_2)$. The second statement is now immediate.
\end{proof}


We thus see that for the following three categories of groups information on $G$ can be extracted from information on $\mathcal{L}(G)$:
\begin{itemize}
\item[(A)] pro-$\ell$ groups: by theorem \ref{thm_Pink_GP}, if $\mathcal{L}(G)$ contains $\ell^k\mathfrak{sl}_2(\mathbb{Z}_\ell) \oplus \ell^k\mathfrak{sl}_2(\mathbb{Z}_\ell)$, then $G'$ contains $\mathcal{B}_\ell(2k,2k)$;
\item[(B)] groups that contain an element $(a,b)$ such that $[a]$ is trivial and the prime-to-$\ell$ part of the order of $[b]$ is least 3, because of lemma \ref{lemma_NontrivialElementIsEasy};
\item[(C)] groups satisfying the hypotheses of lemma \ref{lemma_NontrivialLieAlgebra}.
\end{itemize}


We now start with a general open subgroup $G$ of $\operatorname{SL}_2(\mathbb{Z}_\ell)^2$ and show that (up to passing to a subgroup of finite, absolutely bounded index) the group $G$ must satisfy one of these three sets of hypotheses.
As already anticipated, we will need a case analysis based on the structure of $G_1(\ell)$ and $G_2(\ell)$. These are subgroups of $\operatorname{SL}_2(\mathbb{F}_\ell)$, and by the Dickson classification we know that any subgroup of $\operatorname{SL}_2(\mathbb{F}_\ell)$ is of one of the following types:
split Cartan, nonsplit Cartan, normalizer of split Cartan, normalizer of nonsplit Cartan, Borel, exceptional, all of $\operatorname{SL}_2(\mathbb{F}_\ell)$.

\begin{remark}
To be more precise we should rather write `contained in a split Cartan subgroup', `contained in a Borel subgroup', etc. The slight abuse of language should not cause any confusion.
\end{remark}

We call `type' of $G$ the pair $(\text{type of }G_1(\ell), \text{type of }G_2(\ell))$; the proof of theorem \ref{thm_PinkSL22} will proceed by analysing all the possibilities for the type of $G$. The following proposition helps cut down the total number of cases we need to treat. 

\begin{proposition}\label{prop_FirstReduction}
Let $\ell >5$ and $G, n_1, n_2$ be as in theorem \ref{thm_PinkSL22}. At least one of the following holds:
\begin{enumerate}
\item $G'$ contains $\mathcal{B}_\ell(20 \max\{n_1,n_2\},20 \max\{n_1,n_2\})$
\item there exists a pro-$\ell$ subgroup $T$ of $G$ such that the index $[G:T]$ divides $48^2$, and such that if $(t_1,t_2)$ is an element of $T$ for which $[t_1], [t_2]$ are both multiples of the identity, then $[t_1]=[t_2]$
\item there exists a subgroup $T$ of $G$ such that the index $[G:T]$ divides $4 \cdot 48$, and which enjoys all of the following properties (we denote by $T_1, T_2$ the projections of $T$ on the two factors $\operatorname{SL}_2(\mathbb{Z}_\ell)$):
\begin{enumerate} \item the projection of $T(\ell)$ in $T_1(\ell)/N(T_1(\ell)) \times T_2(\ell)/N(T_2(\ell))$ is the graph of an isomorphism $T_1(\ell)/N(T_1(\ell)) \cong T_2(\ell)/N(T_2(\ell))$
\item the type of $T_1(\ell)$ (resp. $T_2(\ell)$) is either Borel, split Cartan, nonsplit Cartan, or $\operatorname{SL}_2(\mathbb{F}_\ell)$
\item the orders of $T_1(\ell)/N(T_1(\ell))$ and $T_2(\ell)/N(T_2(\ell))$ do not divide 8
\item $T_1$ (resp. $T_2$) contains $\mathcal{B}_\ell(n_1)$ (resp. $\mathcal{B}_\ell(n_2)$)
\item if $(t_1,t_2) \in T$ is such that $[t_1]$ and $[t_2]$ are both multiples of the identity, then $[t_1]=[t_2]$
\end{enumerate}
\end{enumerate}
\end{proposition}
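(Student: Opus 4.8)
The statement is a bookkeeping device for the case analysis that follows it, so the plan is to run through the possibilities for the pair of Dickson types of $G_1(\ell)$ and $G_2(\ell)$, each time passing to a subgroup of absolutely bounded index. The single tool for landing in conclusion~(1) will be Lemma~\ref{lemma_NontrivialElementIsEasy}: whenever $G$ contains an element one of whose reductions modulo~$\ell$ is $[\pm\operatorname{Id}]$ while the prime-to-$\ell$ part of the order of the other reduction is at least~$3$, conclusion~(1) holds. So I would first assume $G$ contains no such element; this is inherited by every subgroup and will in the end yield property~(2e).

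The first reduction concerns the type. For each $i$ with $G_i(\ell)$ the normalizer of a Cartan subgroup $C_i$, replace $G$ by the preimage in $G$ of $G(\ell)\cap(C_1'\times C_2')$, where $C_i'=C_i$ in that case and $C_i'=G_i(\ell)$ otherwise; this costs index at most~$4$ and leaves each $G_i(\ell)$ of type Borel, split Cartan, nonsplit Cartan, exceptional, or $\operatorname{SL}_2(\mathbb{F}_\ell)$, which is~(2b). The second reduction is a Goursat analysis: setting $N_i=N(G_i(\ell))$ and letting $\overline{G(\ell)}$ be the image of $G(\ell)$ in $G_1(\ell)/N_1\times G_2(\ell)/N_2$, Goursat's lemma produces normal subgroups $\overline N_i\trianglelefteq G_i(\ell)/N_i$ of order prime to~$\ell$ and an isomorphism $\varphi$ between the quotients $(G_i(\ell)/N_i)/\overline N_i$ whose graph is $\overline{G(\ell)}$. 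The heart of the proof is to arrange $\overline N_1=\overline N_2=\{1\}$ outside conclusion~(1), which is property~(2a). If $\overline N_2$ has an element of order~$\geq 3$, lift it to $G_2(\ell)$, replace it by a prime-to-$\ell$ power to obtain $g_2$ of order $\geq 3$ prime to~$\ell$ with $(g_1,g_2)\in G(\ell)$ for some $g_1\in N_1$, then raise to the $\ell$-th power to kill $g_1$ (which has $\ell$-power order) and apply Lemma~\ref{lemma_NontrivialElementIsEasy}; thus outside~(1) each $\overline N_i$ has exponent~$2$, and running through the short list of groups occurring as $G_i(\ell)/N_i$ (cyclic groups, quotients of exceptional groups, $\operatorname{SL}_2(\mathbb{F}_\ell)$, $\operatorname{PSL}_2(\mathbb{F}_\ell)$) forces $|\overline N_i|\leq 2$. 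When $\overline N_1=\{1,\tau\}$, the element $\tau$ lifts to an involution of $\operatorname{SL}_2(\mathbb{F}_\ell)$, hence to $-\operatorname{Id}$, so a suitable $\ell$-power of a lift shows $(-\operatorname{Id},\operatorname{Id})\in G(\ell)$; one then either passes to an index-$\leq 2$ subgroup killing $\overline N_1$, or (when no such subgroup exists) a short argument again invoking Lemma~\ref{lemma_NontrivialElementIsEasy} gives conclusion~(1); the same is done for $\overline N_2$, at a further index cost of at most~$4$. Once $\overline N_1=\overline N_2=\{1\}$, property~(2e) is automatic: a pair in $T$ with both reductions scalar lies in $\{\pm\operatorname{Id}\}^2$, and an unequal such pair would have exactly one trivial coordinate in $\overline{T(\ell)}$, impossible for the graph of an isomorphism since $-\operatorname{Id}$ has order~$2$ and is therefore nontrivial in $T_i(\ell)/N(T_i(\ell))$ as $\ell$ is odd.

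Finally I would split off conclusion~(2). Let $Q$ denote the common quotient, equal to $T_i(\ell)/N(T_i(\ell))$ for both~$i$. If $|Q|$ divides~$8$, then by the graph property the kernel of $T(\ell)\to Q$ is contained in $N(T_1(\ell))\times N(T_2(\ell))$, hence is an $\ell$-group, so $N(T(\ell))$ has index dividing~$8$ in $T(\ell)$ and its preimage in $T$ is a pro-$\ell$ subgroup of $G$ of bounded index — conclusion~(2), with (2b) holding since pro-$\ell$ subgroups of $\operatorname{SL}_2(\mathbb{Z}_\ell)^2$ reduce trivially modulo~$\ell$. If $|Q|$ does not divide~$8$ we are in conclusion~(3): (c) is exactly this, (d) holds because each $\mathcal{B}_\ell(n_i)$ lies in the kernel of reduction modulo~$\ell$ while all the reductions performed imposed conditions only modulo~$\ell$, and (a), (b), (e) were established above.

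The step I expect to be the main obstacle is the Goursat analysis: one must know precisely which element orders occur in each of the finitely many relevant Dickson types, so as to guarantee that a non-killed Goursat kernel really produces an element fit for Lemma~\ref{lemma_NontrivialElementIsEasy}, and then handle with care the borderline case $|\overline N_i|=2$ coming from $-\operatorname{Id}$ (where the odd-$\ell$ hypothesis is essential). Checking that the various index contributions combine into the stated divisibilities $4\cdot 48$ and $48^2$ — the factors of~$48$ absorbing the exceptional-type contributions — is routine but tedious.
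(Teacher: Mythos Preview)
Your outline is essentially the paper's: assume the hypothesis of Lemma~\ref{lemma_NontrivialElementIsEasy} fails, run a Goursat argument on $G(\ell)$ modulo the $N_i$, kill the residual $\pm\operatorname{Id}$ in the Goursat kernels at bounded index, then branch into~(2) or~(3) according to whether the common quotient has order dividing~$8$. Two points need attention.

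The main gap is the exceptional type. After your first reduction you explicitly leave ``exceptional'' among the possible types of $G_i(\ell)$, yet you claim this establishes~(3b), which excludes it. More seriously, your scheme for killing a nontrivial Goursat kernel $\overline N_1=\{1,\tau\}$---pass to an index-$2$ subgroup, else invoke Lemma~\ref{lemma_NontrivialElementIsEasy}---can fail when $G_1(\ell)$ is binary icosahedral: this group is perfect, so it has no index-$2$ subgroup, and the element $(-\operatorname{Id},\operatorname{Id})$ you produce has first coordinate of order only~$2$, so Lemma~\ref{lemma_NontrivialElementIsEasy} does not apply; nor can you manufacture a better element, since the graph condition forces any $(a,b)\in G(\ell)$ with $[b]\in\{\pm\operatorname{Id}\}$ to have $[a]\in\{\pm\operatorname{Id}\}$ as well. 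The paper avoids this by reducing exceptional types \emph{first} (index dividing~$24$, replacing $G_1(\ell)$ by a cyclic subgroup of order~$6$ or~$10$), after which the kernel-killing step only has to treat cyclic and $\operatorname{SL}_2(\mathbb{F}_\ell)$ quotients, where it goes through via a $\ker(\,\cdot\to\cdot/2\cdot\,)$ construction of index dividing~$4$.

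A minor correction: your justification that the pro-$\ell$ subgroup in case~(2) satisfies the scalar condition---``pro-$\ell$ subgroups of $\operatorname{SL}_2(\mathbb{Z}_\ell)^2$ reduce trivially modulo~$\ell$''---is false (any nontrivial unipotent element gives a counterexample). The correct argument is that a scalar $[t_i]\in\{\pm\operatorname{Id}\}$ has order dividing~$2$, while membership in an $\ell$-group forces $\ell$-power order, whence $[t_1]=[t_2]=\operatorname{Id}$ for odd~$\ell$.
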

\begin{proof}
If (1) holds we are done, so we can assume this is not the case. The conclusion of lemma \ref{lemma_NontrivialElementIsEasy} is then false, and therefore so is its hypothesis: if $(a,b)$ is an element of $G$ with $[a]$ (resp. $[b]$) equal to $[\pm \operatorname{Id}]$, then the prime-to-$\ell$ part of the order of $[b]$ (resp. $[a]$) is at most 2.
Consider the kernel $J_2$ of the reduction $G(\ell) \to G_1(\ell)$, which we identify to a (normal) subgroup of $G_2(\ell)$. We claim that the prime-to-$\ell$ part of the order of $J_2$ is at most 2. Indeed, $\operatorname{SL}_2(\mathbb{F}_\ell)$ contains only one element of order 2, namely minus the identity, so if the prime-to-$\ell$ part of $|J_2|$ is at least 3, then $J_2$ contains an element $[b]$ whose order has prime-to-$\ell$ part at least 3, contradiction.
Taking into account the fact that $J_2$ is a subgroup of $\operatorname{SL}_2(\mathbb{F}_\ell)$, we see that the $\ell$-part of its order can only be 1 or $\ell$. Thus the order of $J_2$ can only be one of $1,2,\ell,2\ell$; furthermore, the last two cases can only happen if $G_2(\ell)$ is of Borel type, since these are the only subgroups of $\operatorname{SL}_2(\mathbb{F}_\ell)$ admitting a normal $\ell$-Sylow. By Goursat's lemma, we know that the projection of $G(\ell)$ in $G_1(\ell)/J_1 \times G_2(\ell)/J_2$ is the graph of an isomorphism $G_1(\ell)/J_1 \cong G_2(\ell)/J_2$.
Notice now that if $T$ is any subgroup of $G$ of index dividing $4 \cdot 48$, then property (3d) is automatically true for $T$: indeed, since $[G_1:T_1]$ divides $[G:T]$, which in turn divides $4 \cdot 48$, the index $[G_1:T_1]$ is prime to $\ell$, and therefore $\mathcal{B}_\ell(n_1)$ (which is a pro-$\ell$ group, since $n_1 \geq 1$) is contained in $G_1$ if and only if it is contained in $T_1$; the same argument applies to $T_2$ and $\mathcal{B}_\ell(n_2)$. Furthermore, it is clear that property (3e) follows from property (3a): indeed, the only multiples of the identity in $\operatorname{SL}_2(\mathbb{F}_\ell)$ are $\pm \operatorname{Id}$, and the existence in $T(\ell)$ of an element of the form $([\pm \operatorname{Id}],[\mp \operatorname{Id}])$ would contradict (3a). 

We now construct a group $T$ with the desired properties in two successive steps. We claim first that there is a subgroup $H$ of $G$, of index dividing 24, such that $H_1(\ell)$ and $H_2(\ell)$ are either (split or nonsplit) Cartan, Borel, or all of $\operatorname{SL}_2(\mathbb{F}_\ell)$, so that $H$ satisfies property (3b). This is easily seen by a case-by-case analysis depending on the type of $G_1(\ell)$:
\begin{itemize}
\item split/nonsplit Cartan: then $G_1(\ell)/J_1$ is cyclic, so the same is true for $G_2(\ell)/J_2$. It is easily checked that this is only possible if $G_2(\ell)$ is either Borel or Cartan, and we are done (taking $H=G$).
\item normalizer of split/nonsplit Cartan: there is a subgroup $H$ of $G$ of index at most 2 such that $H_1(\ell)$ is Cartan, and we are back to the previous case.
\item Borel: if the order of $J_1$ is prime to $\ell$, then the isomorphism $G_1(\ell)/J_1 \cong G_2(\ell)/J_2$ forces $G_2(\ell)$ to be Borel (because then the order of $G_2(\ell)$ is divisible by $\ell$, and $G_2(\ell)$ cannot be all of $\operatorname{SL}_2(\mathbb{F}_\ell)$, for otherwise the order of $G_2(\ell)/J_2$ would be too large with respect to the order of $G_1(\ell)/J_1$). If the order of $J_1$ is divisible by $\ell$, then $G_1(\ell)/J_1 \cong G_2(\ell)/J_2$ is cyclic, and this forces $G_2(\ell)$ to be either Borel or Cartan. We can take $H=G$.
\item exceptional: according to whether the projective image of $G_1(\ell)$ is isomorphic to either $A_4, A_5$ or $S_5$, there is a subgroup $H$ of $G$ with $[G:H]\bigm\vert 24$ such that $H_1(\ell)$ is cyclic, of order either $6$ or $10$. We are thus reduced to the Cartan case.
\item $\operatorname{SL}_2(\mathbb{F}_\ell)$: comparing orders, the isomorphism $G_1(\ell)/J_1 \cong G_2(\ell)/J_2$ forces $G_2(\ell) = \operatorname{SL}_2(\mathbb{F}_\ell)$. We can take $H=G$.
\end{itemize}
Next we claim that there is a subgroup $K$ of $H$, with $[H:K] \mid 4$, such that the kernel $\tilde{J_1}$ (resp. $\tilde{J_2}$) of $K(\ell) \to K_2(\ell)$ (resp. of $K(\ell) \to K_1(\ell)$) has order either $1$ or $\ell$. It is easily seen that this implies that $K$ satisfies property (3a). Notice that since the orders of $\tilde{J_1}, \tilde{J_2}$ divide $2\ell$ this is equivalent to asking that $-\operatorname{Id} \not \in \tilde{J_1}, \tilde{J_2}$. If neither $(-\operatorname{Id},\operatorname{Id})$ nor $(\operatorname{Id},-\operatorname{Id})$ is in $H$ we are done (simply take $K=H$), so let us consider the case when $H$ contains at least one of the two.
Again we need to distinguish two subcases, depending on the type of $H_1(\ell)$:
\begin{itemize}
\item split/nonsplit Cartan, Borel: the group $P:=\frac{H_1(\ell)}{N(H_1(\ell))} \times \frac{H_2(\ell)}{N(H_2(\ell))}$ is the product of two cyclic groups, so $P/2P$ has order dividing 4. Let $\overline{H}$ be the image of $H(\ell)$ in $P$, so that again $\overline{H}/2\overline{H}$ has order dividing 4. We set $K=\ker \left( H \to H(\ell) \to \overline{H} \to \overline{H}/2\overline{H} \right)$. It is clear that $[H:K] \mid 4$, and we now show that $(-\operatorname{Id},\operatorname{Id})$ and $(\operatorname{Id},-\operatorname{Id})$ do not belong to $K$. The argument is the same in the two cases, so we only consider the former. Clearly, it suffices to show that the image in $\overline{H}$ of $(-\operatorname{Id},\operatorname{Id})$ does not lie in $2\overline{H}$. Suppose by contradiction this is the case, take $p \in \overline{H}$ such that $2p$ is the class of $(-\operatorname{Id},\operatorname{Id})$ in $\overline{H}$, and fix an $h \in H(\ell)$ that maps to $p$. By construction, this is an element such that $h^2$ differs from $(-\operatorname{Id},\operatorname{Id})$ by elements of order $\ell$. In particular, $h^{2\ell}=(-\operatorname{Id},\operatorname{Id})$, so $h^\ell=(h_1,h_2)$ is an element of $H(\ell)$ which squares to $(-\operatorname{Id},\operatorname{Id})$. As the only square roots of $\operatorname{Id}$ in $\operatorname{SL}_2(\mathbb{F}_\ell)$ are $\pm \operatorname{Id}$, this provides us with an element $(h_1,\pm \operatorname{Id})$ of $H(\ell)$ such that $h_1$ has order 4. But this contradicts our initial result that if an element $(h_1,\pm \operatorname{Id})$ is in $G(\ell)$, then the prime-to-$\ell$ part of the order of $h_1$ is at most 2.
\item $\operatorname{SL}_2(\mathbb{F}_\ell)$: as we have already remarked, this forces $H_2(\ell)=\operatorname{SL}_2(\mathbb{F}_\ell)$. By \cite[Lemma 5.1]{MR1209248}, there exists an $M \in \operatorname{GL}_2(\mathbb{F}_\ell)$ and a character $\chi:H(\ell) \to \{\pm 1\}$ such that
\[
H(\ell)=\left\{ (x,y) \in \operatorname{SL}_2(\mathbb{F}_\ell)^2 \bigm\vert y=\chi(x,y) MxM^{-1}  \right\}.\]
We can take $K=\ker\left( H \to H(\ell) \xrightarrow{\chi} \{\pm 1\} \right)$, which has index at most 2 in $H$.
\end{itemize}

The group $K$ thus obtained has index dividing $[G:H][H:K] \bigm\vert 24 \cdot 4$ in $G$, and satisfies properties (3a) and (3b) (hence, as already remarked, (3d) and (3e)). We obtain a group $T$ as in (3) by setting $T=K$ if property (3c) is true for $K$; if this is not the case, we obtain a group $T$ as in (2) by setting $T=\operatorname{ker} \left(K \to K_1(\ell) \to K_1(\ell)/N(K_1(\ell)) \right)$: indeed in this case $[K:T] \bigm\vert 8$ and $T$ is pro-$\ell$ by construction.
\end{proof}

In light of the previous proposition and of theorem \ref{thm_Pink_GP}, theorem \ref{thm_PinkSL22} follows from the following statement:
\begin{proposition}\label{prop_Reduction2}
Let $\ell > 5$ be a prime number and $G$ an open subgroup of $\operatorname{SL}_2(\mathbb{Z}_\ell)^2$ satisfying properties (3a)-(3e) of proposition \ref{prop_FirstReduction}. For $i=1,2$ let $n_i$ be a positive integer such that $G_i$ contains $\mathcal{B}_\ell(n_i)$. At least one of the following holds:
\begin{enumerate}
\item $G'$ contains $\mathcal{B}_\ell(20 \max\{n_1,n_2\},20 \max\{n_1,n_2\})$
\item there exists a subgroup $T$ of $G$, of index dividing $12$, such that if the Lie algebra $\mathcal{L}(T)$ contains $\ell^k \mathfrak{sl}_2(\mathbb{Z}_\ell) \oplus \ell^k \mathfrak{sl}_2(\mathbb{Z}_\ell)$ for a certain positive integer $k$, then $T'$ contains $\mathcal{B}_\ell(p,p)$, where
$
p=2k+\max\left\{2k,8n_1,8n_2 \right\}.
$
\end{enumerate}
\end{proposition}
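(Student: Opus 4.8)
The plan is to argue by a case analysis on the \emph{type} of $G$, that is on the pair $(\text{type of }G_1(\ell),\text{type of }G_2(\ell))$, which by hypothesis (3b) has each entry equal to one of: Borel, split Cartan, nonsplit Cartan, or $\operatorname{SL}_2(\mathbb{F}_\ell)$. The first thing I would do is use (3a) to cut down this list drastically: if $G_i(\ell)$ is of Cartan type then $N(G_i(\ell))$ is trivial, while if it is of Borel type then $N(G_i(\ell))$ is the unipotent subgroup (of order $1$ or $\ell$); since by (3a) we have $G_1(\ell)/N(G_1(\ell))\cong G_2(\ell)/N(G_2(\ell))$ and the only non-cyclic group occurring among these quotients is $\operatorname{SL}_2(\mathbb{F}_\ell)$ itself, either both $G_i(\ell)=\operatorname{SL}_2(\mathbb{F}_\ell)$, or both $G_i(\ell)/N(G_i(\ell))$ are cyclic of a common order $m$ which by (3c) does not divide $8$ (so $m\geq 3$). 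In every case $N(G)$ is the preimage in $G$ of $N(G(\ell))=G(\ell)\cap\big(N(G_1(\ell))\times N(G_2(\ell))\big)$ and is a closed pro-$\ell$ subgroup of $\operatorname{SL}_2(\mathbb{Z}_\ell)^2$, normal in $G$, so that theorem \ref{thm_Pink_GP} applies to it. Note also that (3a) and (3b) rule out the hypothesis of lemma \ref{lemma_NontrivialElementIsEasy}, so that alternative is not directly available to us here: this is exactly why (3a)--(3e) are the hard hypotheses.

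Next, in each of these cases I would produce the subgroup $T$ with $[G:T]\mid 12$, passing to a proper subgroup only when needed to regularise $T(\ell)$: in the Cartan and Borel cases this means removing the $\pm\operatorname{Id}$-ambiguities (for instance passing to the preimage of the subgroup on which the toral parameter is a square) and any small accidental coincidence among the eigenvalues of the relevant toral elements, while in the $\operatorname{SL}_2(\mathbb{F}_\ell)\times\operatorname{SL}_2(\mathbb{F}_\ell)$ case — using that we may change basis \emph{independently} in the two factors — it means putting $T(\ell)$ into the normal form $\Delta(\operatorname{SL}_2(\mathbb{F}_\ell))$. Property (3d) passes to $T$ because $[G_i:T_i]$ divides $12$ and is therefore prime to $\ell$, so that $\mathcal{B}_\ell(n_i)\subseteq G_i$ is equivalent to $\mathcal{B}_\ell(n_i)\subseteq T_i$. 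Lemmas \ref{lemma_TrivialModEllImpliesTrivial} and \ref{lemma_Teichmuller1} are the tools that let one replace an element of $T$ by one whose reduction in a given coordinate is an especially simple (diagonalisable, order-preserving) matrix when building $T$ or when running the argument below.

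The core of the argument is then to show that $\mathcal{L}(N(T))$ is not much smaller than $\mathcal{L}(T)$. By the argument of \cite[Lemma 4.5]{AdelicEC} (already invoked in lemma \ref{lemma_NontrivialLieAlgebra}) the lattice $\mathcal{L}(N(T))$ is stable under conjugation by all of $T$, hence in particular under conjugation by those elements of $T$ whose reduction generates the toral part of $T(\ell)$. On the other hand a direct inspection of $\Theta_2$ shows that every $\Theta_2(t)$, $t\in T$, reduces modulo $\ell$ into a subspace of dimension at most $4$ (spanned by $(h,0),(x,0),(0,h),(0,x)$ in the Borel case, smaller in the Cartan and diagonal cases) — in other words, modulo $\mathcal{L}(N(T))$ the lattice $\mathcal{L}(T)$ is generated by very few `toral/unipotent' vectors. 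Combining this with the hypothesis $\mathcal{L}(T)\supseteq\ell^k\mathfrak{sl}_2(\mathbb{Z}_\ell)^{\oplus 2}$ forces $\mathcal{L}(N(T))$ to contain, in each of the two coordinates, a nonzero vector of $\ell$-adic valuation bounded by $k$ up to an error depending only on $n_1,n_2$; to extract such vectors with the $x$-, $y$- (and $h$-) components separated out I would apply the conjugation trick of lemma \ref{lemma_ConjStableSubspaces}, now with the toral part of $T(\ell)$ playing the role of $\mathcal{B}_\ell(s)$ — and this is precisely where the condition $m\nmid 8$ (which keeps the relevant ratios of eigenvalues away from $1$) and the slack in the index $12$ get used. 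Feeding the resulting elements $(u_1,0),(0,u_2)\in\mathcal{L}(N(T))$ into lemma \ref{lemma_NontrivialLieAlgebra} yields $T'\supseteq\mathcal{B}_\ell(2t+8n_1,2t+8n_2)$: when $t$ can be taken bounded in terms of $n_1,n_2$ alone this already contains $\mathcal{B}_\ell(20\max\{n_1,n_2\},20\max\{n_1,n_2\})$ and we land in conclusion (1), while when the best available bound is $t\leq k$ we get $2t+8n_i\leq p$ and conclusion (2) holds with the claimed $p$. In the $\operatorname{SL}_2(\mathbb{F}_\ell)^2$ case one argues similarly but more directly: $N(T)$ is the congruence kernel, $\mathcal{L}(N(T))$ is stable under the diagonal adjoint action of $\operatorname{SL}_2(\mathbb{F}_\ell)$, which is irreducible for $\ell>5$, so $\mathcal{L}(N(T))$ is squeezed between two explicit homothety-multiples of $\mathfrak{sl}_2(\mathbb{Z}_\ell)^{\oplus 2}$ and theorem \ref{thm_Pink_GP} applied to the pro-$\ell$ group $N(T)$ gives conclusion (2).

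The hardest part is exactly this last quantitative step: bounding the loss of $\ell$-adic valuation incurred in passing from $\mathcal{L}(T)$ to $\mathcal{L}(N(T))$, and doing so uniformly over all the Borel/Cartan combinations. The Borel cases are the most delicate, because there $N(G_i(\ell))$ is nontrivial, so $N(T)$ is strictly larger than the congruence kernel and $\mathcal{L}(N(T))$ reduces modulo $\ell$ onto a line of `$x$-type' vectors rather than onto $0$; tracking how conjugation by the toral part of $T(\ell)$ moves these vectors around, and checking that no eigenvalue coincidence obstructs the recovery of the remaining components, is where the bulk of the computation — and the need for the index-$12$ buffer — is concentrated.
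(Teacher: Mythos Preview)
Your overall case analysis and the philosophy of reducing to $N(T)$ via conjugation by toral elements are right, and your observation that the mixed (nonsplit Cartan, split Cartan/Borel) types are already vacuous under (3a)+(3c) is correct (the paper treats them anyway, but they cannot occur). However, there are two real problems.

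\textbf{The $\operatorname{SL}_2(\mathbb{F}_\ell)\times\operatorname{SL}_2(\mathbb{F}_\ell)$ case is a genuine gap.} Your claim that ``$\mathcal{L}(N(T))$ is stable under the diagonal adjoint action of $\operatorname{SL}_2(\mathbb{F}_\ell)$, which is irreducible'' is false as stated: the diagonal adjoint action on $\mathfrak{sl}_2(\mathbb{F}_\ell)^{\oplus 2}$ splits as two copies of the adjoint representation. More importantly, even granting stability, you give no mechanism to bound the exponent $j$ with $\mathcal{L}(N(T))\supseteq \ell^{j}\mathfrak{sl}_2(\mathbb{Z}_\ell)^{\oplus 2}$ in terms of $k$. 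If $G$ were very close to the graph of an isomorphism at all levels, then $N(T)$ and $\mathcal{L}(N(T))$ would be very close to a diagonal, and you could not extract $(u_1,0),(0,u_2)$ of small valuation from $\mathcal{L}(N(T))$ at all. The paper's route is entirely different: one shows that if $G(\ell^{k+1})$ is the graph of an isomorphism $\operatorname{SL}_2(\mathbb{Z}/\ell^{k+1}\mathbb{Z})\to\operatorname{SL}_2(\mathbb{Z}/\ell^{k+1}\mathbb{Z})$, then, since for $\ell>5$ every such automorphism is inner, $\mathcal{L}(G)\subseteq\{(x,MxM^{-1})\}\pmod{\ell^{k+1}}$, directly contradicting $\mathcal{L}(G)\supseteq \ell^{k}\mathfrak{sl}_2(\mathbb{Z}_\ell)^{\oplus 2}$. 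Hence some projection $G(\ell^{k+1})\to G_i(\ell^{k+1})$ has nontrivial kernel, and a separate lemma then produces an element $(\operatorname{Id},g_2'')\in N(G)$ with $g_2''\not\equiv\operatorname{Id}\pmod{\ell^{k+1}}$, to which lemma \ref{lemma_NontrivialLieAlgebra} applies. Your irreducibility heuristic does not substitute for this structural input.

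\textbf{The Cartan/Borel mechanism is not quite the one you describe.} You invoke ``the conjugation trick of lemma \ref{lemma_ConjStableSubspaces} with the toral part of $T(\ell)$ playing the role of $\mathcal{B}_\ell(s)$'', but that lemma produces elements of one factor of $\mathfrak{sl}_2$; it does not by itself compare $\mathcal{L}(T)$ and $\mathcal{L}(N(T))$. The paper's key point is different: conjugation by the Teichm\"uller-lifted toral element $(g_1,g_2)$ decomposes $\mathfrak{sl}_2(\mathbb{Z}_\ell)^{\oplus 2}$ into eigenspaces, and one has the multiplicative identity $\pi_1\Theta_1(Ag_i)=d_i^{-1}\pi_1\Theta_1(A)$. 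Since for every $h\in T$ one can choose $m$ with $h\cdot(g_1,g_2)^m\in N(T)$, this identity shows that the projections of $\mathcal{L}(T)$ and $\mathcal{L}(N(T))$ onto the non-trivial eigenspaces \emph{coincide}. That is what transfers the hypothesis $\mathcal{L}(T)\supseteq\ell^k\mathfrak{sl}_2^{\oplus 2}$ down to $\mathcal{L}(N(T))$, after which lemma \ref{lemma_NontrivialLieAlgebra} finishes. Your description via ``$\mathcal{L}(T)/\mathcal{L}(N(T))$ is small mod $\ell$'' points in the right direction but misses this exact mechanism, and in particular does not explain why no valuation is lost. (Incidentally, the paper only ever passes to index $2$, not $12$.)
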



We now prove proposition \ref{prop_Reduction2} by analyzing the various possibilities for the type of $G$. Every case is dealt with in a separate section, whose title is of the form $(\text{type of }G_1(\ell),\text{type of }G_2(\ell))$. Note that since $N(\operatorname{SL}_2(\mathbb{F}_\ell))$ is trivial we have $G_1(\ell)=\operatorname{SL}_2(\mathbb{F}_\ell)$ if and only if $G_2(\ell)=\operatorname{SL}_2(\mathbb{F}_\ell)$; this helps exclude a few more possibilities for the type of $G$. Moreover, the statement of proposition \ref{prop_Reduction2} is symmetric in $G_1, G_2$, so we can further use symmetry arguments to reduce the number of cases (thus, for example, we only need consider one of the two cases (Borel, Nonsplit Cartan) and (Nonsplit Cartan, Borel)). Finally, note that we shall prove a slightly stronger statement: the index of $T$ in $G$ can be taken to divide 2.


\subsection{Cases (Nonsplit Cartan, Split Cartan) and (Nonsplit Cartan, Borel)}\label{sect_FirstCase}
The same idea works in both cases: consider an element $(a,b) \in G$ with $[a]$ of maximal order in $G_1(\ell)$; notice in particular that $[a]$ has order dividing $\ell+1$. The group $G(\ell)$ contains the element
$
([a],[b])^{\ell(\ell-1)}=([a]^{\ell(\ell-1)},[\operatorname{Id}]),
$
and the order of $[a]^{\ell(\ell-1)}$ is given by $\displaystyle \frac{\operatorname{ord}([a])}{\left(\ell-1, \operatorname{ord}([a]) \right)}=:m$. Notice that $m \geq 3$: indeed we have $\left(\ell-1, \operatorname{ord}([a]) \right) \leq 2$, so $m \leq 2$ would imply $\operatorname{ord} [a] \bigm\vert 4$, against the assumption that the order of $G_1(\ell)$ does not divide 8 (cf. proposition \ref{prop_FirstReduction} (3c)). Lemma \ref{lemma_NontrivialElementIsEasy} then shows that $G'$ contains $\mathcal{B}_\ell(20 \max\{n_1,n_2\},20 \max\{n_1,n_2\})$.

\subsection{Cases (Split Cartan, Split Cartan), (Borel, Borel) and (Split Cartan, Borel)}
We start by considering the type (Split Cartan, Split Cartan), the other two cases being essentially identical. Note that the groups $N(G_1(\ell))$ and $N(G_2(\ell))$ are trivial, so $G_1(\ell)$ and $G_2(\ell)$ are isomorphic by property (3a) of proposition \ref{prop_FirstReduction}, and we can find an element $(h_1,h_2) \in G$ such that $[h_1],[h_2]$ generate $G_1(\ell), G_2(\ell)$ respectively. By lemma \ref{lemma_Teichmuller1}, the limit $(g_1,g_2)$ of the sequence $(h_1,h_2)^{\ell^{2n}}$ satisfies $g_1^\ell=g_1, g_2^\ell=g_2$, and furthermore $[g_1]$, $[g_2]$ generate $G_1(\ell)$, $G_2(\ell)$ respectively. We choose bases in such a way that both $g_1$ and $g_2$ are diagonal. Write $g_i=\left( \begin{matrix} d_i & 0 \\ 0 & d_i^{-1} \end{matrix} \right)$, where $d_i$ satisfies $d_i^\ell=d_i$. Since $G_1(\ell) \cong G_2(\ell)$, we know that the orders of $[d_1]$ and $[d_2]$ agree; in particular, we can write $[d_2]=[d_1]^q$ for a certain integer $q$, $1 \leq q \leq \operatorname{ord} [d_1]$, that is prime to the order of $[d_1]$. We can, if necessary, apply on the second factor $\operatorname{SL}_2(\mathbb{Z}_\ell)$ the change of basis induced by the matrix $S=\left(\begin{matrix} 1 & 0 \\ 0 & -1 \end{matrix} \right)$: this change of basis exchanges the two diagonal entries of $g_2$, hence it allows us to assume $1 \leq q \leq \frac{\operatorname{ord} [d_1]}{2}$. 
Given that the Teichm\"uller lift is a homomorphism, we have $d_2=\omega([d_2])=\omega([d_1])^q=d_1^q$.
We now define the group $T$:
\begin{itemize}
\item if $[d_2]=-[d_1]$ or $[d_2]=-[d_1]^{-1}$, we let $T$ be the index-2 subgroup of $G$ defined by $\ker \left( G \to G_1(\ell) \to G_1(\ell)/2G_1(\ell)\right)$. If we now repeat the construction of $d_1, d_2$ for $T$, we find that $[d_2]=[d_1]$, and that the order of $T_1(\ell)$ does not divide 4 (recall that the order of $G_1(\ell)$ does not divide 8).
\item if $[d_2] \neq - [d_1]^{\pm 1}$ we set $T=G$.
\end{itemize}
Suppose that $\mathcal{L}(T)$ contains $\ell^k \mathfrak{sl}_2(\mathbb{Z}_\ell) \oplus \ell^k \mathfrak{sl}_2(\mathbb{Z}_\ell)$: we are going to show that $T'$ contains $\mathcal{B}_\ell(p,p)$ for $p=2k+8\max\{n_1,n_2\}$. Recall that $T$ contains an element $(g_1,g_2)$, where $g_i=\operatorname{diag} (d_i, d_i^{-1})$, and that we have set up our notation so that $d_2=d_1^q$ for some $1 \leq q \leq \frac{\operatorname{ord} [d_1]}{2}$.
Consider now the three matrices
\[
M_1=\left( \begin{matrix} 0 & 1 \\ 0 & 0 \end{matrix} \right),  M_2= \left( \begin{matrix} 1 & 0 \\ 0 & -1 \end{matrix} \right), M_3=\left( \begin{matrix} 0 & 0 \\ 1 & 0 \end{matrix} \right)
\]
and let $\pi_1$ (resp. $\pi_2, \pi_3$) be the linear map $\mathfrak{sl}_2(\mathbb{Z}_\ell) \to \mathbb{Z}_\ell \cdot M_i$ giving the projection of an element on its $M_1$ (resp. $M_2, M_3$) component. A $\mathbb{Z}_\ell$-basis of the Lie algebra $\mathfrak{sl}_2(\mathbb{Z}_\ell) \oplus \mathfrak{sl}_2(\mathbb{Z}_\ell)$ is given by $(M_i, 0), (0, M_j)$ for $i=1,2,3$ and $j=1,2,3$. Note that both $\mathcal{L}(T)$ and $\mathcal{L}(N(T))$ are stable under conjugation by $(g_1,g_2)$ (cf. \cite[Lemma 4.5]{AdelicEC}).
Writing elements of $\mathfrak{sl}_2(\mathbb{Z}_\ell) \oplus \mathfrak{sl}_2(\mathbb{Z}_\ell)$ as the 6-dimensional vectors of their coordinates in the basis just described, the action of conjugating by $(g_1,g_2)$ is given by
\[
(x_1,x_2,x_3,x_4,x_5,x_6) \mapsto (d_1^2 x_1,x_2,d_1^{-2} x_3,d_2^2 x_4,x_5, d_2^{-2} x_6).
\]
In particular, if we denote by $C$ the linear operator 
$
(x,y) \mapsto (g_1 x g_1^{-1},g_2 y g_2^{-1})
$
(acting on the Lie algebra $\mathfrak{sl}_2(\mathbb{Z}_\ell) \oplus \mathfrak{sl}_2(\mathbb{Z}_\ell)$),
we have
\[
\frac{1}{\ell-1}\sum_{i=0}^{\ell-2} C^i (x_1,x_2,x_3,x_4,x_5,x_6) = \left(0,x_2,0,0,x_4,0 \right)
\]
since $\displaystyle \frac{1}{\ell-1}\sum_{i=0}^{\ell-2} d_1^{2i} = \frac{1}{\ell-1} \frac{d_1^{2(\ell-1)}-1}{d_1^2-1}=0$ (recall that $d_1^\ell=d_1$ and $d_1^2 \neq 1$), and similarly for $d_1^{-2}$ and $d_2^{\pm 2}$. It follows that if $\mathcal{L}(T)$ or $\mathcal{L}(N(T))$ contains the vector $(x_1, \ldots, x_6)$, then it also contains the vector $(x_1,0,x_3,x_4,0,x_6)$ and, for every integer $j$, its image $(d_1^{2j} x_1,0,d_1^{-2j} x_3,d_2^{2j} x_4,0,d_2^{-2j} x_6)$ under conjugation by $(g_1,g_2)$ iterated $j$ times. Consider the matrix
$
V=\left( \begin{matrix} 1 & 1 & 1 & 1 \\ d_1^2 & d_1^{-2} & d_2^2 & d_2^{-2} \\ d_1^4 & d_1^{-4} & d_2^4 & d_2^{-4} \\ d_1^6 & d_1^{-6} & d_2^6 & d_2^{-6}  \end{matrix} \right):
$
this is a Vandermonde matrix, so its determinant is an $\ell$-adic unit as long as $d_1^2 \not \equiv d_1^{-2} \pmod \ell$, $d_2^2 \not \equiv d_2^{-2} \pmod \ell$ and $d_1^2 \not \equiv d_2^{\pm 2} \pmod{\ell}$. Recall that the orders of $[d_1]$, $[d_2]$ do not divide 4, so the first two conditions are automatically satisfied. Furthermore, under our assumptions the equality $[d_1]^2 = [d_2]^{\pm 2}$ implies $[d_1]=[d_2]$, that is, $q=1$. Consider first the case $q \neq 1$: the matrix $V$ is then invertible over $\mathbb{Z}_\ell$, hence the standard basis vectors $(1,0,0,0), (0,1,0,0), (0,0,1,0), (0,0,0,1)$ can be written as $\mathbb{Z}_\ell$-linear combinations of the rows of $V$. In turn, this implies that the vectors
\[
(x_1,0,0,0,0,0), (0,0,x_3,0,0,0),(0,0,0,x_4,0,0), (0,0,0,0,0,x_6)
\]
can be written as $\mathbb{Z}_\ell$-combinations of the four vectors $C^j (x_1,0,x_3,x_4,0,x_6)$ for $j=0,1,2,3$. Equivalently, we have shown that if $q \neq 1$ the Lie algebras $\mathcal{L}(T), \mathcal{L}(N(T))$ are stable under the projection operators $\pi_1 \oplus 0, \pi_3 \oplus 0, 0 \oplus \pi_1, 0 \oplus \pi_3$.
If, on the other hand, $q=1$ (so $g_1=g_2$), an even simpler computation shows that $\mathcal{L}(T), \mathcal{L}(N(T))$ are stable under the projection operators $\pi_1 \oplus \pi_1$ and $\pi_3 \oplus \pi_3$.
We have the immediate identities
\[
M_1 g_i = d_i^{-1} M_1, \; \pi_1(M_2 g_i)=0, \; \pi_1(M_3 g_i)=0, \; \pi_1(g_i)=0 \quad \text{ for } i=1,2,
\]
whence for any $A \in \operatorname{GL}_2(\mathbb{Z}_\ell)$ we have
$
\pi_1 \Theta_1(A g_i) = d_i^{-1} \pi_1(\Theta_1(A))$ for $i=1,2$.

Again we distinguish the cases $q \neq 1$ and $q=1$. In the former, we know that $(0 \oplus \pi_1)(\mathcal{L}(T))$ is contained in $\mathcal{L}(T)$, and that it is generated by an element of the form $(0 \oplus \pi_1)(\Theta_2(h))$, for a certain $h=(h_1,h_2) \in T$. We can choose an integer $m$ in such a way that $h \cdot (g_1,g_2)^m$ belongs to $N(T)$ (it suffices to choose $m$ such that $[h_1 g_1^m]=[\operatorname{Id}]=[h_2 g_2^m]$); for such an $m$, the element $\Theta_2(h \cdot (g_1,g_2)^m)$ lies in $\mathcal{L}(N(T))$, and since $\mathcal{L}(N(T))$ is stable under $0 \oplus \pi_1$ it also contains
\[
\begin{aligned}
d_2^m \cdot (0 \oplus \pi_1)\left(\Theta_2(h \cdot (g_1,g_2)^m)\right) & = d_2^m  (0, \pi_1 \Theta_1(h_2g_2^m))\\
& = (0,\pi_1(\Theta_1(h_2))).
\end{aligned}
\]
Now $\mathcal{L}(T)$ contains $(0 \oplus \pi_1)(\ell^k \mathfrak{sl}_2(\mathbb{Z}_\ell) \oplus \ell^k \mathfrak{sl}_2(\mathbb{Z}_\ell)) = 0 \oplus \ell^k\mathbb{Z}_\ell \cdot M_1$, so $(0,\ell^k M_1)$ is a multiple of $(0 \oplus \pi_1)(\Theta_2(h))$: the previous formula then shows that $\mathcal{L}(N(T))$ contains $0 \oplus \ell^k\mathbb{Z}_\ell \cdot M_1$. An analogous argument shows that $\mathcal{L}(N(T))$ contains $\ell^k\mathbb{Z}_\ell \cdot M_1 \oplus 0$: lemma \ref{lemma_NontrivialLieAlgebra} then implies that $T'$ contains
$
\mathcal{B}_\ell \left(2k+8 \max\left\{n_1,n_2\right\}, 2k+8 \max\left\{n_1,n_2\right\}\right).
$

\smallskip

Suppose on the other hand that $q=1$, that is, $d_1 = d_2$. Let us write, for the sake of simplicity, $g$ for $g_1=g_2$ and $d$ for $d_1=d_2$. As we have seen, both $\mathcal{L}(T)$ and $\mathcal{L}(N(T))$, thought of as subsets of $\mathfrak{sl}_2(\mathbb{Z}_\ell) \oplus \mathfrak{sl}_2(\mathbb{Z}_\ell)$, are stable under the maps $\pi_i \oplus \pi_i$ for $i=1,2,3$. Hence $\mathcal{L}(T)$ is the direct sum of three rank-2 subalgebras $R_i=(\pi_i \oplus \pi_i)(\mathcal{L}(T))$, $i=1,2,3$, with $R_i$ open in $\mathbb{Z}_\ell M_i \oplus \mathbb{Z}_\ell M_i$; similarly, $\mathcal{L}(N(T))$ is the direct sum of three algebras $S_i=(\pi_i \oplus \pi_i)(\mathcal{L}(N(T)))$.
We claim that $S_1=R_1$.  If $R_1$ is generated by the two elements $(\pi_1 \oplus \pi_1)\left(\Theta_2(w_1)\right)$ and $\left(\pi_1 \oplus \pi_1\right)\left(\Theta_2(w_2)\right)$ for some $w_1,w_2\in T$, then we can find integers $m_1, m_2$ such that $w_1(g,g)^{m_1}$ and $w_2(g,g)^{m_2}$ belong to $N(T)$. It follows that for $j=1,2$ the algebra $S_1$ contains
\[
d^{m_j} (\pi_1 \oplus \pi_1) (\Theta_2(w_j \cdot (g,g)^{m_j}))=d^{m_j} d^{-m_j} (\pi_1 \oplus \pi_1)( \Theta_2(w_j) ),
\]
i.e. $S_1=R_1$ as claimed.
Now notice that by assumption $\mathcal{L}(T)$ contains $\ell^k \mathfrak{sl}_2(\mathbb{Z}_\ell) \oplus \ell^k \mathfrak{sl}_2(\mathbb{Z}_\ell)$, so $R_1=(\pi_1 \oplus \pi_1)(\mathcal{L}(T))$ contains $\ell^k\mathbb{Z}_\ell \cdot M_1 \oplus \ell^k \mathbb{Z}_\ell \cdot M_1$, and the same is true for $(\pi_1 \oplus \pi_1)(\mathcal{L}(N(T)))$. As above, we conclude that $T'$ contains
$
\mathcal{B}_\ell \left(2k+8 \max\left\{n_1,n_2\right\}, 2k+8 \max\left\{n_1,n_2\right\}\right).
$

Finally, note that the (Borel, Borel) and (Split Cartan, Borel) cases are completely analogous: we simply need to choose for $(g_1,g_2)$ an element such that $[g_1]$ (resp. $[g_2]$) generates $T_1(\ell)/N(T_1(\ell))$ (resp. $T_2(\ell)/N(T_2(\ell))$).


\subsection{Case (Nonsplit Cartan, Nonsplit Cartan)}
We follow an approach very close to that of the previous section. Using lemma \ref{lemma_Teichmuller1} and the fact that $G(\ell)$ is the graph of an isomorphism $G_1(\ell) \to G_2(\ell)$, we can find an element $(g_1,g_2)$ of $G$ such that $g_i^{\ell^2}=g_i$ and $[g_i]$ generates $G_i(\ell)$; in a suitable basis we can write
$
g_i = \left( \begin{matrix} a_i & b_i \varepsilon_i \\ b_i & a_i \end{matrix} \right),
$
where $\varepsilon_i$ is an element of $\mathbb{Z}_\ell^\times \setminus \mathbb{Z}_\ell^{\times 2}$ (that is to say, $[\varepsilon_i]$ is not a square in $\mathbb{F}_\ell^\times$). The condition that the order of $G_i(\ell)$ does not divide 8 implies $a_ib_i \not \equiv 0 \pmod \ell$. 
For any $\ell$-adic unit $\varepsilon$ consider now the three matrices
\[
M_1(\varepsilon)=\left( \begin{matrix} 0 & \varepsilon \\ 1 & 0 \end{matrix} \right), \; M_2(\varepsilon)=M_2= \left( \begin{matrix} 1 & 0 \\ 0 & -1 \end{matrix} \right), \; M_3(\varepsilon)=\left( \begin{matrix} 0 & -\varepsilon \\ 1 & 0 \end{matrix} \right).
\]
A basis of $\mathfrak{sl}_2(\mathbb{Z}_\ell) \oplus \mathfrak{sl}_2(\mathbb{Z}_\ell)$ is given by
\[
(M_1(\varepsilon_1), 0), \; (M_2(\varepsilon_1), 0), \; (M_3(\varepsilon_1),0), \; (0, M_1(\varepsilon_2)), \; (0, M_2(\varepsilon_2)), \; (0, M_3(\varepsilon_2)),
\]
and again we write elements of $\mathfrak{sl}_2(\mathbb{Z}_\ell) \oplus \mathfrak{sl}_2(\mathbb{Z}_\ell)$ as six-dimensional vectors in this basis. Let $C$ be the linear operator (from $\mathfrak{sl}_2(\mathbb{Z}_\ell) \oplus \mathfrak{sl}_2(\mathbb{Z}_\ell)$ to itself) 
given by $(x,y) \mapsto (g_1,g_2)(x,y)(g_1,g_2)^{-1}$; once again, $\mathcal{L}(G)$ and $\mathcal{L}(N(G))$ are stable under $C$.
The matrix of $C$ in this basis is block-diagonal, the blocks being given by
$
\displaystyle B_i=\left(\begin{array}{ccc}
 1 & 0 & 0 \\
 0 & 1+2 \varepsilon_i  b_i^2 & 2a_i b_i \varepsilon_i  \\
 0 & 2a_i b_i & 1+2 \varepsilon_i  b_i^2
\end{array}\right).
$
Since the bottom-right $2$ by $2$ block of $B_i$ is simply $g_i^2$, the eigenvalues of $B_i$ are $1$ and the squares of the eigenvalues of $g_i$. The analogue of the condition $[d_1] \neq -[d_2]^{\pm 1}$ of the previous paragraph is `the only eigenvalue shared by $[B_1] \in \operatorname{GL}_3(\mathbb{F}_\ell)$ and $[B_2] \in \operatorname{GL}_3(\mathbb{F}_\ell)$ is 1'. Thus in this case we define the group $T$ as follows (notice that 1 is not an eigenvalue of $[g_i]^2$):
\begin{enumerate}
\item for an element $z \in \mathbb{Z}_\ell[\sqrt{\varepsilon_1},\sqrt{\varepsilon_2}]$ denote by $[z]$ its image in $\overline{\mathbb{F}_\ell}$. By construction we have $a_i \pm b_i \sqrt{\varepsilon_i} = \omega\left(\left[ a_i \pm b_i \sqrt{\varepsilon_i} \right] \right)$, where $\omega$ is the Teichm\"uller lift. If $[a_1 \pm \sqrt{\varepsilon_1}b_1]^2=[a_2 \pm \sqrt{\varepsilon_2}b_2]^2$, then (if necessary) we apply on the second factor $\operatorname{SL}_2(\mathbb{Z}_\ell)$ the change of basis induced by the matrix $S=\left(\begin{matrix} 1 & 0 \\ 0 & -1 \end{matrix} \right)$ to assume $[a_1 + \sqrt{\varepsilon_1}b_1]^2=[a_2 + \sqrt{\varepsilon_2}b_2]^2$. 
We then set
$
T:=\ker \left( G \to G(\ell) \to G(\ell)/2G(\ell) \right),
$
which (since $G(\ell)$ is cyclic) has index 2 in $G$. The element $(g_1,g_2)^2 \in T$ projects to a generator of $T(\ell)$, and we have
\[
(a_1+b_1\sqrt{\varepsilon_1})^2 = \omega\left([a_1+b_1\sqrt{\varepsilon_1}]^2 \right) = \omega\left([a_2+b_2\sqrt{\varepsilon_2}]^2 \right) = (a_2+b_2\sqrt{\varepsilon_2})^2,
\]
which -- using the fact that $a_i^2-\varepsilon_i b_i^2=\det g_i=1$ -- implies $\varepsilon_1 b_1^2=\varepsilon_2 b_2^2$.
 Notice that $T$, being of index 2 in $G$, is normal, hence its Lie algebra $\mathcal{L}(T)$ is stable not just under conjugation by elements of $T$, but also under conjugation by elements of $G$; the same is true for $\mathcal{L}(N(T))=\mathcal{L}(N(G))$. In particular, both these algebra are stable under conjugation by $(g_1,g_2)$, that is, they are stable under $C$. 
\item if $(a_1 \pm \sqrt{\varepsilon_1}b_1)^2$, $(a_2 \pm \sqrt{\varepsilon_2}b_2)^2$ are all distinct in $\overline{\mathbb{F}_\ell}$ we simply set $T=G$. In this case the squares of the eigenvalues of $g_1$ and of $g_2$ are distinct.
\end{enumerate}

We now assume that $\mathcal{L}(T)$ contains $\ell^k \mathfrak{sl}_2(\mathbb{Z}_\ell) \oplus \ell^k \mathfrak{sl}_2(\mathbb{Z}_\ell)$: we shall show that $T'$ contains $\mathcal{B}_\ell(2k+8\max\{n_1,n_2\},2k+8\max\{n_1,n_2\})$.

\smallskip

Suppose first that we are in subcase (2). Let $p_1(x)$ be the characteristic polynomial of $B_1$, and consider $p_1(C)$. This will be a block-diagonal operator whose first block is the null matrix and whose second block is of the form $\left(
\begin{array}{c|cc}
  0 & 0 \quad 0\\ \hline
  0 & \raisebox{-9pt}{{\LARGE\mbox{{$A$}}}} \\[-4pt]
  0 &
\end{array}
\right)$, with $A$ invertible modulo $\ell$ (this follows at once from the fact that the reduction modulo $\ell$ of this block can be computed as $p_1([B_2])$, and the only eigenvalue that is common to $[B_1]$ and $[B_2]$ is 1). Note furthermore that by the Hamilton-Cayley theorem the $2 \times 2$ identity can be expressed as a polynomial in $A$, so that ultimately the diagonal matrix with diagonal entries $(0,0,0,0,1,1)$
can be expressed a polynomial in $C$. Concretely, this is the operator
\[
\displaystyle \Pi:\left( \left( \begin{matrix}h_1 & x_1 \\ y_1 & -h_1 \end{matrix} \right), \left( \begin{matrix} h_2 & x_2 \\ y_2 & -h_2 \end{matrix} \right) \right) \mapsto \left( \left( \begin{matrix}0 & 0 \\ 0 & 0 \end{matrix} \right), \left( \begin{matrix} \displaystyle h_2 & \displaystyle  \frac{ x_2 - \varepsilon_2 y_2}{2} \\ \displaystyle  \frac{\varepsilon_2 y_2 - x_2}{2\varepsilon_2} & -h_2 \end{matrix} \right) \right),
\]
and we have just shown that $\mathcal{L}(T)$ and $\mathcal{L}(N(T))$ (being stable under $C$) are in particular also stable under $\Pi$.
As $T_2$ contains $\mathcal{B}_\ell(n_2)$, there exists an element $f_1$ of $T_1$ such that $\left(f_1, R\left(\ell^{n_2}\right) \right)$ belongs to $T$. Taking the $\ell(\ell^2-1)$-th power of this element shows that $N(T)$ contains the element
$
\left(f_1^{\ell(\ell^2-1)}, R\left( (\ell^2-1)\ell^{n_2+1}\right) \right),
$
and therefore $\mathcal{L}(N(T))$ contains
\[
\frac{1}{\ell^2-1} \Theta_2 \left(f_1^{\ell(\ell^2-1)}, R\left(  (\ell^2-1)\ell^{n_2+1} \right) \right) = \left( \frac{1}{\ell^2-1} \Theta_1\left(f_1^{\ell(\ell^2-1)} \right), \left( \begin{matrix} 0 & \ell^{n_2+1} \\ 0 & 0 \end{matrix} \right) \right).
\]
Applying $\Pi$ and multiplying by $2 \varepsilon_2$ we see that $\mathcal{L}(N(T))$ contains
$
\left(0,\left( \begin{matrix} 0 & \varepsilon_2 \ell^{n_2+1} \\ -\ell^{n_2+1} & 0 \end{matrix} \right) \right);
$
by lemma \ref{lemma_NontrivialLieAlgebra} we then find that $T'$ contains $\{\operatorname{Id} \} \times \mathcal{B}_\ell(10n_2+2)$. Swapping the roles of $T_1,T_2$ the same argument shows that $T'$ contains $\mathcal{B}_\ell(10n_1+2) \times \{\operatorname{Id} \} $, and we are done since this clearly implies that $T'$ contains $\mathcal{B}_\ell(20\max\{n_1,n_2\},20\max\{n_1,n_2\} )$.


Next consider subcase (1). 
Recall that algebras $\mathcal{L}(T)$ and $\mathcal{L}(N(T))$ are stable under $C$. 
We keep the notation $M_i(\varepsilon)$ from subcase (2), and we let $\pi_1(\varepsilon)$ (resp.~$\pi_2(\varepsilon), \pi_3(\varepsilon)$) be the linear maps $\mathfrak{sl}_2(\mathbb{Z}_\ell) \to \mathbb{Z}_\ell \cdot M_i(\varepsilon)$ giving the projection of an element on its $M_1(\varepsilon)$ (resp.~$M_2(\varepsilon), M_3(\varepsilon)$) component. Using the fact that $\varepsilon_1 b_1^2=\varepsilon_2 b_2^2$, one easily checks that
\[\displaystyle \pi_1(\varepsilon_1) \oplus \pi_1(\varepsilon_2) = - \frac{1}{4 \varepsilon_1 b_1^2} \left( \operatorname{Id}-2(1+2 \varepsilon_1 b_1^2 )C +C^2 \right),\]
from which we see that $\mathcal{L}(T), \mathcal{L}(N(T))$ are stable under $\pi_1(\varepsilon_1) \oplus \pi_1(\varepsilon_2)$ and therefore, by difference, also under
$
\tilde{\pi} : (x_1,x_2,x_3,x_4,x_5,x_6) \mapsto (0,x_2,x_3,0,x_5,x_6).
$
We now set $\Lambda$ to be the $6 \times 6$ matrix of $C$ in the basis $(M_1(\varepsilon_1),0),(M_2(\varepsilon_1),0),(M_3(\varepsilon_1),0)$, $(0,M_1(\varepsilon_2)),(0,M_2(\varepsilon_2)),(0,M_3(\varepsilon_2))$; as we have already seen in subcase (2),
this is the block-diagonal operator with blocks given by $1,g_1^2,1,g_2^2$.
We claim that $\tilde{\pi}(\mathcal{L}(T))$ and $\tilde{\pi}(\mathcal{L}(N(T)))$, seen as submodules of $\mathbb{Z}_\ell^6$, are stable under left multiplication by $\Lambda^{-1}$.
Indeed, since the Lie algebras of $T$ and of $N(T)$ are stable under conjugation by $(g_1,g_2)$ the claim follows from the identity
\begin{equation}
\tilde{\pi}\left( (g_1,g_2)^{-1} (t_1,t_2) (g_1,g_2) \right) = \Lambda^{-1} \cdot \tilde{\pi} \left( (t_1,t_2) \right) \quad \forall (t_1,t_2) \in \mathfrak{sl}_2(\mathbb{Z}_\ell)^2.
\end{equation}
Furthermore, 
one easily checks that, for all $t \in T$, we have
$
\tilde{\pi} \left( \Theta_2 \left( (g_1,g_2)^2\cdot t \right)\right) = \Lambda \cdot \tilde{\pi} \left( \Theta_2(t) \right).
$
Let now $w_1, \ldots, w_4\in T$ be such that $\tilde{\pi}(\mathcal{L}(T))$ is generated by $\tilde{\pi}(\Theta_2(w_1)), \ldots, \tilde{\pi}(\Theta_2(w_4))$. Since $[(g_1^2,g_2^2)]$ generates $T(\ell)$, for $i=1,\ldots, 4$ there is an integer $m_i$ such that $(g_1,g_2)^{m_i}w_i$ belongs to $N(T)$ (that is, it is trivial modulo $\ell$): it follows that $\Theta_2\left((g_1^2,g_2^2)^{m_i}w_i \right)$ is in $\mathcal{L}(N(T))$, and since $\mathcal{L}(N(T))$ is stable under both $\tilde{\pi}$ and $\Lambda^{-1}$ we find
\[
\Lambda^{-m_i} \cdot \tilde{\pi} \left( \Theta_2\left((g_1^2,g_2^2)^{m_i}w_i \right) \right) = \Lambda^{-m_i} \cdot \Lambda^{m_i} \cdot \tilde{\pi}(\Theta_2(w_i)) = \tilde{\pi} \left( \Theta_2(w_i) \right) \in \mathcal{L}(N(T)).
\]
This easily implies $\mathcal{L}(N(T)) \supseteq \tilde{\pi}\left(\mathcal{L}(N(T))\right) = \tilde{\pi}(\mathcal{L}(T)) \supseteq \tilde{\pi}(\ell^k \mathfrak{sl}_2(\mathbb{Z}_\ell) \oplus \ell^k \mathfrak{sl}_2(\mathbb{Z}_\ell))$. In particular, $\mathcal{L}(N(T))$ contains two elements $(u_1,0)$ and $(0,u_2)$ with $u_1, u_2 \not \equiv 0 \pmod{\ell^{k+1}}$:
by lemma \ref{lemma_NontrivialLieAlgebra} we conclude that $N(T)'$ contains
$
\mathcal{B}_\ell \left(2k+8 \max\left\{n_1,n_2\right\}, 2k+8 \max\left\{n_1,n_2\right\}\right).
$

\subsection{Case ($\operatorname{SL_2}(\mathbb{F}_\ell), \operatorname{SL_2}(\mathbb{F}_\ell)$)}\label{sect_LastCase}
In this case we take $T=G$, and assume that $\mathcal{L}(T)=\mathcal{L}(G)$ contains $\ell^k \mathfrak{sl}_2(\mathbb{Z}_\ell) \oplus \ell^k \mathfrak{sl}_2(\mathbb{Z}_\ell)$. We shall prove that $T'=G'$ contains $\mathcal{B}_\ell(4k,4k)$.
We consider the question of whether or not, for any given $m$, the group $G(\ell^m)$ is the graph of an isomorphism $G_1(\ell^m) \to G_2(\ell^m)$; the following lemma covers the case when this does \textit{not} happen:

\begin{lemma}\label{lemma_GraphSL2}
Let $m$ be a positive integer. Suppose $G$ contains an element of the form $(g_1,g_2)$, where $g_1$ is trivial modulo $\ell^m$ but $g_2$ is not (or symmetrically, $g_2$ trivial modulo $\ell^m$ and $g_1$ nontrivial). Then $G'$ contains $\mathcal{B}_\ell(4(m-1), 4(m-1))$.
\end{lemma}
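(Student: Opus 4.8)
The plan is to exploit the element $(g_1,g_2)\in G$ — say with $g_1\equiv\operatorname{Id}\pmod{\ell^m}$ and $g_2\not\equiv\operatorname{Id}\pmod{\ell^m}$ — to produce, after taking a suitable $\ell$-power, an element of $G$ of the form $(\operatorname{Id}+\ell^{m}A_1,\operatorname{Id}+\ell^{r}A_2)$ in which the first factor is trivial modulo $\ell^m$ and the second is genuinely nontrivial modulo $\ell^{m}$, i.e.\ has exact level $r$ with $r\le m-1$... wait, rather: the second factor is nontrivial modulo $\ell^m$, so it has exact level $r\le m-1$. Since $G_1(\ell)=G_2(\ell)=\operatorname{SL}_2(\mathbb{F}_\ell)$, the reduction $[g_2]$ may be anything; replacing $(g_1,g_2)$ by its $|\operatorname{SL}_2(\mathbb{F}_\ell)|$-th power does not hurt the hypothesis on the levels (conjugation-free, since we only raise to powers) except that I must be careful that $g_2$ remains nontrivial modulo $\ell^m$. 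The cleanest route is: write $r=v_\ell(g_2-\operatorname{Id})\le m-1$; raising $(g_1,g_2)$ to the power $\ell^{m-1-r}\cdot|\operatorname{SL}_2(\mathbb{F}_\ell)|$ (using Lemma \ref{lemma_log} to track valuations of $(\operatorname{Id}+\ell^r A)^{\ell^j}\equiv\operatorname{Id}+\ell^{r+j}A$) yields an element $(x,y)\in G$ with $x\equiv\operatorname{Id}\pmod{\ell^m}$ (a fortiori $\pmod{\ell^{1}}$), and $y\in\mathcal{B}_\ell(m-1)$ with $y\not\equiv\operatorname{Id}\pmod{\ell^m}$, so $y$ has exact level $m-1$.

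Next I would use the hypotheses $G_1\supseteq\mathcal{B}_\ell(\cdot)$ implicitly available — actually more directly the fact that $G_1(\ell)=\operatorname{SL}_2(\mathbb{F}_\ell)$, hence by \cite[IV-23, Lemma 3]{SerreAbelianRepr} (as used in Lemma \ref{lemma_Generators}(3)) $G_1=\operatorname{SL}_2(\mathbb{Z}_\ell)$, and likewise $G_2=\operatorname{SL}_2(\mathbb{Z}_\ell)$. So for every $g\in\operatorname{SL}_2(\mathbb{Z}_\ell)$ there is $(g,*)\in G$ and $(*,g)\in G$. Now conjugate the element $(x,y)$ from the previous step by elements $(\operatorname{Id},h)$, $h\in\operatorname{SL}_2(\mathbb{Z}_\ell)$: the commutator $[(x,y),(\operatorname{Id},h)]=(\operatorname{Id},[y,h])$ lies in $G$. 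As $y$ has exact level $m-1$, writing $y=\operatorname{Id}+\ell^{m-1}w$ with $w\not\equiv0\pmod\ell$, the commutators $[y,h]$ for $h$ ranging over $\mathcal{B}_\ell(m-1)$ (or even over all of $\operatorname{SL}_2(\mathbb{Z}_\ell)$) generate, via the conjugation action, a subgroup whose associated Lie algebra is stable under conjugation by $\operatorname{SL}_2(\mathbb{Z}_\ell)\supseteq\mathcal{B}_\ell(1)$ and nonzero modulo $\ell^{m}$; Lemma \ref{lemma_ConjStableSubspaces} (with $t=m-1$, $s=1$, $v=0$ since $\ell>5$ here — note $\ell\le 5$ is covered by Proposition \ref{prop_PinkTrueSmallEll}, so WLOG $\ell>5$) then shows this Lie algebra contains $\ell^{(m-1)+4}\mathfrak{sl}_2(\mathbb{Z}_\ell)=\ell^{m+3}\mathfrak{sl}_2(\mathbb{Z}_\ell)$. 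Feeding this into Theorem \ref{thm_Pink_GP} (applied to the pro-$\ell$ closure of $\{\operatorname{Id}\}\times\langle[y,h]\rangle$) gives $\{\operatorname{Id}\}\times\mathcal{B}_\ell(2(m+3))$ inside the derived group. That is weaker than claimed, so I expect the actual argument to be sharper — probably using Lemma \ref{lemma_Generators}(4) directly on $L,R$-type elements extracted from $[y,h]$ rather than the lossy Lemma \ref{lemma_ConjStableSubspaces}–Theorem \ref{thm_Pink_GP} chain — to land exactly $\{\operatorname{Id}\}\times\mathcal{B}_\ell(4(m-1))$, and symmetrically (running the whole argument with the roles reversed, if the symmetric hypothesis holds, or by noting $G'$ projects onto $\operatorname{SL}_2(\mathbb{Z}_\ell)$ on the first factor) one gets $\mathcal{B}_\ell(4(m-1))\times\{\operatorname{Id}\}$; together these generate $\mathcal{B}_\ell(4(m-1),4(m-1))\subseteq G'$.

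The main obstacle, I expect, is the bookkeeping of constants: getting the level down to exactly $4(m-1)$ rather than something like $2m+O(1)$ requires choosing the conjugating elements $h$ cleverly so that $[y,h]$ is itself (conjugate to) an $L(\cdot)$ or $R(\cdot)$ of exact level $\approx 2(m-1)$, then invoking Lemma \ref{lemma_Generators}(4) which doubles $s\mapsto 2s$. One should take $h=L(1)$ and $h=R(1)$ (or unipotents in general position relative to $w$) so that $[y,h]-\operatorname{Id}$ has level exactly $m-1$ in a unipotent direction, apply Lemma \ref{lemma_Generators}(1) to fill in $L(\ell^{m-1}),R(\ell^{m-1})$ in $G'\cap(\{\operatorname{Id}\}\times\operatorname{SL}_2(\mathbb{Z}_\ell))$, and conclude with Lemma \ref{lemma_Generators}(4). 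The secondary nuisance is verifying that raising $(g_1,g_2)$ to $\ell$-powers preserves "$g_1$ trivial mod $\ell^m$" — trivially true — and that I can always arrange the second coordinate to have the \emph{exact} level $m-1$; if $g_2$ has level $r<m-1$ I raise to the power $\ell^{m-1-r}$ first, which by Lemma \ref{lemma_log} moves it to level exactly $m-1$, and the first coordinate stays trivial modulo $\ell^m$ a fortiori.
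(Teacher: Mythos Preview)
Your proposal has a genuine gap at the conjugation step. You write ``conjugate $(x,y)$ by $(\operatorname{Id},h)$ to get $(\operatorname{Id},[y,h])\in G$'', but $(\operatorname{Id},h)$ is not known to lie in $G$ --- producing such elements is precisely the point of the lemma. If you instead conjugate by an element $(a,h)\in G$ (which exists since $G_2=\operatorname{SL}_2(\mathbb{Z}_\ell)$), the commutator is $([x,a],[y,h])$, whose first coordinate is trivial only modulo $\ell^m$, not equal to $\operatorname{Id}$; iterating never lands you in $\{\operatorname{Id}\}\times\operatorname{SL}_2(\mathbb{Z}_\ell)$. The paper closes this gap by a generation trick: since $G_1=\operatorname{SL}_2(\mathbb{Z}_\ell)$, lift $R(\ell),L(\ell),D(\ell)$ to $x=(R(\ell),x_2),\,y=(L(\ell),y_2),\,h=(D(\ell),h_2)\in G$; because $G(\ell)$ is the graph of an isomorphism, $x_2,y_2,h_2$ are trivial mod~$\ell$. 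Raising to the $\ell^{m-1}$-th power, the first coordinates topologically generate $\mathcal{B}_\ell(m)\ni g_1^{-1}$ while the second coordinates land in $\mathcal{B}_\ell(m)$; hence $G$ contains some $(g_1^{-1},g_2')$ with $g_2'\equiv\operatorname{Id}\pmod{\ell^m}$, and multiplying by $(g_1,g_2)$ gives a \emph{genuine} element $(\operatorname{Id},g_2'g_2)\in N(G)$ with second coordinate nontrivial mod~$\ell^m$. Now Lemma~\ref{lemma_NontrivialLieAlgebra} applies with $t=m-1$ and $n_2=0$ (for $\ell>5$ Lemma~\ref{lemma_ConjStableSubspaces} allows $s=0$, so your worry about lossy constants from taking $s=1$ was unfounded), yielding $\{\operatorname{Id}\}\times\mathcal{B}_\ell(2(m-1))\subseteq G$ directly.

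Your plan for the first factor also fails: the hypothesis is asymmetric, and ``$G'$ projects onto $\operatorname{SL}_2(\mathbb{Z}_\ell)$'' does not produce elements of $\mathcal{B}_\ell(\cdot)\times\{\operatorname{Id}\}$. The paper reuses the same elements $x,y,h$, now raised to the $\ell^{2m-3}$-th power: their first coordinates generate $\mathcal{B}_\ell(2(m-1))$, while their second coordinates lie in $\mathcal{B}_\ell(2(m-1))$ and can therefore be cancelled using the $\{\operatorname{Id}\}\times\mathcal{B}_\ell(2(m-1))$ just obtained. This gives $\mathcal{B}_\ell(2(m-1),2(m-1))\subseteq G$, and Lemma~\ref{lemma_Commutator} then yields $\mathcal{B}_\ell(4(m-1),4(m-1))\subseteq G'$. (Incidentally, your preliminary step of raising $(g_1,g_2)$ to the power $\ell^{m-1-r}\cdot|\operatorname{SL}_2(\mathbb{F}_\ell)|$ is both unnecessary for the paper's argument and slightly wrong: the extra factor of $\ell$ in $|\operatorname{SL}_2(\mathbb{F}_\ell)|$ would push the level of the second coordinate past $m-1$.)
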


\begin{proof}
Notice first that we necessarily have $m \geq 2$: indeed, $G(\ell)$ is the graph of an isomorphism $G_1(\ell) \to G_2(\ell)$, so if $g_1$ is trivial modulo $\ell$ then $g_2$ is trivial modulo $\ell$ as well. In particular, $(g_1,g_2)$ belongs to $\ker \left(G \to G(\ell)\right)=N(G)$. Now let
$
x_1=R(\ell)$, $y_1=L\left(\ell\right)$, $h_1=D\left( \ell\right)$.
By \cite[IV-23, Lemma 3]{SerreAbelianRepr}, since $G_1(\ell)=\operatorname{SL}_2(\mathbb{F}_\ell)$ we also have $G_1=\operatorname{SL}_2(\mathbb{Z}_\ell)$; in particular, we can find $x_2, y_2, h_2 \in G_2$ such that $x=(x_1,y_1)$, $y=(y_2,y_2)$ and $h=(h_1,h_2)$ all belong to $G$. As $G(\ell)$ is the graph of an isomorphism $G_1(\ell) \to G_2(\ell)$ and $x_1, y_1$ and $h_1$ are all trivial modulo $\ell$, the same must be true of $x_2,y_2,h_2$.
The elements $x^{\ell^{m-1}}, y^{\ell^{m-1}}$ and $h^{\ell^{m-1}}$ then satisfy
\begin{itemize}
\item their first coordinates topologically generate $\mathcal{B}_\ell(m)$, by lemma \ref{lemma_Generators} (3)
\item their second coordinates are trivial modulo $\ell^m$,
\end{itemize}
so the group they (topologically) generate contains an element of the form $(g_1^{-1},g_2')$, where $g_2'$ is trivial modulo $\ell^m$. Thus $G$ contains the product $g=(g_1^{-1},g_2')(g_1,g_2)=(\operatorname{Id},g_2'g_2)$, whose second coordinate is congruent to $g_2$ (and therefore nontrivial) modulo $\ell^m$. It is immediate to check that $g$ is in $N(G)$ (since its coordinates are trivial modulo $\ell$), hence $\mathcal{L}(N(G))$ contains $\Theta_2(g)$, which is of the form $(0,u)$ with $u$ nontrivial modulo $\ell^m$ (lemma \ref{lemma_NontrivialImpliesNontrivialAlgebra}). Applying lemma \ref{lemma_NontrivialLieAlgebra} we deduce that $N(G)$ contains $\left\{\operatorname{Id} \right\} \times \mathcal{B}_\ell(2(m-1))$ (in the notation of lemma \ref{lemma_NontrivialLieAlgebra} one can take $t=m-1$ and $n_2=0$).
To finish the proof, consider the group $H$ topologically generated by $x'=x^{\ell^{2m-3}}, y'= y^{\ell^{2m-3}}, h'=h^{\ell^{2m-3}}$, where the exponents make sense since we already remarked that $m \geq 2$. Denote by $H_1,H_2$ the projections of $H$ on the two components $\operatorname{SL}_2(\mathbb{Z}_\ell)$.
It is clear that $H_1 \supseteq \mathcal{B}_\ell(2(m-1))$ and $H_2 \subseteq \mathcal{B}_\ell(2(m-1))$, so the group generated by $H$ and $\left\{\operatorname{Id} \right\} \times \mathcal{B}_\ell(2(m-1))$ (which as we have seen is a subgroup of $G$) contains $\mathcal{B}_\ell(2(m-1),2(m-1))$, and we are done.
\end{proof}

%
%
%


We now show that for $m=k+1$ the hypothesis of the previous lemma is satisfied. Indeed suppose by contradiction that the projections
$
G(\ell^{k+1}) \to G_1(\ell^{k+1})$, $G(\ell^{k+1}) \to G_2(\ell^{k+1})
$
both have trivial kernel. Then Goursat's lemma implies that $G(\ell^{k+1})$ is the graph of an isomorphism $G_1(\ell^{k+1}) \to G_2(\ell^{k+1})$, i.e.~an automorphism of $\operatorname{SL}_2\left(\mathbb{Z}/\ell^{k+1}\mathbb{Z}\right)$. By \cite{MR840986} (see also \cite[Theorem 5(e)]{MR1670548}), and since $\ell > 5$, all such automorphisms are induced by conjugation, that is we can find a matrix $M \in \operatorname{GL}_2\left(\mathbb{Z}/\ell^{k+1}\mathbb{Z}\right)$ such that
$
G(\ell^{k+1}) = \left\{ (x,y) \in \operatorname{SL}_2\left(\mathbb{Z}/\ell^{k+1}\mathbb{Z}\right)^2 \bigm| y=MxM^{-1} \right\}.
$
Consequently, if we still denote by the same letter any lift of $M$ to $\operatorname{GL}_2(\mathbb{Z}_\ell)$, we have
\[
G \subseteq \left\{ (x,y) \in \operatorname{SL}_2(\mathbb{Z}_\ell)^2 \bigm| y \equiv MxM^{-1} \pmod{\ell^{k+1}} \right\}.
\]
Applying $\Theta_2$ and noticing that $\operatorname{tr}(MxM^{-1}) =  \operatorname{tr}(x)$ we deduce
\[
\mathcal{L}(G) \subseteq \left\{ (x,y) \in \mathfrak{sl}_2(\mathbb{Z}_\ell)^2 \bigm| y \equiv MxM^{-1} \pmod{\ell^{k+1}} \right\},
\]
but this contradicts the hypothesis that $\mathcal{L}(G)$ contains $\ell^{k} \mathfrak{sl}_2(\mathbb{Z}_\ell) \oplus \ell^{k} \mathfrak{sl}_2(\mathbb{Z}_\ell)$. Thus at least one of the two projections $G(\ell^{k+1}) \to G_i(\ell^{k+1})$ has nontrivial kernel, and the previous lemma shows that $G'$ contains $\mathcal{B}_\ell(4k, 4k)$.

\section{$\ell=2$, $n=2$}\label{sect_Pink2}
In this section we prove: 

\begin{theorem}\label{thm_PinkGL222}
Let $G$ be an open subgroup of $\operatorname{GL}_2(\mathbb{Z}_2)^2$ whose projection modulo 4 is trivial. Denote by $G_1, G_2$ the two projections of $G$ on the factors $\operatorname{GL}_2(\mathbb{Z}_2)$, and for $i=1,2$ let $n_i \geq 3$ be an integer such that $G_i$ contains $\mathcal{B}_2(n_i)$. Suppose furthermore that for every $(g_1,g_2) \in G$ we have $\det(g_1)=\det(g_2) \equiv 1 \pmod 8$.
If $\mathcal{L}(G)$ contains $2^k \mathfrak{sl}_2(\mathbb{Z}_2) \oplus 2^k \mathfrak{sl}_2(\mathbb{Z}_2)$ for a certain integer $k \geq 2$, then $G$ contains
$
\mathcal{B}_2(12(k+11n_2+5n_1+12)+1,12(k+11n_1+5n_2+12)+1).$
\end{theorem}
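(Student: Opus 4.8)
\noindent\textbf{Step 1: reduction to $\operatorname{SL}_2(\mathbb{Z}_2)^2$.} The plan is first to pass to a pro-$2$ subgroup of $\operatorname{SL}_2(\mathbb{Z}_2)^2$. Since every $(g_1,g_2)\in G$ satisfies $\det(g_1)=\det(g_2)\equiv 1\pmod 8$, and since the square root congruent to $1$ modulo $4$ defines a group homomorphism $1+8\mathbb{Z}_2\to 1+4\mathbb{Z}_2$, the set
\[
H:=\left\{\lambda\cdot(g_1,g_2)\ \middle|\ (g_1,g_2)\in G,\ \lambda\in 1+4\mathbb{Z}_2,\ \lambda(g_1,g_2)\in\operatorname{SL}_2(\mathbb{Z}_2)^2\right\}
\]
is a closed pro-$2$ subgroup of $\operatorname{SL}_2(\mathbb{Z}_2)^2$ with trivial reduction modulo $4$. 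Exactly as in Lemma~\ref{lemma_Saturation} (using $\Theta_2(\lambda g)=\lambda\,\Theta_2(g)$) one checks that $G$ and $H$ have the same derived subgroup and the same Lie algebra, so $\mathcal{L}(H)\supseteq 2^k\mathfrak{sl}_2(\mathbb{Z}_2)\oplus 2^k\mathfrak{sl}_2(\mathbb{Z}_2)$; moreover $H_i\supseteq\mathcal{B}_2(n_i)$, because an element of $G$ whose $i$-th component has determinant $1$ already lies in $\operatorname{SL}_2(\mathbb{Z}_2)^2$, hence in $H$. As $G'=H'\subseteq G$, it suffices to prove that $H'$ contains $\mathcal{B}_2(P_1,P_2)$ with $P_1,P_2$ as in the statement.

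\noindent\textbf{Step 2: the core reduction.} Because $H$ is pro-$2$ we have $N(H)=H$, so the Lie algebra $\mathcal{L}(N(H))=\mathcal{L}(H)$ contains $2^k\mathfrak{sl}_2(\mathbb{Z}_2)^{\oplus 2}$, is stable under conjugation by $H$ (since $\Theta_2(ghg^{-1})=g\,\Theta_2(h)\,g^{-1}$, so $\Theta_2(H)$ and its $\mathbb{Z}_2$-span are $H$-stable), and in particular contains elements $(u_1,0)$ and $(0,u_2)$ with $u_1,u_2\not\equiv 0\pmod{2^{k+1}}$. The plan is to establish the $\ell=2$ analogue of Lemma~\ref{lemma_NontrivialLieAlgebra}: \emph{if $\mathcal{L}(N(H))$ contains $(0,u)$ with $u\not\equiv 0\pmod{2^{t+1}}$ and $H_i\supseteq\mathcal{B}_2(n_i)$, then $H'$ contains $\{\operatorname{Id}\}\times\mathcal{B}_2(q_2(t,n_1,n_2))$ for an explicit $q_2$, and symmetrically for the first factor.} Granting this with $t=k$, one gets $\mathcal{B}_2(q_1)\times\{\operatorname{Id}\}$ and $\{\operatorname{Id}\}\times\mathcal{B}_2(q_2)$ inside $H'$, hence $\mathcal{B}_2(q_1,q_2)\subseteq H'$, which after matching the constants (a final iterated-commutator step as in Lemma~\ref{lemma_Commutator} accounts for the ``$+1$'') is exactly the conclusion.

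\noindent\textbf{Step 3: the $\ell=2$ substitute for Lemma~\ref{lemma_NontrivialLieAlgebra}.} In the odd case that lemma follows by combining Lemma~\ref{lemma_ConjStableSubspaces} with Pink's theorem (Theorem~\ref{thm_Pink_GP}); the latter is not available for $\ell=2$, and rebuilding enough of it by hand is where the dependence on $n_1,n_2$ and the bulk of the work enter. I would proceed as follows. First, conjugation-stability and Lemma~\ref{lemma_ConjStableSubspaces} (applicable since $H_2\supseteq\mathcal{B}_2(n_2)$ with $n_2\geq 3\geq 2$, and $v=1$) upgrade $(0,u)$ to $0\oplus 2^{t+4n_2+4}\mathfrak{sl}_2(\mathbb{Z}_2)\subseteq\mathcal{L}(N(H))$. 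Next, choose elements of $N(H)$ whose $\Theta_2$-images generate this sublattice; using Lemma~\ref{lemma_log} and Lemma~\ref{lemma_CongruenceExponentials} (which controls the $2$-adic discrepancy between $\log$ and $\Theta_2$), their second components are genuine elements of $\operatorname{SL}_2(\mathbb{Z}_2)$ of controlled valuation, while their first components are trivial only modulo a small power of $2$. Invoking $H_1\supseteq\mathcal{B}_2(n_1)$, raise these elements to suitable powers and multiply (in the spirit of Lemma~\ref{lemma_GraphSL2}) to ``disentangle'', producing genuine $(\operatorname{Id},z)\in H$ with $z$ of controlled valuation; Lemma~\ref{lemma_Generators}(3) together with Lemma~\ref{lemma_NontrivialImpliesNontrivialAlgebra} then yields $\{\operatorname{Id}\}\times\mathcal{B}_2(q_2)\subseteq H'$. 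The disentangling step costs a loss depending on $n_1$ (and on $n_2$ through the first step), which is why the ball obtained in the second factor — and, after the symmetric argument, the one in the first factor — depends on both $n_1$ and $n_2$; propagating the $\ell=2$ losses through these two coupled estimates is what pins down the coefficients $11,5,12$.

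\noindent\textbf{Main obstacle.} The hard part is precisely Step~3: at $\ell=2$ one cannot invoke Pink's clean correspondence between $G$ and $\mathcal{L}(G)$, and must instead bridge the (genuinely larger, for $\ell=2$) gap between $\mathcal{L}(H)$ and $H$ itself using the room $\mathcal{B}_2(n_i)$ inside each $H_i$, all while tracking explicit constants. The recurring nuisances are the non-integrality of $\tfrac12$ and the extra powers of $2$ picked up in commutators, logarithms and exponentials; Lemma~\ref{lemma_CongruenceExponentials} is the key quantitative tool for taming these, and the simultaneous bookkeeping of the first-factor and second-factor estimates is the technical crux.
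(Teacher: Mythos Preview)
Your Step~1 is correct and essentially identical to the paper's reduction: saturate by scalars in $1+4\mathbb{Z}_2$, intersect with $\operatorname{SL}_2(\mathbb{Z}_2)^2$, observe via Lemma~\ref{lemma_Saturation} that the derived subgroup and the Lie algebra are preserved, and check that the projections still contain $\mathcal{B}_2(n_i)$. The paper then simply applies Theorem~\ref{thm_PinkSL222} to this group $U$ and uses Lemma~\ref{lemma_Commutator} to pass from $U$ to $U'=G'$, picking up the extra factor of $2$ and the ``$+1$''. So the statement you are asked to prove is, in the paper, a three-line corollary of Theorem~\ref{thm_PinkSL222}.

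Your Steps~2--3, by contrast, try to \emph{re-prove} the content of Theorem~\ref{thm_PinkSL222} by a route different from the paper's, and this is where there is a genuine gap. The crucial implication you need is
\[
(0,u)\in\mathcal{L}(H)\text{ with }u\not\equiv 0\pmod{2^{k+1}}\ \Longrightarrow\ (\operatorname{Id},z)\in H\text{ with }z\not\equiv\operatorname{Id}\pmod{2^{t}}
\]
for some controlled $t$. Your sketch (``choose elements of $N(H)$ whose $\Theta_2$-images generate this sublattice, then disentangle'') does not produce this. The problem is that $\mathcal{L}(H)$ is only the $\mathbb{Z}_2$-\emph{span} of $\Theta_2(H)$: knowing $(0,u)\in\mathcal{L}(H)$ gives a linear relation $\sum c_i\Theta_2(h_i)=(0,u)$, but the individual $h_i=(a_i,b_i)$ have first components $a_i$ that are trivial only modulo $4$, and $\Theta_2$ is not a group homomorphism, so you cannot multiply the $h_i$ to get an element with $\Theta_2$-image close to $(0,u)$. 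Your proposed ``disentangling'' using $H_1\supseteq\mathcal{B}_2(n_1)$ does not help either: for any $g_1\in\mathcal{B}_2(n_1)$ you can find $(g_1,g_2)\in H$, but you have no control over $g_2$ beyond $g_2\equiv\operatorname{Id}\pmod 4$, so multiplying by such elements to kill the first coordinate destroys the information in the second. The analogy with Lemma~\ref{lemma_GraphSL2} is misleading: that lemma already \emph{assumes} the existence of an element with one trivial and one nontrivial component.

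The paper establishes exactly this implication, but by a completely different and much longer argument occupying most of Section~\ref{sect_Pink2}: one argues by contradiction that if $H$ contains \emph{no} element $(\operatorname{Id},b)$ with $b\not\equiv\operatorname{Id}\pmod{2^t}$, then $H(2^t)$ is the graph of a homomorphism $\varphi:H_1\to H_2(2^t)$; a delicate analysis (Lemmas~\ref{lemma_hDiagonalizable}--\ref{lemma_generators4}, Propositions~\ref{prop_ConjugationByN}--\ref{prop_Conclusionl2}) shows this $\varphi$ is, up to a $2$-adically small error, conjugation by an explicit matrix $P$, whence $\mathcal{L}(H)$ is $2$-adically close to the graph of $x\mapsto PxP^{-1}$, contradicting $\mathcal{L}(H)\supseteq 2^k\mathfrak{sl}_2(\mathbb{Z}_2)^{\oplus 2}$ for $t$ in the stated range. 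It is this graph-versus-Lie-algebra tension, and not any direct construction from $\mathcal{L}(H)$, that produces the element $(\operatorname{Id},z)$ and accounts for the specific constants $11,5,12$.
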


By an argument similar to that used for the case of odd $\ell$ (and that will be carried out at the end of this section) we can reduce the problem to the corresponding statement for $\operatorname{SL}_2(\mathbb{Z}_2)^2$:

\begin{theorem}\label{thm_PinkSL222}
Let $G$ be an open subgroup of $\operatorname{SL}_2(\mathbb{Z}_2)^2$ whose reduction modulo 4 is trivial. Denote by $G_1, G_2$ the two projections of $G$ on the factors $\operatorname{SL}_2(\mathbb{Z}_2)$, and for $i=1,2$ let $n_i \geq 3$ be an integer such that $G_i$ contains $\mathcal{B}_2(n_i)$. If $\mathcal{L}(G)$ contains $2^k \mathfrak{sl}(\mathbb{Z}_2) \oplus 2^k \mathfrak{sl}(\mathbb{Z}_2)$ for a certain integer $k \geq 2$, then $G$ contains
$
\mathcal{B}_2(6(k+11n_2+5n_1+12),6(k+11n_1+5n_2+12)).
$
\end{theorem}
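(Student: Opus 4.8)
The hypothesis that the reduction of $G$ modulo $4$ is trivial forces $G\subseteq\mathcal{B}_2(2,2)$, and in particular $G$ is a pro-$2$ group: indeed each quotient $\mathcal{B}_2(s)/\mathcal{B}_2(s+1)$ ($s\geq 1$) embeds into $\mathfrak{sl}_2(\mathbb{F}_2)$. Consequently the entire case analysis of Section~\ref{sect_Pinkl}, which was organised around the reduction $G\to G(\ell)$ and the Dickson classification, trivialises here: the finite part is trivial and $N(G)=G$. What does \emph{not} carry over for free is Theorem~\ref{thm_Pink_GP} (Pink's description of pro-$\ell$ groups), which is available only for odd $\ell$; the factors of $2$ that logarithms and exponentials drag along at the prime $2$ are precisely what degrade the constants to $6,11,5,12$ in the statement.

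First I would record a quantitative Pink-type statement at $\ell=2$. By Lemma~\ref{lemma_log} the logarithm is a bijection $\mathcal{B}_2(2)\xrightarrow{\sim}4M_2(\mathbb{Z}_2)$; for $g\in\mathcal{B}_2(s)$ with $s\geq 2$ a short manipulation of the series together with $\det g=1$ gives $\log g\equiv\Theta_1(g)\pmod{2^{2s-1}}$, while Lemma~\ref{lemma_CongruenceExponentials} controls the Baker--Campbell--Hausdorff corrections to $\log(gh)$. Combining these with $\log(g^n)=n\log g$, one obtains: if $H$ is a pro-$2$ subgroup of $\operatorname{SL}_2(\mathbb{Z}_2)$ (resp.\ of $\operatorname{SL}_2(\mathbb{Z}_2)^2$), trivial modulo $4$, with $\mathcal{L}(H)\supseteq 2^k\mathfrak{sl}_2(\mathbb{Z}_2)$ (resp.\ $\supseteq 2^k\mathfrak{sl}_2(\mathbb{Z}_2)^{\oplus 2}$), then $H$, indeed $H'$, contains $\mathcal{B}_2(ck)$ (resp.\ $\mathcal{B}_2(ck,ck)$) for an explicit absolute constant $c$; one realises a $\mathbb{Z}_2$-basis of a finite-index sublattice of $\mathcal{L}(H)$ as $\Theta$-images of elements of $H$ raised to a common high power, then takes products and iterated brackets, reading off $\exp$ of a ball inside $H$. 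The one-factor case is essentially \cite[§4]{AdelicEC}; the two-factor case follows by the same argument, passing to $[\mathcal{L}(H),\mathcal{L}(H)]$ exactly as in the proof of Theorem~\ref{thm_Pink_GP}. \emph{I expect this to be the main obstacle}: the whole step is a careful accounting of the extra $+1$'s and powers of $2$ forced by the prime $2$, and it has to be run uniformly for both numbers of factors.

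Next I would break the correlation between the two coordinates. Fix $m=k+O(1)$. I claim there is an element $(g_1,g_2)\in G$ with $g_1\equiv\operatorname{Id}\pmod{2^m}$ but $g_2\not\equiv\operatorname{Id}\pmod{2^m}$ (or the symmetric statement). If no such element existed, both kernels of the projections $G(2^m)\to G_i(2^m)$ would be trivial, so by Goursat's lemma $G(2^m)$ would be the graph of an isomorphism between subgroups of $\operatorname{SL}_2(\mathbb{Z}/2^m)$ containing $\mathcal{B}_2(n_i)/\mathcal{B}_2(m)$; taking logarithms turns this isomorphism into an $\mathfrak{sl}_2$-equivariant map, hence one induced by conjugation by a matrix $M$ (automorphisms of $\mathfrak{sl}_2$ being inner), and then $G\subseteq\{(x,y)\mid y\equiv MxM^{-1}\pmod{2^m}\}$, whence $\mathcal{L}(G)\subseteq\{(u,v)\mid v\equiv MuM^{-1}\pmod{2^m}\}$, which contradicts $\mathcal{L}(G)\supseteq 2^k\mathfrak{sl}_2(\mathbb{Z}_2)^{\oplus 2}$ as soon as $m>k$. (Justifying that the isomorphism respects the pro-$2$ filtration well enough for the $\mathfrak{sl}_2$-equivariance to apply — the $\ell=2$ substitute for the appeal to \cite{MR840986} in Section~\ref{sect_LastCase} — is the one genuinely new point here, handled via the fact that $\mathcal{B}_2(2)$ is a uniform pro-$2$ group and the $G_i$ are open in it.) Given such an asymmetric element, I would mimic Lemma~\ref{lemma_GraphSL2}: using $G_i\supseteq\mathcal{B}_2(n_i)$ together with Lemma~\ref{lemma_Commutator} and the integral Goursat Lemma~\ref{lemma_IntegralGoursat} (both valid at $\ell=2$ once all levels are $\geq 2$), produce $(\operatorname{Id},u)\in G=N(G)$ with $u\not\equiv 0\pmod{2^m}$, so that by Lemma~\ref{lemma_NontrivialImpliesNontrivialAlgebra} the Lie algebra $\mathcal{L}(N(G))$ contains an element nontrivial modulo $2^m$ in the second slot; then the $\ell=2$ analogue of Lemma~\ref{lemma_NontrivialLieAlgebra}, built from Lemma~\ref{lemma_ConjStableSubspaces} (which does cover $\ell=2$, for levels $s\geq 2$) and the pro-$2$ Pink statement of the previous paragraph, yields $\{\operatorname{Id}\}\times\mathcal{B}_2(q_2)\subseteq G$; symmetrically $\mathcal{B}_2(q_1)\times\{\operatorname{Id}\}\subseteq G$, and multiplying these inclusions gives $\mathcal{B}_2(q_1,q_2)\subseteq G$.

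Finally I would chase the constants. Lemma~\ref{lemma_ConjStableSubspaces} contributes the shift $4s+4v$ with $v=1$, Lemmas~\ref{lemma_Commutator} and~\ref{lemma_IntegralGoursat} the shifts $+(n-1)v$ and $+(n-2)v$, the $\ell=2$ Pink statement contributes the overall multiplicative factor, and the production of a large ball in the first factor passes through a conjugation by $\mathcal{B}_2(n_2)$-elements in the second factor (just as the odd-case Lemma~\ref{lemma_NontrivialElementIsEasy} yields the asymmetric $\mathcal{B}_\ell(4n_1+16n_2,8n_2)$); optimising the resulting inequalities produces the stated inclusion $\mathcal{B}_2\big(6(k+11n_2+5n_1+12),\,6(k+11n_1+5n_2+12)\big)\subseteq G$, with the asymmetry $11n_2$ versus $5n_1$ reflecting precisely this cross-conjugation.
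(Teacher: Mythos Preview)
Your high-level outline agrees with the paper's: either $G$ contains an asymmetric element $(\operatorname{Id},a)$ with $a\not\equiv\operatorname{Id}\pmod{2^t}$, from which one bootstraps via the one-factor result \cite[Theorem~5.2]{AdelicEC} and Lemma~\ref{lemma_ConjStableSubspaces} (this is Lemma~\ref{lemma_TrivialElementForZ2}), or else $G$ is modulo $2^t$ the graph of a continuous homomorphism $\varphi:G_1\to G_2(2^t)$, and one derives a contradiction with $\mathcal{L}(G)\supseteq 2^k\mathfrak{sl}_2(\mathbb{Z}_2)^{\oplus 2}$ by showing $\varphi$ is approximately conjugation by some matrix $P$. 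However, two genuine problems remain. First, your Step~1 overclaims: a two-factor statement ``$\mathcal{L}(H)\supseteq 2^k\mathfrak{sl}_2^{\oplus 2}\Rightarrow H'\supseteq\mathcal{B}_2(ck,ck)$'' with an absolute $c$ would apply to $G$ itself and prove the theorem outright with constants independent of $n_1,n_2$, rendering the rest of your argument superfluous; the paper only ever uses the \emph{one}-factor version, inside Lemma~\ref{lemma_TrivialElementForZ2}, and a clean two-factor analogue at $\ell=2$ is precisely what is unavailable.

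Second, the step you flag as ``the one genuinely new point'' is in fact the entire content of the proof, and your proposed route---pass to logarithms and invoke ``automorphisms of $\mathfrak{sl}_2$ are inner'' via uniform-pro-$2$ theory---does not go through as stated: $\varphi$ is a group homomorphism defined only on the proper open subgroup $G_1$ and only modulo $2^t$, so after taking logarithms you get a nonlinear map on a sublattice with all equalities only modulo $2^t$; extracting an approximate conjugator with controlled error from this is not a citation but the work itself. The paper does it by hand. Equation~\eqref{eqn_LogarithmIsEigenvalue} exhibits $\log\psi(x_1)$ as an approximate eigenvector for conjugation by $\psi(h_1)$, which via Lemma~\ref{lemma_ApproximateEigenvalue1} forces the eigenvalues of $\psi(h_1)$ close to $(1+2^{n_1})^{\pm1}$ and hence $\psi(h_1)$ diagonalisable over $\mathbb{Q}_2$ (Lemma~\ref{lemma_hDiagonalizable}); one diagonalises by $N$ with $v_2(\det N)\leq n_1+1$ (Corollary~\ref{cor_ConstructionOfN}), shows that in this basis $\psi(R(2^{2n_1+1}))$ and $\psi(L(2^{2n_1+1}))$ are approximately unipotent triangular with off-diagonal entries $c,d$ (Proposition~\ref{prop_ConjugationByN}), and then uses an explicit integrated form of $[x,y]=h$ (Lemma~\ref{lemma_generators4}) to prove $cd\equiv 2^{4n_1+2}$ and correct by a further diagonal $M$; $P=\lambda NM$ is the conjugator, and Proposition~\ref{prop_Conclusionl2} gives the contradiction for $t=k+11n_1+n_2+9$. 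Every one of the constants $6,11,5,12$ in the statement arises from tracking valuation losses through these explicit manipulations; the abstract argument you sketch, even if it could be made rigorous, would not reproduce them.
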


The proof of this theorem, although technically involved, relies on a very simple idea: we can find an element of $G$ of the form $(\operatorname{Id},a)$, where $a$ is not too close to the identity $2$-adically, and this easily implies the conclusion by \cite[Theorem 5.2]{AdelicEC}, cf. lemma \ref{lemma_TrivialElementForZ2}. In order to find $a$ we proceed by contradiction: if there is no such $a$, then $G$ looks very much like the graph of a map $G_1 \to G_2$, and this imposes severe restrictions on its Lie algebra. Indeed, one can prove that in this case $\mathcal{L}(G)$ agrees to high $2$-adic order with the graph of a linear map $x \mapsto PxP^{-1}$. Quantifying this notion of `being $2$-adically very close to a graph' (cf. proposition \ref{prop_Conclusionl2}) gives a contradiction with the fact that $\mathcal{L}(G)$ contains $2^k \mathfrak{sl}(\mathbb{Z}_2) \oplus 2^k \mathfrak{sl}(\mathbb{Z}_2)$. 

\medskip

\noindent\textbf{Notation.} For the whole section, the symbols $G$, $G_1, G_2$, and $n_1, n_2$ will have the meaning given in the statement of theorem \ref{thm_PinkSL222}. We also set $x_1, y_1, h_1$ (resp.~$x_2, y_2, h_2$) to be $R(2^{n_1})$, $L(2^{n_1})$, $D(2^{n_1})$ (resp.~$R(2^{n_2})$, $L(2^{n_2})$, $D(2^{n_2})$): by construction, they are elements of $G_1$ (resp. of $G_2$).

\medskip

We start to deploy the strategy just outlined by showing that it is in fact enough to find an element $(\operatorname{Id},a)$ as above:

\begin{lemma}\label{lemma_TrivialElementForZ2}
Let $n \geq 3$. Suppose that $G$ contains an element of the form $(\operatorname{Id},a)$ (resp. $(a,\operatorname{Id})$), with $a$ nontrivial modulo $2^n$. Then $G$ contains
$
\mathcal{B}_2(6n+24n_2+n_1+18,6n+24n_2+18)
$
(resp. $\mathcal{B}_2(6n+24n_1+18,6n+24n_1+n_2+18)$.)
\end{lemma}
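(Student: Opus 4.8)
The plan is to reduce the statement to an application of \cite[Theorem 5.2]{AdelicEC} by producing, inside $G$, the three standard generators $R(d)$, $L(d)$, $D(d)$ (with $d$ of controlled valuation) in the second factor, while keeping the first factor trivial; then to transport the resulting $\{\operatorname{Id}\}\times\mathcal{B}_2(\ast)$ together with $H_1\times\{\operatorname{Id}\}$ (obtained by a symmetric manipulation using that $G_1\supseteq\mathcal{B}_2(n_1)$ and an iterated-power trick) into a full product ball, exactly in the spirit of the last paragraph of the proof of Lemma~\ref{lemma_GraphSL2}. I shall treat the case $(\operatorname{Id},a)\in G$; the case $(a,\operatorname{Id})\in G$ is symmetric and yields the second displayed ball.

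First I would clean up $a$. Since $(\operatorname{Id},a)$ lies in $N(G)$ (its first coordinate is trivial, and $G(4)$ is trivial so $a\equiv\operatorname{Id}\pmod 4$), the element $\Theta_1(a)$ lies in $\mathcal{L}(N(G))$, and by Lemma~\ref{lemma_NontrivialImpliesNontrivialAlgebra} it is nonzero modulo $2^n$; write $t=n-1$ so that $\Theta_1(a)\not\equiv 0\pmod{2^{t+1}}$. Next I would use the conjugation action: $N(G)$ is normal in $G$, $\mathcal{L}(N(G))$ is stable under conjugation by $G$ (same argument as \cite[Lemma~4.5]{AdelicEC}), and $G_2\supseteq\mathcal{B}_2(n_2)$, so Lemma~\ref{lemma_ConjStableSubspaces} (applicable with $s=n_2\geq 3\geq 2$, $v=1$) shows that the smallest conjugation-stable subalgebra of $\mathfrak{sl}_2(\mathbb{Z}_2)$ through $\Theta_1(a)$ contains $2^{t+4n_2+4}\mathfrak{sl}_2(\mathbb{Z}_2)$. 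Hence $\mathcal{L}(N(G))$ contains $0\oplus 2^{t+4n_2+4}\mathfrak{sl}_2(\mathbb{Z}_2)$. Applying the $2$-adic form of Pink's theorem — here I would invoke the $\ell=2$ analogue of Theorem~\ref{thm_Pink_GP}, i.e.\ \cite[Theorem~5.2]{AdelicEC}, which costs a further factor governed by the difference between $\mathcal{L}$ and $\mathcal{B}$ in the dyadic case — we conclude that $N(G)'$, hence $G$, contains $\{\operatorname{Id}\}\times\mathcal{B}_2(c(t+4n_2+4))$ for the appropriate absolute constant $c$ from that theorem; keeping track of the constants should produce $\{\operatorname{Id}\}\times\mathcal{B}_2(6n+24n_2+18)$ after substituting $t=n-1$ and absorbing the slack.

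The remaining step is to manufacture a ball in the first factor. Using that $G_1=\operatorname{SL}_2(\mathbb{Z}_2)$ is overkill here — we only know $G_1\supseteq\mathcal{B}_2(n_1)$ — so I would instead pick, for each of the generators $x_1=R(2^{n_1})$, $y_1=L(2^{n_1})$, $h_1=D(2^{n_1})$ of $\mathcal{B}_2(n_1)$ (Lemma~\ref{lemma_Generators}(3), valid since $n_1\geq 3\geq 2$), an element of $G$ lifting it, raise to a high power of $2$ so that the second coordinate is pushed into $\mathcal{B}_2(6n+24n_2+18)$ (and hence can be killed off using the ball we already have in $\{\operatorname{Id}\}\times\operatorname{SL}_2(\mathbb{Z}_2)$), thereby producing $(R(2^{N}),\operatorname{Id})$, $(L(2^{N}),\operatorname{Id})$, $(D(2^{N}),\operatorname{Id})$ in $G$ with $N$ of size roughly $n_1+6n+24n_2+18$. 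By Lemma~\ref{lemma_Generators}(3) these generate $\mathcal{B}_2(N)\times\{\operatorname{Id}\}$, and combining with $\{\operatorname{Id}\}\times\mathcal{B}_2(6n+24n_2+18)$ gives the product ball $\mathcal{B}_2(6n+24n_2+n_1+18,\,6n+24n_2+18)\subseteq G$ claimed. The main obstacle is bookkeeping: threading the various absolute constants from the dyadic Pink statement \cite[Theorem~5.2]{AdelicEC} and from Lemma~\ref{lemma_ConjStableSubspaces} through the power-raising steps so that the final exponents come out exactly as stated (rather than merely up to an absolute constant) — in particular being careful that the exponent loss from "$\log$/$\exp$ only converge on $\mathcal{B}_2(2)$" (the $v=1$ terms) is accounted for, and that raising to a $2$-power only moves an element deeper into a ball by the expected amount (Lemma~\ref{lemma_log} on valuations of $\exp$ and $\log$).
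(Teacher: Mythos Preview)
Your proposal is correct and follows essentially the same route as the paper: use Lemma~\ref{lemma_NontrivialImpliesNontrivialAlgebra} to see $\Theta_1(a)\not\equiv 0\pmod{2^n}$, apply Lemma~\ref{lemma_ConjStableSubspaces} with $s=n_2$ and $v=1$ to get $2^{n+4n_2+3}\mathfrak{sl}_2(\mathbb{Z}_2)$ inside the relevant Lie algebra, invoke \cite[Theorem~5.2]{AdelicEC} to obtain $\{\operatorname{Id}\}\times\mathcal{B}_2(6n+24n_2+18)\subseteq G$, and then finish exactly as in the last paragraph of Lemma~\ref{lemma_GraphSL2} (only with $x_1,y_1,h_1$ at level $n_1$ rather than level $1$, which accounts for the extra $n_1$ in the first exponent).

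One small point of precision: you phrase the first step in terms of $\mathcal{L}(N(G))\subseteq\mathfrak{sl}_2(\mathbb{Z}_2)^2$, but \cite[Theorem~5.2]{AdelicEC} is a statement about subgroups of a single $\operatorname{SL}_2(\mathbb{Z}_2)$, so from $\mathcal{L}(N(G))\supseteq 0\oplus 2^{n+4n_2+3}\mathfrak{sl}_2(\mathbb{Z}_2)$ alone one cannot directly read off a ball in $\{\operatorname{Id}\}\times\operatorname{SL}_2(\mathbb{Z}_2)$. The clean fix (which is exactly what the paper does) is to work not with $N(G)$ but with the normal closure $H$ of $(\operatorname{Id},a)$ in $G$; this is automatically of the form $\{\operatorname{Id}\}\times H_2$, your conjugation-stability argument shows $\mathcal{L}(H_2)\supseteq 2^{n+4n_2+3}\mathfrak{sl}_2(\mathbb{Z}_2)$, and then \cite[Theorem~5.2]{AdelicEC} applies to $H_2$ directly.
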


\begin{proof}
Consider the smallest normal subgroup $H$ of $G$ that contains $(\operatorname{Id},a)$. This is clearly of the form $\left\{\operatorname{Id}\right\} \times H_2$, where $H_2$ is the smallest normal subgroup of $G_2$ containing $a$. The Lie algebra $L$ of $H_2$ contains $\Theta_1(a)$, so it is nontrivial modulo $2^n$ by lemma \ref{lemma_NontrivialImpliesNontrivialAlgebra}. By normality of $H_2$ in $G_2$, $L$ is stable under conjugation by $\mathcal{B}_2(n_2)$, so lemma \ref{lemma_ConjStableSubspaces} says that $L$ contains $2^{n+4n_2+3} \mathfrak{sl}_2(\mathbb{Z}_2)$. Applying \cite[Theorem 5.2]{AdelicEC} we deduce that $H_2$ contains $\mathcal{B}_2(6n+24n_2+18)$, and we finish the proof as we did for lemma \ref{lemma_GraphSL2}.
\end{proof}
We now come to the hard part of the proof, namely showing that if no such $a$ exists, then $G$ is very close to being a graph. For any fixed integer $t \geq 3$, we distinguish two possibilities:

\begin{enumerate}
\item There exist two elements $(a,b)$ and $(a,b')$ of $G$ with $b \not \equiv b' \pmod{2^t}$, or equivalently, there exists an element of $G$ of the form $(\operatorname{Id},b'')$ with $b'' \not \equiv \operatorname{Id} \pmod{2^t}$. In this case we simply apply lemma \ref{lemma_TrivialElementForZ2}.
\item For every $a \in G_1$ there exists a (necessarily unique) $b \in G_2(2^t)$ such that, for every element of $G$ of the form $(a,c)$, we have $c \equiv b \pmod{2^t}$. In this case we write $b=\varphi(a)$, so that $\varphi$ is a well-defined function $G_1 \to G_2(2^t)$.
\end{enumerate}


As it is clear, the key step in proving theorem \ref{thm_PinkSL222} is to bound the values of $t$ for which this second case can arise. Let then $t \geq 3$ be an integer for which we are in case 2. We choose a function $\psi:G_1 \to G_2$ such that
\begin{itemize}
\item $\psi(a) \equiv \varphi(a) \pmod{2^t}$ for every $a\in G_1$;
\item $(a,\psi(a))$ belongs to $G$ for every $a \in G_1$.
\end{itemize}


As we shall see shortly, $\varphi$ is actually a continuous group morphism; on the other hand, $\psi$ does not necessarily have any nice group-theoretic properties, but allows us to work with well-defined elements of $\mathbb{Z}_2$ instead of congruence classes. We will also see that any such morphism $\varphi$ is, in a suitable sense, `inner', a fact that will lead to a contradiction for $t$ large enough. From now on, therefore, we work under the following assumption:


\begin{condition}\label{assumption_t} The integer $t \geq 3$ has the following property: for every $a \in G_1$ there exists a unique $b \in G_2(2^t)$ such that, for every element of $G$ of the form $(a,c)$, we have $c \equiv b \pmod{2^t}.$\end{condition}

To prove theorem \ref{thm_PinkSL222} we now proceed as follows. Exploiting (integrated forms of) the commutation relations $[h,x]=2x$ and $[h,y]=-2y$ we show that we can fix a basis in which $\psi(h_1)$ is diagonal (cf.~lemma \ref{lemma_hDiagonalizable}). We then prove that in this basis $\psi(x_1)$ and $\psi(y_1)$ are necessarily triangular up to some $2$-adically small error (proposition \ref{prop_ConjugationByN}), and making use of the relation $[x,y]=h$ we further show that $\psi(x_1),\psi(y_1)$ satisfy some additional constraints (which lead to proposition \ref{prop_ConjugationWeak}). This proves that, up to some $2$-adically small error, $G$ is contained in the graph of $x \mapsto PxP^{-1}$ for a suitable matrix $P$: the desired conclusion then follows easily by estimating the 2-adic valuation of the error terms.

\begin{lemma}
$\varphi$ defines a group morphism $G_1 \to G_2(2^t)$.
\end{lemma}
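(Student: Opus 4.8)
The plan is to show that $\varphi$ is compatible with products, which amounts to a statement about $G$ modulo $2^t$. The key point is that Condition \ref{assumption_t} guarantees that $\varphi(a)$ is well-defined as the common reduction modulo $2^t$ of the second coordinate of any element of $G$ lying above $a \in G_1$.

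First I would take two elements $a, a' \in G_1$. By definition there are elements $(a, \psi(a))$ and $(a', \psi(a'))$ in $G$, and their product $(aa', \psi(a)\psi(a'))$ again lies in $G$ since $G$ is a group. Since this element of $G$ lies above $aa' \in G_1$, the defining property in Condition \ref{assumption_t} forces $\psi(a)\psi(a') \equiv \varphi(aa') \pmod{2^t}$. On the other hand $\psi(a) \equiv \varphi(a)$ and $\psi(a') \equiv \varphi(a') \pmod{2^t}$, and since reduction modulo $2^t$ is a ring homomorphism (so multiplication of matrices is compatible with it), we get $\psi(a)\psi(a') \equiv \varphi(a)\varphi(a') \pmod{2^t}$. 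Combining the two congruences yields $\varphi(aa') = \varphi(a)\varphi(a')$ as elements of $G_2(2^t) \subseteq \operatorname{SL}_2(\mathbb{Z}/2^t\mathbb{Z})$. This shows $\varphi$ is a group homomorphism; one should also note it is well-defined as a map into $G_2(2^t)$ because each element of $G_2$ admits a lift to $G$ (as $G_1$ is the projection of $G$) and its image modulo $2^t$ is precisely $\varphi(a)$, which lies in $G_2(2^t)$ by construction.

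For continuity, I would observe that $\varphi$ factors through the finite quotient $G_1(2^t)$: if $a \equiv a' \pmod{2^t}$ then choosing lifts $(a,\psi(a)), (a',\psi(a'))$ in $G$, their ratio lies above an element of $G_1$ trivial modulo $2^t$, hence (being in case 2, with $b = \operatorname{Id}$ the unique element of $G_2(2^t)$ associated to any $a'' \equiv \operatorname{Id}$) its second coordinate is trivial modulo $2^t$; thus $\varphi(a) = \varphi(a')$. A homomorphism between the topological group $G_1$ and a finite group that factors through an open subgroup's quotient is automatically continuous. There is essentially no obstacle here — the only thing to be careful about is the bookkeeping that $\varphi$ genuinely lands in $G_2(2^t)$ and is single-valued, both of which are immediate consequences of Condition \ref{assumption_t} and the surjectivity of the projection $G \to G_1$.
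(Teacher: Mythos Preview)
Your proof that $\varphi$ is a group homomorphism is correct and essentially identical to the paper's: multiply two lifts in $G$, invoke Condition~\ref{assumption_t} on the product, and use $\psi \equiv \varphi \pmod{2^t}$.

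The final paragraph on continuity is extraneous here --- continuity is the \emph{next} lemma in the paper, not part of this one --- and moreover your argument for it has a gap. You assert that if $a'' \equiv \operatorname{Id} \pmod{2^t}$ then $\varphi(a'') = \operatorname{Id}$, but Condition~\ref{assumption_t} only says that for each \emph{individual} $a'' \in G_1$ the value $\varphi(a'')$ is well-defined; it does not say $\varphi(a'')$ depends only on the class of $a''$ modulo $2^t$. Knowing $(\operatorname{Id},\operatorname{Id}) \in G$ gives $\varphi(\operatorname{Id}) = \operatorname{Id}$, but this tells you nothing about $\varphi(a'')$ for $a'' \neq \operatorname{Id}$ with $a'' \equiv \operatorname{Id} \pmod{2^t}$. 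The paper proves continuity by a different route: it produces elements $(x')^{2^t}, (y')^{2^t}, (h')^{2^t}$ of $G$ whose first coordinates generate an open subgroup of $G_1$ while their second coordinates are trivial modulo $2^t$, exhibiting an open subgroup of $\ker\varphi$ directly.
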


\begin{proof}
Let $a_1, a_2$ be two elements of $G_1$. Then $(a_1,\psi(a_1))(a_2,\psi(a_2))=(a_1a_2,\psi(a_1)\psi(a_2))$ belongs to $G$, so our assumption implies that $\psi(a_1)\psi(a_2) \equiv \varphi(a_1a_2) \pmod{2^t}$. As $\psi(a_1)$ (resp. $\psi(a_2)$) is congruent to $\varphi(a_1)$ (resp. $\varphi(a_2)$) modulo $2^t$ the claim follows.
\end{proof}


\begin{lemma}
$\varphi$ is continuous.
\end{lemma}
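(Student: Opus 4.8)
The plan is to observe that $G_2(2^t)$ is a \emph{finite} group, which we may regard as carrying the discrete topology; consequently a group homomorphism $\varphi\colon G_1\to G_2(2^t)$ is continuous if and only if its kernel is open in $G_1$. So it suffices to exhibit an open subgroup of $G_1$ on which $\varphi$ is trivial.

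First I would use the standing hypothesis that $G$ is open in $\operatorname{SL}_2(\mathbb{Z}_2)^2$ to fix an integer $r$ with $\mathcal{B}_2(r,r)\subseteq G$. Since $\mathcal{B}_2(r,r)=\mathcal{B}_2(r)\times\mathcal{B}_2(r)$ and $\operatorname{Id}\in\mathcal{B}_2(r)$, for every $a\in\mathcal{B}_2(r)$ the pair $(a,\operatorname{Id})$ lies in $G$. Applying Condition~\ref{assumption_t} to this element (it is of the form $(a,c)$ with $c=\operatorname{Id}$) yields $\varphi(a)\equiv\operatorname{Id}\pmod{2^t}$, i.e.\ $\varphi(a)$ is the neutral element of $G_2(2^t)$. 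Hence $\mathcal{B}_2(r)\subseteq\ker\varphi$.

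It remains to note that $\mathcal{B}_2(r)$ is open in $\operatorname{SL}_2(\mathbb{Z}_2)$ and is contained in $G_1$ (being the image of $\mathcal{B}_2(r,r)\subseteq G$ under the first projection), so it is an open subgroup of $G_1$; therefore $\ker\varphi$ is open and $\varphi$ is continuous. There is essentially no obstacle in this argument: the only points requiring a little care are the correct application of Condition~\ref{assumption_t} and the elementary remark that a homomorphism from a profinite group to a finite discrete group is continuous exactly when its kernel is open.
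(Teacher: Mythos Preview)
Your proof is correct and, if anything, simpler than the paper's. Both arguments follow the same overall strategy—show that $\ker\varphi$ contains an open subgroup of $G_1$—but they produce that subgroup differently.

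The paper picks lifts $x'=(x_1,a)$, $y'=(y_1,b)$, $h'=(h_1,c)$ in $G$ of the standard generators $x_1,y_1,h_1\in G_1$, raises them to the $2^t$-th power so that their second coordinates become trivial modulo $2^t$, and observes that the first coordinates $x_1^{2^t},y_1^{2^t},h_1^{2^t}$ still topologically generate an open subgroup of $\operatorname{SL}_2(\mathbb{Z}_2)$ (via a variant of Lemma~\ref{lemma_Generators}). Thus $\varphi$ vanishes on this open subgroup.

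You bypass this construction entirely by invoking directly the openness of $G$: choosing $r$ with $\mathcal{B}_2(r,r)\subseteq G$ immediately puts $(a,\operatorname{Id})$ in $G$ for every $a\in\mathcal{B}_2(r)$, so Condition~\ref{assumption_t} forces $\varphi(a)=\operatorname{Id}$. This is shorter and avoids any appeal to generators or to Lemma~\ref{lemma_Generators}. The paper's route has the minor advantage of being explicit about the open subgroup in terms of $n_1$ and $t$, but that explicitness is not used anywhere, so nothing is lost with your approach.
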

\begin{proof}
Denote by $\pi_1, \pi_2:\operatorname{SL}_2(\mathbb{Z}_2)^2 \to \operatorname{SL}_2(\mathbb{Z}_2)$ the projections on the two factors. As $x_1,y_1,h_1$ belong to $G_1$ we can find $a,b,c$ so that $x'=(x_1,a),y'=(y_1,b),h'=(h_1,c)$ all belong to $G$. Consider then $(x')^{2^t},(y')^{2^t},(h')^{2^t}$ and the group $H$ they generate (topologically). The projection $\pi_1(H)$ contains $x_1^{2^t}=R(2^{n_1+t})$ and $y_1^{2^t}=L(2^{n_1+t})$, hence it is open in $\operatorname{SL}_2(\mathbb{Z}_2)$ by an obvious variant of lemma \ref{lemma_Generators} (4). On the other hand, $\pi_2(H)$ is generated by $a^{2^t}, b^{2^t}, c^{2^t}$, so it is trivial modulo $2^t$. It follows that for any $g_1 \in H$ we have $\varphi(g_1)=1$, so $\ker \varphi$ is open and $\varphi$ is continuous.
\end{proof}

\begin{definition}
Let $g$ be an element of $\operatorname{SL}_2(\mathbb{Z}_2)$ (resp.~of a finite quotient $\operatorname{SL}_2(\mathbb{Z}/2^m\mathbb{Z})$) that is trivial modulo 4, and let $\beta$ be any 2-adic integer. Write $\beta=\sum_{n \geq 0} a_n 2^n$, where each $a_n$ is either 0 or 1. We set $g^\beta=\prod_{n \geq 0} g^{a_n 2^n}$, which is well-defined since for every finite $j$ only a finite number of terms appearing in the product are nontrivial modulo $2^j$.
\end{definition}

The continuity of $\varphi$ then implies
\begin{lemma}
Let $\beta$ be any $2$-adic integer and $g$ be an element of $G_1$. We have $\varphi(g^\beta)=\varphi(g)^{\beta}$.
\end{lemma}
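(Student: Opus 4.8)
The plan is to reduce the claimed identity to the case of an integer exponent, where it is nothing more than the fact that $\varphi$ is a group morphism, and then to pass to the limit using the continuity of $\varphi$ established in the previous lemma.

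First one checks that the statement makes sense. By hypothesis $G$, hence its projection $G_1$, is trivial modulo $4$, so $g\in\mathcal{B}_2(2)$ and $g^\beta$ is defined by the preceding definition; likewise $\varphi(g)\in G_2(2^t)$ is trivial modulo $4$ (recall $t\geq 3$), so $\varphi(g)^\beta$ is a well-defined element of the finite group $\operatorname{SL}_2(\mathbb{Z}/2^t\mathbb{Z})$. Now write $\beta=\sum_{n\geq 0}a_n 2^n$ with each $a_n\in\{0,1\}$, and for $N\geq 0$ set $\beta_N=\sum_{n=0}^{N}a_n 2^n\in\mathbb{Z}$. By the very definition of $g^\beta$ as the limit of the partial products $\prod_{n=0}^{N}g^{a_n 2^n}=g^{\beta_N}$, the ordinary integer powers $g^{\beta_N}$ converge $2$-adically to $g^\beta$; in the same way $\varphi(g)^{\beta_N}$ converges to $\varphi(g)^\beta$ in $G_2(2^t)$.

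Applying $\varphi$ and using that $\beta_N\in\mathbb{Z}$ and that $\varphi$ is a group morphism, we get $\varphi(g^{\beta_N})=\varphi(g)^{\beta_N}$ for every $N$; letting $N\to\infty$ and invoking the continuity of $\varphi$, which gives $\varphi(g^{\beta_N})\to\varphi(g^\beta)$, we conclude $\varphi(g^\beta)=\lim_N\varphi(g)^{\beta_N}=\varphi(g)^\beta$. There is no genuine obstacle in this argument: the only point to keep in mind is that, $G_2(2^t)$ being finite and discrete, continuity of $\varphi$ here simply amounts to $\ker\varphi$ being open, so that $\varphi(g^{\beta_N})$ is eventually constant — this is exactly what the limiting step uses, and it is what makes the passage to the limit legitimate despite $\varphi$ having no \emph{a priori} compatibility with the $2$-adic power operation.
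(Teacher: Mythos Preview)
Your proof is correct and is exactly the intended argument: the paper merely records that the lemma follows from the continuity of $\varphi$, and you have spelled out the standard details of passing from integer exponents to $2$-adic exponents via partial sums and limits.
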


Having dispensed with these necessary preliminaries we now begin with the proof proper. 
To find an element satisfying the assumptions of lemma \ref{lemma_TrivialElementForZ2} we look first at $(x_1,\psi(x_1))$:

\begin{lemma}\label{lemma_Psix1Vanishes}
If $\log \psi(x_1)$ vanishes modulo $2^{n_1+n_2}$, then $G$ contains
$
\mathcal{B}_2(30n_1+24,30n_1+n_2+24).
$
\end{lemma}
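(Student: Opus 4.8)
The idea is to produce an element of $G$ of the form $(\operatorname{Id}, b)$ with $b$ nontrivial modulo a controlled power of $2$, and then invoke lemma \ref{lemma_TrivialElementForZ2}. The hypothesis is that $\log\psi(x_1)$ vanishes modulo $2^{n_1+n_2}$. Recall that $x_1 = R(2^{n_1})$, so $\log x_1 = 2^{n_1} x$ (with $x$ the standard nilpotent), and $x_1^\beta = R(2^{n_1}\beta)$ for any $\beta\in\mathbb{Z}_2$. Since $\varphi$ is a continuous morphism and $\varphi(g^\beta)=\varphi(g)^\beta$, the element $\psi(x_1^\beta) = \psi(x_1)^\beta$ agrees with $\varphi(x_1)^\beta$ modulo $2^t$; and because $\log\psi(x_1)\equiv 0\pmod{2^{n_1+n_2}}$, writing $\psi(x_1)=\exp(u)$ with $u\in 2^{n_1+n_2}M_2(\mathbb{Z}_2)$ (here $n_1,n_2\geq 3$ so $u\equiv 0\pmod 4$ and the exponential/logarithm are well behaved, cf.\ lemmas \ref{lemma_log} and \ref{lemma_CongruenceExponentials}), we have $\psi(x_1)^\beta = \exp(\beta u)$.

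First I would exploit the freedom in $\beta$. Since $x_1^{2^t} = R(2^{n_1+t})$, taking $\beta = 2^{t}\gamma$ shows $(x_1^{2^t\gamma}, \psi(x_1)^{2^t\gamma})\in G$ for all $\gamma$, where the first coordinate runs over $R(2^{n_1+t}\gamma)$ and the second coordinate is $\exp(2^t\gamma u)$, which is trivial modulo $2^{n_1+n_2+t}$. More to the point: consider the element $(x_1,\psi(x_1))^{2^{n_2}} = (R(2^{n_1+n_2}), \exp(2^{n_2}u))$; here the second coordinate is $\exp$ of something in $2^{n_1+2n_2}M_2(\mathbb{Z}_2)$, hence trivial modulo $2^{n_1+2n_2}$ (and in any case modulo $2^{n_1+n_2}$). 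Now $R(2^{n_1+n_2})$ is a specific nontrivial element of $\mathcal{B}_2(n_1+n_2)$, and since $G_1=\pi_1(G)\supseteq\mathcal{B}_2(n_1)$ contains $R(2^{n_1})$ we already have, via the element $x'=(x_1,a)$ of the continuity lemma, some $(R(2^{n_1}),a)\in G$ with $a$ trivial modulo $4$; but it is cleaner to argue directly. Because $\psi(x_1)$ is $\exp$ of an element divisible by $2^{n_1+n_2}$, I claim we can "cancel" the second coordinate: the point is that $G$ is a group, so from $(R(2^{n_1}),\psi(x_1))\in G$ and the fact that $\psi(x_1)\in\mathcal{B}_2(n_1+n_2)\subseteq G_2$ — wait, this needs $\psi(x_1)$ itself to lie in the normal closure we can control.

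The correct route: let $H$ be the smallest normal subgroup of $G$ containing $(x_1,\psi(x_1))$. Its projection to the first factor contains the normal closure of $R(2^{n_1})$ in $G_1$, which (since $G_1\supseteq\mathcal{B}_2(n_1)$, and using lemmas \ref{lemma_Generators} and \ref{lemma_Commutator}) contains a full ball $\mathcal{B}_2(m_1)$ for an explicit $m_1$ linear in $n_1$; meanwhile every element of $H$ has second coordinate that is a product of conjugates of $\psi(x_1)^{\pm1}$, hence lies in $\mathcal{B}_2(n_1+n_2)$ — in fact in the $\ell$-adic subgroup generated by conjugates of $\exp(u)$, all of which are trivial modulo $2^{n_1+n_2}$, and conjugation by elements of $G_2\supseteq\mathcal{B}_2(n_2)$ preserves the ideal $2^{n_1+n_2}M_2(\mathbb{Z}_2)$ (change of basis is integral; and for general $g\in\operatorname{SL}_2(\mathbb{Z}_2)$, $g\cdot 2^{n_1+n_2}M_2\cdot g^{-1}=2^{n_1+n_2}M_2$), so the second coordinates of $H$ stay trivial modulo $2^{n_1+n_2}$. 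Therefore $H$ contains an element of the form $(w, z)$ with $w$ a prescribed element of $\mathcal{B}_2(m_1)$ — for instance $w=R(2^{m_1})$ — and, choosing instead $w$ to equal $R(2^{m_1})$ which lies in $G_1$ and whose normal closure in $G_1$ is contained in $H_1$... Actually the slickest finish: since $H_1\supseteq\mathcal{B}_2(m_1)$ and $R(2^{n_1})\in\mathcal{B}_2(m_1)$ when $m_1\leq n_1$ — which fails — I instead take $H_1=G_1\cap$(normal closure), argue $H_1$ is open, pick $m_1$ with $\mathcal{B}_2(m_1)\subseteq H_1$, and then any element of $G_1$ of valuation $\geq m_1$ is realized; in particular there is $(R(2^{m_1}),z_1)\in H\subseteq G$ with $z_1\equiv\operatorname{Id}\pmod{2^{n_1+n_2}}$. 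But $R(2^{m_1})\in G_1$ so also $(R(2^{m_1}),z_2)\in G$ for some $z_2$ trivial modulo $4$ (from $\mathcal{B}_2(n_1)\subseteq G_1$... no). The honest statement is: $H$ itself contains $(R(2^{m_1}),z_1)$ and also, being normal in $G$ and containing the $\mathcal{B}_2(m_1)\times\{\text{stuff trivial mod }2^{n_1+n_2}\}$ data, it contains $(R(2^{m_1})^{-1}, z_1')$ similarly; multiplying, $G$ contains $(\operatorname{Id}, z_1^{-1}z_1')$ — but this might be trivial modulo $2^{n_1+n_2}$, which is not enough. So the hypothesis "$\log\psi(x_1)$ vanishes modulo $2^{n_1+n_2}$" must instead be used to say $\psi(x_1)$ is \emph{not} trivial modulo $2^{n_1+n_2}$ would be false; rereading, the hypothesis says it \emph{does} vanish, meaning $\psi(x_1)$ is $2$-adically close to $\operatorname{Id}$, so we should think of $x_1$ and $\psi(x_1)$ as "almost the same amount of triviality," whence $(x_1,\psi(x_1))^{2^{n_2}}=(R(2^{n_1+n_2}),\psi(x_1)^{2^{n_2}})$ has $\psi(x_1)^{2^{n_2}}=\exp(2^{n_2}u)$ trivial modulo $2^{n_1+2n_2}$, i.e.\ \emph{much} more trivial than the first coordinate; then since $\mathcal{B}_2(n_1+2n_2)\subseteq G_2$ (as $\mathcal{B}_2(n_2)\subseteq G_2$ and $n_1+2n_2\geq n_2$) we have $(\operatorname{Id},\psi(x_1)^{-2^{n_2}})\in G$, hence multiplying $G$ contains $(R(2^{n_1+n_2}),\operatorname{Id})$.

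\textbf{Finishing.} So $G$ contains $(R(2^{n_1+n_2}),\operatorname{Id})$; by lemma \ref{lemma_TrivialElementForZ2} (the "$(a,\operatorname{Id})$" version) with $n=n_1+n_2$, the group $G$ contains $\mathcal{B}_2(6(n_1+n_2)+24n_1+18, 6(n_1+n_2)+24n_1+n_2+18)=\mathcal{B}_2(30n_1+6n_2+18, 30n_1+7n_2+18)$. Since $n_2\geq 3>0$ this ball is contained in $\mathcal{B}_2(30n_1+24, 30n_1+n_2+24)$ — one checks $30n_1+6n_2+18 \le 30n_1+24$ iff $6n_2\le 6$, which is \emph{false} for $n_2\geq 3$; so the bookkeeping must be arranged the other way, applying lemma \ref{lemma_TrivialElementForZ2} to the element $(R(2^{n_1+n_2}),\operatorname{Id})$ read as an element trivial in the \emph{second} coordinate nontrivial in the \emph{first}, giving $\mathcal{B}_2(6(n_1+n_2)+24n_1+18, 6(n_1+n_2)+24n_1+n_2+18)$ and then absorbing: the claimed $\mathcal{B}_2(30n_1+24, 30n_1+n_2+24)$ should come out by instead using the cleaner element $(R(2^{n_1}),\operatorname{Id})$, which lies in $G$ once we cancel — redo the exponent: from $(x_1,\psi(x_1))$ with $\psi(x_1)=\exp(u)$, $u\in 2^{n_1+n_2}M_2$, and $\mathcal{B}_2(n_1+n_2)\subseteq\mathcal{B}_2(n_2+1)\subseteq G_2$... wait $\psi(x_1)\in\mathcal{B}_2(n_1+n_2)$ requires $n_1+n_2\geq 2$, true, and $\mathcal{B}_2(n_1+n_2)\subseteq G_2$ requires $n_1+n_2\geq n_2$, true, \emph{but} $G_2\supseteq\mathcal{B}_2(n_2)$ only gives $\mathcal{B}_2(n_1+n_2)\subseteq G_2$ since $n_1+n_2\ge n_2$ — yes. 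So $(\operatorname{Id},\psi(x_1)^{-1})\in G$ directly, and multiplying by $(x_1,\psi(x_1))$ gives $(R(2^{n_1}),\operatorname{Id})\in G$. Now lemma \ref{lemma_TrivialElementForZ2} with $a=R(2^{n_1})$ nontrivial modulo $2^{n_1+1}$, so $n=n_1+1$: $G$ contains $\mathcal{B}_2(6(n_1+1)+24n_1+18, 6(n_1+1)+24n_1+n_2+18) = \mathcal{B}_2(30n_1+24, 30n_1+n_2+24)$, which is exactly the claim. The main obstacle is the cancellation step, i.e.\ making rigorous that $\psi(x_1)$, being $\exp$ of a matrix divisible by $2^{n_1+n_2}$, actually lies in $\mathcal{B}_2(n_1+n_2)$ (immediate from lemma \ref{lemma_log} since $n_1,n_2\geq 3$ forces $u\equiv 0\pmod 4$) and hence belongs to $G_2\supseteq\mathcal{B}_2(n_2)\supseteq\mathcal{B}_2(n_1+n_2)$, which then lets us subtract it off; everything else is a one-line application of lemma \ref{lemma_TrivialElementForZ2}.
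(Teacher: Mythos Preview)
Your final argument has a genuine gap. You claim that since $\psi(x_1)\in\mathcal{B}_2(n_1+n_2)\subseteq G_2$, the element $(\operatorname{Id},\psi(x_1)^{-1})$ belongs to $G$ ``directly''. This does not follow: $G_2$ is only the \emph{projection} of $G$ onto the second factor, so $\psi(x_1)^{-1}\in G_2$ merely says there is some $g_1\in G_1$ with $(g_1,\psi(x_1)^{-1})\in G$, and you have no control over $g_1$. In fact, producing an element of $G$ with one coordinate equal to the identity and the other nontrivial is exactly the nontrivial content of the lemma; you cannot assume it for free. The same mistake occurs in your earlier attempt with $(x_1,\psi(x_1))^{2^{n_2}}$.

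The paper fixes this by building an auxiliary subgroup of $G$ whose first coordinate is \emph{small} but not necessarily trivial. One lifts $x_2,y_2,h_2\in G_2$ to elements $x'=(a,x_2),\,y'=(b,y_2),\,h'=(c,h_2)$ of $G$ (where $a,b,c$ are trivial modulo $4$ since $G(4)$ is trivial), and takes the group $H$ generated by their $2^{n_1}$-th powers. Then $\pi_1(H)$ is trivial modulo $2^{n_1+1}$, while $\pi_2(H)$ is generated by $R(2^{n_1+n_2}),L(2^{n_1+n_2})$ and a suitable $D$, hence contains $\mathcal{B}_2(n_1+n_2)$ by lemma~\ref{lemma_Generators}. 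Now, since $\psi(x_1)^{-1}\in\mathcal{B}_2(n_1+n_2)\subseteq\pi_2(H)$, there exists $(w,\psi(x_1)^{-1})\in H\subseteq G$ with $w\equiv\operatorname{Id}\pmod{2^{n_1+1}}$; multiplying by $(x_1,\psi(x_1))$ gives $(x_1 w,\operatorname{Id})\in G$ with $x_1 w\equiv x_1\not\equiv\operatorname{Id}\pmod{2^{n_1+1}}$. Your application of lemma~\ref{lemma_TrivialElementForZ2} with $n=n_1+1$ is then correct and gives the stated ball.
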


\begin{proof}
Exponentiating the hypothesis gives $\psi (x_1) \equiv \operatorname{Id} \pmod{2^{n_1+n_2}}$. There exist $a,b,c \in G_1$ such that $x'=(a,x_2),y'=(b,y_2),h'=(c,h_2)$ belong to $G$. Consider $(x')^{2^{n_1}},(y')^{2^{n_1}},(h')^{2^{n_1}}$: these three elements (topologically) generate a group $H$ such that $\pi_1(H)$ is trivial modulo $2^{n_1+1}$ (recall that $a,b,c$ are already trivial modulo 4) and $\pi_2(H)$ contains $\mathcal{B}_2(n_1+n_2)$ (lemma \ref{lemma_Generators}).

It follows that $H$ (hence $G$) contains an element of the form $(w,\psi(x_1)^{-1})$, where $w$ is trivial modulo $2^{n_1+1}$. Therefore $G$ contains the element $(x_1,\psi(x_1))(w,\psi(x_1)^{-1})=(x_1w,\operatorname{Id})$, where $x_1w \equiv x_1 \pmod {2^{n_1+1}}$ is nontrivial modulo $2^{n_1+1}$. The claim follows from lemma \ref{lemma_TrivialElementForZ2}.
\end{proof}

It is clear that if we are in the situation of the previous lemma theorem \ref{thm_PinkSL222} easily follows, so next we ask what happens if $\log \psi(x_1)$ is not too close to zero $2$-adically. For the sake of simplicity we set $\alpha=(1+2^{n_1})^2$. Note that $h_1x_1h_1^{-1}=x_1^\alpha$, so we have $\varphi(h_1)\varphi(x_1)\varphi(h_1)^{-1} = \varphi(x_1)^{\alpha}$, or equivalently
$
\psi(h_1)\psi(x_1)\psi(h_1)^{-1} \equiv \psi(x_1)^\alpha \pmod{2^t};
$
taking logarithms, which makes sense since both sides of the equation are trivial modulo 4, we obtain (via lemma \ref{lemma_CongruenceExponentials}, which we shall use from now on without further explicit mention)
\begin{equation}\label{eqn_LogarithmIsEigenvalue}
\psi(h_1) \cdot \log \psi(x_1) \cdot \psi(h_1)^{-1} \equiv \alpha \log \psi(x_1) \pmod{2^t}.
\end{equation}

Now this equation shows that the operator `conjugation by $\psi(h_1)$' admits $\log \psi(x_1)$ as an approximate eigenvector. If $\log \psi(x_1)$ is not too close to zero, this allows us to deduce properties of $\psi(h_1)$:

\begin{lemma}\label{lemma_hDiagonalizable}
With the notation of theorem \ref{thm_PinkSL222} and condition \ref{assumption_t}, let $U=t-3n_1-n_2-3$.
Suppose $\log \psi(x_1) \not \equiv 0 \pmod{2^{n_1+n_2}}$ and $U>3n_1$. Then $\psi(h_1)$ is diagonalizable over $\mathbb{Q}_2$, with eigenvalues $\lambda_1,\lambda_2 \in \mathbb{Z}_2$ that satisfy
$
\lambda_1\equiv 1+2^{n_1} \pmod{2^U}$, $\lambda_2\equiv \left(1+2^{n_1}\right)^{-1} \pmod{2^U}.
$
\end{lemma}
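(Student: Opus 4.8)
The plan is to read \eqref{eqn_LogarithmIsEigenvalue} as the statement that $w:=\log\psi(x_1)$ is an approximate eigenvector, modulo $2^t$, of the conjugation operator $v\mapsto \psi(h_1)\,v\,\psi(h_1)^{-1}$ acting on $\mathfrak{sl}_2(\mathbb{Z}_2)$, with approximate eigenvalue $\alpha=(1+2^{n_1})^2$. Since $\psi(x_1)$ lies in $G_2$ and is therefore trivial modulo $4$, the element $w$ is a well-defined element of $\mathfrak{sl}_2(\mathbb{Z}_2)$ (Lemma \ref{lemma_log}), and by hypothesis it has valuation at most $n_1+n_2-1$. Fixing the basis $x,y,h$ to identify $\mathfrak{sl}_2(\mathbb{Z}_2)\cong\mathbb{Z}_2^3$, Lemma \ref{lemma_ApproximateEigenvalue1} (applied with $\alpha_{\text{there}}=n_1+n_2-1$, which is admissible because $t>6n_1+n_2+3>n_1+n_2$) then gives $p(\alpha)\equiv 0\pmod{2^{\,t-n_1-n_2+1}}$, where $p$ is the characteristic polynomial of the conjugation operator.

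Next I would write $\mu,\mu^{-1}$ for the eigenvalues of $\psi(h_1)$ and $s=\mu+\mu^{-1}=\operatorname{tr}\psi(h_1)\in\mathbb{Z}_2$, so that $p(X)=(X-1)(X-\mu^2)(X-\mu^{-2})$ (this formula is insensitive to whether $\psi(h_1)$ is semisimple, as it only records the characteristic polynomial). Since $v_2(\alpha-1)=v_2\!\left(2^{n_1+1}+2^{2n_1}\right)=n_1+1$, dividing out gives $(\alpha-\mu^2)(\alpha-\mu^{-2})\equiv 0\pmod{2^N}$ with $N=t-2n_1-n_2$. A direct computation rewrites $(\alpha-\mu^2)(\alpha-\mu^{-2})=(\alpha+1)^2-\alpha s^2=\beta^2(c-s)(c+s)$, where $\beta=1+2^{n_1}$ and $c=\beta+\beta^{-1}$; as $v_2(\beta)=0$ this yields $v_2\big((s-c)(s+c)\big)\ge N$. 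Because $\psi(h_1)\equiv\operatorname{Id}\pmod 4$ forces $s\equiv 2\pmod 4$, and because $c\equiv 2\pmod 8$ (here $2n_1\ge 6$), both $s-c$ and $s+c$ are divisible by $4$ while their sum $2c$ has valuation exactly $2$; hence $\min\{v_2(s-c),v_2(s+c)\}=2$ and therefore $v_2(s-c)\ge M$ or $v_2(s+c)\ge M$, where $M:=N-2=t-2n_1-n_2-2$.

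In the first case I would apply Hensel's lemma to $f(X)=X^2-sX+1$ at the point $\beta$: one has $f(\beta)=\beta(c-s)$ of valuation $\ge M$, while $f'(\beta)=2\beta-s\equiv \beta-\beta^{-1}\pmod{2^M}$ has valuation exactly $n_1+1$; the hypothesis $v_2(f(\beta))>2\,v_2(f'(\beta))$ amounts to $M>2(n_1+1)$, i.e.\ $U>n_1+1$, which follows from $U>3n_1$ (using $n_1\ge 3$). Hensel then produces a root $\lambda_1\in\mathbb{Z}_2$ of $f$ with $\lambda_1\equiv\beta=1+2^{n_1}\pmod{2^{\,M-n_1-1}}=\pmod{2^U}$, and since the two roots are reciprocal and distinct ($U>n_1+1$) the other root is $\lambda_2=\lambda_1^{-1}\equiv(1+2^{n_1})^{-1}\pmod{2^U}$, so $\psi(h_1)$ is diagonalizable over $\mathbb{Q}_2$ with the asserted eigenvalues. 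It remains to rule out the second case, and this is the one place a genuine arithmetic input is needed: the identical Hensel argument at $-\beta$ would give an eigenvalue $\lambda_1\in\mathbb{Z}_2$ with $\lambda_1\equiv -(1+2^{n_1})\equiv 3\pmod 4$; but writing $\psi(h_1)=\operatorname{Id}+4A$ with $A\in M_2(\mathbb{Z}_2)$, any eigenvalue of $\psi(h_1)$ lying in $\mathbb{Z}_2$ equals $1+4\nu$ for an eigenvalue $\nu$ of $A$, and such a $\nu$ is a root of a monic integral polynomial, hence $\nu\in\mathbb{Z}_2$ and the eigenvalue is $\equiv 1\pmod 4$ — a contradiction. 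The main obstacle is thus purely one of bookkeeping: tracking the valuations of $\alpha-1$, of $\beta-\beta^{-1}$, and of $2c$ precisely enough that the Hensel hypothesis collapses to exactly the inequality $U>3n_1$ in the statement; the passage to $\ell=2$ does not introduce any new idea beyond having to argue modulo $4$ and $8$ rather than modulo $2$.
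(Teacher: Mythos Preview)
Your argument is correct, and it reaches the same conclusion by a genuinely different route from the paper.

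The paper works directly with the eigenvalues $\lambda_1,\lambda_2$ of $\psi(h_1)$, a priori in a quadratic extension of $\mathbb{Q}_2$. It observes at the outset that $\psi(h_1)\equiv\operatorname{Id}\pmod 4$ forces $\lambda_i\equiv 1\pmod 4$, so in the factorisation $p(\alpha)=(\alpha-1)\prod_i\bigl((1+2^{n_1})-\lambda_i\bigr)\bigl((1+2^{n_1})+\lambda_i\bigr)$ the ``plus'' factors each contribute valuation exactly $1$; this yields first a weaker bound $\max_i v_2\bigl((1+2^{n_1})-\lambda_i\bigr)\ge (t-2n_1-n_2-2)/2$, which is then bootstrapped (using $\lambda_2=\lambda_1^{-1}$ to pin down $v_2((1+2^{n_1})-\lambda_2)=n_1+1$) to the full precision $U$. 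Finally the paper checks that the discriminant $\operatorname{tr}(\psi(h_1))^2-4$ is a square in $\mathbb{Z}_2$ to conclude $\lambda_i\in\mathbb{Z}_2$.

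You instead stay inside $\mathbb{Z}_2$ throughout by working with the trace $s=\operatorname{tr}\psi(h_1)$: the identity $(\alpha-\mu^2)(\alpha-\mu^{-2})=\beta^2(c-s)(c+s)$ with $c=\beta+\beta^{-1}$ converts the approximate-eigenvalue information directly into a high-valuation congruence on $s$, and a single application of Hensel to $X^2-sX+1$ at $\beta$ produces an eigenvalue in $\mathbb{Z}_2$ to precision exactly $M-(n_1+1)=U$, with no bootstrap step and no separate discriminant computation. The spurious case $s\equiv -c$ is then eliminated by the same $\lambda\equiv 1\pmod 4$ observation that the paper used up front. Your approach is more streamlined; the paper's has the mild advantage of making the link with the eigenvalues of the adjoint action more visible, but the two are logically equivalent and both hit the target $U=t-3n_1-n_2-3$ on the nose.
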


\begin{proof}
Denote by $\mathcal{C}_{\psi(h_1)}$ the linear endomorphism of $\mathfrak{sl}_2(\mathbb{Z}_2)$ given by conjugation by $\psi(h_1)$, and let $p(x)$ be its characteristic polynomial. Note that $\operatorname{tr} (\log \psi(x_1)) = \log \det \psi(x_1)=0$, so $\log \psi(x_1)$ is in $\mathfrak{sl}_2(\mathbb{Z}_2)$. Also let $\lambda_1, \lambda_2$ be the eigenvalues of $\psi(h_1)$.
An easy computation shows that $p(x)=(x-1)\left(x-\lambda_1^2\right)\left(x-\lambda_2^2\right)$ (the same result can also be deduced from the properties of the adjoint representation of $\mathfrak{sl}_2$).

With a little abuse of notation, in the course of the proof we shall use congruences (modulo powers of 2) that involve $\lambda_1,\lambda_2$: a priori, these might not be elements of $\mathbb{Z}_2$, so the precise meaning of these congruences is that we work with the ideals generated by the relevant powers of 2 in the ring of integers of $F$, where $F$ is a suitable quadratic extension of $\mathbb{Q}_2$ that contains $\lambda_1,\lambda_2$; likewise, we also extend $v_2$ to $F$.
By equation \eqref{eqn_LogarithmIsEigenvalue} we have
\[
\mathcal{C}_{\psi(h_1)} (\log \psi(x_1)) = \psi(h_1) \left( \log \psi(x_1) \right) \psi(h_1)^{-1}= \alpha \log \psi(x_1) + O(2^t),
\]
so that $\log \psi(x_1)$ is approximately an eigenvector for $\mathcal{C}_{\psi(h_1)}$. We deduce from lemma \ref{lemma_ApproximateEigenvalue1} and the assumption $\log \psi(x_1) \not \equiv 0 \pmod{2^{n_1+n_2}}$ that $p(\alpha) \equiv 0 \pmod {2^{t-n_1-n_2+1}}$. Since $\psi(h_1)$ is congruent to the identity modulo 4, it is easy to see that we have $\lambda_1 \equiv \lambda_2 \equiv 1 \pmod 4$, so $v_2(1+2^{n_1}+\lambda_i)=1$ for $i=1,2$. Hence $v_2(p(\alpha))$, which is given by
\[
v_2 \left((1+2^{n_1})^2-1\right)+v_2(1+2^{n_1}+\lambda_1)+v_2(1+2^{n_1}+\lambda_2)+v_2(1+2^{n_1}-\lambda_1)+v_2(1+2^{n_1}-\lambda_2),
\]
does not exceed
$
(n_1+1)+ 1 +1 +2\max_i v_2(1+2^{n_1}-\lambda_i),
$
so that
\[
\max_i v_2(1+2^{n_1}-\lambda_i) \geq \frac{v_2(p(\alpha)) -n_1-3}{2} \geq \frac{t-2n_1-n_2-2}{2}.
\]
Up to exchanging $\lambda_1$ and $\lambda_2$ we can assume the maximum is attained for $i=1$.

Set $U'= \left\lfloor \displaystyle \frac{t-2n_1-n_2-2}{2} \right\rfloor$. We have $\lambda_1 \equiv 1+2^{n_1} \pmod{2^{U'}}$ and, if $U' > 2n_1$ (a condition that is implied by the hypothesis $U>3n_1$), also $\lambda_2 = \lambda_1^{-1} \equiv 1-2^{n_1}+2^{2n_1} \pmod{2^{2n_1+1}}$. It follows in particular that $v_2(1+2^{n_1}-\lambda_2)=n_1+1$, so that we can improve our previous estimate to
\[
v_2(1+2^{n_1}-\lambda_1) \geq v_2(p(\alpha))-(n_1+1)-1-1-(n_1+1) \geq t-3n_1-n_2-3,
\]
that is $\lambda_1 \equiv \lambda_2^{-1} \equiv 1+2^{n_1} \pmod{2^U}$. Now since $U>3n_1$ the trace of $\psi(h_1)$ is given by
\[
\lambda_1+\lambda_2 = 1+2^{n_1} + 1-2^{n_1}+2^{2n_1}+O(2^{3n_1}),
\]
so
\[
\begin{aligned}
\operatorname{tr}(\psi(h_1))^2-4\det(\psi(h_1)) & =\left(2+2^{2n_1}+O(2^{3n_1})\right)^2-4 \\ 
& = 2^{2n_1+2} \left( 1+ O\left(2^{n_1} \right) \right)
\end{aligned}
\]
is a square in $\mathbb{Z}_2$ (since $n_1 \geq 3$). It follows that the eigenvalues of $\psi(h_1)$ lie in $\mathbb{Z}_2$, because 
$
\displaystyle \lambda_{1,2} = \frac{\operatorname{tr}(\psi(h_1)) \pm \sqrt{\left(\operatorname{tr}\psi(h_1)\right)^2-4}}{2}
$
is in $\mathbb{Q}_2$ (as the expression under square root is a square) and is $2$-integral (as $p(x)$ is monic with $2$-integral coefficients). It follows that $\psi(h_1)$ is diagonalizable over $\mathbb{Q}_2$, and that its eigenvalues satisfy the given congruences. 
\end{proof}

We now start building a base-change matrix $P$ such that $G$ is contained (up to error terms of large 2-adic valuation) in the graph of $x \mapsto PxP^{-1}$. As a first approximation, the following corollary yields a matrix $N$ such that $N^{-1}\psi(h_1)N$ is congruent to $h_1$ modulo a large power of 2:

\begin{corollary}\label{cor_ConstructionOfN}
Under the hypotheses of the previous lemma, there exists a $2$-integral matrix $N \in \operatorname{GL}_2(\mathbb{Q}_2)$ that satisfies:
\begin{enumerate}
\item $N^{-1}\psi(h_1)N$ is diagonal (with diagonal entries $\lambda_1,\lambda_2$ as above);
\item $v_2 (\det(N)) \leq n_1+1$.
\end{enumerate}
\end{corollary}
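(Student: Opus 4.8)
The plan is to build $N$ from primitive eigenvectors of $\psi(h_1)$ and to control $v_2(\det N)$ via the valuation of $\lambda_1-\lambda_2$, which Lemma~\ref{lemma_hDiagonalizable} already pins down. First, that lemma tells us $\psi(h_1)$ is diagonalizable over $\mathbb{Q}_2$ with eigenvalues $\lambda_1,\lambda_2\in\mathbb{Z}_2$ satisfying $\lambda_1\equiv 1+2^{n_1}$ and $\lambda_2\equiv(1+2^{n_1})^{-1}\pmod{2^U}$, where $U=t-3n_1-n_2-3>3n_1$. From $\lambda_1-\lambda_2\equiv \frac{2^{n_1+1}(1+2^{n_1-1})}{1+2^{n_1}}\pmod{2^U}$, together with $n_1\ge 3$ (so that $n_1+1<U$ and $1+2^{n_1-1}$ is a unit), I would read off $v_2(\lambda_1-\lambda_2)=n_1+1$; in particular $\lambda_1\ne\lambda_2$.

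Next, for $i=1,2$ the eigenspace $\ker(\psi(h_1)-\lambda_i\operatorname{Id})$ is a line in $\mathbb{Q}_2^2$, and the lattice $\ker(\psi(h_1)-\lambda_i\operatorname{Id})\cap\mathbb{Z}_2^2$ is free of rank one; since $\mathbb{Z}_2$ is a local PID I can choose a generator $v_i$ that is primitive (its two coordinates are not both divisible by $2$). Setting $N=(v_1\mid v_2)$, this is a $2$-integral matrix, invertible over $\mathbb{Q}_2$ because $v_1,v_2$ are eigenvectors for distinct eigenvalues, and $N^{-1}\psi(h_1)N=\operatorname{diag}(\lambda_1,\lambda_2)$; this settles property (1).

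For property (2) I must show $d:=v_2(\det N)\le n_1+1$; assume $d\ge 1$, since otherwise there is nothing to prove. The mechanism is that $\psi(h_1)$ preserves each line $\mathbb{Z}_2 v_i$, hence the index-$2^d$ sublattice $\mathbb{Z}_2 v_1+\mathbb{Z}_2 v_2$, so it acts on the cyclic quotient $\mathbb{Z}_2^2/(\mathbb{Z}_2 v_1+\mathbb{Z}_2 v_2)$, and integrality of this action constrains $d$. Concretely, I would complete $v_1$ to a $\mathbb{Z}_2$-basis $\{v_1,f\}$ of $\mathbb{Z}_2^2$ and write $v_2=xv_1+yf$; then $\det N=\pm y\det(v_1\mid f)$ with $\det(v_1\mid f)\in\mathbb{Z}_2^\times$, so $v_2(y)=d$ and, by primitivity of $v_2$, $x$ is a unit. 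The vector $g:=2^{-d}(v_2-xv_1)=(2^{-d}y)\,f$ lies in $\mathbb{Z}_2^2$, and a one-line computation gives $\psi(h_1)\,g=x(\lambda_2-\lambda_1)2^{-d}v_1+\lambda_2 g$. Since the left-hand side and $\lambda_2 g$ lie in $\mathbb{Z}_2^2$ and $v_1$ is part of a $\mathbb{Z}_2$-basis, $x(\lambda_2-\lambda_1)2^{-d}\in\mathbb{Z}_2$; as $x$ is a unit this yields $d\le v_2(\lambda_1-\lambda_2)=n_1+1$, as required.

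I do not anticipate a genuine obstacle here: once Lemma~\ref{lemma_hDiagonalizable} is in hand, this is essentially linear algebra over $\mathbb{Z}_2$. The only points deserving care are checking that $v_2(\lambda_1-\lambda_2)$ is exactly $n_1+1$ (this is where $n_1\ge 3$ and $U>n_1+1$ are used) and the elementary bookkeeping that lets one choose the $v_i$ primitive and then extract the divisibility $2^d\mid x(\lambda_1-\lambda_2)$ from $2$-integrality of $\psi(h_1)$. The hard conceptual work was already done in the previous lemma; this corollary is its cleanup into a usable change-of-basis matrix.
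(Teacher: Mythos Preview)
Your proposal is correct and follows essentially the same approach as the paper: choose primitive eigenvectors as the columns of $N$, then bound $v_2(\det N)$ by $v_2(\lambda_1-\lambda_2)=n_1+1$. The only cosmetic difference is in how this last inequality is extracted: the paper normalises so that both eigenvectors have first coordinate $1$, whence $\lambda_1\equiv\lambda_2\pmod{\det N}$ follows by comparing first coordinates after applying $\psi(h_1)$; you instead complete $v_1$ to a $\mathbb{Z}_2$-basis and read off the same divisibility from the integrality of $\psi(h_1)$ acting on $2^{-d}(v_2-xv_1)$. Both arguments encode the same linear-algebraic fact.
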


\begin{proof}
Let $w_1,w_2$ be two eigenvectors for $\psi(h_1)$, associated resp. with $\lambda_1,\lambda_2$, and chosen so as to be $2$-integral and to have at least one coordinate that is a $2$-adic unit. Let $N$ be the matrix having $w_1, w_2$ as columns: it is clear that $N$ satisfies (1). Now if $w_1,w_2$ are linearly independent over $\mathbb{F}_2$ we are done, for then $v_2(\det N)=0$; otherwise, up to rescaling $w_1,w_2$ and exchanging their two coordinates, we can assume they are of the form $w_1=\left(\begin{matrix} 1 \\ w_1' \end{matrix}\right), w_2=\left(\begin{matrix} 1 \\ w_2' \end{matrix}\right)$. The determinant of $N$ is simply $w_2'-w_1'$, hence we have
\[
\left( \begin{matrix} 0 \\ w_2'-w_1' \end{matrix} \right) \equiv 0 \pmod{\det(N)}  \quad  \Rightarrow  \quad  w_2 \equiv w_1 \pmod{\det(N)}.
\]
Applying $\psi(h_1)$ to both sides of the last congruence we find
$
\lambda_2 w_2 = \lambda_1 w_1 \pmod{\det(N)},
$
and comparing the first coordinates of these vectors we obtain $\lambda_1 \equiv \lambda_2 \pmod{\det(N)}$.
Since by the previous lemma we have $\lambda_1 \equiv 1+2^{n_1} \pmod{2^{2n_1}}$ and $\lambda_2 \equiv 1-2^{n_1} \pmod{2^{2n_1}}$, we have in particular $2^{n_1+1} \equiv 0 \pmod{\det(N)}$, whence the corollary.
\end{proof}


Assuming the hypotheses of lemma \ref{lemma_hDiagonalizable}, fix a matrix $N$ as in the previous corollary. We consider those elements $(g_1,\psi(g_1))$ of $G$ such that $N^{-1} \psi(g_1) N$ is $2$-integral:

\begin{lemma}\label{lemma_CanTakeLogs}
Assume that $\log \psi(x_1)$ does not vanish modulo $2^{n_1+n_2}$ and that $U>3n_1$, so that we can find an $N$ as above. Let $g_1$ be an element of $\mathcal{B}_2(2n_1+1) \subseteq G_1$. Then $N^{-1}\psi(g_1)N$ is $2$-integral and trivial modulo 4.
\end{lemma}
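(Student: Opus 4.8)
The plan is to reduce everything to three explicit topological generators of $\mathcal{B}_2(2n_1+1)$ and to exploit that $\varphi$ is a \emph{continuous group homomorphism}. I set $A := x_1^{2^{n_1+1}} = R(2^{2n_1+1})$, $B := y_1^{2^{n_1+1}} = L(2^{2n_1+1})$ and $C := h_1^{2^{n_1+1}} = D\bigl((1+2^{n_1})^{2^{n_1+1}}-1\bigr)$; by lemma \ref{lemma_log} the parameter of $C$ has valuation exactly $2n_1+1\ (\geq 2)$, so lemma \ref{lemma_Generators}(3) shows that $A,B,C$ topologically generate $\mathcal{B}_2(2n_1+1)$. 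Since $G_2(2^t)$ is finite and $\varphi$ is continuous, for every $g_1 \in \mathcal{B}_2(2n_1+1)$ the element $\varphi(g_1)$ is a \emph{finite} word in $\varphi(A)^{\pm 1},\varphi(B)^{\pm 1},\varphi(C)^{\pm 1}$. Writing $\tilde g$ for the corresponding word in $\psi(A)^{\pm 1},\psi(B)^{\pm 1},\psi(C)^{\pm 1} \in G_2$, and using that $\psi$ agrees with $\varphi$ modulo $2^t$ and that $\varphi$ is a homomorphism, one gets $\psi(g_1) \equiv \tilde g \pmod{2^t}$.

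Next I would introduce the set $S := \bigl\{ g \in \operatorname{SL}_2(\mathbb{Z}_2) \bigm\vert N^{-1}gN \in M_2(\mathbb{Z}_2) \text{ and } N^{-1}gN \equiv \operatorname{Id} \pmod 4 \bigr\}$ and check that it is a closed subgroup of $\operatorname{SL}_2(\mathbb{Z}_2)$: closedness is clear, and since $\det(N^{-1}gN)=\det g = 1$, the adjugate of a $2$-integral matrix that is $\equiv \operatorname{Id} \pmod 4$ is again of that form, which gives stability under inverses (stability under products is immediate). The core of the argument is then to show that $\psi(A),\psi(B),\psi(C)$ all lie in $S$. For $C$ this is immediate from corollary \ref{cor_ConstructionOfN}: $N^{-1}\psi(h_1)N = \operatorname{diag}(\lambda_1,\lambda_2)$ with $\lambda_i \equiv 1 \pmod 4$ (lemma \ref{lemma_hDiagonalizable}), so $N^{-1}\psi(h_1)^{2^{n_1+1}}N = \operatorname{diag}(\lambda_1^{2^{n_1+1}},\lambda_2^{2^{n_1+1}})$ is $2$-integral and $\equiv \operatorname{Id} \pmod 4$; and $\psi(C)-\psi(h_1)^{2^{n_1+1}}$, being divisible by $2^t$, contributes after conjugation by $N$ a matrix of valuation $\geq t - v_2(\det N) \geq t-(n_1+1)$, which is $\geq 2$ because $U>3n_1$ forces $t>6n_1+n_2+3$. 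For $A$ (and symmetrically $B$): since $\psi(x_1)$ is trivial modulo $4$ we have $v_2(\psi(x_1)-\operatorname{Id}) \geq 2$, hence by lemma \ref{lemma_log} $v_2\bigl(\psi(x_1)^{2^{n_1+1}}-\operatorname{Id}\bigr) = (n_1+1) + v_2(\psi(x_1)-\operatorname{Id}) \geq n_1+3$; as $\psi(A) \equiv \varphi(A) = \varphi(x_1)^{2^{n_1+1}} \equiv \psi(x_1)^{2^{n_1+1}} \pmod{2^t}$, conjugating by $N$ loses at most $v_2(\det N)\leq n_1+1$, so $N^{-1}\psi(A)N$ is $2$-integral and $\equiv \operatorname{Id}\pmod 4$.

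To conclude: for $g_1 \in \mathcal{B}_2(2n_1+1)$ the element $\tilde g$ of the first paragraph is a finite product of $\psi(A)^{\pm1},\psi(B)^{\pm1},\psi(C)^{\pm1}$, hence lies in the subgroup $S$; then $N^{-1}\psi(g_1)N = N^{-1}\tilde g N + N^{-1}(\psi(g_1)-\tilde g)N$, where the first summand is $2$-integral and $\equiv \operatorname{Id} \pmod 4$ while the second has valuation $\geq t - v_2(\det N) \geq 2$, so $N^{-1}\psi(g_1)N$ is $2$-integral and trivial modulo $4$, as claimed. The only real obstacle I anticipate is bookkeeping of valuations: one must be careful that conjugation by the \emph{non-integral} matrix $N$ costs at most $v_2(\det N) \leq n_1+1$ and that this is exactly offset by the gain of $n_1+1$ coming from $A,B,C$ being $2^{n_1+1}$-th powers — which is precisely why the hypothesis is $\mathcal{B}_2(2n_1+1)$ and not a shallower ball — while the auxiliary $2^t$-errors stay negligible thanks to $U>3n_1$.
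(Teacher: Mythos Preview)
Your proof is correct and follows essentially the same approach as the paper: reduce to the three generators $R(2^{2n_1+1}),L(2^{2n_1+1}),D(2^{2n_1+1})$, use that $\psi(x_1)^{2^{n_1+1}}\equiv\operatorname{Id}\pmod{2^{n_1+3}}$ (and analogously for $y_1,h_1$), and observe that conjugation by $N$ costs at most $v_2(\det N)\leq n_1+1$. The paper's proof is terser and simply asserts that checking the three generators suffices; your introduction of the subgroup $S$ and the word $\tilde g$ makes explicit the passage from generators to arbitrary $g_1\in\mathcal{B}_2(2n_1+1)$, which the paper leaves to the reader (it is needed because $\psi$ itself is not a homomorphism).
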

\begin{proof}
As $\mathcal{B}_2(2n_1+1)$ is generated by $L(2^{2n_1+1})$, $R(2^{2n_1+1})$ and $D(2^{2n_1+1})$ it is enough to show the conclusion of the lemma holds for these three elements. Since the proof is virtually identical in the three cases, we only consider $R(2^{2n_1+1})$. We have $R(2^{2n_1+1})=x_1^{2^{n_1+1}}$, hence \[\psi(R(2^{2n_1+1})) \equiv \psi(x_1)^{2^{n_1+1}} \pmod {2^t};\] as $\psi(x_1) \equiv \operatorname{Id} \pmod{4}$, the matrix $\psi(x_1)^{2^{n_1+1}}$ is congruent to the identity modulo $2^{n_1+3}$. Writing $\psi(x_1)^{2^{n_1+1}}$ as $\operatorname{Id} + 2^{n_1+3}B$ for a certain 2-integral matrix $B$ we have 
\[
N^{-1} \psi(R(2^{2n_1+1})) N = N^{-1}\left(\operatorname{Id}+2^{n_1+3}B\right)N = \operatorname{Id} + N^*  \left(  \frac{2^{n_1+3}}{\det(N)} B \right)  N,
\]
where $N^*=\det(N)N^{-1}$ is the adjugate matrix of $N$. Since $v_2 (\det N) \leq n_1+1$, this last expression is manifestly $2$-integral and congruent modulo $4$ to the identity.
\end{proof}

Let $N^*$ be the adjugate matrix of $N$ and $D=\left( \begin{matrix} \lambda_1 & \\ & \lambda_2 \end{matrix} \right)=N^{-1}\psi(h_1)N$. By the previous lemma, the following identity only involves 2-integral matrices:
\[
\begin{aligned}
\left(N^{-1}\psi(h_1)N\right) \, \left(N^{-1}\psi\left(R(2^{2n_1+1})\right)N\right) \, & \left(N^*\psi(h_1)^{-1}N\right)  = N^*\psi(h_1) \psi\left(R(2^{2n_1+1})\right) \psi(h_1)^{-1}N\\
                                                             & \equiv N^* \varphi(h_1) \varphi(R(2^{2n_1+1})) \varphi(h_1)^{-1} N\pmod{2^t} \\
																														 & \equiv N^* \varphi \left( R(2^{2n_1+1})^{\alpha} \right) N \pmod{2^t} \\
   																													 & \equiv N^* \varphi \left( R(2^{2n_1+1}) \right)^{\alpha} N \pmod{2^t}.
\end{aligned}
\]
Replacing $\varphi$ by $\psi$ and dividing through by $\det(N)$ we deduce
\begin{equation}\label{eqn_AxEigenvector}
D \left( N^{-1}\psi(R(2^{2n_1+1}))N \right) D^{-1} \equiv \left(N^{-1}\psi(R(2^{2n_1+1}))N\right) ^{\alpha} \pmod{2^{t-n_1-1}}.
\end{equation}
This equation forces $N^{-1}\psi(R(2^{2n_1+1}))N$ to be of a very specific form:

\begin{lemma}
There exists $c \in 4\mathbb{Z}_2$ such that $N^{-1}\psi(R(2^{2n_1+1}))N \equiv \left(\begin{matrix} 1 & c \\ 0 & 1 \end{matrix} \right) \pmod{2^{U-n_1-2}}$.
\end{lemma}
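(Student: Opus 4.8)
The plan is to pass to logarithms in equation~\eqref{eqn_AxEigenvector}. I keep the standing hypotheses under which $N$ was constructed, in particular $\log\psi(x_1)\not\equiv 0\pmod{2^{n_1+n_2}}$ and $U>3n_1$ (whence also $t>6n_1+n_2+3$). Set $A:=N^{-1}\psi(R(2^{2n_1+1}))N$. By lemma~\ref{lemma_CanTakeLogs} the matrix $A$ is $2$-integral and congruent to $\operatorname{Id}$ modulo $4$; moreover $\det A=\det\psi(R(2^{2n_1+1}))=1$, so by lemma~\ref{lemma_log} the logarithm $X:=\log A$ is a well-defined element of $4M_2(\mathbb{Z}_2)$, and $\operatorname{tr}X=\log\det A=0$, i.e.\ $X\in\mathfrak{sl}_2(\mathbb{Z}_2)$. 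I would write $X=\left(\begin{matrix}p & q\\ r & -p\end{matrix}\right)$, with $p,q,r\in 4\mathbb{Z}_2$.

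First I would take logarithms on both sides of~\eqref{eqn_AxEigenvector}. Writing $D=\operatorname{diag}(\lambda_1,\lambda_2)$ with $\lambda_i\in\mathbb{Z}_2^\times$, both $DAD^{-1}$ and $A^\alpha$ are $2$-integral and trivial modulo $4$, so lemma~\ref{lemma_CongruenceExponentials} lets us take logarithms while preserving the congruence modulo $2^{t-n_1-1}$. Since $\log$ commutes with conjugation and $\log(A^\alpha)=\alpha X$, this yields
\[
D\,X\,D^{-1}\equiv\alpha X\pmod{2^{t-n_1-1}}.
\]
Using $\lambda_1\lambda_2=\det\psi(h_1)=1$, the off-diagonal entries of $DXD^{-1}$ are $\lambda_1^2 q$ and $\lambda_2^2 r$, so comparing entries gives
\[
(\alpha-1)p\equiv 0,\qquad (\lambda_1^2-\alpha)q\equiv 0,\qquad(\lambda_2^2-\alpha)r\equiv 0\pmod{2^{t-n_1-1}}.
\]
Now $v_2(\alpha-1)=n_1+1$ since $n_1\ge 3$, and from $\lambda_1\equiv 1+2^{n_1}$, $\lambda_2\equiv(1+2^{n_1})^{-1}\pmod{2^U}$ (lemma~\ref{lemma_hDiagonalizable}) together with $U>3n_1$ one computes $v_2(\lambda_2^2-\alpha)=n_1+2$. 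Hence $v_2(p)\ge t-2n_1-2$ and $v_2(r)\ge t-2n_1-3$, and since $U-n_1-2=t-4n_1-n_2-5$ both of these are $\ge U-n_1-2$. The congruence on $q$ carries no information (it is vacuous because $v_2(\lambda_1^2-\alpha)\ge U$), which is exactly what we want: $q$ will be the sought-after $c$.

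Finally I would exponentiate back. Because $X\in\mathfrak{sl}_2(\mathbb{Z}_2)$, Cayley--Hamilton gives $X^2=-\det(X)\operatorname{Id}=(p^2+qr)\operatorname{Id}$, and from $v_2(q)\ge 2$ and the bounds just obtained one gets $v_2(p^2+qr)\ge t-2n_1-1$; combined with $t>6n_1+n_2+3$, a short computation shows that every term $X^m/m!$ with $m\ge 2$ in $A=\exp X=\operatorname{Id}+X+\sum_{m\ge 2}X^m/m!$ is $\equiv 0\pmod{2^{U-n_1-2}}$. Since also $p,r\equiv 0\pmod{2^{U-n_1-2}}$, this gives $A\equiv\operatorname{Id}+X\equiv\left(\begin{matrix}1 & q\\ 0 & 1\end{matrix}\right)\pmod{2^{U-n_1-2}}$, so $c:=q\in 4\mathbb{Z}_2$ does the job. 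The only genuine obstacle is the $2$-adic valuation bookkeeping in the last two steps: one must pin down $v_2(\lambda_2^2-\alpha)=n_1+2$ exactly (this is what kills the $(2,1)$-entry of $X$ to the required precision) and must control the tail of the exponential series, for which the centrality of $X^2$ forced by $X\in\mathfrak{sl}_2(\mathbb{Z}_2)$, together with the lower bound $t>6n_1+n_2+3$, is the crucial input.
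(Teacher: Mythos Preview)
Your proof is correct and follows essentially the same route as the paper: take logarithms in~\eqref{eqn_AxEigenvector}, use the exact values $v_2(\alpha-1)=n_1+1$ and $v_2(\lambda_2^2-\alpha)=n_1+2$ to kill the diagonal and $(2,1)$ entries of $\log A$ to the required precision, then exponentiate. The only cosmetic difference is that the paper first reduces the congruence to modulus $2^U$ and then invokes lemma~\ref{lemma_CongruenceExponentials} for the exponentiation step, whereas you work modulo $2^{t-n_1-1}$ throughout and handle the exponential tail by hand via $X^2=(p^2+qr)\operatorname{Id}$; both are valid and yield the same $c=\mu_x=q$.
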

\begin{proof}
Note that since $t-n_1-1>U$ we can in particular rewrite equation \eqref{eqn_AxEigenvector} modulo $2^U$, and we know $D \equiv \left( \begin{matrix} 1+2^{n_1} & \\ & \left(1+2^{n_1}\right)^{-1} \end{matrix}\right) \pmod{2^U}$.
Recall that $N^{-1} \psi(R(2^{2n_1+1}))N$ is $2$-integral and trivial modulo 4 by lemma \ref{lemma_CanTakeLogs}, so its logarithm is well-defined. Since $D$ and $D^{-1}$ are both 2-integral, the same is true of $DN^{-1}\psi(R(2^{2n_1+1}))ND^{-1}$. Let $A_x=\log \left(N^{-1}\psi(R(2^{2n_1+1}))N\right)$ and write
$
A_x=\mu_x \left( \begin{matrix} 0 & 1 \\ 0 & 0\end{matrix} \right) + \mu_y \left( \begin{matrix} 0 & 0 \\ 1 & 0\end{matrix} \right) + \mu_h \left( \begin{matrix} 1 & 0 \\ 0 & -1\end{matrix} \right)
$
for certain $\mu_x,\mu_y,\mu_h \in \mathbb{Z}_2$. We claim that
$
\mu_h \equiv 0 \pmod{2^{U-n_1-1}}$ and $\mu_y \equiv 0 \pmod{2^{U-n_1-2}}.
$
Reducing equation \eqref{eqn_AxEigenvector} modulo $2^U$ and taking logarithms we get
\[
\alpha \cdot A_x \equiv D A_x D^{-1} \equiv \left( \begin{matrix} \lambda_1 & 0 \\ 0 & \lambda_2 \end{matrix}\right) A_x \left( \begin{matrix} \lambda_1 & 0 \\ 0 & \lambda_2 \end{matrix}\right)^{-1} \pmod{2^U},
\]
and the right hand side can be computed explicitly in terms of $\mu_x,\mu_h,\mu_y$. Using $\lambda_2=1/\lambda_1$ we find
\[
\alpha \cdot A_x \equiv  \lambda_1^2 \mu_x \left( \begin{matrix} 0 & 1 \\ 0 & 0\end{matrix} \right)+ \lambda_2^2 \mu_y \left( \begin{matrix} 0 & 0 \\ 1 & 0\end{matrix} \right) + \mu_h \left( \begin{matrix} 1 & 0 \\ 0 & -1\end{matrix} \right) \pmod{2^U},
\]
that is
\begin{equation}\label{eq_CongruencesLogX}
\begin{cases} \alpha \mu_x \equiv \lambda_1^2 \mu_x \pmod{2^U} \\ \alpha \mu_y \equiv \lambda_2^2 \mu_y \pmod{2^U} \\ \alpha \mu_h \equiv \mu_h \pmod{2^U}. \end{cases}
\end{equation}
Furthermore, one easily sees that $v_2(\alpha-{\lambda_2}^2)  =v_2\left( \left(1+2^{n_1} \right)^2 -\left( 1-2^{n_1}+O\left(2^{2n_1} \right) \right)^2 \right)=n_1+2$
and $v_2(\alpha-1)=n_1+1$.
Rewriting the last formula in \eqref{eq_CongruencesLogX} as $(\alpha-1)\mu_h \equiv 0 \pmod{2^U}$ shows that $\mu_h \equiv 0 \pmod{2^{U-n_1-1}}$, while $\alpha \mu_y \equiv \lambda_2^2 \mu_y \pmod{2^U}$ implies $\mu_y \equiv 0 \pmod{ 2^{U-n_1-2}}$. This proves that $A_x \equiv \left( \begin{matrix}0 & \mu_x \\ 0 & 0 \end{matrix} \right) \pmod {2^{U-n_1-2}}$, and exponentiating we find
\[
N^{-1} \psi(R(2^{2n_1+1}))N=\exp A_x \equiv \exp \left( \begin{matrix}0 & \mu_x \\ 0 & 0 \end{matrix} \right) \equiv \left( \begin{matrix}1 & \mu_x \\ 0 & 1 \end{matrix} \right) \pmod {2^{U-n_1-2}}.
\]
We can then take $c=\mu_x$, which is in $4\mathbb{Z}_2$ since $N^{-1} \psi(R(2^{2n_1+1}))N \equiv \operatorname{Id} \pmod 4$ by lemma \ref{lemma_CanTakeLogs}.
\end{proof}


A completely analogous argument yields similar results for $N^{-1}\psi(L(2^{2n_1+1}))N$, hence:
\begin{proposition}\label{prop_ConjugationByN}
Assume that
\begin{itemize}
\item $T:=U-n_1-2=t-4n_1-n_2-5$ is larger than $2n_1-2$ (that is, $U >3n_1$);
\item $G$ contains no element of the form $(\operatorname{Id},b)$, where $b \not \equiv \operatorname{Id} \pmod{2^t}$;
\item $\log \psi(x_1)$ does not vanish modulo $2^{n_1+n_2}$.
\end{itemize}

Then there exists a matrix $N \in \operatorname{GL}_2(\mathbb{Q}_2)$, with $2$-integral entries and whose determinant satisfies $v_2 (\det(N)) \leq n_1+1$, and scalars $c,d \in 4\mathbb{Z}_2$, such that
\[
N^{-1}\psi(h_1)N = \left( \begin{matrix} \lambda_1 &  \\ & \lambda_2 \end{matrix} \right) \equiv h_1 \pmod{2^T},
\]
\[
N^{-1}\psi\left(R(2^{2n_1+1})\right)N \equiv \left( \begin{matrix} 1 & c \\ 0 & 1 \end{matrix} \right) \pmod{2^T}, \quad N^{-1}\psi\left(L(2^{2n_1+1})\right)N \equiv \left( \begin{matrix} 1 & 0 \\ d & 1 \end{matrix} \right) \pmod{2^T}.
\]
\end{proposition}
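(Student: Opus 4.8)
The plan is to read the proposition off from the lemmas and corollary proved just above; the only genuinely new work is to run the computation behind the constant $c$ once more, with $L$ in place of $R$.

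First I would record that the hypothesis that $G$ contains no element $(\operatorname{Id},b)$ with $b\not\equiv\operatorname{Id}\pmod{2^t}$ is exactly the statement that we are in case 2, i.e.\ that Condition \ref{assumption_t} holds; hence $\varphi\colon G_1\to G_2(2^t)$ and a lift $\psi\colon G_1\to G_2$ are defined and the preliminary results of this section apply. The hypothesis $T>2n_1-2$ is, by the definition $T=U-n_1-2$, the same as $U>3n_1$, so together with $\log\psi(x_1)\not\equiv 0\pmod{2^{n_1+n_2}}$ we are in the situation of lemma \ref{lemma_hDiagonalizable} and corollary \ref{cor_ConstructionOfN}. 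I would take $N$ to be the matrix furnished by that corollary: it is $2$-integral, has $v_2(\det N)\le n_1+1$, and $N^{-1}\psi(h_1)N=\operatorname{diag}(\lambda_1,\lambda_2)$ with $\lambda_1\equiv 1+2^{n_1}$ and $\lambda_2\equiv(1+2^{n_1})^{-1}$ modulo $2^U$ by lemma \ref{lemma_hDiagonalizable}; since $h_1=\operatorname{diag}(1+2^{n_1},(1+2^{n_1})^{-1})$ and $T<U$ this gives $N^{-1}\psi(h_1)N\equiv h_1\pmod{2^T}$ at once. Next, lemma \ref{lemma_CanTakeLogs} shows that $N^{-1}\psi(g_1)N$ is $2$-integral and $\equiv\operatorname{Id}\pmod 4$ for every $g_1\in\mathcal{B}_2(2n_1+1)$, in particular for $R(2^{2n_1+1})$ and $L(2^{2n_1+1})$, and the lemma immediately preceding this proposition produces $c\in 4\mathbb{Z}_2$ with $N^{-1}\psi(R(2^{2n_1+1}))N\equiv\left(\begin{smallmatrix}1&c\\0&1\end{smallmatrix}\right)$ modulo $2^{U-n_1-2}=2^T$.

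It then only remains to obtain the analogous statement for $N^{-1}\psi(L(2^{2n_1+1}))N$, which I expect to be the one place where a (routine) argument has to be supplied. The plan is to repeat verbatim the derivation of \eqref{eqn_AxEigenvector}, starting now from the relation $h_1\,L(2^{2n_1+1})\,h_1^{-1}=L(2^{2n_1+1})^{\alpha^{-1}}$ (with $\alpha=(1+2^{n_1})^2$), to get $D\,(N^{-1}\psi(L(2^{2n_1+1}))N)\,D^{-1}\equiv(N^{-1}\psi(L(2^{2n_1+1}))N)^{\alpha^{-1}}\pmod{2^{t-n_1-1}}$ with $D=N^{-1}\psi(h_1)N$; then to take logarithms (legitimate by lemma \ref{lemma_CanTakeLogs}) and split into coordinates in the standard basis of $\mathfrak{sl}_2(\mathbb{Z}_2)$. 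Using $\lambda_1^2\equiv\alpha$ and $\lambda_2^2\equiv\alpha^{-1}\pmod{2^U}$, the upper-triangular and diagonal coordinates of $\log(N^{-1}\psi(L(2^{2n_1+1}))N)$ are forced to vanish modulo $2^{U-n_1-2}$ and $2^{U-n_1-1}$ respectively — here one uses $v_2(\alpha^{-1}-\alpha)=n_1+2$ and $v_2(\alpha^{-1}-1)=n_1+1$ — while the lower-triangular coordinate is unconstrained, and exponentiating (via lemma \ref{lemma_CongruenceExponentials}) yields $N^{-1}\psi(L(2^{2n_1+1}))N\equiv\left(\begin{smallmatrix}1&0\\d&1\end{smallmatrix}\right)\pmod{2^T}$ with $d\in 4\mathbb{Z}_2$ (the membership because the left-hand side is $\equiv\operatorname{Id}\pmod 4$). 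The main obstacle, such as it is, is purely bookkeeping: checking that all reductions are carried out to the common modulus $2^T$, and keeping track of the fact that passing from exponent $\alpha$ (for $x_1$) to exponent $\alpha^{-1}$ (for $y_1$) swaps the roles of the two off-diagonal coordinates, which is what makes the $L$-image lower-triangular rather than upper-triangular.
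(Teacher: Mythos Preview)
Your proposal is correct and follows exactly the paper's approach: assemble the statement from lemma \ref{lemma_hDiagonalizable}, corollary \ref{cor_ConstructionOfN}, lemma \ref{lemma_CanTakeLogs}, and the unlabelled lemma producing $c$, then rerun that last computation with $L(2^{2n_1+1})$ in place of $R(2^{2n_1+1})$ to obtain $d$. The paper dispatches this last step with a single sentence (``A completely analogous argument yields similar results for $N^{-1}\psi(L(2^{2n_1+1}))N$''), and your sketch of that analogous argument is accurate.
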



\begin{remark}
As we shall see shortly, the product $cd$ is $2$-adically very close to $2^{4n_1+2}$, as one would expect. However, it is not true in general that $c,d$, taken separately, are $2$-adically very close to $2^{2n_1+1}$. Let us give an example of this phenomenon, which will also motivate the choices we make in the rest of the proof. Fix a positive integer $n_1 \geq 3$ and an integer $p \gg n_1$, and let $G$ be the group generated by $\mathcal{B}_2(p,p)$ and
$
\left\{ (g_1,g_2) \in \mathcal{B}_2(n_1)^2 \bigm\vert g_2=g_1 \right\}.
$
In our notation one can take $\psi$ to be the identity, and a matrix $N$ as in the statement of proposition \ref{prop_ConjugationByN} is given by $\left(\begin{matrix} 2^{n_1+1} & 0 \\ 0 & 1 \end{matrix} \right)$; notice that this matrix cannot be obtained from the construction of $N$ we gave in corollary \ref{cor_ConstructionOfN}, but its simple form makes it easier to make our point. In this situation we have
\[
N^{-1}\psi(R(2^{2n_1+1}))N = \left(
\begin{array}{cc}
 1 & 2^{n_1} \\
 0 & 1 \\
\end{array}
\right), \quad N^{-1}\psi(L(2^{2n_1+1}))N = \left(
\begin{array}{cc}
 1 & 0 \\
 2^{3 n_1+2} & 1 \\
\end{array}
\right),
\]
so $c=2^{n_1}$ and $d=2^{3n_1+2}$ are quite far $2$-adically from $2^{2n_1+1}$. 
\end{remark}

The parameters $c,d$ are up to now completely free, and they cannot be controlled by only using the relations $h_1x_1h_1^{-1}=x_1^\alpha$, $h_1^{-1}y_1h_1=y_1^{\alpha}$ (which are essentially integrated forms of the usual $\mathfrak{sl}_2$-Lie algebra relations $[h,x]=2x,[h,y]=-2y$). In order to say something meaningful about them, we shall need to use an integrated form of the Lie algebra relation $[x,y]=h$, that is to say we want to have some degree of control on commutators of the form $L(a)R(b)L(a)^{-1}R(b)^{-1}$. This is made possible by the following simple lemma, whose proof is immediate by induction:

\begin{lemma}\label{lemma_generators4}
\begin{itemize}
\item For any pair $(a,b)$ of elements of $\mathbb{Z}_2$ of valuation at least 1, the finite products
$
\displaystyle \prod_{i=-n}^{-1} R(a)^{(ab)^{-i}} \cdot \left(R(a)L(b)R(a)^{-1}L(b)^{-1}\right) \cdot \prod_{i=1}^n L(b)^{-(ab)^i}
$
converge, as $n \to \infty$, to $\left( \begin{matrix} \frac{1}{1-ab} & 0 \\ 0 & 1-ab \end{matrix} \right).$
\item
Let $(a,b)$ be as above and $(c,d)$ be any other pair of elements of $2$-adic valuation at least 1. The finite products
$
\displaystyle \prod_{i=-n}^{-1} R(c)^{(ab)^{-i}} \cdot \left(R(c)L(d)R(c)^{-1}L(d)^{-1}\right) \cdot \prod_{i=1}^n L(d)^{-(ab)^i}
$
converge to a limit for $n\to \infty$, and this limit is of the form $\left( \begin{matrix} \star & \star \\ \star & 1-cd \end{matrix} \right).$
\end{itemize}
\end{lemma}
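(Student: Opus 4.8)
The plan is to notice that $R$ and $L$ reduce the two assertions to a combination of geometric series in the exponent together with a single explicit $2\times 2$ matrix identity; the claim that this is "immediate by induction" is exactly the statement that $\prod_{i=-n}^{-1} R(a)^{(ab)^{-i}} = R\bigl(a\sum_{j=1}^n (ab)^j\bigr)$ and similarly on the other side.

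First I would record that $t\mapsto R(t)$ and $t\mapsto L(t)$ are continuous homomorphisms from the additive group $\mathbb{Z}_2$ into $\operatorname{SL}_2(\mathbb{Z}_2)$: since $R(a)^n=R(na)$ and $L(b)^n=L(nb)$ for $n\in\mathbb{Z}$, taking $2$-adic limits shows that the continuous extension satisfies $R(a)^{\beta}=R(\beta a)$ and $L(b)^{\beta}=L(\beta b)$ for every $\beta\in\mathbb{Z}_2$ (the exponents $(ab)^{-i}$ with $i\le -1$, i.e. $(ab)^{|i|}$, do lie in $\mathbb{Z}_2$ — in fact in $4\mathbb{Z}_2$ — because $v_2(ab)\ge 2$, so these powers are well defined). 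The factors of the left-hand partial product then pairwise commute, being powers of the single matrix $R(a)$, so
\[
\prod_{i=-n}^{-1} R(a)^{(ab)^{-i}} = R\!\left(a\sum_{j=1}^{n}(ab)^{j}\right)\xrightarrow[n\to\infty]{} R\!\left(\frac{a^2b}{1-ab}\right),
\]
where convergence of $\sum_{j\ge 1}(ab)^j$ to $\tfrac{ab}{1-ab}$ in $\mathbb{Z}_2$ uses $v_2(ab)>0$ (which also makes $1-ab$ a unit); symmetrically $\prod_{i=1}^{n}L(b)^{-(ab)^{i}}\to L\bigl(-\tfrac{ab^2}{1-ab}\bigr)$. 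Hence the product in the first assertion converges to $R\!\left(\tfrac{a^2b}{1-ab}\right)\bigl(R(a)L(b)R(a)^{-1}L(b)^{-1}\bigr)L\!\left(-\tfrac{ab^2}{1-ab}\right)$, and what remains is the bare computation that this equals $\operatorname{diag}\!\left(\tfrac{1}{1-ab},\,1-ab\right)$. Writing $\lambda=ab$, one multiplies out $R(a)L(b)R(a)^{-1}L(b)^{-1}=\left(\begin{smallmatrix}1+\lambda+\lambda^2 & -a^2b\\ ab^2 & 1-\lambda\end{smallmatrix}\right)$; left multiplication by $R\!\left(\tfrac{a^2b}{1-\lambda}\right)$ clears the $(1,2)$-entry and turns the $(1,1)$-entry into $\tfrac{(1+\lambda+\lambda^2)(1-\lambda)+\lambda^3}{1-\lambda}=\tfrac{1}{1-\lambda}$ (using $(1+\lambda+\lambda^2)(1-\lambda)=1-\lambda^3$), after which right multiplication by $L\!\left(-\tfrac{ab^2}{1-\lambda}\right)$ clears the $(2,1)$-entry, leaving the asserted diagonal matrix.

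For the second assertion the same reasoning applies verbatim, the only difference being that the exponents are still powers of $ab$ while the matrices being raised to those powers are $R(c)$ and $L(d)$; thus $\prod_{i=-n}^{-1}R(c)^{(ab)^{-i}}$ converges to the upper unipotent matrix $R\!\left(\tfrac{abc}{1-ab}\right)$ and $\prod_{i=1}^{n}L(d)^{-(ab)^{i}}$ to the lower unipotent matrix $L\!\left(-\tfrac{abd}{1-ab}\right)$, so the whole product converges to $R(u)\bigl(R(c)L(d)R(c)^{-1}L(d)^{-1}\bigr)L(v)$ for some $u,v\in\mathbb{Z}_2$. Since left multiplication by an upper unipotent matrix and right multiplication by a lower unipotent matrix both leave the $(2,2)$-entry unchanged, and the $(2,2)$-entry of $R(c)L(d)R(c)^{-1}L(d)^{-1}$ is $1-cd$, the limit is of the form $\left(\begin{smallmatrix}\star&\star\\ \star&1-cd\end{smallmatrix}\right)$, as claimed. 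I do not expect any genuine obstacle here: the only points needing care are the positivity of $v_2(ab)$ (guaranteeing both convergence of the geometric series and that $1-ab\in\mathbb{Z}_2^{\times}$) and bookkeeping of the order of the factors in the products — immaterial since the factors of each partial product commute — and the lone concrete step is the routine $2\times 2$ identity of the first part.
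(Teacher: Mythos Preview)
Your proof is correct and matches the paper's own approach: the paper simply says the lemma ``is immediate by induction'', and what you have written out is precisely that induction (collapsing each side product to a single $R(\cdot)$ or $L(\cdot)$ via the additive homomorphism property) followed by the routine $2\times 2$ verification. Your observation that left multiplication by $R(u)$ and right multiplication by $L(v)$ leave the $(2,2)$-entry untouched is the cleanest way to handle the second item.
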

Apply this lemma to $a=2^{2n_1+1}, b=2^{2n_1+1}$: the infinite product 
\[
\prod_{i=-\infty}^{-1} R(a)^{(ab)^{-i}} \cdot \left(R(a)L(b)R(a)^{-1}L(b)^{-1}\right) \cdot \prod_{i=1}^\infty L(b)^{-(ab)^i}
\]
converges to $\left( \begin{matrix} \displaystyle \frac{1}{1-2^{4n_1+2}} & 0 \\ 0 & 1-2^{4n_1+2} \end{matrix} \right) = h_1^\beta,$ where $\beta$ is defined by $(1+2^{n_1})^{-\beta} = 1-2^{4n_1+2}$. Applying $\varphi$ (which, being continuous, commutes with infinite products) we deduce that
\begin{equation}\label{eq_cd}
\varphi(h_1)^\beta = \prod_{i=-\infty}^{-1} \varphi(R(a))^{(ab)^{-i}} \cdot \left(\varphi(R(a))\varphi(L(b))\varphi(R(a))^{-1}\varphi(L(b))^{-1}\right) \cdot \prod_{i=1}^\infty \varphi(L(b))^{-(ab)^i}.
\end{equation}
By proposition \ref{prop_ConjugationByN} there exist $c,d \in 4\mathbb{Z}_2$ such that
\begin{equation}\label{eq_CongruencesAB}
\begin{aligned}
N^{-1}\psi(R(2^{2n_1+1}))N = N^{-1}\psi(R(a))N \equiv R(c)\equiv\left( \begin{matrix} 1 & c \\ 0 & 1 \end{matrix}\right) \pmod{2^T}, \\
N^{-1}\psi(L(2^{2n_1+1}))N =N^{-1}\psi(L(b))N \equiv L(d) \equiv \left( \begin{matrix} 1 & 0 \\ d & 1 \end{matrix}\right) \pmod{2^T}.
\end{aligned}
\end{equation}
Rewriting equation \eqref{eq_cd} in terms of $\psi$ and multiplying by $N$ (resp. $N^*$) on the right (resp.~left) we find
\[
N^*\psi(h_1)^\beta N \equiv N^* \prod_{i=-\infty}^{-1} \psi(R(a))^{(ab)^{-i}} \cdot \left[\psi(R(a)),\psi(L(b))\right] \cdot \prod_{i=1}^\infty \psi(L(b))^{-(ab)^i} N \pmod{2^{t}}.
\]
Further conjugating every term $\psi(R(a))$ and $\psi(L(b))$ by $N$, similarly to what we did in deriving equation \eqref{eqn_AxEigenvector}, using the congruences in \eqref{eq_CongruencesAB}, and dividing by $\det N$, we end up with
\[
\left(N^{-1}\psi(h_1)N\right)^\beta \equiv \prod_{i=-\infty}^{-1} R(c)^{(ab)^{-i}} \cdot \left(R(c)L(d)R(c)^{-1}L(d)^{-1}\right) \cdot \prod_{i=1}^\infty L(d)^{-(ab)^i} \pmod{2^{T-n_1-1}}.
\]
Applying the second part of the previous lemma to $R(c),L(d)$ we then obtain
\[
\begin{aligned}
h_1^\beta \equiv \left(N^{-1}\psi(h_1)N\right)^\beta & \equiv \prod_{i=-\infty}^{-1} R(c)^{(ab)^{-i}} \cdot \left(R(c)L(d)R(c)^{-1}L(d)^{-1}\right) \cdot \prod_{i=1}^\infty L(d)^{-(ab)^i} \\ & \equiv \left( \begin{matrix} \star & \star \\ \star & 1-cd \end{matrix} \right) \pmod{2^{T-n_1-1}},
\end{aligned}
\]
whence -- comparing the bottom-right coefficients -- we find $1-2^{4n_1+2} \equiv 1-cd \pmod{2^{T-n_1-1}}$. In particular, if $T \geq 5n_1+4$, we must have $v_2(c)+v_2(d)=4n_1+2$, and by symmetry we can assume that $v_2(c) \leq 2n_1+1$.

We deduce that $\displaystyle d \equiv \frac{2^{4n_1+2}}{c} \pmod{2^{T-n_1-1-v_2(c)}}$, and therefore
$
\displaystyle d \equiv \frac{2^{4n_1+2}}{c} \pmod{2^{T-3n_1-2}}.
$
Consider now the matrix $M=\left( \begin{matrix} 1 & 0 \\ 0 & 2^{2n_1+1}/c \end{matrix} \right)$ (which is $2$-integral, since $v_2(c) \leq 2n_1+1$). By construction we have $MR(2^{2n_1+1}) =R(c) M$, so that
\[
M R(2^{2n_1+1})  \equiv  R(c) M \equiv N^{-1}\psi(R(2^{2n_1+1}))NM \pmod{2^{T}},
\]
and furthermore -- since $N^{-1}\psi(h_1)N$ is diagonal and congruent to $h_1$ modulo $2^T$ -- we also have
\[
Mh_1 \equiv h_1M \equiv  N^{-1}\psi(h_1)N M \pmod{2^T}.
\]
Finally, using what we just proved on $d$ we find (for $T \geq 5n_1+4$)
\[
\begin{aligned}
M L(2^{2n_1+1}) & = \left( \begin{matrix} \displaystyle 1 & 0 \\ 2^{4n_1+2}/c & 2^{2n_1+1}/c \end{matrix} \right)
       \equiv \left( \begin{matrix} \displaystyle 1 & 0 \\ d & 2^{2n_1+1}/c \end{matrix} \right) \\ & \equiv L(d) M   
			\equiv N^{-1}\psi\left(L(2^{2n_1+1})\right)NM \pmod{2^{T-3n_1-2}}.
\end{aligned}
\]
Multiplying this last equation by $M^* = \left( \begin{matrix} 2^{2n_1+1}/c & 0 \\ 0 & 1 \end{matrix} \right)$ on the left we get
\[
\det M \cdot L(2^{2n_1+1}) \equiv M^*N^{-1}\psi\left(L(2^{2n_1+1})\right)N M \pmod{2^{T-3n_1-2}},
\]
and similar ones hold for $R(2^{2n_1+1})$, $h_1$. Given that $v_2(\det M) \leq 2n_1+1$, dividing by $\det M$ we find
 \[
(NM)^{-1} \psi \left(L(2^{2n+1})\right) NM \equiv L(2^{2n+1}) \pmod{2^{T-5n_1-3}},
\]
along with similar relations for $R(2^{2n_1+1}),h_1=D(2^{n_1})$. As $R(2^{2n_1+1}),L(2^{2n_1+1}),D(2^{n_1})$ generate $\mathcal{B}_2(2n_1+1)$ we have thus established
\begin{proposition}\label{prop_ConjugationWeak}
For every $g \in \mathcal{B}_2(2n_1+1)$ we have $(NM)^{-1}\psi(g)(NM) \equiv g \pmod{2^{T-5n_1-3}}$.
\end{proposition}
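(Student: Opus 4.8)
The plan is to assemble the proposition from the three explicit congruences already derived in the run-up to it, plus the observation that the three matrices $R(2^{2n_1+1})$, $L(2^{2n_1+1})$, $D(2^{n_1})=h_1$ topologically generate $\mathcal{B}_2(2n_1+1)$ (this is lemma \ref{lemma_Generators} (3), valid since $2n_1+1\geq 2$). For each of the three generators $g$ we have already shown a congruence of the shape $\det M\cdot g \equiv M^* N^{-1}\psi(g) N M \pmod{2^{T-3n_1-2}}$ (the displayed identity just before the statement for $L(2^{2n_1+1})$, and the author remarks that analogous ones hold for $R(2^{2n_1+1})$ and $h_1$). Since $v_2(\det M)\leq 2n_1+1$, dividing through by $\det M$ costs at most $2n_1+1$ in the 2-adic precision, yielding $(NM)^{-1}\psi(g)(NM)\equiv g\pmod{2^{T-5n_1-3}}$ for each generator $g$.

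The remaining point is to promote this from the three generators to all of $\mathcal{B}_2(2n_1+1)$. Here I would use that $\psi$ is a continuous group morphism composed with the well-defined conjugation-by-$NM$ map. More precisely, set $\Phi(g) := (NM)^{-1}\psi(g)(NM)$; because $\psi$ need not be a homomorphism I would instead work with $\varphi$, which \emph{is} a continuous homomorphism $G_1\to G_2(2^t)$, and note that $\Phi(g)\equiv (NM)^{-1}\varphi(g)(NM)\pmod{2^t}$, with $T-5n_1-3 < t$ so that the claimed congruence is a statement modulo $2^{T-5n_1-3}$ about $\varphi$ conjugated by $NM$. The map $g\mapsto (NM)^{-1}\varphi(g)(NM)\bmod 2^{T-5n_1-3}$ is then a genuine group homomorphism from $\mathcal{B}_2(2n_1+1)$ to $\mathrm{SL}_2(\mathbb{Z}/2^{T-5n_1-3}\mathbb{Z})$ (using $v_2(\det(NM))\leq 3n_1+2$ to see the conjugates are $2$-integral modulo the relevant power, as in lemma \ref{lemma_CanTakeLogs}), and the identity map $g\mapsto g\bmod 2^{T-5n_1-3}$ is another; two continuous homomorphisms on a topologically finitely generated pro-$2$ group that agree on a generating set agree everywhere. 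Hence $\Phi(g)\equiv g\pmod{2^{T-5n_1-3}}$ for all $g\in\mathcal{B}_2(2n_1+1)$.

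The main obstacle, and the only genuinely delicate part, is bookkeeping the 2-adic precision: one must check that every conjugation by $N$ or $M$ (equivalently, multiplication by an adjugate followed by division by a determinant) is actually performed on matrices that remain $2$-integral at the claimed precision, so that the congruences can legitimately be "conjugated back" rather than merely asserted formally. This is exactly the kind of verification carried out in lemma \ref{lemma_CanTakeLogs} and in the derivation of equation \eqref{eqn_AxEigenvector}, and I would simply invoke those computations: since $v_2(\det N)\leq n_1+1$ and $v_2(\det M)\leq 2n_1+1$, a congruence modulo $2^s$ before conjugation becomes a congruence modulo $2^{s-(n_1+1)}$ or $2^{s-(2n_1+1)}$ after, and the precision $T-5n_1-3$ in the statement is precisely what is left after accounting for all these losses starting from the modulus $2^{T}$ of proposition \ref{prop_ConjugationByN} and the modulus $2^{T-n_1-1}$ coming from equation \eqref{eq_cd}. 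No new idea is needed beyond carefully propagating these estimates, so the proof is essentially an assembly of already-established congruences together with the generation statement.
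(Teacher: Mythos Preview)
Your proposal is correct and follows essentially the same route as the paper: the paper's argument is precisely the sequence of displayed congruences preceding the proposition, capped by the one-line remark that $R(2^{2n_1+1}),L(2^{2n_1+1}),D(2^{n_1})$ generate $\mathcal{B}_2(2n_1+1)$, and you have simply spelled out the multiplicativity step (via $\varphi$) that the paper leaves implicit. One tiny nitpick: lemma \ref{lemma_Generators}(3) as stated requires $a,b,c$ to share the same valuation, whereas here $v_2(2^{n_1})=n_1\neq 2n_1+1$; you need part (2) of that lemma first to pass from $D(2^{n_1})$ to $D(2^{2n_1+1})$ before invoking part (3).
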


Notice that if we replace $NM$ by $\lambda NM$, for any $\lambda \in \mathbb{Q}_2$, then the previous proposition still holds, simply because the factors of $\lambda$ on the left hand side cancel out. We can then choose $\lambda$ in such a way that $\lambda NM$ is $2$-integral and has at least one coefficient which is a $2$-adic unit; we set $P:=\lambda NM$ for this value of $\lambda$.
We now give a version of proposition \ref{prop_ConjugationWeak} that applies to all of $G_1$:
\begin{proposition}
The following congruences hold for every $g \in G_1$:
\[
P^{-1}\psi(g)P \equiv g \pmod{2^{T-7n_1-2}} \text{ and } \operatorname{tr} \psi(g)= \operatorname{tr} g \pmod{2^{T-7n_1-2}}.
\]
\end{proposition}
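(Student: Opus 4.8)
The plan is to bootstrap proposition \ref{prop_ConjugationWeak} — which controls $P^{-1}\psi(g)P$ only for $g$ in the small ball $\mathcal{B}_2(2n_1+1)$ — up to all of $G_1$ by raising to a suitable power. We may assume $T-7n_1-2\ge 1$, since otherwise there is nothing to prove; then in particular $T-5n_1-3\ge 2n_1+1$. Since $G$, hence $G_1$, is trivial modulo $4$, every $g\in G_1$ satisfies $v_2(g^{2^j}-\operatorname{Id})\ge j+2$, so with $m:=2n_1-1$ the element $g^{2^m}$ lies in $\mathcal{B}_2(2n_1+1)$, and proposition \ref{prop_ConjugationWeak} gives $P^{-1}\psi(g^{2^m})P\equiv g^{2^m}\pmod{2^{T-5n_1-3}}$. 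Next I would replace $\psi(g^{2^m})$ by $\psi(g)^{2^m}$: since $\varphi$ is a group homomorphism and $\psi\equiv\varphi\pmod{2^t}$, we have $\psi(g)^{2^m}\equiv\varphi(g)^{2^m}=\varphi(g^{2^m})\equiv\psi(g^{2^m})\pmod{2^t}$, and conjugating by $P$ costs only $v_2(\det P)$ in the exponent. Here $v_2(\det P)\le 3n_1+2$: indeed $P$ is a power of $2$ times $NM$, with $N$ as in corollary \ref{cor_ConstructionOfN} ($v_2(\det N)\le n_1+1$) and $M=\operatorname{diag}(1,2^{2n_1+1}/c)$ with $v_2(c)\le 2n_1+1$ ($v_2(\det M)\le 2n_1+1$), and rescaling an integral matrix to have a unit entry does not increase $v_2$ of its determinant. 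As $t=T+4n_1+n_2+5$, the modulus $2^{t-v_2(\det P)}$ dominates $2^{T-5n_1-3}$, so setting $X:=P^{-1}\psi(g)P$ I obtain
\[
X^{2^m}\equiv g^{2^m}\pmod{2^{T-5n_1-3}},
\]
and in particular $X^{2^m}$ is a $2$-integral matrix lying in $\mathcal{B}_2(2n_1+1)$.

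I would then extract the $2^m$-th root using logarithms. By lemma \ref{lemma_log}, $\log(X^{2^m})$ and $\log(g^{2^m})$ lie in $2^{2n_1+1}M_2(\mathbb{Z}_2)$ and $\log(g^{2^m})=2^m\log g$; applying lemma \ref{lemma_CongruenceExponentials} to the congruence above yields $\log(X^{2^m})\equiv 2^m\log g\pmod{2^{T-5n_1-3}}$. Granting (see below) that $X$ is itself $2$-integral and trivial modulo $4$ — equivalently, that $X$ is the principal $2^m$-th root $\exp\!\bigl(2^{-m}\log(X^{2^m})\bigr)$ of $X^{2^m}$ — we have $\log(X^{2^m})=2^m\log X$, so $2^m\log X\equiv 2^m\log g\pmod{2^{T-5n_1-3}}$, hence $\log X\equiv\log g\pmod{2^{T-5n_1-3-m}}$, that is, modulo $2^{T-7n_1-2}$. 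Exponentiating (lemma \ref{lemma_CongruenceExponentials} again) gives $P^{-1}\psi(g)P=X\equiv g\pmod{2^{T-7n_1-2}}$. The trace congruence is then immediate: $\operatorname{tr}\psi(g)=\operatorname{tr}(P^{-1}\psi(g)P)\equiv\operatorname{tr}g\pmod{2^{T-7n_1-2}}$.

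The one genuinely delicate point — and the main obstacle — is the claim that $X=P^{-1}\psi(g)P$, which a priori is merely an element of $\operatorname{SL}_2(\mathbb{Q}_2)$ produced by conjugation, coincides with the principal $2^m$-th root $\tilde X:=\exp\!\bigl(2^{-m}\log X^{2^m}\bigr)$ of $X^{2^m}$ (in particular is $2$-integral and trivial mod $4$). The argument is that $\tilde X$ is a polynomial in $X^{2^m}$, hence commutes with $X$, so $X\tilde X^{-1}$ is a $2^m$-torsion element of $\operatorname{SL}_2(\overline{\mathbb{Q}}_2)$ commuting with $X^{2^m}\in\mathcal{B}_2(2n_1+1)$; a short case analysis on its eigenvalues — which are $2^m$-th roots of unity, whereas those of $X$ are the ($2$-integral, $\equiv 1\bmod 2$) eigenvalues of $\psi(g)\in\operatorname{SL}_2(\mathbb{Z}_2)$ and those of $\tilde X$ are $\equiv 1\bmod 4$ — forces $X\tilde X^{-1}=\operatorname{Id}$. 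Everything else is routine bookkeeping with the logarithm maps and the bound on $v_2(\det P)$, together with a verification that the running hypotheses on $t$ (in particular the bound $T>2n_1-2$ from proposition \ref{prop_ConjugationByN}, strengthened to $T\ge 7n_1+4$ in the nontrivial range) keep all the exponents appearing above nonnegative.
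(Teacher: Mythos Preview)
Your proof is correct and follows the same overall strategy as the paper: raise $g$ to the $2^{2n_1-1}$-th power to land in $\mathcal{B}_2(2n_1+1)$, apply proposition \ref{prop_ConjugationWeak}, use that $\psi$ agrees with the homomorphism $\varphi$ modulo $2^t$ (losing $v_2(\det P)\le 3n_1+2$ upon conjugation), take logarithms, divide by $2^{2n_1-1}$, and exponentiate. The bookkeeping of exponents matches the paper's exactly.

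The one place where you take a genuinely longer route is the ``principal root'' argument showing that $X=P^{-1}\psi(g)P$ is itself $2$-integral and trivial modulo $4$. The paper bypasses this entirely: rather than first forming $X$ and then trying to take its logarithm, it takes the logarithm of $P^{-1}\psi(g)^{2^{2n_1-1}}P$ and uses the purely formal identity
\[
\log\bigl(P^{-1}\psi(g)^{2^{2n_1-1}}P\bigr)=P^{-1}\log\bigl(\psi(g)^{2^{2n_1-1}}\bigr)P=2^{2n_1-1}\,P^{-1}\bigl(\log\psi(g)\bigr)P,
\]
valid because conjugation commutes with convergent power series and $\log\psi(g)$ is already well-defined (since $\psi(g)\equiv\operatorname{Id}\pmod 4$). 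One then obtains $P^{-1}\log\psi(g)P\equiv\log g\pmod{2^{T-7n_1-2}}$, and the integrality and triviality modulo $4$ of $P^{-1}\psi(g)P=\exp\bigl(P^{-1}\log\psi(g)P\bigr)$ fall out for free from this congruence. Your eigenvalue case analysis is correct (the eigenvalues of $\psi(g)$, being those of $\operatorname{Id}+4B$ with $B$ integral, are $\equiv 1\pmod 4$ in the ring of integers of the splitting field, and the only $2$-power root of unity with that property is $1$), but it is not needed.
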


\begin{remark}
As promised at the beginning of the section, this is essentially the statement that if $G$ does not contain an element of the form $(\operatorname{Id},b)$ with $b \not \equiv \operatorname{Id} \pmod{2^t}$, then $G$ is $2$-adically very close to being a graph.
\end{remark}

\begin{proof}
Clearly $v_2(\det P) \leq v_2(\det M)+v_2(\det N) \leq 3n_1+2$, and for all $g \in G_1$ and every $m \in \mathbb{N}$ we have
$
P^* \psi\left(g^m \right) P \equiv P^* \varphi \left(g^m \right) P \equiv P^* \varphi \left(g \right)^m P \equiv P^* \psi \left(g \right)^m P \pmod{2^t},
$
so dividing by $\det P$ we find $P^{-1} \psi\left(g^m \right) P \equiv P^{-1} \psi \left(g \right)^m P \pmod{2^{t-3n_1-2}}$.
Now $g$ is congruent to the identity modulo 4, hence $g^{2^{2n_1-1}}$ belongs to $\mathcal{B}_2(2n_1+1)$, so applying proposition \ref{prop_ConjugationWeak} and noticing that $T-5n_1-3 < t-3n_1-2$ we find
$
P^{-1}\psi\left( g \right)^{2^{2n_1-1}}P \equiv P^{-1}\psi\left( g^{2^{2n_1-1}} \right) P \equiv g^{2^{2n_1-1}} \pmod{2^{T-5n_1-3}}.
$
Taking logarithms we obtain
\[
2^{2n_1-1} P^{-1} \log \psi(g) P \equiv 2^{2n_1-1} (\log g) \pmod{2^{T-5n_1-3}},\]
whence
$
P^{-1} \log \psi(g) P \equiv \log g \pmod{2^{T-7n_1-2}}.
$
Since $\log g$ is trivial modulo 4, we can exponentiate both sides of the congruence to find
$
P^{-1}\psi(g)P \equiv g \pmod{2^{T-7n_1-2}},
$
as claimed. Taking the trace of this last congruence also gives $\operatorname{tr} \psi(g) \equiv \operatorname{tr}(g) \pmod{2^{T-7n_1-2}}$.
\end{proof}
Let now $(g_1,g_2)$ be an element of $G$. By the previous proposition we have
\[
P\left( g_1 -\frac{\operatorname{tr} g_1}{2} \operatorname{Id}  \right) \equiv \psi(g_1)P - \frac{\operatorname{tr}g_1}{2} P   \equiv \left( \psi(g_1) - \frac{\operatorname{tr} \psi(g_1)}{2} \operatorname{Id} \right)P   \pmod{2^{T-7n_1-3}},
\]
and since $\psi(g_1) \equiv g_2 \pmod{2^t}$ this implies $P\Theta_1(g_1) \equiv \Theta_1(g_2)P \pmod{2^{T-7n_1-3}}$.
Recalling that $\mathcal{L}(G)$ is the $\mathbb{Z}_2$-span of $\Theta_2(G)$, this implies that for every $(u_1,u_2) \in \mathcal{L}(G)$ we have the congruence $Pu_1 \equiv u_2P \pmod{2^{T-7n_1-3}}.$
%
Taking into account all the assumptions we made along the way, we have thus proved:
\begin{proposition}\label{prop_Conclusionl2}
Let $G$, $n_1,n_2$ be as in theorem \ref{thm_PinkSL222} and $t \geq 3$ be an integer. Assume:
\begin{enumerate}
\item $G$ contains no element of the form $(\operatorname{Id},b)$, where $b \not \equiv \operatorname{Id} \pmod{2^t}$;
\item $\log \psi(x_1)$ does not vanish modulo $2^{n_1+n_2}$;
\item $t-11n_1-n_2-8 \geq 0$ (that is, $T-7n_1-3 \geq 0$; this also implies the condition $U>3n_1$ of lemma \ref{lemma_hDiagonalizable}).




\end{enumerate}

Then for every $(u_1,u_2) \in \mathcal{L}(G)$ we have $Pu_1 \equiv u_2P \pmod {2^{t-11n_1-n_2-8}}$, where $P \in \operatorname{GL}_2(\mathbb{Q}_2)$ is 2-integral and has at least one coefficient which is a $2$-adic unit.
\end{proposition}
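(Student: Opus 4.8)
The plan is to assemble, under the three stated hypotheses, the chain of results developed in this section. I would begin by noting that hypothesis (1) is exactly Condition \ref{assumption_t}: since $G$ contains no element $(\operatorname{Id},b)$ with $b\not\equiv\operatorname{Id}\pmod{2^t}$, we are in ``case 2'', so the morphism $\varphi:G_1\to G_2(2^t)$ and a lift $\psi:G_1\to G_2$ are available, with $\varphi$ continuous and compatible with $2$-adic exponentiation. Hypothesis (2) places us outside the scope of lemma \ref{lemma_Psix1Vanishes}, and I would check at the outset that hypothesis (3) dominates every numerical constraint invoked below: writing $U=t-3n_1-n_2-3$ and $T=U-n_1-2=t-4n_1-n_2-5$, one has $U>3n_1$, $U'=\lfloor(t-2n_1-n_2-2)/2\rfloor>2n_1$, $T\ge 5n_1+4$, and $T-7n_1-3=t-11n_1-n_2-8\ge 0$, each of which follows from $t\ge 11n_1+n_2+8$ because $n_1\ge 3$.

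Next I would run through the construction. Lemma \ref{lemma_hDiagonalizable} diagonalizes $\psi(h_1)$ over $\mathbb{Q}_2$ with eigenvalues $\lambda_1,\lambda_2$ close to $1+2^{n_1}$ and $(1+2^{n_1})^{-1}$; corollary \ref{cor_ConstructionOfN} gives $N\in\operatorname{GL}_2(\mathbb{Q}_2)$ with $N^{-1}\psi(h_1)N$ diagonal and $v_2(\det N)\le n_1+1$; lemma \ref{lemma_CanTakeLogs} legitimizes logarithms of $N^{-1}\psi(g)N$ for $g\in\mathcal{B}_2(2n_1+1)$; and equation \eqref{eqn_AxEigenvector} together with the ensuing lemma puts $N^{-1}\psi(R(2^{2n_1+1}))N$ and $N^{-1}\psi(L(2^{2n_1+1}))N$ into triangular form modulo $2^T$, yielding proposition \ref{prop_ConjugationByN} with parameters $c,d\in 4\mathbb{Z}_2$. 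Applying the integrated commutator identity of lemma \ref{lemma_generators4} with $a=b=2^{2n_1+1}$ forces $cd\equiv 2^{4n_1+2}\pmod{2^{T-n_1-1}}$, hence (using $T\ge 5n_1+4$) $v_2(c)+v_2(d)=4n_1+2$ and, after possibly swapping factors, $v_2(c)\le 2n_1+1$. Introducing $M=\left(\begin{matrix}1&0\\0&2^{2n_1+1}/c\end{matrix}\right)$, which is $2$-integral by this bound, and absorbing it into $N$ gives proposition \ref{prop_ConjugationWeak}: $(NM)^{-1}\psi(g)(NM)\equiv g\pmod{2^{T-5n_1-3}}$ first on the generators $R(2^{2n_1+1}),L(2^{2n_1+1}),D(2^{n_1})$ and then on all of $\mathcal{B}_2(2n_1+1)$. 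Rescaling $NM$ by a suitable $\lambda\in\mathbb{Q}_2$ produces the $2$-integral matrix $P$ with a unit coefficient for which the same congruence holds.

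To extend from $\mathcal{B}_2(2n_1+1)$ to $G_1$, I would use $v_2(\det P)\le 3n_1+2$ and $\varphi(g^m)=\varphi(g)^m$ to get $P^{-1}\psi(g^m)P\equiv P^{-1}\psi(g)^mP\pmod{2^{t-3n_1-2}}$; taking $m=2^{2n_1-1}$ (so $g^m\in\mathcal{B}_2(2n_1+1)$ as $g\equiv\operatorname{Id}\pmod 4$), invoking proposition \ref{prop_ConjugationWeak}, taking logarithms, dividing by $2^{2n_1-1}$ and exponentiating yields $P^{-1}\psi(g)P\equiv g\pmod{2^{T-7n_1-2}}$ and $\operatorname{tr}\psi(g)\equiv\operatorname{tr} g$ to the same modulus, for all $g\in G_1$. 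Finally, for $(g_1,g_2)\in G$ I would expand $P\bigl(g_1-\tfrac12\operatorname{tr}(g_1)\operatorname{Id}\bigr)$, compare it with $\bigl(\psi(g_1)-\tfrac12\operatorname{tr}\psi(g_1)\operatorname{Id}\bigr)P$, and use $\psi(g_1)\equiv g_2\pmod{2^t}$ together with $T-7n_1-3\le t$ to obtain $P\Theta_1(g_1)\equiv\Theta_1(g_2)P\pmod{2^{T-7n_1-3}}$. Since $\mathcal{L}(G)$ is the $\mathbb{Z}_2$-span of $\Theta_2(G)$ and this congruence is $\mathbb{Z}_2$-linear in $(g_1,g_2)$, it propagates to every $(u_1,u_2)\in\mathcal{L}(G)$, and $T-7n_1-3=t-11n_1-n_2-8$ gives the claimed modulus.

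The difficulty here is not conceptual but one of bookkeeping: the main point to get right is that the single hypothesis $t\ge 11n_1+n_2+8$ really does control every intermediate inequality ($U>3n_1$, $U'>2n_1$, $T\ge 5n_1+4$, the integrality of $M$ via $v_2(c)\le 2n_1+1$, $v_2(\det P)\le 3n_1+2$), and that each division by $\det N$, $\det M$ or $\det P$ and each $\log$/$\exp$ step is tracked precisely, so that no loss of $2$-adic accuracy beyond $t-11n_1-n_2-8$ accumulates.
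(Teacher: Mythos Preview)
Your proposal is correct and follows exactly the paper's approach: the proposition is stated there as a summary of the entire preceding chain of lemmas and propositions (``Taking into account all the assumptions we made along the way, we have thus proved:\ldots''), and you have faithfully reconstructed that chain, including the verification that hypothesis (3) alone suffices for every intermediate numerical constraint.
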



The result we were aiming for is now well within reach: 

\smallskip

\begin{proof}{(of theorem \ref{thm_PinkSL222})}
Suppose that $\mathcal{L}(G)$ contains $2^k \mathfrak{sl}_2(\mathbb{Z}_2) \oplus 2^k \mathfrak{sl}_2(\mathbb{Z}_2)$: then $\mathcal{L}(G)$ contains the elements $(u_1,u_2):=\left( 2^k \left(\begin{matrix} 0 & 1 \\ 0 & 0 \end{matrix} \right) ,0\right)$ and $(v_1,v_2):=\left( 2^k \left(\begin{matrix} 0 & 0 \\ 1 & 0 \end{matrix} \right) ,0\right)$. Since one of the coefficients of $P$ is a $2$-adic unit, at least one of the two congruences $Pu_1 \equiv u_2P \equiv 0 \pmod{2^{k+1}}$ and $Pv_1 \equiv v_2P \equiv 0 \pmod{2^{k+1}}$ does not hold. In particular, the conclusion of the previous proposition is \textit{false} if $t-11n_1-n_2-8=k+1$. Thus if we let $t = k+11n_1+n_2+9$ at least one of the three hypotheses must be false; for our choice of $t$, the inequality of condition (3) is satisfied, so either (1) or (2) must fail.
If (2) fails, then $G$ contains $\mathcal{B}_2(30n_1+24,30n_1+24+n_2)$ by lemma \ref{lemma_Psix1Vanishes}. On the other hand, if condition (1) is not satisfied, then lemma \ref{lemma_TrivialElementForZ2} implies that $G$ contains all of 
$
\mathcal{B}_2(6(k+11n_1+5n_2+12)+n_1,6(k+11n_1+5n_2+12))$, which is contained in $\mathcal{B}_2(30n_1+24,30n_1+24+n_2)$, and therefore contained in $G$ independently of which hypothesis (1) or (2) is the one that fails. Finally, note that the hypotheses of the theorem are symmetric in $G_1,G_2$, so we can repeat the whole argument switching the roles of $G_1, G_2$, which shows that $G$ also contains
$
\mathcal{B}_2(6(k+11n_2+5n_1+12),6(k+11n_2+5n_1+12)+n_2)
$
and concludes the proof of the theorem.
\end{proof}

As promised, we can finally prove theorem \ref{thm_PinkGL222}, by an argument similar to that used to deduce theorem \ref{thm_PinkSL22} from theorem \ref{thm_PinkGL22}:

\begin{proof}
We let $\det^*$ be as in the proof of theorem \ref{thm_PinkGL22}. Let $G^{\operatorname{sat}}=\{\lambda g \bigm\vert g \in G, \lambda \in 1+4\mathbb{Z}_2  \}$, denote by $U$ the intersection $G^{\operatorname{sat}} \cap \operatorname{SL}_2(\mathbb{Z}_2)^2$, and let $U_1,U_2$ be the two projections of $U$ on the factors $\operatorname{SL}_2(\mathbb{Z}_2)$. Notice that $U'=G'$: by assumption, for every $g \in G$ there exists $\lambda \in \mathbb{Z}_2$ such that $\det^* g=\lambda^2$; but then either $\lambda$ or $-\lambda$ is congruent to 1 modulo 4, so either $g/\lambda$ or $-g/\lambda$ belongs to $U$, and we can apply lemma \ref{lemma_Saturation}. 
Also remark that if $G_1$ contains $\mathcal{B}_2(n_1)$, then the same is true for $U_1$: indeed for every $g_1 \in \mathcal{B}_2(n_1)$ we know that there exists a certain $g_2 \in G_2$ such that $(g_1,g_2) \in G$. As $\det(g_2)$ equals $\det(g_1)=1$ by the assumptions on $G$, this shows that $(g_1,g_2)$ belongs to $U$ as well, and therefore $g_1$ belongs to $U_1$. The same argument obviously also works for $U_2$, and furthermore $U$ and $G$ have the same Lie algebra (again by lemma \ref{lemma_Saturation}). Finally, $U$ is trivial modulo 4 by construction. Applying theorem \ref{thm_PinkSL222} to $U$ we deduce that $U$ contains $\mathcal{B}_2(6(k+11n_2+5n_1+12),6(k+11n_1+5n_2+12))$, and therefore $G' = U'$ contains $\mathcal{B}_2(12(k+11n_2+5n_1+12)+1,12(k+11n_1+5n_2+12)+1)$ by lemma \ref{lemma_Commutator}.
\end{proof}

\section{Conclusion of the proof}\label{sect_RedTo2}
Using the results of the previous two sections we are now in a position to show theorem \ref{thm_GeneralPink} for all $n$.
Let us first remark that the inequality $[G:H] \leq 120$ appearing in the statement of \cite[Theorem 4.2]{AdelicEC} can be improved to $[G:H] \bigm\vert 24$: this follows immediately from the same proof and the simple remark that if $G(\ell)$ is exceptional, then $G$ contains a subgroup $H$, of index dividing 24, such that $H(\ell)$ is cyclic of order dividing 6 or 10. With this small improvement, case $n=1$ of theorem \ref{thm_GeneralPink} is (amply) covered by theorems 4.2 and 5.2 of \cite{AdelicEC}.
We now consider the case $n \geq 2$, starting with the odd primes. If $\ell=3$ we take $H=\operatorname{ker} \left(G \rightarrow \operatorname{SL}_2(\mathbb{F}_3)^n \right)$, for which the claim follows directly from the fact that it is pro-$\ell$ and from Pink's theorem (theorem \ref{thm_Pink_GP}).

We can then assume $\ell \geq 5, n \geq 2$. Denote by $\pi_i:\operatorname{SL}_2(\mathbb{Z}_\ell)^n \to \operatorname{SL}_2(\mathbb{Z}_\ell)$ the $n$ canonical projections and let $\pi_{i_1,i_2}=\pi_{i_1} \times \pi_{i_2}$ be the projection on the two factors numbered $i_1$ and $i_2$. For $m$ between $1$ and $n$, construct inductively groups $H_m$ as follows: apply \cite[Theorem 4.2]{AdelicEC} to $\pi_1(G)$ to find a subgroup $K_1$ of $\pi_1(G)$ of index dividing 24 and having property $(\star\star)$ in the notation of \cite{AdelicEC} (or, if $\pi_1(G)'=\operatorname{SL}_2(\mathbb{Z}_\ell)$, set $K_1=\pi_1(G)$), and set $H_1=\pi_1^{-1}(K_1)$: this is a subgroup of $G$ of index dividing 24. Assuming we have constructed $K_m$ and $H_m$, apply \cite[Theorem 4.2]{AdelicEC} to $\pi_{m+1}(H_m)$ to find a subgroup $K_{m+1}$ of index dividing 24 and having property $(\star\star)$, and set $H_{m+1}=\pi_{m+1}^{-1}(K_{m+1})$, again with the convention that $K_{m+1}=\pi_{m+1}(H_m)$ if $\pi_{m+1}(H_m)'=\operatorname{SL}_2(\mathbb{Z}_\ell)$. It is clear by construction that $H_n$ is an open subgroup of $G$ of index dividing $24^n$.
For $1 \leq t \leq n$ we also let $n_t$ be the minimal positive integer such that $\mathcal{B}_\ell(n_t)$ is contained in $\pi_t(H_n)$. Notice now that for all $m=1,\ldots,n-1$ the index $[H_m:H_{m+1}]$ is a divisor of 24, hence in particular coprime to $\ell \geq 5$. As $\mathcal{B}_\ell(n_t)$ is a pro-$\ell$ group, it follows that it is contained in $\pi_t(H_n)$ if and only if it is contained in $\pi_t(H_j)$ for all $j=1,\ldots,n$.
Now let $(i_1,j_1), (i_2,j_2), \ldots, (i_{n(n-1)/2},j_{n(n-1)/2})$ be the list of all $n(n-1)/2$ pairs $(i,j)$ with $i<j$, and construct inductively groups $H_{i_m,j_m}$ (for $m=1,\ldots,n(n-1)/2$) as follows. We start by formally setting $H_{i_0,j_0}=H_n$; then, assuming $H_{i_m, j_m}$ has been constructed, we apply theorem \ref{thm_PinkSL22} to $\pi_{i_{m+1},j_{m+1}}(H_{i_m,j_m})$ and we define $H_{i_{m+1},j_{m+1}}$ as follows:
\begin{enumerate}
\item if $\mathcal{B}_\ell(20\max\{n_{i_{m+1}},n_{j_{m+1}}\},20\max\{n_{i_{m+1}},n_{j_{m+1}}\})$ is contained in $\pi_{i_{m+1},j_{m+1}}(H_{i_m,j_m})$ we set $H_{i_{m+1},j_{m+1}}=H_{i_m,j_m}$ and $K_{i_{m+1},j_{m+1}}=\pi_{i_{m+1},j_{m+1}}(H_{i_m,j_m})$;
\item otherwise, there exists an open subgroup $K_{i_{m+1},j_{m+1}}$ of $\pi_{i_{m+1},j_{m+1}}(H_{i_m,j_m})$, of index dividing $48^2$ and having properties (2a) and (2b) of theorem \ref{thm_PinkSL22}, in which case we define $H_{i_{m+1},j_{m+1}}$ to be the inverse image in $H_{i_{m},j_{m}}$ of $K_{i_{m+1},j_{m+1}}$.
\end{enumerate}

Again, notice that the index $\left[H_{i_m,j_m}:H_{i_{m+1},j_{m+1}} \right]$ is prime to $\ell$, so that it follows inductively that for every $t=1,\ldots,n$ and every $m=1,\ldots,n(n-1)/2$ the open subgroup $\mathcal{B}_\ell(n_t)$ is contained in $\pi_t(H_{i_m,j_m})$. 
We finally set $H=H_{i_{n(n-1)/2},j_{n(n-1)/2}}$; by construction, it is a open subgroup of $G$ of index dividing $24^n 48^{n(n-1)}$. 
Denote by $\tau_i : \mathfrak{sl}_2(\mathbb{Z}_\ell)^n\to \mathfrak{sl}_2(\mathbb{Z}_\ell)$ (resp.~$\tau_{i,j}$) the projection on the $i$-th (resp.~$(i,j)$-th) factor. Now suppose that $\mathcal{L}(H)$ contains $\ell^k \mathfrak{sl}_2(\mathbb{Z}_\ell) \oplus \ldots \oplus \ell^k\mathfrak{sl}_2(\mathbb{Z}_\ell)$ for a certain $k>0$. We have
\[
\mathcal{L}(K_i) \supseteq \mathcal{L}(\pi_i(H)) = \tau_i (\mathcal{L}(H)) \supseteq \ell^k \mathfrak{sl}_2(\mathbb{Z}_\ell),
\]
so the properties of $K_i=\pi_i(H_i)$ given by \cite[Theorem 4.2]{AdelicEC} imply that it contains $\mathcal{B}_\ell(4k)$. It follows from the definition of the integers $n_t$ that we have $n_t \leq 4k$ for all $t$.
Consider now a pair of indices $i<j$. As before we have $\mathcal{L}(K_{i,j}) \supseteq \mathcal{L}(\pi_{i,j}(H)) = \tau_{i,j} (\mathcal{L}(H)) \supseteq \ell^k \mathfrak{sl}_2(\mathbb{Z}_\ell) \oplus \ell^k \mathfrak{sl}_2(\mathbb{Z}_\ell)$, so two cases arise (depending on whether we were in case (1) or (2) above):
\begin{enumerate}
\item either $\mathcal{B}_\ell(20\max\{n_i,n_j\},20\max\{n_i,n_j\}) \supseteq \mathcal{B}_\ell(80k,80k)$ is contained in $\pi_{i,j}(G)$,
\item or $K_{i,j}$ contains $\mathcal{B}_\ell(p,p)$ with $p = 2k+\max\left\{2k, 8n_i,8n_j \right\} \leq 34k$.
\end{enumerate}
Either way, we see that $\pi_{i,j}(G)$ contains $\mathcal{B}_\ell(80k,80k)$: 
once again, the index $[\pi_{i,j}(G):\pi_{i,j}(H)]$ is prime to $\ell$, so the fact that $\pi_{i,j}(G)$ contains $\mathcal{B}_\ell(80k,80k)$ implies that $\pi_{i,j}(H)$ contains $\mathcal{B}_\ell(80k,80k)$. Since this holds for every pair of indices $1 \leq i<j \leq n$, lemma \ref{lemma_IntegralGoursat} implies that $H$ contains $\displaystyle \prod_{i=1}^n \mathcal{B}_\ell\left(80(n-1)k\right)$, as claimed.

Finally consider the case $\ell=2$. Define $H$ to be the kernel of the reduction $G \to \operatorname{SL}_2(\mathbb{Z}/4\mathbb{Z})^n$. Suppose that $\mathcal{L}(H)$ contains $2^k \mathfrak{sl}_2(\mathbb{Z}_2) \oplus \ldots \oplus 2^k \mathfrak{sl}_2(\mathbb{Z}_2)$, and let $H_i=\pi_i(H), H_{i,j}=\pi_{i,j}(H)$.
Since $\mathcal{L}(\pi_i(H)) \supseteq 2^k \mathfrak{sl}_2(\mathbb{Z}_2)$, \cite[Theorem 5.2]{AdelicEC} implies that $H_i$ contains $\mathcal{B}_2(6k)$, and the integers $n_i$ appearing in the statement of theorem \ref{thm_PinkSL222} can all be taken to be $6k > 3$. Similarly, $\mathcal{L}(H_{i,j})$ contains $2^k \mathfrak{sl}_2(\mathbb{Z}_2) \oplus 2^k \mathfrak{sl}_2(\mathbb{Z}_2)$, hence by theorem \ref{thm_PinkSL222} the group $H_{i,j}$ contains $\mathcal{B}_2(582k+72, 582k+72)$: lemma \ref{lemma_IntegralGoursat} then implies that $H$ contains $\displaystyle \prod_{i=1}^n \mathcal{B}_2\left((n-1)(582k+73)\right)$. Finally, as $H$ is trivial modulo 4, it is clear that $k \geq 3$, so we have $582k + 73 \leq 607k$ and we are done. \qed

\medskip

\noindent\textbf{Acknowledgments.} I would like to thank my advisor, Nicolas Ratazzi, for his time and support, and the anonymous referee for the careful reading of the manuscript and the valuable comments.
I also acknowledge financial support from the ``Fondation Mathématique Jacques Hadamard".

\bibliography{Biblio}{}
\bibliographystyle{alpha}

\end{document}